\newtheorem{theorem}{Theorem}[section]
\newtheorem{lemma}[theorem]{Lemma}
\newtheorem{corollary}[theorem]{Corollary}
\newtheorem{proposition}[theorem]{Proposition}
\newtheorem{definition}[theorem]{Definition}
\theoremstyle{definition}
\newtheorem{remark}[theorem]{Remark}
\numberwithin{equation}{section}
\newcommand{\de}{\mathrm{d}}
\begin{document}
	\title{Sharp gradient stability for a class of Hardy-Sobolev-Maz'ya inequalities}
	
	\author{Wei Dai, Jingze Fu and An Zhang}
	
	\address{School of Mathematical Sciences, Beihang University (BUAA), Beijing 100191, P. R. China, and Key Laboratory of Mathematics, Informatics and Behavioral Semantics, Ministry of Education, Beijing 100191, P. R. China}
	\email{weidai@buaa.edu.cn}
	
	\address{School of Mathematical Sciences, Beihang University (BUAA), Beijing 100191, P. R. China}
	\email{fujingze@buaa.edu.cn}
	
	\address{School of Mathematical Sciences, Beihang University (BUAA), Beijing 100191, P. R. China}
	\email{anzhang@buaa.edu.cn}	
	
	\maketitle
	\begin{abstract}
		In this paper, we proved the sharp gradient stability for a class of Hardy-Sobolev-Maz'ya inequalities with partial (stronger) singular weight and non-radial extremal functions. Our result seems to be the first stability result for non-radial extremal functions. The presence of partial (stronger) singular weight brings substantial new challenges, requiring us to significantly refine the techniques from \cite{DT1,FN,FZ} and introduce some new ideas to handle both the cylindrical symmetry of non-radial extremal functions and the partial (stronger) singular weight structure. Key technical innovations include new compact embedding with strong singularity, non-degeneracy and spectral property of the linearized operator $\mathcal{L}_{v}$ generated by non-radial extremal function $v$ and new refined spectral inequalities, which are crucial for our analysis. Since the extremal function $v$ is non-radial, ODE approach fails, we use binary PDE to prove the spectral property of $\mathcal{L}_{v}$. Surprisingly, the sharp exponent $\gamma=\max\{2,p\}$ in our sharp gradient stability inequality \eqref{eq:se} is independent of the partial weight dimension $k$, while the extremal manifold depends on $k$.
	\end{abstract}
	
	\noindent\textbf{Keywords:} Hardy-Sobolev-Maz'ya inequalities; Sharp stability; Partial (stronger) singular weight; Non-radial extremal functions.
	
	\smallskip
	
	\noindent{\bf 2020 AMS Subject Classifications: Primary: 46E35, Secondary: 26D10.}
	
	\begin{small}	
		
		\section{Introduction}
In this paper, we investigate the sharp gradient stability for the following Hardy-Sobolev-Maz'ya inequalities \cite{BT}: there exists a sharp constant $S=S(n,p,k)>0$ such that for any function $u\in D^{1,p}(\mathbb{R}^n)$,
		\begin{equation}\label{eq:hsm}
			S\biggl(\int_{\mathbb{R}^n}|y|^{-1}|u|^{p_1^*}\,\de x\biggr)^{\frac{1}{p_1^*}}\leq \biggl(\int_{\mathbb{R}^n}|Du|^p \,\de x\biggr)^{\frac{1}{p}},
		\end{equation}
		where $x=(y,z)\in\mathbb{R}^k\times\mathbb{R}^{n-k}$ with $3\leq k\leq n$. The extremal functions \eqref{eq:em} for the sharp inequality \eqref{eq:hsm} satisfy the following Euler-Lagrange equation
		\begin{equation}\label{eq:ele}
			-\Delta_pv=S^p\|v\|_{L^{p_1^*}(\mathbb{R}^n;|y|^{-1})}^{p-p_1^*}|y|^{-1}v^{p_1^*-1},
		\end{equation}
		where the $p$-Laplacian
		$-\Delta_pu=-\mathrm{div}(|Du|^{p-2}Du)$
		and $S$ is the sharp constant in \eqref{eq:hsm}.		
		
		\subsection{Background and stability for the Sobolev inequality}\label{sub1-sob}
		
		The question of quantitative stability was first raised by Brezis and Lieb in \cite{BL} for the classical Sobolev inequality.
		%We introduce some useful definitions here for describing the problem.
		Given $n\geq2$ and $1<p<n$, denote by  $D^{1,p}(\mathbb{R}^n)$ the closure of $C_c^{\infty}(\mathbb{R}^n)$ with respect to the norm
		\begin{equation*}
			\|u\|_{D^{1,p}(\mathbb{R}^n)}:=\biggl(\int_{\mathbb{R}^n}|Du|^p\,\de x\biggr)^{\frac{1}{p}},
		\end{equation*}
		where $Du$ is the gradient of function $u$.
		The $p$-Sobolev inequality says that, there exists a sharp positive constant $\mathscr S=\mathscr S(n,p)>0$ depending on $n$ and $p$ such that
		\begin{equation}\label{eq:sob}
			\|Du\|_{L^p(\mathbb{R}^n)}\geq \mathscr S \|u\|_{L^{p^*}(\mathbb{R}^n)},
		\end{equation}
		where $p^*=\frac{np}{n-p}$ is the Sobolev critical exponent.	
		Define the $p$-Sobolev deficit by
		\begin{equation*}
			d(u):=\frac{\|Du\|_{L^p(\mathbb{R}^n)}}{\|u\|_{L^{p^*}(\mathbb{R}^n)}}-\mathscr S,\quad\quad\forall\,u\in D^{1,p}(\mathbb{R}^n).
		\end{equation*}
		Then it is natural to ask the following question as \cite{BL}: whether the deficit $d(u)$ can be bounded from below by the Sobolev distance between $u$ and the extremal manifold $\mathscr{M}$ consisting of Aubin-Talenti-type extremal functions.
		A few years after \cite{BL}, the original problem for $p=2$ was solved  by Bianchi and Egnell \cite{BE}, proving that there  exists a constant $c=c(n)>0$ such that
		\begin{equation*}
			d(u)\geq c\inf_{v\in\mathscr{M}}\biggl(\frac{\|D(u-v)\|_{L^2(\mathbb{R}^n)}}{\|Du\|_{L^2(\mathbb{R}^n)}}\biggr)^2,\quad\quad\forall\,u\in D^{1,2}(\mathbb{R}^n),
		\end{equation*}
		which is sharp both in terms of the distance and  the exponent.
		However, the Bianchi-Egnell method in \cite{BE} relies heavily on the Hilbert structure of $D^{1,2}(\mathbb{R}^n)$, which breaks down unfortunately for general case $p\neq 2$.
		The general case therefore remains open for decades, requiring new ideas and techniques.
		
		About 20 years later, Cianchi, Fusco, Maggi and Pratelli \cite{CFMP} proved the first result for all $p\in(1,n)$: there exists some constant $c=c(n,p)>0$ such that
		\begin{equation}\label{eq:cfmp}
			d(u)\geq c\inf_{v\in\mathscr{M}}\biggl(\frac{\|u-v\|_{L^{p^*}(\mathbb{R}^n)}}{\|u\|_{L^{p^*}(\mathbb{R}^n)}}\biggr)^\gamma,\quad\quad\forall\,u\in D^{1,p}(\mathbb{R}^n),
		\end{equation}
		where the distance and  the explicit exponent $\gamma$ are both non-sharp.
		The authors introduced a beautiful combination of techniques coming from symmetrization and optimal transport, which were further developed by Figalli, Maggi and Pratelli in \cite{FaMP} to obtain sharp stability theorems for the anisotropic Sobolev and log-Sobolev inequalities on functions of bounded variation. 		See also \cite{Ca,Fn,FnMP} for earlier results.
		
		The first stability result using Sobolev distance for the Sobolev inequality \eqref{eq:sob} with $p>2$ is due to Figalli and Neumayer \cite{FN}: for some explicit exponent $\gamma=\gamma(n,p)>0$, there exists some constant $c=c(n,p)>0$ such that
		\begin{equation}\label{eq:fz}
			d(u)\geq c\inf_{v\in\mathscr{M}}\biggl(\frac{\|Du-Dv\|_{L^p(\mathbb{R}^n)}}{\|Du\|_{L^p(\mathbb{R}^n)}}\biggr)^\gamma,\quad\quad\forall\,u\in D^{1,p}(\mathbb{R}^n).
		\end{equation}
		Neumayer \cite{Nr} subsequently generalized this result to the full range $p\in(1,n)$ through a simpler proof.
		However, the exponents $\gamma$ in \cite{FN,Nr} are both non-sharp since their proofs rely heavily on the result in \cite{CFMP}.
		Note that, the strategy in \cite{Nr} cannot give the sharp exponent, even if one could prove the $L^p$ stability  \eqref{eq:cfmp} with a sharp exponent.
		
		In a recent breakthrough work,  Figalli and Zhang \cite{FZ} solved this problem completely for any $1<p<n$.
		Surprisingly, the sharp exponent  is given by
		$\gamma=\max\{2,p\}, $
		which is dimension-independent but exhibits a $p$-dependent behavior: it varies with respect to $p$ for $p>2$ while remains the constant $2$ for $p\le 2$.
		
		As mentioned at the beginning, there are also some important further progress on the sharp stability constant $c=c(n,p)$.
		In particular, Dolbeault, Esteban, Figalli, Frank and Loss \cite{DEFFL} recently proved a remarkable result with sharp stability constants for the classical Sobolev and log-Sobolev inequalities ($p=2$).
		The result is much more complete than \cite{BE}  and \cite{BDNS,BDS1,BDS2,DjT,Kt} in the sense that, the explicit stability constant for the Sobolev inequality is dimensionally sharp, which further yields a dimension-free stability constant for the Gaussian log-Sobolev inequality.
		It also does not require any restriction on the functions by entropy method like \cite{BDNS}.

For more literature on the stability of Sobolev-type inequalities, refer to \cite{BWW,BE,BDNS,BDS1,BDS2,Ce,CFL,CKW,CLT1,CFW,Ca,CFMP,DHP,DSW,DE,DZ_of,DZ_ff,FaMP,Frank,Frank1,FnMP,FN,FZ,GW,LZ_rt,LZZ,Nr,Nv,PYZ,Rb,WW22,WW23,WW24,YZ,ZZ,ZZZ}. For the stability of isoperimetric and  Brunn-Minkowski inequalities, c.f. \cite{CL,FHT1,FHT2,FJ1,FJ2,FMM,FaMP_i,FaMP_b,FnMP_i,FZ_i,HST1,HST2}. We refer to \cite{BDNS,BDS1,BDS2,CLT2,DFHQW,DHL,DLQ,DEFFL,DjT,Kt} for some further progress on the sharp (stability) constant and \cite{CDL,CDQ0,CF,CS,DDGL,DGHP,DLL,DQ,FM1,FM2} for other interesting results.

		\subsection{Our main result on stability for the Hardy-Sobolev-Maz'ya inequalities}
		It is natural to consider the stability problem for weighted Sobolev inequalities.
		So far, there have been many works investigating the stability for Sobolev inequalities with radial weights, for instance, the Hardy-Sobolev and Caffarelli-Kohn-Nirenberg inequalities \cite{DT1,DT2,WW22,WW24,ZZ}.
		In this paper, based on the classification result of extremal functions in \cite{LM}, we study the quantitative stability of the Hardy-Sobolev-Maz'ya inequalities, in which case we consider the partial (stronger) singular weight $|y|^{-1}$, where $y\in \mathbb{R}^{k}$ with $k\leq n$, see \eqref{eq:hsm}.
		
		In contrast to both the unweighted case and the radially weighted scenario, the presence of partial weights with stronger singularities generates substantial technical obstacles.
		For instance, when $k<n$, the partial singular weight $|y|^{-1}$ with $y\in \mathbb{R}^{k}$ in the Hardy-Sobolev-Maz'ya inequalities \eqref{eq:hsm} generates stronger singularities and the extremal functions of the Hardy-Sobolev-Maz'ya inequalities \eqref{eq:hsm} lose full radial symmetry and instead exhibit only cylindrical symmetry. Cylindrical symmetry of the extremal functions has been proved by Secchi, Smets and Willem in \cite{SSW} by symmetrization arguments, see also \cite{MS04} for $p=2$. Very recently, for $3\leq k\leq n-1$, Lin and Ma \cite{LM} classified all positive finite energy cylindrically symmetric solutions of the corresponding Euler-Lagrange equation \eqref{eq:ele}, and hence obtain the best constant and extremal functions for the Hardy-Sobolev-Maz'ya inequalities \eqref{eq:hsm}.
		
		%We first introduce some definitions.
		Given $n \geq 4$ and $1 < p < n$, let  $p_1^*:=\frac{p(n-1)}{n-p}$ be the weighted exponent.
		Define the weighted spaces $D^{1,p}(\mathbb{R}^n;w)$ and $L^p(\mathbb{R}^n;w)$ as the closure of $C_c^{\infty}(\mathbb{R}^n)$ with respect to the norms
		\begin{equation*}
			\| u\|_{L^p(\mathbb{R}^n;w)}:=\biggl(\int_{\mathbb{R}^n}w|u|^p \,\de x \biggr)^{1/p}\quad\mathrm{and}\quad\| u\|_{D^{1,p}(\mathbb{R}^n;w)}:=\biggl(\int_{\mathbb{R}^n}w|Du|^p \,\de x \biggr)^{1/p},
		\end{equation*}
		where $w\geq0$ is a weight function.
		We consider the following class of Hardy-Sobolev-Maz'ya inequalities \cite{BT}: there exists a sharp constant $S=S(n,p,k)>0$ such that for any function $u\in D^{1,p}(\mathbb{R}^n)$,
		\begin{equation}\label{eq:hsm}
			S\biggl(\int_{\mathbb{R}^n}|y|^{-1}|u|^{p_1^*}\,\de x\biggr)^{\frac{1}{p_1^*}}\leq \biggl(\int_{\mathbb{R}^n}|Du|^p \,\de x\biggr)^{\frac{1}{p}},
		\end{equation}
		where $x=(y,z)\in\mathbb{R}^k\times\mathbb{R}^{n-k}$ with $3\leq k\leq n$.
		Let $\mathcal{M}$ be the $(n-k+2)$-dimensional manifold of all extremal functions for  \eqref{eq:hsm} given by (see \cite{LM})
		\begin{equation}\label{eq:em}
			v_{a,\lambda,z'}(x)=a\lambda^{\frac{n-p}{p}}\bigl[(1+\lambda|y|)^2+|\lambda z-z'|^2\bigr]^{-\frac{n-p}{2(p-1)}},\quad\quad a\in\mathbb{R}\backslash\{0\},\:\lambda>0,\: z'\in\mathbb{R}^{n-k}.
		\end{equation}
		The sharp constant $S$ can be computed easily using the extremal functions. In particular, for any fixed nonzero function $v=v_{a,\lambda,z'}\in\mathcal{M}$,
		\begin{equation*}
			\|Dv\|_{L^p(\mathbb{R}^n)}=S\|v\|_{L^{p_1^*}(\mathbb{R}^n;|y|^{-1})},	\end{equation*}
		and
		the tangent space of $\mathcal M$ at point $v$  is
		\begin{equation*}
			T_v\mathcal{M}=\mathrm{span}\{v, \partial_\lambda v, \partial_{z'_1}v,\ldots,\partial_{z'_{n-k}}v\}.
		\end{equation*}
		The extremal functions \eqref{eq:em} for the sharp inequality \eqref{eq:hsm} satisfy the following Euler-Lagrange equation
		\begin{equation}\label{eq:ele}
			-\Delta_pv=S^p\|v\|_{L^{p_1^*}(\mathbb{R}^n;|y|^{-1})}^{p-p_1^*}|y|^{-1}v^{p_1^*-1},
		\end{equation}
		where the $p$-Laplacian
		$-\Delta_pu=-\mathrm{div}(|Du|^{p-2}Du)$
		and $S$ is the sharp constant in \eqref{eq:hsm}.

		Similar to the quantitative stability for the Sobolev inequality in subsection \ref{sub1-sob}, in order to study the quantitative stability for the Hardy-Sobolev-Maz'ya inequalities \eqref{eq:hsm}, we define the deficit for any function $u\in D^{1,p}(\mathbb{R}^n)$ by
		\begin{equation*}
			\delta(u):=\frac{\|Du\|_{L^p(\mathbb{R}^n)}}{\|u\|_{L^{p_1^*}(\mathbb{R}^n;|y|^{-1})}} - S.
		\end{equation*}
		Our goal is to establish a lower bound for the deficit $\delta(u)$ analogous to \eqref{eq:fz}, featuring a sharp exponent in terms of the Sobolev distance between $u$ and the extremal manifold $\mathcal M$.
		
		To this end, we consider the corresponding linearized problem.
		Differentiating \eqref{eq:ele} for function $v=v_{a,\lambda,z'}\in\mathcal{M}$ with respect to either $\lambda$ or $z'_i$ ($1\leq i\leq n-k$) yields the following: for any $w\in\mathrm{span}\{\partial_\lambda v, \partial_{z'_1}v,\ldots,\partial_{z'_{n-k}}v\}$,
		\begin{equation*}
			-\mathrm{div}\bigl(|Dv|^{p-2}Dw+(p-2)|Dv|^{p-4}(Dv\cdot Dw)Dv\bigr) =(p_1^*-1)S^p\|v\|_{L^{p_1^*}(\mathbb{R}^n;|y|^{-1})}^{p-p_1^*}|y|^{-1}v^{p_1^*-2}w.
		\end{equation*}
		This naturally leads us to investigate the linearized $p$-Laplace operator
		\begin{equation}\label{eq:lv}
			\mathcal{L}_v[\varphi]:=-\mathrm{div}\bigl(|Dv|^{p-2}D\varphi+(p-2)|Dv|^{p-4}(Dv\cdot D\varphi)Dv\bigr),
		\end{equation}
		defined on the weighted space $L^2(\mathbb{R}^n;|y|^{-1}v^{p_1^*-2})$.

\medskip
		
We show in Proposition \ref{prop:ds} that $\mathcal{L}_v$ has a discrete spectrum. Moreover, the eigenspaces corresponding to the first and second eigenvalues coincide with the tangent space $T_v\mathcal{M}$. We prove the following non-degeneracy result and spectral property for the linearized operator $\mathcal{L}_v$.
		\begin{theorem}\label{thm:ef}
			Let $\alpha_i$ and $E_i$ be the $i$-th eigenvalue and eigenspace of the operator $\mathcal{L}_v$ defined by \eqref{eq:lv} in the weighted space $L^2(\mathbb{R}^n;|y|^{-1}v^{p_1^*-2})$. Then we have
			\begin{align}
				\alpha_1=(p-1)S^p\|v\|_{L^{p_1^*}(\mathbb{R}^n;|y|^{-1})}^{p-p_1^*},\quad\quad &E_1=\mathrm{span}\{v\},\label{eq:e1}\\
				\alpha_2=(p_1^*-1)S^p\|v\|_{L^{p_1^*}(\mathbb{R}^n;|y|^{-1})}^{p-p_1^*},\quad\quad &E_2=\mathrm{span}\{\partial_\lambda v, \partial_{z'_1}v,\ldots,\partial_{z'_{n-k}}v\}.\label{eq:e2}
			\end{align}
		\end{theorem}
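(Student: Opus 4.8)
The plan is to combine a direct computation identifying the candidate eigenfunctions, the positivity of the extremal, and the structural information provided by Proposition~\ref{prop:ds}.

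By the invariances generating $\mathcal{M}$ — the scaling $v\mapsto av$, the dilations, and the $z'$-translations — it suffices to treat one normalized extremal, say $v(x)=\bigl[(1+|y|)^2+|z|^2\bigr]^{-\frac{n-p}{2(p-1)}}$: the operator $\mathcal{L}_v$, the weight $|y|^{-1}v^{p_1^*-2}$, and the quantities appearing in \eqref{eq:e1}--\eqref{eq:e2} all transform consistently under these symmetries. Since $\mathcal{L}_v[v]=-\mathrm{div}\bigl((p-1)|Dv|^{p-2}Dv\bigr)=(p-1)(-\Delta_pv)$, the Euler--Lagrange equation \eqref{eq:ele} gives $\mathcal{L}_v[v]=\alpha_1\,|y|^{-1}v^{p_1^*-2}v$ with $\alpha_1=(p-1)S^p\|v\|_{L^{p_1^*}(\mathbb{R}^n;|y|^{-1})}^{p-p_1^*}$, so $v$ is an eigenfunction. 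Since $v>0$ and a positive eigenfunction of the self-adjoint operator $\mathcal{L}_v$ on $L^2(\mathbb{R}^n;|y|^{-1}v^{p_1^*-2})$ (which has discrete spectrum by Proposition~\ref{prop:ds}) is necessarily a simple ground state — by the strong maximum principle for the associated linear, possibly degenerate, elliptic operator — it follows that $\alpha_1$ is the first eigenvalue and $E_1=\mathrm{span}\{v\}$. Next, differentiating \eqref{eq:ele} in $\lambda$ and in each $z'_i$ — exactly the identity displayed just before the statement of Theorem~\ref{thm:ef} — shows that every $w\in\mathrm{span}\{\partial_\lambda v,\partial_{z'_1}v,\ldots,\partial_{z'_{n-k}}v\}$ satisfies $\mathcal{L}_v[w]=\alpha_2\,|y|^{-1}v^{p_1^*-2}w$ with $\alpha_2=(p_1^*-1)S^p\|v\|_{L^{p_1^*}(\mathbb{R}^n;|y|^{-1})}^{p-p_1^*}$; using the explicit form \eqref{eq:em} one checks these $n-k+1$ functions are linearly independent and lie in the form domain and in $L^2(\mathbb{R}^n;|y|^{-1}v^{p_1^*-2})$. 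Since $p_1^*=\frac{p(n-1)}{n-p}>p$ whenever $p>1$, we have $\alpha_2>\alpha_1$, so these $n-k+1$ functions are eigenfunctions with a common eigenvalue strictly above the bottom of the spectrum.

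To conclude I would invoke Proposition~\ref{prop:ds}, which identifies the direct sum $E_1\oplus E_2$ of the first two eigenspaces with the tangent space $T_v\mathcal{M}=\mathrm{span}\{v,\partial_\lambda v,\partial_{z'_1}v,\ldots,\partial_{z'_{n-k}}v\}$, an $(n-k+2)$-dimensional space. All of $T_v\mathcal{M}$ then consists of eigenfunctions carrying exactly the two eigenvalues $\alpha_1$ and $\alpha_2$ computed above; since $v\in E_1$ and any eigenfunction lying in $E_1\oplus E_2$ must lie entirely in $E_1$ or entirely in $E_2$, the functions $\partial_\lambda v,\partial_{z'_1}v,\ldots,\partial_{z'_{n-k}}v$ all lie in $E_2$, forcing $\dim E_1=1$, $\dim E_2=n-k+1$, and the identifications \eqref{eq:e1}--\eqref{eq:e2}.

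The real difficulty is therefore concentrated in Proposition~\ref{prop:ds}: proving that $\mathcal{L}_v$ has discrete spectrum on $L^2(\mathbb{R}^n;|y|^{-1}v^{p_1^*-2})$ and that nothing but $T_v\mathcal{M}$ sits at or below the second eigenvalue. The partial, stronger singularity of the weight $|y|^{-1}$, the conical behaviour of $v$ along $\{y=0\}$, and the degeneracy of the coefficient $|Dv|^{p-2}$ together place this outside the scope of classical $p$-Laplacian eigenvalue theory. The natural strategy exploits the cylindrical symmetry of $v$: decompose $L^2(\mathbb{R}^n;|y|^{-1}v^{p_1^*-2})$ into spherical harmonics of degree $\ell\ge0$ in the $y$-variables and of degree $m\ge0$ in the $z$-variables, on which $\mathcal{L}_v$ block-diagonalizes into a family of reduced singular, degenerate eigenvalue problems $\mathcal{L}^{(\ell,m)}$ in the two radial variables $(|y|,|z|)$, which a suitable change of variables puts on a bounded domain. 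One then needs a \emph{new compact embedding adapted to the strong singularity}, giving discreteness and a clean min--max, together with \emph{new refined weighted spectral (Hardy-type) inequalities} controlling the bottom eigenvalue $\alpha_1^{(\ell,m)}$ as $(\ell,m)$ grows: the explicit eigenfunctions above already locate $\alpha_1$ in the fully symmetric sector $(0,0)$ (eigenfunction $v$), and $\alpha_2$ as the second eigenvalue of the $(0,0)$-sector (eigenfunction $\partial_\lambda v$) and the first eigenvalue of the degree-one-in-$z$ sector (eigenfunctions $\partial_{z'_i}v$); the spectral inequalities must show that the next eigenvalue of each of these two sectors, and the bottom eigenvalue of every other sector — in particular the degree-one-in-$y$ and degree-two-in-$z$ ones, the remaining sectors following by monotonicity in $(\ell,m)$ — all lie strictly above $\alpha_2$. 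This last, quantitative, spectral-gap estimate in the presence of the strong singularity is the decisive and most delicate point.
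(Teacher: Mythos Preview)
Your proposal has a genuine circularity. You invoke Proposition~\ref{prop:ds} to identify $E_1\oplus E_2$ with $T_v\mathcal{M}$, but Proposition~\ref{prop:ds} in the paper asserts \emph{only} that the spectrum of $\mathcal{L}_v$ is discrete; it says nothing about the eigenspaces. The identification $E_1\oplus E_2=T_v\mathcal{M}$ is exactly the content of Theorem~\ref{thm:ef}, so you are assuming what is to be proved. In particular, your argument never establishes that $(p_1^*-1)S^p\|v\|_{L^{p_1^*}(\mathbb{R}^n;|y|^{-1})}^{p-p_1^*}$ is the \emph{second} eigenvalue (only that it is an eigenvalue above $\alpha_1$), nor that the corresponding eigenspace contains nothing beyond the known tangent directions.

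The strategy you sketch to fill this gap --- decomposing $L^2(\mathbb{R}^n;|y|^{-1}v^{p_1^*-2})$ into spherical harmonics in $y$ and in $z$ separately and reducing to two-variable radial problems --- is precisely what the paper explains does \emph{not} work here. Because $v$ depends jointly on $|y|$ and $|z|$ through $W=(1+|y|)^2+|z|^2$, the coefficient $|Dv|^{p-2}$ and the anisotropic second-order term $\sum_{i,j}\tilde{x}_i\tilde{x}_j\partial_i\partial_j$ in the explicit form \eqref{eq:Lv} couple the $y$- and $z$-harmonics; $\mathcal{L}_v$ does not block-diagonalize sector by sector. The paper's introduction flags this explicitly (``preventing the use of separation of variables and Sturm--Liouville theory''). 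The paper instead handles the two missing pieces by different means: the inequality $\alpha_2\ge(p_1^*-1)S^p\|v\|^{p-p_1^*}_{L^{p_1^*}}$ comes from the nonnegativity of the second variation of the deficit functional $G$ at the minimizer $v$, restricted to $\{\varphi:\langle\varphi,v\rangle_*=0\}$ (an argument you never use); the identification of $E_2$ is done by projecting an arbitrary second eigenfunction $\varphi$ onto $\mathrm{span}\{\partial_\lambda v,\partial_{z'_i}v\}$, writing $\varphi=g\,\tilde\varphi$, and proving $g$ is constant via a Liouville theorem when $p=2$ and, when $p\neq2$, via an intricate integration-by-parts identity against carefully chosen test functions together with a nodal-domain argument.
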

		
\begin{remark}
For non-degeneracy result on the linearized operator $\mathcal{L}_v$ in the special Laplacian case $p=2$, refer to \cite{CFMS}, see also \cite{CPY}. Our Theorem \ref{thm:ef} extend their results from $p=2$ to general $1<p<n$.
\end{remark}

		Based on Theorem \ref{thm:ef}, we can prove our main result, the following sharp stability theorem on Hardy-Sobolev-Maz'ya inequalities.
		\begin{theorem}\label{thm:main}
			Let $n \geq 4$, $1 < p < n$, $3\leq k\leq n-1$ and $\gamma=\max\{2,p\}$. Then there exists a constant $c=c(n,p,k)>0$ such that, for all $u\in D^{1,p}(\mathbb{R}^n)$,
			\begin{equation}\label{eq:se}
				\delta(u)\geq c(n,p,k)\inf_{v\in\mathcal{M}}\biggl(\frac{\|D(u-v)\|_{L^p(\mathbb{R}^n)}}{\|Du\|_{L^p(\mathbb{R}^n)}}\biggr)^\gamma.
			\end{equation}
			Moreover, the exponent $\gamma=\max\{2,p\}$ is sharp for \eqref{eq:se} to hold.
		\end{theorem}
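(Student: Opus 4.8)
\emph{Overall strategy.} The plan is to follow the two–regime scheme of Figalli and Zhang \cite{FZ}: a global compactness step reducing matters to a neighbourhood of $\mathcal{M}$, followed by a sharp local expansion, adapting every step to the partial singular weight $|y|^{-1}$ and to the merely cylindrical symmetry of $\mathcal{M}$. By homogeneity we normalise $\|Du\|_{L^p(\mathbb{R}^n)}=1$, so that $d(u):=\inf_{v\in\mathcal{M}}\|D(u-v)\|_{L^p(\mathbb{R}^n)}\le 1$, and we work with the energy deficit $E(u):=\|Du\|_{L^p(\mathbb{R}^n)}^p-S^p\|u\|_{L^{p_1^*}(\mathbb{R}^n;|y|^{-1})}^p$, which is comparable to $\delta(u)$ whenever $\delta(u)$ is bounded. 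First I would show there is $\varepsilon_0>0$ such that $d(u)\ge\varepsilon_0$ forces $\delta(u)\ge c_0>0$: otherwise a sequence $u_j$ with $\|Du_j\|_{L^p}=1$, $\delta(u_j)\to 0$ and $d(u_j)\ge\varepsilon_0$ would be a minimising sequence for the Hardy-Sobolev-Maz'ya quotient \eqref{eq:hsm}, and a concentration-compactness analysis in the weighted, cylindrically symmetric setting — this is where the new compact embedding with strong singularity enters — after factoring out the non-compact group of dilations, $z$-translations and the scalar $a$, would give $u_j\to v_\infty\in\mathcal{M}$ strongly in $D^{1,p}(\mathbb{R}^n)$, a contradiction. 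This reduces the problem to $u$ in a small $D^{1,p}$-neighbourhood of $\mathcal{M}$, where $\delta(u)$ is small, so it suffices to prove $E(u)\ge c\,d(u)^{\max\{2,p\}}$.

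\emph{Local expansion near $\mathcal{M}$.} For such $u$ I would choose a nearest point $v=v_{a,\lambda,z'}\in\mathcal{M}$ in the $D^{1,p}$-distance and set $\varphi=u-v$; minimality yields $\int_{\mathbb{R}^n}|D\varphi|^{p-2}D\varphi\cdot D\tau\,\de x=0$ for all $\tau\in T_v\mathcal{M}$, from which one extracts, to leading order, orthogonality of $\varphi$ to $E_1\oplus E_2=T_v\mathcal{M}$ in $L^2(\mathbb{R}^n;|y|^{-1}v^{p_1^*-2})$. Expanding $\|D(v+\varphi)\|_{L^p}^p$ and $S^p\|v+\varphi\|_{L^{p_1^*}(\mathbb{R}^n;|y|^{-1})}^p$ and cancelling the first-order terms via \eqref{eq:ele}, one is led to an estimate of the shape
\[
E(u)\ \ge\ c_0\,Q_v(\varphi)\ +\ c_1\int_{\{|D\varphi|\ge\eta|Dv|\}}|D\varphi|^p\,\de x\ -\ R(\varphi),
\]
where $Q_v(\varphi):=\int_{\mathbb{R}^n}|Dv|^{p-2}\bigl(|D\varphi|^2+(p-2)|Dv|^{-2}(Dv\cdot D\varphi)^2\bigr)\,\de x$ is the quadratic form of $\mathcal{L}_v$, the constant $c_0>0$ comes from Theorem~\ref{thm:ef} together with the spectral gap $\alpha_3>\alpha_2$ (which bounds $\int_{\mathbb{R}^n}|y|^{-1}v^{p_1^*-2}\varphi^2$ by $\alpha_3^{-1}Q_v(\varphi)$ — this is where the refined spectral inequalities are needed), and $R(\varphi)$ collects the super-quadratic remainders. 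The inequality \eqref{eq:hsm} bounds the top-order nonlinear remainder by $\|D\varphi\|_{L^p}^{p_1^*}=o(\|D\varphi\|_{L^p}^p)$ since $p_1^*>p$, while the cubic parts of both the $p$-energy (on $\{|D\varphi|<\eta|Dv|\}$ for $\eta$ small) and the weighted nonlinearity get absorbed into the first two terms after a bootstrap controlling $\varphi/v$.

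\emph{Conclusion in the near regime.} Set $\varepsilon:=\|D\varphi\|_{L^p(\mathbb{R}^n)}$. If a fixed fraction of $\varepsilon^p$ is carried by $\{|D\varphi|\ge\eta|Dv|\}$, the second term above gives $E(u)\gtrsim\varepsilon^p$ directly. Otherwise most of $\varepsilon^p$ lies in $\{|D\varphi|<\eta|Dv|\}$, where for $p\ge2$ one has $|D\varphi|^p\le|Dv|^{p-2}|D\varphi|^2$ pointwise, hence $Q_v(\varphi)\gtrsim\varepsilon^p$ and again $E(u)\gtrsim\varepsilon^p$. For $p\le2$ the $p$-Dirichlet energy is uniformly convex, and a standard H\"older/interpolation argument combined with Theorem~\ref{thm:ef} yields $E(u)\gtrsim\varepsilon^2$ directly. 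In every case $\delta(u)\gtrsim d(u)^{\max\{2,p\}}$, which, undoing the normalisation, is \eqref{eq:se}.

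\emph{Sharpness and main obstacle.} For optimality of the exponent, if $p\le2$ take $u_t=v+t\phi$ with $\phi$ a fixed function orthogonal to $T_v\mathcal{M}$: then $d(u_t)\sim t$ and $\delta(u_t)\sim t^2$ by the nondegenerate second variation furnished by Theorem~\ref{thm:ef}. If $p>2$, take $u_t=v+t\phi_t$ with $\phi_t(x)=s_t^{(p-n)/p}\eta\bigl((x-x_0)/s_t\bigr)$ concentrating at a scale $s_t$ chosen so that $t\,s_t^{-n/p}\to\infty$, around a point $x_0$ with $y_0\ne0$ and $Dv(x_0)\ne0$: there the perturbation overwhelms $Dv$ on the bubble, so $E(u_t)\sim t^p$ while $d(u_t)\sim t$, forcing the exponent $p$. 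The hard part will be the near-regime remainder analysis for $p>2$ in the presence of the singular weight — controlling the cross and cubic remainders of both the $p$-energy and the weighted nonlinearity uniformly near $\{y=0\}$, where $v$ and $|y|^{-1}$ are simultaneously singular, and running the associated bootstrap for $\varphi/v$ — together with establishing strong $D^{1,p}$-compactness modulo symmetries in the far regime, which rests on the new weighted compact embedding.
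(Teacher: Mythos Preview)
Your overall two-regime scheme is correct, but there are two genuine gaps in the near-regime analysis.

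\textbf{Orthogonality.} Choosing $v$ as the $D^{1,p}$-nearest point gives the Euler--Lagrange condition $\int |D\varphi|^{p-2}D\varphi\cdot D\tau\,\de x=0$ for $\tau\in T_v\mathcal{M}$, but this is \emph{not} the weighted $L^2$ orthogonality $\int |y|^{-1}v^{p_1^*-2}\varphi\,\tau\,\de x=0$ required to invoke the spectral gap (Proposition~\ref{prop:sg} and, more importantly, its nonlinear refinement Proposition~\ref{prop:sgc}). Your phrase ``to leading order'' does not bridge the gap: the condition you obtain is homogeneous of degree $p-1$ in $\varphi$ and bears no direct relation to $\langle\varphi,\tau\rangle_*$. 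The paper resolves this by a \emph{separate} minimisation (Lemma~\ref{le:o}): one minimises the auxiliary functional $\mathcal{F}_u[v]=\frac{1}{p_1^*}\int |y|^{-1}|v|^{p_1^*}-\frac{1}{p_1^*-1}\int |y|^{-1}|v|^{p_1^*-2}vu$ over $\mathcal{M}$, whose first-order condition is \emph{exactly} $\langle u-v,\tau\rangle_*=0$. Only then can one apply the spectral gap.

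\textbf{The range $p<2$.} Your treatment ``the $p$-Dirichlet energy is uniformly convex, and a standard H\"older/interpolation argument \ldots\ yields $E(u)\gtrsim\varepsilon^2$ directly'' glosses over the hardest part of the paper. For $1<p\le\frac{2n}{n+1}$ one has $p_1^*\le2$, so $v^{p_1^*-2}$ is unbounded and the weighted $L^2$ norm $\int |y|^{-1}v^{p_1^*-2}\varphi^2$ on the right of Proposition~\ref{prop:sg} need not be controlled by anything coming out of the gradient expansion. The paper replaces this norm by the Orlicz-type quantity $\int |y|^{-1}\frac{(v+C_1|\varphi|)^{p_1^*}}{v^2+|\varphi|^2}|\varphi|^2$ and proves a refined spectral inequality (Proposition~\ref{prop:sgc}(1)) by a delicate compactness argument (Lemma~\ref{le:cl} and Corollary~\ref{cor:ope}) involving a truncation $\phi_{i,1}=\min\{\phi_i,\zeta v/\varepsilon_i\}$ and a dichotomy between two regimes for $\phi_{i,2}$. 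This entire machinery, together with the pointwise lower bound of Lemma~\ref{le:FZ 2.1} featuring the auxiliary vector $w$ (not the plain $Q_v$), is what produces the extra term $c_0\int\min\{|D\varphi|^p,|Dv|^{p-2}|D\varphi|^2\}$ needed to close the estimate for $p<2$; there is no ``standard'' shortcut. Your sketch of the $p\ge2$ case and of sharpness is, by contrast, essentially along the right lines.
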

		\begin{remark}
			Surprisingly, our main theorem (Theorem \ref{thm:main}) indicates that the sharp exponent $\gamma=\max\{2,p\}$ is independent of the weight dimension $k$, while the extremal manifold $\mathcal{M}$ depends on it. Notably, the case $k=2$ remains open, even the classification of the extremal functions of \eqref{eq:hsm} is currently unknown (see \cite{LM}).
		\end{remark}
		\begin{remark}
			Following the approach of \cite[Remark 1.2]{FZ}, we can verify the sharpness of the decay exponent $\gamma=\max\{2,p\}$.
			
			On the one hand, consider the test functions $u_i(x):=v(A_ix)$ ($i\in \mathbb N^+$),   where $v=v_{1,1,0}\in\mathcal{M}$ and  $A_i\in\mathbb{R}^{n\times n}$ is the diagonal matrix
			\begin{equation*}
				A_i=\mathrm{diag}\biggl(1,\ldots,1,1+\frac{1}{i}\biggr).
			\end{equation*}
			A direct calculation shows that:
			\begin{itemize}
				\item the deficit $\delta(u_i)$ decays as $i^{-2}$,
				\item the right-hand side of \eqref{eq:se} decays as $i^{-\gamma}$. \end{itemize}
			Consequently, \eqref{eq:se} fails for any exponent $\gamma<2$.
			
			On the other hand, for any fixed non-trivial $\varphi\in C_c^{\infty}(\mathbb{R}^n)$, consider $\tilde{u}_i:=v+\varphi(x_i+\cdot)$, where $\{x_i=(y_i,z_i)\}_{i\in \mathbb N^+}\subseteq\mathbb{R}^n$ satisfies $|x_i|\rightarrow\infty$ and $|y_i|\geq C$ for some constant $C$ as $i\rightarrow\infty$.
			One can check that
			\begin{equation*}
				\|D\tilde{u}_i\|_{L^p(\mathbb{R}^n)}^p=\|Dv\|_{L^p(\mathbb{R}^n)}^p + \|\varphi\|_{L^p(\mathbb{R}^n)}^p+r_{i,1}
			\end{equation*}
			and
			\begin{equation*}
				\|\tilde{u}_i\|_{L^{p_1^*}(\mathbb{R}^n;|y|^{-1})}^{p_1^*}=\|v\|_{L^{p_1^*}(\mathbb{R}^n;|y|^{-1})}^{p_1^*}+\|\varphi\|_{L^{p_1^*}(\mathbb{R}^n;|y|^{-1})}^{p_1^*}+ r_{i,2},
			\end{equation*}
			where the remainder terms are bounded by $|r_{i,1}|+|r_{i,2}|\leq C\big(v(x_i)+|Dv(x_i)|\big)\leq Cv(x_i)\rightarrow 0$ as $i\rightarrow\infty$.
			Hence, choosing a sequence $\varepsilon_i\rightarrow0$ such that $v(x_i)\ll\varepsilon_i\ll1$, the functions $\hat{u}_i:=v+\varepsilon_i\varphi(x_i+\cdot)$ satisfy
			\begin{equation*}
				\|D\hat{u}_i\|_{L^p(\mathbb{R}^n)}^p=\|Dv\|_{L^p(\mathbb{R}^n)}^p + \varepsilon_i^p\|\varphi\|_{L^p(\mathbb{R}^n)}^p+o(\varepsilon_i^p)
			\end{equation*}
			and
			\begin{equation*}
				\|\hat{u}_i\|_{L^{p_1^*}(\mathbb{R}^n;|y|^{-1})}^{p_1^*}=\|v\|_{L^{p_1^*}(\mathbb{R}^n;|y|^{-1})}^{p_1^*}+\varepsilon_i^{p_1^*}\|\varphi\|_{L^{p_1^*}(\mathbb{R}^n;|y|^{-1})}^{p_1^*} + o(\varepsilon_i^{p_1^*}).
			\end{equation*}
			Therefore, one can discover that the deficit $\delta(\hat{u}_i)$ behaves as $\varepsilon_i^p$, while the right-hand side of \eqref{eq:se} behaves as $\varepsilon_i^{\gamma}$. Therefore, \eqref{eq:se} fails also for any exponent  $\gamma<p$.
			
			These two necessary conditions  confirm the sharpness of our exponent $\gamma=\max\{2,p\}$.
		\end{remark}
		
		\subsection{Main ideas of the proof}
		Following the approach of \cite{FN,FZ}, we begin our proof along the basic lines of \cite{BE}.
		Specifically, for any function $u$ that is close to $\mathcal{M}$, we select $v\in\mathcal{M}$ closest to $u$ and set
		\[\varphi:=\frac{u-v}{\|Du-Dv\|_{L^p(\mathbb{R}^n)}} \qquad \text{and}\qquad \varepsilon:=\|Du-Dv\|_{L^p(\mathbb{R}^n)}.\]
		This allows us to express $u$  as $v+\varepsilon\varphi$.
		Expanding the deficit $\delta(u)$ in $\varepsilon$, we aim to obtain a lower bound of $\delta(u)$ in terms of $\|\varphi\|_{D^{1,p}(\mathbb{R}^n)}^\gamma$.
		
		The Taylor expansion of $\delta(u)$ yields that
		\begin{equation*}
			\delta(v+\varepsilon\varphi)=\varepsilon^2Q_v[\varphi]+o\bigl(\varepsilon^2\|D\varphi\|_{L^p(\mathbb{R}^n)}^2\bigr),
		\end{equation*}
		where $Q_v[\cdot]$ is  a quadratic form depending on $v$.
		If $\varphi$ is orthogonal to the tangent space $T_v\mathcal{M}$ in the weighted space $L^2(\mathbb{R}^n;|y|^{-1}v^{p_1^*-2})$, spectral analysis shows that $Q_v[\varphi]$ controls the weighted norm $\|D\varphi\|_{L^p(\mathbb{R}^n;|y|^{-1}v^{p_1^*-2})}^2$.
		Consequently, we obtain the lower bound
		\begin{equation*}
			\delta(v+\varepsilon\varphi)\geq c\varepsilon^2\|D\varphi\|_{L^p(\mathbb{R}^n;|y|^{-1}v^{p_1^*-2})}^2+o\bigl(\varepsilon^2\|D\varphi\|_{L^p(\mathbb{R}^n)}^2\bigr).
		\end{equation*}
		
		If $p=2$, the result follows directly from the smallness of  $\varepsilon\ll1$ under the orthogonality condition.
		When $p\not=2$, however, several challenging technical obstacles arise:
		\begin{itemize}
			\item $L^p(\mathbb{R}^n)$ is non-Hilbert for $p\not=2$, thus the analysis will become much more complicated.
			
			\item For $p>2$, the weighted $L^2$-norm cannot control the $L^p$-norm of $D\varphi$.
			
			\item For $p<2$, $D^{1,p}$-norm is weaker than any weighted $D^{1,2}$-norm.
			
			\item If $p\leq\frac{2n}{n+1}$, $L^{p_1^*}(\mathbb{R}^n;|y|^{-1})$-norm is weaker than any weighted $L^{2}$-norm.
		\end{itemize}
		Furthermore, comparing with the stability for the classical Sobolev inequality in \cite{FN,FZ} and the stability for the Hardy-Sobolev inequality in \cite{DT1}, we have at least two more crucial difficulties on the stability for the Hardy-Sobolev-Maz'ya inequalities:
		\begin{itemize}
			\item The partial singular weight $|y|^{-1}$  in the Hardy-Sobolev-Maz'ya inequalities (involving only $k$-dimensional part of the spatial variables) generates stronger singularities and complicates the key estimates.
			
			\item The extremal functions only possess cylindrical symmetry rather than radial symmetry, hence the non-degeneracy and spectral analysis of $\mathcal{L}_v$ can not be deduced by the spectral analysis of ODE and Sturm-Liouville theory via a spherical harmonic decomposition.
		\end{itemize}
		To overcome all those difficulties mentioned above, we adapted strategies from  \cite{DT1,FN,FZ} with a series of new ideas and techniques. Specifically, we have developed several innovative approaches:
		\begin{itemize}
			\item For the non-degeneracy and spectral analysis of the linearized operator $\mathcal{L}_v$, since the extremal function $v$ is merely cylindrically symmetric and is not radially symmetric, the usual ODE approach does not work. We apply the spherical harmonic decomposition to the linearized equation \eqref{eq:phi} with respect to partial variable $y\in \mathbb{R}^{k}$ and then apply the spherical harmonic decomposition twice again with respect to partial variable $z\in\mathbb{R}^{n-k}$, and hence reduce the non-degeneracy and spectral analysis of $\mathcal{L}_v$ to spectral analysis of a PDE involving two variables instead of ODE, see the proof of Theorem \ref{thm:ef} in subsection 3.1.
			
			\item For partial (stronger) singular weight estimates, we introduced new transformation techniques to effectively handle the non-radial singularities, and proved and applied new compact embedding with strong singularity, see Section 2.
			
			\item We significantly modified those methods from  \cite{DT1,FN,FZ} to deal with the cylindrical symmetry and partial (stronger) singular weight structure, and established new refined spectral inequalities, see sub-section 3.2.
		\end{itemize}

		\subsection{Structure of the paper}
		In Section \ref{sec:cr}, we prove some compactness results that form the foundation for subsequent analysis.
		In Section \ref{sec:sg}, we first prove the discreteness of the spectrum of the linearized operator in Proposition \ref{prop:ds}. Then, we characterize the eigenspaces corresponding to the first and second eigenvalues and hence prove Theorem \ref{thm:ef}, which indicates the non-degeneracy of the Euler-Lagrange equations. Finally, based on Theorem \ref{thm:ef}, we establish a crucial spectral estimate.
		In Section \ref{sec:main}, we complete the proof of Theorem \ref{thm:main} by combining the spectral gap theorem with the concentration-compactness arguments.
		In Appendices, section \ref{sec-cc} contains a proof of the concentration-compactness theorem and section \ref{sec-ue} collects some useful estimates for reader's convenience.
		
		\vspace{1em}
		
		\noindent\textbf{Notation.}
		Throughout this paper, we adopt the following conventions: positive constants are denoted by $C(\cdot)$ and $c(\cdot)$, where the parentheses indicate all dependent parameters.
		Typically, $C$ denotes a big constant $\ge 1$ and $c$ denotes a small constant $\le 1$, whose value may vary from line to line.
		The relation $a\sim b$ indicates the existence of constants $C_1, C_2>0$ such that $C_1b\leq a\le  C_2 b$. The Euclidean ball centered at $x$ with radius $r$ is denoted by $B(x, r)$.
		We use $\rightarrow$ to denote strong convergence and $\rightharpoonup$ to denote weak convergence.
		
		\section{Compactness Results}\label{sec:cr}
		
		\subsection{A rough estimate for any $1<p<n$.}
		The following weighted compact embedding theorem will be applied in Section \ref{sec:sg} to prove $\mathcal{L}_v$ has a discrete spectrum.
		Throughout Section \ref{sec:cr} and Section \ref{sec:sg} we assume that $v:=v_{a_0,1,0}$ with $a_0>0$ such that $\frac{1}{2}\leq\|v\|_{L^{p_1^*}(\mathbb{R}^n)}\leq2$.
		\begin{proposition}\label{prop:ce}
			Let $1<p<n$. Then $D^{1,2}(\mathbb{R}^n;|Dv|^{p-2})$ compactly embeds into $L^2(\mathbb{R}^n;|y|^{-1}v^{p_1^*-2})$.
		\end{proposition}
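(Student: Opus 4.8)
The plan is to first rewrite the claim in terms of pure power weights. Writing $x=(y,z)$ and $\rho:=[(1+|y|)^2+|z|^2]^{1/2}\ge 1$, one has $v=a_0\rho^{-\frac{n-p}{p-1}}$, and a direct computation gives $|D\rho|\equiv 1$ and
\[
\Delta\rho=\frac{n-1}{\rho}+\frac{k-1}{|y|\,\rho},
\]
the second term being precisely where the partial singular direction enters. Since $|Dv|=a_0\tfrac{n-p}{p-1}\rho^{-\frac{n-1}{p-1}}$, elementary exponent arithmetic (using $p_1^*=\frac{p(n-1)}{n-p}$) shows that, with $m:=\frac{(n-1)(p-2)}{p-1}$,
\[
|Dv|^{p-2}=c_0\,\rho^{-m},\qquad |y|^{-1}v^{p_1^*-2}=c_1\,|y|^{-1}\rho^{-m-2},
\]
and that $m<n-2$ exactly because $p<n$. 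So it suffices to prove that $D^{1,2}(\mathbb{R}^n;\rho^{-m})$ embeds compactly into $L^2(\mathbb{R}^n;|y|^{-1}\rho^{-m-2})$.

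Second, I would establish the continuous embedding (a weighted Hardy--Sobolev inequality) together with a quantitative tail refinement, distinguishing the sign of $m$. Put $\psi:=\rho^{-m/2}\varphi$. Integrating by parts and inserting the formula for $\Delta\rho$ yields the identity
\[
\int_{\mathbb{R}^n}\rho^{-m}|D\varphi|^2\,\de x=\int_{\mathbb{R}^n}|D\psi|^2\,\de x+\Big(\tfrac{m^2}{4}-\tfrac{m(n-2)}{2}\Big)\int_{\mathbb{R}^n}\rho^{-2}\psi^2\,\de x-\tfrac{m(k-1)}{2}\int_{\mathbb{R}^n}|y|^{-1}\rho^{-2}\psi^2\,\de x .
\]
When $p\le 2$ (so $m\le 0$) all three remainder terms are favourable, and combined with the sharp Hardy inequalities in $\mathbb{R}^n$ ($n\ge3$) and in $\mathbb{R}^k$ ($k\ge3$) this shows that $\varphi\mapsto\psi$ is a bounded isomorphism onto $D^{1,2}(\mathbb{R}^n)$ and that $\int|y|^{-1}\rho^{-m-2}\varphi^2=\int|y|^{-1}\rho^{-2}\psi^2\lesssim\|D\psi\|_{L^2(\mathbb{R}^n)}^2$; thus, for $p\le2$, the whole statement reduces to the single compact embedding $D^{1,2}(\mathbb{R}^n)\hookrightarrow\hookrightarrow L^2(\mathbb{R}^n;|y|^{-1}\rho^{-2})$. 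When $p>2$ I would obtain the continuous embedding directly: split $|y|^{-1}\le\tfrac12(|y|^{-2}+1)$, treat the $|y|^{-2}$ part by the $\mathbb{R}^k$-Hardy inequality applied to $\rho^{-(m+2)/2}\varphi$, treat the remaining $\rho^{-m-2}\varphi^2$ by a Caffarelli--Kohn--Nirenberg inequality (admissible exactly when $m<n-2$) on $\{|x|>1\}$ with a cutoff, and handle the near-origin contribution by the elementary one-dimensional ``polar'' bound $\int_{\{|x|<2\}}\varphi^2\lesssim\int\rho^{-m}|D\varphi|^2$ — here the shift $\rho\ge1$ is essential.

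Third, for compactness I would run a three-region scheme. Take $\varphi_j$ bounded in $D^{1,2}(\mathbb{R}^n;\rho^{-m})$; after extraction $\varphi_j\rightharpoonup\varphi_\infty$ weakly there and, by continuity of the embedding, also in $L^2(\mathbb{R}^n;|y|^{-1}\rho^{-m-2})$, so we may assume $\varphi_j\rightharpoonup0$. Fix $\delta\in(0,1)$ and $R>1$ and split $\mathbb{R}^n$ into $\Omega_1=\{|y|<\delta\}$, $\Omega_2=\{|y|\ge\delta,\ |x|\le R\}$ and $\Omega_3=\{|x|>R\}$. On $\Omega_2$, which is relatively compact with both weights pinched between positive constants, the weighted norms are equivalent to the $H^1$ and $L^2$ norms, so Rellich--Kondrachov gives $\int_{\Omega_2}|y|^{-1}\rho^{-m-2}\varphi_j^2\to0$. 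On $\Omega_1$, using $|y|^{-1}\le\delta|y|^{-2}$ and the $\mathbb{R}^k$-Hardy estimate above, $\int_{\Omega_1}|y|^{-1}\rho^{-m-2}\varphi_j^2\le C\delta\,\|\varphi_j\|_{D^{1,2}(\rho^{-m})}^2\le C\delta$. On $\Omega_3$ the tail refinement gives $\int_{\Omega_3}|y|^{-1}\rho^{-m-2}\varphi_j^2\le\omega(R)\,\|\varphi_j\|_{D^{1,2}(\rho^{-m})}^2$ with $\omega(R)\to0$. Letting $j\to\infty$, then $R\to\infty$ and $\delta\to0$, yields strong convergence to $0$ along a subsequence, hence compactness.

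The hard part is the tail estimate on $\Omega_3$ and, in the worst range, the compactness of $D^{1,2}(\mathbb{R}^n)\hookrightarrow L^2(\mathbb{R}^n;|y|^{-1}\rho^{-2})$ itself. If $p>\frac{2n}{n+1}$ (equivalently $m>-2$), so that the target weight still decays at infinity, $\omega(R)$ is produced by a Caffarelli--Kohn--Nirenberg inequality with a strictly subcritical exponent $q\in(2,2^*]$ (the room surviving the $|y|^{-1}$ precisely because $-2<m<n-2$), followed by Hölder over $\Omega_3$ and the $|y|^{-1}\le\tfrac12(|y|^{-2}+1)$ trick near $\{y=0\}$. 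If $p\le\frac{2n}{n+1}$ the target weight grows at infinity and this argument breaks; there one must use the $p\le2$ reduction and prove the borderline compact embedding $D^{1,2}(\mathbb{R}^n)\hookrightarrow\hookrightarrow L^2(\mathbb{R}^n;|y|^{-1}\rho^{-2})$, in which the measure $\mu:=|y|^{-1}\rho^{-2}\,\de x$ has infinite mass and $|y|^{-1}$ is exactly critical for the $\mathbb{R}^k$-Hardy inequality. I would verify it through Maz'ya's capacitary criterion, checking $\mu(B(x_0,r))/\mathrm{cap}(B(x_0,r))\to0$ as $r\to0$, as $x_0$ approaches $\{y=0\}$, and as $|x_0|\to\infty$: the decisive point is that, although borderline for the $\mathbb{R}^k$-dilations, the weight $|y|^{-1}$ is strictly subcritical for the full $\mathbb{R}^n$-dilations, so it defeats every concentration profile (including those centred on $\{y=0\}$) and every escape of mass to infinity. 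Making the constants in the Hardy/CKN and capacity estimates uniform — in particular tracking their dependence on $k$ — is where the bulk of the technical work lies, and this is exactly the new compact embedding with strong singularity announced in the introduction.
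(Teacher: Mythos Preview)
Your three--region compactness scheme is the same as the paper's, and the overall argument is sound, but the route to the two key quantitative inputs is genuinely different. The paper never splits on $p$: it obtains the continuous embedding \eqref{eq:pti} in one line from the non-negativity of the second variation of $G(u)=\|Du\|_p^p-S^p\|u\|_{p_1^*}^p$ at the minimizer $v$, which automatically yields $\int |y|^{-1}v^{p_1^*-2}\varphi^2\le C\int|Dv|^{p-2}|D\varphi|^2$ for all $1<p<n$; and it proves the small-$|y|$ and large-$|x|$ tightness estimates \eqref{eq:ptib}--\eqref{eq:ptiob} by direct weighted Sobolev/Hardy/CKN inequalities (using in particular a Maz'ya-type weighted Sobolev inequality in the $y$-variables to gain integrability near $\{y=0\}$, and splitting the far region into $\{|y|\ \text{large}\}$ and $\{|z|\ \text{large}\}$). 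Your approach instead trades the variational shortcut for the substitution $\psi=\rho^{-m/2}\varphi$ and an explicit integration-by-parts identity, which is elegant for $p\le2$ and cleanly isolates the reduced embedding $D^{1,2}(\mathbb{R}^n)\hookrightarrow\hookrightarrow L^2(|y|^{-1}\rho^{-2})$; your $|y|^{-1}\le\delta|y|^{-2}$ + Hardy-in-$\mathbb{R}^k$ trick for $\Omega_1$ is also simpler than the paper's. The cost is the case analysis on $p$ versus $2$ and $2n/(n+1)$, and for the borderline tail you invoke Maz'ya's capacitary criterion --- valid, but heavier than needed: the same $|y|/|z|$ splitting the paper uses (pull one factor of $(1+|y|+|z|)^{-1}$ out as $R^{-1}$, then Hardy in $\mathbb{R}^k$ or $\mathbb{R}^n$) gives the large-$|x|$ smallness directly for your reduced embedding too, without capacities. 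In short, both proofs work; the paper's second-variation device is the insight you are missing and is what makes the argument uniform in $p$.
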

		To prove this, we first introduce an estimate that will be useful also later.
		Define the cylinders $Y_{R}:=\bigl\{x=(y,z)\in\mathbb{R}^k\times\mathbb{R}^{n-k}\: :\: |y|\leq R\bigr\}$ and $Z_{R}:=\bigl\{x=(y,z)\in\mathbb{R}^k\times\mathbb{R}^{n-k}\: :\: |z|\leq R\bigr\}$, then we have the following lemma.
		\begin{lemma}
			Let $1<p<n$. \\
			(1) There exists a constant $C =C(n,p,k)>0$ such that for all $\varphi\in L^2(\mathbb{R}^n;|y|^{-1}v^{p_1^*-2}) \cap D^{1,2}(\mathbb{R}^n;|Dv|^{p-2})$, we have
			\begin{equation}\label{eq:pti}
				\int_{\mathbb{R}^n}|y|^{-1}v^{p_1^*-2}\varphi^2\,\de x\leq C(n,p,k)\int_{\mathbb{R}^n}|Dv|^{p-2}|D\varphi|^2\,\de x.
			\end{equation}
			(2)	Moreover, there exists $\beta=\beta(n,p,k)>0$, such that for all $\rho\in(0,1)$,
			\begin{equation}\label{eq:ptib}
				\int_{Y_\rho}|y|^{-1}v^{p_1^*-2}\varphi^2\,\de x\leq C(n,p,k)\rho^{\beta}\int_{\mathbb{R}^n}|Dv|^{p-2}|D\varphi|^2\,\de x,
			\end{equation}
			and also
			\begin{equation}\label{eq:ptiob}
				\int_{\mathbb{R}^n\backslash B(0,\rho^{-1})}|y|^{-1}v^{p_1^*-2}\varphi^2\,\de x\leq C(n,p,k)\rho\int_{\mathbb{R}^n}|Dv|^{p-2}|D\varphi|^2\,\de x.
			\end{equation}
		\end{lemma}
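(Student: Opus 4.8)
The plan is to establish all three inequalities \eqref{eq:pti}, \eqref{eq:ptib}, \eqref{eq:ptiob} by a unified weighted Hardy-type argument, exploiting the explicit form of $v=v_{a_0,1,0}$. Since $v(x)=a_0[(1+|y|)^2+|z|^2]^{-\frac{n-p}{2(p-1)}}$, both $v$ and $|Dv|$ have completely explicit decay: setting $R:=(1+|y|)+|z|\sim 1+|x|$ (away from the singular set $\{y=0\}$, which is where the extra care is needed), one has $v\sim R^{-\frac{n-p}{p-1}}$ and $|Dv|\sim R^{-\frac{n-p}{p-1}-1}\cdot(\text{a factor from the }|y|\text{-direction})$; more precisely $|Dv|^{p-2}$ behaves like $R^{-(n-p)\frac{p-2}{p-1}}R^{-(p-2)}$ with a correction near $\{y=0\}$ coming from $\partial_{|y|}(1+|y|)$ being bounded. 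The first step is to record these pointwise asymptotics for $v^{p_1^*-2}|y|^{-1}$ and $|Dv|^{p-2}$, checking that the exponents of $R$ match: the weight $|y|^{-1}v^{p_1^*-2}$ is integrable against $\varphi^2$ precisely because $p_1^*-2=\frac{2p-n+p-... }{\cdots}$, and one verifies $|y|^{-1}v^{p_1^*-2}\sim |y|^{-1}R^{-(n-p)\frac{p_1^*-2}{p-1}}$.

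Second, I would prove a weighted Hardy inequality on $\mathbb R^n=\mathbb R^k_y\times\mathbb R^{n-k}_z$ of the form $\int |y|^{-1}R^{-\sigma}\varphi^2\,\de x\le C\int R^{-\sigma-?}|D\varphi|^2\,\de x$ adapted to this geometry. The natural route is to change variables / slice: for fixed $z$, work in the $y$-variable with the strong singularity $|y|^{-1}$, where the one-dimensional radial Hardy inequality $\int_0^\infty r^{k-2}f^2\,\de r\le C\int_0^\infty r^k (f')^2\,\de r$ (valid since $k\ge 3$ so $k-2>0$ and the constant is $(2/(k-... ))$-type) handles the factor $|y|^{-1}=|y|\cdot|y|^{-2}$, absorbing one power of $|y|$ by Hardy and keeping the remaining $|y|^{k-1}$ from the measure; then integrate in $z$ and use the decay in $R$ to make the constant finite. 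The key point is that $k\ge 3$ is exactly what makes $|y|^{-2}$ subcritical relative to the $k$-dimensional Hardy inequality, so the singular weight is controlled with a finite constant — this is where the hypothesis $3\le k\le n$ enters. Alternatively, and perhaps more cleanly, one introduces the transformation hinted at in the introduction ("new transformation techniques to effectively handle the non-radial singularities") that turns $|y|^{-1}$ into a power that can be absorbed; I would set up that substitution and reduce \eqref{eq:pti} to a standard weighted Sobolev/Hardy inequality with a non-singular (or mildly singular) weight, for which the constant is classical.

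Third, the localized bounds \eqref{eq:ptib} and \eqref{eq:ptiob} come from the same computation with a gain. For \eqref{eq:ptib}, restricting to $Y_\rho=\{|y|\le\rho\}$ with $\rho<1$, the weight $|y|^{-1}v^{p_1^*-2}$ is essentially $|y|^{-1}$ times a bounded quantity on $Y_\rho$ (since $v\sim 1$ there for $|z|\lesssim 1$ and decays for large $|z|$), and the Hardy inequality on the thin cylinder produces an extra factor $\rho^{\beta}$ with $\beta=k-2>0$ — more precisely, the truncation of the Hardy integral to $r\le\rho$ scales like $\rho^{k-2}$ against the full gradient integral, by a standard cutoff/scaling argument (introduce $\eta$ supported in $Y_{2\rho}$, bound $\int_{Y_\rho}$ by $\int (\eta\varphi)$-terms, and control $\int|y|^{-1}v^{p_1^*-2}|D\eta|^2\varphi^2$ using the already-proved global estimate). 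For \eqref{eq:ptiob}, on $\mathbb R^n\setminus B(0,\rho^{-1})$ we have $R\gtrsim\rho^{-1}$, so the extra decay of $v^{p_1^*-2}$ relative to what is needed for integrability yields a clean power $\rho^{1}$ (the exponent $1$ rather than a larger power is dictated by the gap between $p_1^*-2$ and the critical exponent; I would just track the exponents to confirm it is at least $\rho$). The main obstacle I anticipate is the interplay near the "corner" where $\{|y| \text{ small}\}$ meets $\{|x| \text{ large}\}$: there the weight has both the strong singularity $|y|^{-1}$ and the decay from $R$, and one must make sure the Hardy argument in $y$ and the decay argument in $(y,z)$ can be run simultaneously without losing the finiteness of the constant or the correct power of $\rho$ — handling this likely requires splitting $\mathbb R^n$ into the regions $\{|y|\le 1\}$ and $\{|y|\ge 1\}$ (resp. dyadic annuli in $R$) and summing, which is routine once the single-region estimates are in place.
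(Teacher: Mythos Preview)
Your proposal takes a substantially different route from the paper for part (1), and has a genuine gap in the argument for \eqref{eq:ptib}.

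For \eqref{eq:pti} the paper uses no explicit Hardy inequalities at all. It simply observes that $v$ minimizes the deficit functional $G(u)=\int|Du|^p-S^p\bigl(\int|y|^{-1}|u|^{p_1^*}\bigr)^{p/p_1^*}$, so $\frac{\mathrm{d}^2}{\mathrm{d}\varepsilon^2}\big|_{\varepsilon=0}G(v+\varepsilon\varphi)\ge 0$; expanding the second variation and applying Cauchy--Schwarz to the cross terms gives \eqref{eq:pti} in a few lines, uniformly in $p$. Your direct weighted-Hardy approach might be pushed through, but you would have to track the sign of the exponent in $|Dv|^{p-2}$ (it flips at $p=2$) and treat large $|y|$ separately; the variational argument sidesteps all of this.

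For \eqref{eq:ptib} your cutoff argument does not close. With $\eta$ supported in $Y_{2\rho}$ and $|D\eta|\sim\rho^{-1}$, applying \eqref{eq:pti} to $\eta\varphi$ leaves the commutator term $\rho^{-2}\int_{\rho<|y|<2\rho}|Dv|^{p-2}\varphi^2$. Since $|Dv|^{p-2}\sim\bigl((1+|y|)^2+|z|^2\bigr)\,v^{p_1^*-2}$, this is, up to constants, $\rho^{-1}\int_{\rho<|y|<2\rho}\bigl((1+|y|)^2+|z|^2\bigr)|y|^{-1}v^{p_1^*-2}\varphi^2$, and the unbounded factor $(1+|y|)^2+|z|^2$ is not controlled by \eqref{eq:pti}. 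A straight Hardy-in-$y$ step also does not give $\beta=k-2$: at best $\int_{|y|<\rho}|y|^{-1}f^2\le\rho\int|y|^{-2}f^2\le C(k)\rho\int_{\mathbb{R}^k}|D_yf|^2$ yields $\beta=1$, and matching the $z$-weights afterwards fails for $p>2$. The paper instead sets $\bar\varphi=|y|^{-1/2}\varphi$, applies a weighted Sobolev inequality in the $y$-variable (from Maz'ya) to gain integrability to $L^q$ with some $q>2$, and then uses H\"older on $\{|y|\le\rho\}$ to extract $\rho^{k(1-2/q)}$. This Sobolev-type gain is the missing idea.

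For \eqref{eq:ptiob} your outline is close in spirit; the paper splits $\mathbb{R}^n\setminus B(0,\rho^{-1})$ into the complements of the cylinders $Y_{\frac12\rho^{-1}}$ and $Z_{\frac12\rho^{-1}}$ and treats each with Hardy--Sobolev and Caffarelli--Kohn--Nirenberg inequalities, which is essentially the region splitting you anticipate.
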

		
		\begin{proof}
			\emph{(1)} \ First we assume $\varphi\in C_c^1(\mathbb{R}^n)$.
			Define
			\begin{equation*}
				G(u):=\int_{\mathbb{R}^n}|Du|^p\,\de x -S^p \biggl(\int_{\mathbb{R}^n}|y|^{-1}|u|^{p_1^*}\,\de x\biggr)^{p/p_1^*}.
			\end{equation*}
			Since $v$ is a local minimum of functional $G$, it holds
			\begin{equation}\label{eq:fc}
				\begin{split}
					0\leq{}&\frac{\mathrm{d}^2}{\mathrm{d}\varepsilon^2}\bigg{|}_{\varepsilon=0}G(v+\varepsilon\varphi)\\
					={}&p\int_{\mathbb{R}^n}|Dv|^{p-2}|D\varphi|^2\,\de x + p(p-2)\int_{\mathbb{R}^n}|Dv|^{p-4}(Dv\cdot D\varphi)^2\,\de x\\
					&-pp_1^*\biggl(\frac{p}{p_1^*}-1\biggr)S^p\biggl(\int_{\mathbb{R}^n}|y|^{-1}|v|^{p_1^*}\,\de x\biggr)^{\frac{p}{p_1^*}-2}\biggl(\int_{\mathbb{R}^n}|y|^{-1}|v|^{p_1^*-2}v\varphi \,\de x\biggr)^2\\
					&-p(p_1^*-1)S^p\biggl(\int_{\mathbb{R}^n}|y|^{-1}|v|^{p_1^*}\,\de x\biggr)^{\frac{p}{p_1^*}-1}\int_{\mathbb{R}^n}|y|^{-1}|v|^{p_1^*-2}\varphi^2 \,\de x.
				\end{split}
			\end{equation}
			By H$\rm{\ddot{o}}$lder's inequality, we have
			\begin{equation*}
				\biggl(\int_{\mathbb{R}^n}|y|^{-1}|v|^{p_1^*-2}v\varphi \,\de x\biggr)^2\leq\int_{\mathbb{R}^n}|y|^{-1}|v|^{p_1^*}\,\de x\cdot\int_{\mathbb{R}^n}|y|^{-1}|v|^{p_1^*-2}\varphi^2 \,\de x,
			\end{equation*}
			and also
			\begin{equation*}
				\int_{\mathbb{R}^n}|Dv|^{p-4}(Dv\cdot D\varphi)^2\,\de x\leq\int_{\mathbb{R}^n}|Dv|^{p-2}|D\varphi|^2\,\de x.
			\end{equation*}
			Thus it follows that, if $p\geq2$,
			\begin{equation*}
				\begin{split}
					0\leq{}\frac{\mathrm{d}^2}{\mathrm{d}\varepsilon^2}\bigg{|}_{\varepsilon=0}G(v+\varepsilon\varphi)
					\leq{}&p(p-1)\int_{\mathbb{R}^n}|Dv|^{p-2}|D\varphi|^2\,\de x\\ &-p(p-1)S^p\biggl(\int_{\mathbb{R}^n}|y|^{-1}|v|^{p_1^*}\,\de x\biggr)^{\frac{p}{p_1^*}-1}\int_{\mathbb{R}^n}|y|^{-1}|v|^{p_1^*-2}\varphi^2\,\de x,
				\end{split}
			\end{equation*}
			and if $p<2$,
			\begin{equation*}
				\begin{split}
					0\leq{}\frac{\mathrm{d}^2}{\mathrm{d}\varepsilon^2}\bigg{|}_{\varepsilon=0}G(v+\varepsilon\varphi)
					\leq{}&p\int_{\mathbb{R}^n}|Dv|^{p-2}|D\varphi|^2\,\de x\\ &-p(p-1)S^p\biggl(\int_{\mathbb{R}^n}|y|^{-1}|v|^{p_1^*}\,\de x\biggr)^{\frac{p}{p_1^*}-1}\int_{\mathbb{R}^n}|y|^{-1}|v|^{p_1^*-2}\varphi^2\,\de x.
				\end{split}
			\end{equation*}
			Noticing that $\int_{\mathbb{R}^n}|y|^{-1}|v|^{p_1^*}\,\de x$ is a constant depending only on $n$, $p$ and $k$, thus \eqref{eq:pti} holds for any $\varphi\in C_c^1(\mathbb{R}^n)$, and then for any $\varphi\in D^{1,2}(\mathbb{R}^n;|Dv|^{p-2})$ by approximation argument.
			
			\emph{(2)} \ To prove \eqref{eq:ptib}, we first observe that
			\begin{equation}\label{eq:vdv}
				|v(x)|\sim\bigl[(1+|y|)^2+|z|^2\bigr]^{\frac{p-n}{2(p-1)}}\quad\quad\mathrm{and}\quad\quad|Dv|\sim\bigl[(1+|y|)^2+|z|^2\bigr]^{\frac{1-n}{2(p-1)}},
			\end{equation}
			so it follows that
			\begin{equation}\label{eq:vdvp}
				v^{p_1^*-2}\sim\biggl[\bigl(1+|y|\bigr)^2+|z|^2\biggr]^{\frac{(2-p)n-p}{2(p-1)}}\quad\mathrm{and}\quad\quad|Dv|^{p-2}\sim\biggl[\bigl(1+|y|\bigr)^2+|z|^2\biggr]^{\frac{(2-p)n+p-2}{2(p-1)}}.
			\end{equation}
			We can still assume $\varphi\in C_c^1(\mathbb{R}^n)$.
			By Fubini's theorem, it is easy to see that \eqref{eq:ptib} is equivalent to
			\begin{equation}\label{eq:ptib1}
				\int_{\mathbb{R}^{n-k}}\int_{\{|y|\leq\rho\}}|y|^{-1}|v|^{p_1^*-2}\varphi^2\,\de y\de z\leq C(n,p,k)\rho^{\beta}\int_{\mathbb{R}^{n-k}}\int_{\mathbb{R}^k}|Dv|^{p-2}|D\varphi|^2\,\de y\de z,
			\end{equation}
			where $\{|y|\leq\rho\}=\{y\in\mathbb{R}^{k}:|y|\leq\rho\}$.
			By \eqref{eq:pti} and \eqref{eq:vdvp}, we know
			\begin{equation*}
				\begin{split}
					\int_{\mathbb{R}^n}|Dv|^{p-2}|D\varphi|^2\,\de y\geq{}&C(n,p,k)\int_{\mathbb{R}^n}|y|^{-1}v^{p_1^*-2}\varphi^2 + |Dv|^{p-2}|D\varphi|^2\,\de x\\
					\geq{}&C(n,p,k)\int_{\mathbb{R}^{n-k}}\biggl(\int_{\{|y|\leq\rho\}}|y|^{-1}\varphi^2+|D\varphi|^2 \,\de y\biggr)\bigl(1+|z|^2\bigr)^{\frac{(2-p)n-p}{2(p-1)}}\,\de z.
				\end{split}
			\end{equation*}
			Defining  $\bar{\varphi}:=|y|^{-\frac{1}{2}}\varphi$ and applying the weighted Sobolev inequality in \cite[(2.3.37)]{Mv} to $\bar{\varphi}$, it holds
			\begin{equation*}
				\begin{split}
					\int_{\mathbb{R}^n}|Dv|^{p-2}|D\varphi|^2\,\de y\geq{}&C(n,p,k)\int_{\mathbb{R}^{n-k}}\biggl(\int_{\{|y|\leq\rho\}}\bar{\varphi}^2 + \Bigl|D\Bigl(|y|^{\frac{1}{2}}\bar{\varphi}\Bigr)\Bigr|^2 \,\de y\biggr)\bigl(1+|z|^2\bigr)^{\frac{(2-p)n-p}{2(p-1)}}\,\de z\\
					\geq{}&C(n,p,k)\int_{\mathbb{R}^{n-k}}\biggl(\int_{\{|y|\leq\rho\}}\bar{\varphi}^2 + |y||D\bar{\varphi}|^2 \,\de y\biggr)\bigl(1+|z|^2\bigr)^{\frac{(2-p)n-p}{2(p-1)}}\,\de z\\
					\geq{}&C(n,p,k)\int_{\mathbb{R}^{n-k}}\biggl(\int_{\{|y|\leq\rho\}}|y|^{-\frac{q}{2}}|\varphi|^q \,\de y\biggr)^{\frac{2}{q}}\bigl(1+|z|^2\bigr)^{\frac{(2-p)n-p}{2(p-1)}}\,\de z,
				\end{split}
			\end{equation*}	
			where $q>2$ is a constant.
			Therefore, by H$\rm{\ddot{o}}$lder's inequality, one can deduce
			\begin{equation*}
				\begin{split}
					\int_{\mathbb{R}^{n-k}}\int_{\{|y|\leq\rho\}}&|y|^{-1}|v|^{p_1^*-2}\varphi^2\,\de y\de z\leq{} C(n,p,k)\int_{\mathbb{R}^{n-k}}\int_{\{|y|\leq\rho\}}|y|^{-1}\varphi^2 \,\de y(1+|z|^2)^{\frac{(2-p)n-p}{2(p-1)}}\,\de z\\
					\leq{}&C(n,p,k)\rho^{k\bigl(1-\frac{2}{q}\bigr)}\int_{\mathbb{R}^{n-k}}\biggl(\int_{\{|y|\leq\rho\}}|y|^{-\frac{q}{2}}|\varphi|^q\biggr)^{\frac{2}{q}}\bigl(1+|z|^2\bigr)^{\frac{(2-p)n-p}{2(p-1)}}\,\de z\\
					\leq{}&C(n,p,k)\rho^{k\bigl(1-\frac{2}{q}\bigr)}\int_{\mathbb{R}^{n-k}}|Dv|^{p-2}|D\varphi|^2 \,\de y.
				\end{split}
			\end{equation*}	
			Thus, \eqref{eq:ptib1} has been proved, so \eqref{eq:ptib} holds.
			
			To prove \eqref{eq:ptiob}, we will use Lemma \ref{lem:sobs}. 		First, since $|x|\leq|y|+|z|$ and $\mathbb{R}^n\backslash B(0,\rho^{-1})\subseteq(\mathbb{R}^n\backslash Y_{\frac{1}{2}\rho^{-1}})\cup(\mathbb{R}^n\backslash Z_{\frac{1}{2}\rho^{-1}})$,  we have
			\begin{equation}\label{eq:cfb}
				\int_{\mathbb{R}^n\backslash B(0,\rho^{-1})}|y|^{-1}v^{p_1^*-2}\varphi^2\,\de x\leq\int_{\mathbb{R}^n\backslash Y_{\frac{1}{2}\rho^{-1}}}|y|^{-1}v^{p_1^*-2}\varphi^2\,\de x+\int_{\mathbb{R}^n\backslash Z_{\frac{1}{2}\rho^{-1}}}|y|^{-1}v^{p_1^*-2}\varphi^2\,\de x.
			\end{equation}
			Combining \eqref{eq:vdvp} with the weighted Hardy-Sobolev inequality \eqref{eq:hsi} with ($R=\frac12\, \rho^{-1}$), we obtain
			\begin{equation}\label{eq:yl}
				\begin{split}
					\int_{\mathbb{R}^n\backslash Y_R}&|y|^{-1}v^{p_1^*-2}\varphi^2\,\de x\leq C(n,p,k)R^{-1}\int_{\mathbb{R}^n}|x|^{\frac{(2-p)n-p}{p-1}}\varphi^2\,\de x\\
					&\leq C(n,p,k)R^{-1}\int_{\mathbb{R}^n}|x|^{\frac{(2-p)n-p}{p-1}+2}|D\varphi|^2\,\de x\leq C(n,p,k)R^{-1}\int_{\mathbb{R}^n}|Dv|^{p-2}|D\varphi|^2\,\de x.
				\end{split}
			\end{equation}
			By \eqref{eq:vdvp} and \eqref{eq:hsi}, it holds
			\begin{equation}\label{eq:z2}
				\begin{split}
					&\quad \int_{\mathbb{R}^n\backslash Z_R}|y|^{-1}v^{p_1^*-2}\varphi^2\,\de x\\
					\leq{}&C(n,p,k)	\int_{\mathbb{R}^n\backslash Z_R}|y|^{-1}(1+|y|+|z|)^{-2}(1+|y|+|z|)^{\frac{(2-p)n-p}{p-1}+2}\varphi^2\,\de x\\
					\leq{}&C(n,p,k)R^{-1}\int_{\{|z|>R\}}\int_{\mathbb{R}^k}|y|^{-2}(1+|y|+|z|)^{\frac{(2-p)n-p}{p-1}+2}\varphi^2\,\de y\de z\\
					\leq{}&C(n,p,k)R^{-1}\int_{\{|z|>R\}}\int_{\mathbb{R}^k}\Bigl|D_y\Bigl((1+|y|+|z|)^{\frac{(2-p)n-p}{2(p-1)}+1}\varphi\Bigr)\Bigr|^2\,\de y\de z\\
					\leq{}&C(n,p,k)R^{-1}\int_{\mathbb{R}^n\backslash Z_R}(1+|x|)^{\frac{(2-p)n-p}{p-1}}\varphi^2\,\de x + C(n,p,k)R^{-1}\int_{\mathbb{R}^n}|Dv|^{p-2}|D\varphi|^2\,\de x,
				\end{split}
			\end{equation}
			where $D_y:=(D_{y_1},D_{y_2},\ldots,D_{y_k})$ and $|D_y\varphi|\leq|D\varphi|$ holds naturally.
			By mollifying $\varphi$ and multiplying by a smooth cutoff $\eta\in C_0^{\infty}\bigl(\mathbb{R}^n\backslash Z_R\bigr)$, we may assume without loss of generality that $\varphi\in C_0^{1}\bigl(\mathbb{R}^n\backslash Z_R\bigr)$.
			Thus, applying the Caffarelli-Kohn-Nirenberg inequality \eqref{eq:ckni} (with $r=q=2$, $\beta=\frac{(2-p)n-p}{2(p-1)}$ and $\alpha=\frac{(2-p)n-p}{2(p-1)}+1$) to the first term in the left-hand side of \eqref{eq:z2}, we obtain
			\begin{equation}\label{eq:z1}
				\begin{split}
					\int_{\mathbb{R}^n\backslash Z_R}(1+|x|)^{\frac{(2-p)n-p}{p-1}}\varphi^2\,\de x\leq{}& C(n,p,k)\int_{\mathbb{R}^n}|x|^{\frac{(2-p)n-p}{p-1}}\varphi^2\,\de x\\
					\leq{}&C(n,p,k)\int_{\mathbb{R}^n}|x|^{\frac{(2-p)n-p}{p-1}+2}|D\varphi|^2\,\de x\\
					\leq{}&C(n,p,k)\int_{\mathbb{R}^n}(1+|x|)^{\frac{(2-p)n-p}{p-1}+2}|D\varphi|^2\,\de x\\
					\leq{}&C(n,p,k)\int_{\mathbb{R}^n}|Dv|^{p-2}|D\varphi|^2\,\de x,
				\end{split}
			\end{equation}
			where we have used the fact $\varphi\in C_0^{1}\bigl(\mathbb{R}^n\backslash Z_R\bigr)$.
			Then combining \eqref{eq:z1} with \eqref{eq:z2}, we get
			\begin{equation}\label{eq:zl}
				\int_{\mathbb{R}^n\backslash Z_R}|y|^{-1}v^{p_1^*-2}\varphi^2\,\de x\leq C(n,p,k)R^{-1}\int_{\mathbb{R}^n}|Dv|^{p-2}|D\varphi|^2\,\de x.
			\end{equation}
			We complete the proof of \eqref{eq:ptiob} by combining \eqref{eq:yl}, \eqref{eq:zl} and \eqref{eq:cfb}.
		\end{proof}
		
		\begin{proof}[Proof of Proposition \ref{prop:ce}]
			Let $\{\varphi_i\}_{i\in \mathbb N^+}$ be a sequence of functions in $D^{1,2}(\mathbb{R}^n;|Dv|^{p-2})$ with uniformly bounded norm.
			It follows from \eqref{eq:pti} that $\varphi_i$ is also uniformly bounded in $L^2(\mathbb{R}^n;|y|^{-1}v^{p_1^*-2})$.
			Since $|Dv|^{p-2}$ is locally bounded away from $0$ and $\infty$ in $\mathbb{R}^n$, $\{\varphi_i\}_{i\in \mathbb N^+}$ is also uniformly bounded in $D_{\rm{loc}}^{1,2}(\mathbb{R}^n)$.
			By the Rellich-Kondrachov compact embedding theorem and a diagonal argument we deduce that, up to a subsequence, $\varphi_i$ converges to some function $\varphi$ both weakly in $D^{1,2}(\mathbb{R}^n;|Dv|^{p-2})\cap L^2(\mathbb{R}^n;|y|^{-1}v^{p_1^*-2})$ and strongly in $L_{\rm{loc}}^2(\mathbb{R}^n)$.
			Since $|y|^{-1}v^{p_1^*-2}$ is locally bounded away from $0$ and $\infty$ in $\mathbb{R}^n\backslash Y_0$, we can deduce that $\varphi_i$ also converges strongly to $\varphi$ in $L_{\rm{loc}}^2(\mathbb{R}^n\backslash Y_0,|y|^{-1}v^{p_1^*-2})$.
			
			On the other hand, \eqref{eq:ptib} and \eqref{eq:ptiob} implies that, for any $\rho\in(0,1)$,
			\begin{equation*}
				\int_{Y_\rho}|y|^{-1}v^{p_1^*-2}|\varphi_i|^2\,\de x\leq C(n,p,k)\rho^{\beta}\quad\mathrm{and}\quad\int_{\mathbb{R}^n\backslash B(0,\rho^{-1})}|y|^{-1}v^{p_1^*-2}|\varphi_i|^2\,\de x\leq C(n,p,k)\rho^{-1}.
			\end{equation*}
			We conclude the proof by the strong convergence of $\varphi_i$ inside $K_\rho:=\overline{B(0,\rho^{-1})\backslash Y_\rho}$, together with the arbitrariness of $\rho$ (that can be chosen arbitrarily small).
		\end{proof}
		As we shall see later, the previous result allows us to deal with the case $\frac{2n}{n+1}<p<n$, but is not enough for the small range case.
		
		\subsection{More delicate estimate for case $1<p\leq\frac{2n}{n+1}$.}
		%When $1<p\leq\frac{2n}{n+1}$, the following much more delicate compactness result is needed in our proof.
		\begin{lemma}\label{le:cl}
			Let $1<p\leq\frac{2n}{n+1}$ {\rm(which means $p<p_1^*\leq2$)} and $\{\phi_i\}_{i\in \mathbb N^+}$ be a sequence of functions in $D^{1,p}(\mathbb{R}^n)$ satisfying
			\begin{equation}\label{eq:pnb}
				\int_{\mathbb{R}^n}\bigl(|Dv|+\varepsilon_i|D\phi_i|\bigr)^{p-2}|D\phi_i|^2 \,\de x\leq1,
			\end{equation}
			where $\varepsilon_i\in(0,1)$ is a sequence that converges to $0$.
			Then up to a subsequence, $\phi_i$ converges weakly in $D^{1,p}(\mathbb{R}^n)$ to some function $\phi\in D^{1,p}(\mathbb{R}^n)\cap L^2(\mathbb{R}^n;|y|^{-1}v^{p_1^*-2})$.
			In addition, given any $C_1\geq0$, %it holds
			\begin{equation}\label{eq:nc}
				\int_{\mathbb{R}^n}|y|^{-1}\frac{(v+C_1\varepsilon_i\phi_i)^{p_1^*}}{v^2+|\varepsilon_i\phi_i|^2}|\phi_i|^2\,\de x \ \rightarrow \ \int_{\mathbb{R}^n}|y|^{-1}v^{p_1^*-2}|\phi|^2\,\de x,\quad\quad \mathrm{as}\ i\rightarrow\infty.
			\end{equation}
		\end{lemma}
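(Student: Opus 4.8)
The plan is to split the argument into two parts: first the weak-convergence and membership statement, then the convergence of integrals \eqref{eq:nc}. For the first part, I would start from the hypothesis \eqref{eq:pnb}. Since $p \le 2$, on the set where $\varepsilon_i|D\phi_i| \le |Dv|$ we have $(|Dv| + \varepsilon_i|D\phi_i|)^{p-2} \ge 2^{p-2}|Dv|^{p-2}$, while on the complementary set $(|Dv| + \varepsilon_i|D\phi_i|)^{p-2} \ge 2^{p-2}(\varepsilon_i|D\phi_i|)^{p-2}$, so that $\varepsilon_i^{p-2}|D\phi_i|^p \le 2^{2-p}$ there; combining the two regions and using $\varepsilon_i < 1$, a Hölder/interpolation argument on each slice gives a uniform bound $\|D\phi_i\|_{L^p(\mathbb{R}^n)} \le C$. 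Hence, up to a subsequence, $\phi_i \rightharpoonup \phi$ in $D^{1,p}(\mathbb{R}^n)$. To see $\phi \in L^2(\mathbb{R}^n; |y|^{-1}v^{p_1^*-2})$ and to set up \eqref{eq:nc}, I would also show $\phi_i$ is uniformly bounded in $D^{1,2}(\mathbb{R}^n; |Dv|^{p-2})$: this follows by writing $|Dv|^{p-2}|D\phi_i|^2 \le 2^{2-p}(|Dv| + \varepsilon_i|D\phi_i|)^{p-2}|D\phi_i|^2$ (again using $p \le 2$), so $\|D\phi_i\|_{D^{1,2}(\mathbb{R}^n;|Dv|^{p-2})}^2 \le 2^{2-p}$. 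Then \eqref{eq:pti} of the previous Lemma gives a uniform bound on $\|\phi_i\|_{L^2(\mathbb{R}^n;|y|^{-1}v^{p_1^*-2})}$, and Proposition \ref{prop:ce} yields (after passing to a further subsequence) strong convergence $\phi_i \to \phi$ in $L^2(\mathbb{R}^n; |y|^{-1}v^{p_1^*-2})$, with $\phi$ in that space; by uniqueness of weak limits this $\phi$ agrees with the $D^{1,p}$-weak limit.

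For the second part, fix $C_1 \ge 0$ and abbreviate $w_i := |y|^{-1}\dfrac{(v+C_1\varepsilon_i\phi_i)^{p_1^*}}{v^2+|\varepsilon_i\phi_i|^2}|\phi_i|^2$ and $w := |y|^{-1}v^{p_1^*-2}|\phi|^2$. The strategy is to show $w_i \to w$ in $L^1(\mathbb{R}^n)$ by combining pointwise (a.e.) convergence on a subsequence with a tightness/equi-integrability argument that rules out loss of mass near $Y_0$, near infinity, and by vanishing. First, since $\varepsilon_i \to 0$ and (along a further subsequence) $\phi_i \to \phi$ a.e., one checks pointwise that the prefactor $\dfrac{(v+C_1\varepsilon_i\phi_i)^{p_1^*}}{v^2+|\varepsilon_i\phi_i|^2} \to v^{p_1^*-2}$ a.e., so $w_i \to w$ a.e. Next I would control the pieces: the elementary inequality $\dfrac{(v+C_1\varepsilon_i\phi_i)^{p_1^*}}{v^2+|\varepsilon_i\phi_i|^2} \le C\big(v^{p_1^*-2} + \varepsilon_i^{p_1^*-2}|\phi_i|^{p_1^*-2}\big)$ (valid since $p_1^* \le 2$, splitting on whether $C_1\varepsilon_i|\phi_i| \le v$) reduces $w_i$ to a main term bounded by $C|y|^{-1}v^{p_1^*-2}|\phi_i|^2$ — which is equi-integrable and tight by estimates \eqref{eq:ptib}, \eqref{eq:ptiob} applied to the bounded sequence $\{\phi_i\} \subset D^{1,2}(\mathbb{R}^n;|Dv|^{p-2})$ — plus an error term bounded by $C|y|^{-1}\varepsilon_i^{p_1^*-2}|\phi_i|^{p_1^*}$. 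For the error term I would use the Hardy–Sobolev-type inequality \eqref{eq:hsi} (or the Hardy–Sobolev–Maz'ya inequality \eqref{eq:hsm} itself) to estimate $\int_{\mathbb{R}^n}|y|^{-1}|\phi_i|^{p_1^*}\,\de x \le C\|D\phi_i\|_{L^p(\mathbb{R}^n)}^{p_1^*} \le C$, and then observe that $\varepsilon_i^{p_1^*-2} \cdot$ (this bounded quantity) — wait, $p_1^* - 2 < 0$, so this blows up; instead I must pair the $\varepsilon_i$ power with a gain from \eqref{eq:pnb}. The correct bookkeeping is: on the region $\{C_1\varepsilon_i|\phi_i| > v\}$ one has $v^2 + |\varepsilon_i\phi_i|^2 \gtrsim \varepsilon_i^2|\phi_i|^2$, hence $w_i \lesssim |y|^{-1}\varepsilon_i^{p_1^*-2}|\phi_i|^{p_1^*-2} \cdot |\phi_i|^2 / 1 \cdot$ ... let me restate: there $w_i \lesssim |y|^{-1}(\varepsilon_i|\phi_i|)^{p_1^*}/(\varepsilon_i|\phi_i|)^2 \cdot |\phi_i|^2 = |y|^{-1}(\varepsilon_i|\phi_i|)^{p_1^*-2}|\phi_i|^2 = |y|^{-1}\varepsilon_i^{p_1^*-2}|\phi_i|^{p_1^*}$, and since $\int |y|^{-1}|\phi_i|^{p_1^*} \le C$ is bounded while $\varepsilon_i^{p_1^*-2} \to \infty$, I instead bound the contribution of this region directly by its measure shrinking: on it $v \le C_1\varepsilon_i|\phi_i|$, so $\int_{\text{region}} w_i \lesssim \int |y|^{-1}\varepsilon_i^{p_1^*-2}|\phi_i|^{p_1^*}$ restricted to $\{v \lesssim \varepsilon_i|\phi_i|\}$; using $v^{p_1^*-2} \gtrsim (\varepsilon_i|\phi_i|)^{p_1^*-2}$ there (as $p_1^*-2<0$ and $v \lesssim \varepsilon_i|\phi_i|$), this is $\lesssim \int_{\text{region}} |y|^{-1}v^{p_1^*-2}|\phi_i|^2$, which again is controlled by the first Lemma and is equi-integrable/tight. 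So in all cases $w_i$ is dominated, uniformly in $i$, by a tight, equi-integrable family; combined with $w_i \to w$ a.e. and the Vitali convergence theorem (plus the uniform tightness near $Y_0$, near $\infty$, and no dichotomy loss since the domination is by a convergent sequence in $L^1$ via \eqref{eq:pti}), we conclude $\int w_i \to \int w$, which is \eqref{eq:nc}. Finally, since every subsequence has a further subsequence along which \eqref{eq:nc} holds with the same limit, the full sequence converges.

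I expect the main obstacle to be the error/remainder term involving $\varepsilon_i^{p_1^*-2}$, whose negative exponent forces one to exploit the gain coming from the degenerate weight $(|Dv|+\varepsilon_i|D\phi_i|)^{p-2}$ in \eqref{eq:pnb} rather than from the $L^p$-bound alone: one has to carefully localize to the region where $\varepsilon_i|\phi_i|$ dominates $v$ and trade the bad $\varepsilon_i$ power for the smallness of $v^{p_1^*-2}$ there, reducing everything back to the benign quantity $\int |y|^{-1}v^{p_1^*-2}|\phi_i|^2$ whose tightness is already guaranteed by \eqref{eq:pti}, \eqref{eq:ptib}, \eqref{eq:ptiob}. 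The second most delicate point is the tightness near the strongly singular set $Y_0$, where the weight $|y|^{-1}$ is not locally integrable against a generic $L^p$ function; here one genuinely needs the refined estimate \eqref{eq:ptib} with its power $\rho^\beta$, $\beta > 0$, which is exactly what was proved for this purpose.
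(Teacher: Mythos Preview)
Your argument contains a fatal sign error at the very first step. You claim
\[
|Dv|^{p-2}|D\phi_i|^2 \le 2^{2-p}\bigl(|Dv|+\varepsilon_i|D\phi_i|\bigr)^{p-2}|D\phi_i|^2,
\]
but since $p<2$ the exponent $p-2$ is negative, so enlarging the base from $|Dv|$ to $|Dv|+\varepsilon_i|D\phi_i|$ \emph{decreases} the power: the inequality actually goes the other way,
\[
|Dv|^{p-2}|D\phi_i|^2 \;\ge\; \bigl(|Dv|+\varepsilon_i|D\phi_i|\bigr)^{p-2}|D\phi_i|^2.
\]
Hence the hypothesis \eqref{eq:pnb} gives you no control whatsoever on $\int |Dv|^{p-2}|D\phi_i|^2$, and you cannot invoke \eqref{eq:pti}, \eqref{eq:ptib}, \eqref{eq:ptiob} or Proposition~\ref{prop:ce}. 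Every subsequent step of your plan---the uniform $L^2(\mathbb{R}^n;|y|^{-1}v^{p_1^*-2})$ bound, the strong $L^2$ convergence, the tightness/equi-integrability of $w_i$, and the domination of the error term on $\{\varepsilon_i|\phi_i|\gtrsim v\}$---collapses, because all of them rest on this single wrong inequality.

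This reversal is not a technicality one can patch; it is precisely why the paper's proof is so long. The paper first imposes the auxiliary constraint $\varepsilon_i\phi_i\le \zeta v$ (Step~1), under which one can run a weighted Hardy-type argument (Lemma~\ref{le:hi} and the pointwise estimates \eqref{eq:ni1}--\eqref{eq:ni2}) to produce the bound \eqref{eq:tnb} on $\int|y|^{-1}v^{p_1^*-2}|\phi_i|^2$ from \eqref{eq:pnb} directly. Removing the constraint (Step~2) then requires decomposing $\phi_i=\phi_{i,1}+\phi_{i,2}$ with $\phi_{i,1}=\min\{\phi_i,\zeta v/\varepsilon_i\}$, a super-solution/convexity trick to control $\int_{\{\varepsilon_i\phi_i>\zeta v\}}|Dv|^p$, and a delicate case split comparing $\int|y|^{-1}|\phi_{i,1}|^{p_1^*}$ with $\varepsilon_i^{-\eta}\int|y|^{-1}|\phi_{i,2}|^{p_1^*}$. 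None of this machinery is present in your plan, and some genuine new input of this kind is unavoidable once the easy comparison fails.
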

		
		\begin{proof}
			Replacing $\phi_i$ by $|\phi_i|$, we can assume $\phi_i\geq 0$. By H$\rm{\ddot{o}}$lder's inequality, one has
			\begin{equation*}
				\begin{split}
					\int_{\mathbb{R}^n}|D\phi_i|^p\,\de x\leq{}&\biggl(\int_{\mathbb{R}^n}\bigl(|Dv|+\varepsilon_i|D\phi_i|\bigr)^{p-2}|D\phi_i|^2\,\de x\biggr)^{\frac{p}{2}}\biggl(\int_{\mathbb{R}^n}\bigl(|Dv|+\varepsilon_i|D\phi_i|\bigr)^p\,\de x\biggr)^{1-\frac{p}{2}}\\
					\leq{}&C(n,p,k)\biggl(\int_{\mathbb{R}^n}\bigl(|Dv|+\varepsilon_i|D\phi_i|\bigr)^{p-2}|D\phi_i|^2\,\de x\biggr)^{\frac{p}{2}}\biggl(1+\varepsilon_i^p\int_{\mathbb{R}^n}|D\phi_i|^p\biggr)^{1-\frac{p}{2}}.
				\end{split}
			\end{equation*}
			Combining this with \eqref{eq:pnb} gives
			\begin{equation}\label{eq:hb}
				\biggl(\int_{\mathbb{R}^n}|D\phi_i|^p\,\de x\biggr)^{\frac{2}{p}}\leq C(n,p,k)\int_{\mathbb{R}^n}\bigl(|Dv|+\varepsilon_i|D\phi_i|\bigr)^{p-2}|D\phi_i|^2\,\de x\leq C(n,p,k).
			\end{equation}
			Therefore, up to a subsequence, $\phi_i$ converges weakly in $D^{1,p}(\mathbb{R}^n)$ and also a.e. to some function $\phi\in D^{1,p}(\mathbb{R}^n)$.
			Then we need to show $\phi\in L^2(\mathbb{R}^n;|y|^{-1}v^{p_1^*-2})$ and the validity of \eqref{eq:nc}.
			
			We will split the rest of our proof into two steps: In step 1, we first carry out our proof under the assumption that $\varepsilon_i\phi_i\leq\zeta v$, for some small constant $\zeta=\zeta(n,p,C_1)\in(0,1)$  to be determined later.  In step 2, we will remove the assumption.
			
			\medskip	%$\bullet$
			\textit{ Step 1: prove \eqref{eq:nc} under the assumption: $\varepsilon_i\phi_i\leq\zeta v$}.
			By direct calculations, one has
			\begin{equation*}
				\begin{split}
					\int_{\mathbb{R}^n}|y|^{-1}(v+&\varepsilon_i\phi_i)^{p_1^*-2}|\phi_i|^2\,\de x={}\int_{\mathbb{R}^{n-k}}\int_{\mathbb{R}^k}|y|^{-1}(v+\varepsilon_i\phi_i)^{p_1^*-2}|\phi_i|^2\,\de y\de z\\
					\leq{}&C(n,p,k)\int_{\mathbb{R}^{n-k}}\int_{\mathbb{R}^k}|y|^{-1}(v+\varepsilon_i\phi_i)^{p_1^*-2}\Bigl(1+\frac{\varepsilon_i\phi_i}{v}\Bigr)^{p-2}|\phi_i|^2\,\de y\de z.
				\end{split}
			\end{equation*}
			According to Lemma \ref{le:hi}, for any $\xi\geq1$, there exists a constant $C=C(k,p,\xi)$ such that
			\begin{equation*}
				C(k,p,\xi)\int_{\mathbb{R}^{k}}|w|^p|y|^{-1}\bigl[(1+|y|)^{p-1}\bigr]^{\xi-1}\,\de y\leq\int_{\mathbb{R}^k}|D_yw|^p\bigl[(1+|y|)^{p-1}\bigr]^{\xi}\,\de y.
			\end{equation*}
			For any constant $C\geq1$, changing $y\rightarrow y/C$ in the above equation, one obtains
			\begin{equation*}
				C(k,p,\xi)\int_{\mathbb{R}^{k}}|w|^pC|y|^{-1}\biggl[\biggl(1+\frac{|y|}{C}\biggr)^{p-1}\biggr]^{\xi-1}C^{-k}\,\de y \leq \int_{\mathbb{R}^k}C^{-p}|D_yw|^p\biggl[\biggl(1+\frac{|y|}{C}\biggr)^{p-1}\biggr]^{\xi}C^{-k}\,\de y.
			\end{equation*}
			Since $-2p<0$, for any $C\geq1$, it holds
			\begin{equation}\label{eq:hpi}
				\begin{split}
					C(k,p,\xi)\int_{\mathbb{R}^{k}}|w|^p|y|^{-1}\bigl[(C+|y|)^{p-1}\bigr]^{\xi-1}\,\de y\leq{}& C^{-2p}\int_{\mathbb{R}^k}|D_yw|^p\bigl[(C+|y|)^{p-1})\bigr]^{\xi}\,\de y\\
					\leq{}&\int_{\mathbb{R}^k}|D_yw|^p\bigl[(C+|y|)^{p-1})\bigr]^{\xi}\,\de y.
				\end{split}
			\end{equation}
			Next, \eqref{eq:vdvp} implies $v^{p_1^*-2}\sim\bigl[\bigl(1+|z|+|y|\bigr)^{p-1}\bigr]^{\xi-1}$, so we obtain
			\begin{equation*}
				\begin{split}
					\int_{\mathbb{R}^n}|y|^{-1}(v+&\varepsilon_i\phi_i)^{p_1^*-2}|\phi_i|^2\,\de x\\
					\leq{}&C(n,p,k)\int_{\mathbb{R}^{n-k}}\int_{\mathbb{R}^k}|y|^{-1}(v+\varepsilon_i\phi_i)^{p_1^*-2}\biggl(1+\frac{\varepsilon_i\phi_i}{v}\biggr)^{p-2}|\phi_i|^2\,\de y\de z\\
					\leq{}&C(n,p,k)\int_{\mathbb{R}^{n-k}}\int_{\mathbb{R}^k}|y|^{-1}\bigl[\bigl(1+|z|+|y|\bigr)^{p-1}\bigr]^{\xi-1}|w_i|^p\,\de y\de z.
				\end{split}
			\end{equation*}
			When integrating on $\mathbb{R}^k$, one can consider $1+|z|$ as a constant.
			By approximation, we can apply \eqref{eq:hpi} with
			\begin{equation*}
				\xi=\frac{(2-p_1^*)(n-p)}{(p-1)^2}+1\quad\quad\mathrm{and}\quad\quad w=w_i:=\biggl(1+\frac{\varepsilon_i\phi_i}{v}\biggr)^{\frac{p-2}{p}}|\phi_i|^{\frac{2}{p}},
			\end{equation*}
			so we obtains
			\begin{equation*}
				\begin{split}
					\int_{\mathbb{R}^n}|y|^{-1}(v+\varepsilon_i\phi_i)^{p_1^*-2}|\phi_i|^2\,\de x\leq{}&C(n,p,k)\int_{\mathbb{R}^{n-k}}\int_{\mathbb{R}^k}|D_yw_i|^p\bigl[\bigl(1+|z|+|y|\bigr)^{p-1}\bigr]^{\xi}\,\de y\de z\\
					\leq{}&C(n,p,k)\int_{\mathbb{R}^{n-k}}\int_{\mathbb{R}^k}|Dw_i|^p\bigl[\bigl(1+|z|+|y|\bigr)^{p-1}\bigr]^{\xi}\,\de y\de z\\
					\leq{}&C(n,p,k)\int_{\mathbb{R}^n}|Dw_i|^p\bigl[\bigl(1+|x|\bigr)^{p-1}\bigr]^{\xi}\,\de x,
				\end{split}
			\end{equation*}
			where we have used $|D_yw_i|\leq|Dw_i|$.
			By the assumption $\varepsilon_i\phi_i\leq\zeta v$, it holds
			\begin{equation*}
				\begin{split}
					|Dw_i|^p\leq{}& C(p)\biggl[\biggl(1+\frac{\varepsilon_i\phi_i}{v}\biggr)^{-2}|\phi_i|^2\biggl(\frac{\varepsilon_i\phi_i|Dv|}{v^2}+\frac{\varepsilon_i|D\phi_i|}{v}\biggr)^p+\biggl(1+\frac{\varepsilon_i\phi_i}{v}\biggr)^{p-2}|\phi_i|^{2-p}|D\phi_i|^p\biggr]\\
					\leq{}&C(p)\biggl[|\phi_i|^2\biggl(\frac{\zeta|Dv|^2}{v}+\frac{\varepsilon_i|D\phi_i|}{v}\biggr)^p+|\phi_i|^{2-p}|D\phi_i|^p\biggr].
				\end{split}
			\end{equation*}
			Then one can apply \eqref{eq:ni2} ($a=|\phi_i|, b=|D\phi_i|, r=|x|,\varepsilon=\varepsilon_i$ and $s=1$) to deduce that, for any $\varepsilon_0>0$, there exists a $\zeta>0$ small enough such that
			\begin{equation*}
				\begin{split}
					\int_{\mathbb{R}^n}|y|^{-1}v^{p_1^*-2}|\phi_i|^2\,\de x\leq{}&
					2^{2-p_1^*}\int_{\mathbb{R}^n}|y|^{-1}(v+\varepsilon_i\phi_i)^{p_1^*-2}|\phi_i|^2\,\de x\\
					\leq{}&C(n,p,k)\| w_i\|_{D^{1,p}(\mathbb{R}^n;[(1+|x|)^{p-1}]^{\xi})}^p\\
					\leq{}& C(n,p,k)\varepsilon_0\int_{\mathbb{R}^n}|x|^{-1}(1+|x|)^{\frac{-(n-p)(p_1^*-2)}{p-1}}|\phi_i|^2\,\de x\\
					&+C(\varepsilon_0,n,p,k)\int_{\mathbb{R}^n}\Bigl(\bigl(1+|x|\bigr)^{-\frac{n-1}{p-1}}+\varepsilon_i|D\phi_i|\Bigr)^{p-2}|D\phi_i|^2\,\de x\\
					\leq{}&C(n,p,k)\varepsilon_0\int_{\mathbb{R}^n}|y|^{-1}v^{p_1^*-2}|\phi_i|^2\,\de x\\
					&+C(\varepsilon_0,n,p,k)\int_{\mathbb{R}^n}\bigl(|Dv|+\varepsilon_i|D\phi_i|\bigr)^{p-2}|D\phi_i|^2\,\de x.
				\end{split}
			\end{equation*}
			Thus, fixing $\varepsilon_0$ small enough so that $C(n,p,k)\varepsilon_0<\frac{1}{2}$, it follows from \eqref{eq:pnb} and the above inequality that
			\begin{equation}\label{eq:tnb}
				\begin{split}
					\int_{\mathbb{R}^n}|y|^{-1}v^{p_1^*-2}|\phi_i|^2\,\de x+\| w_i&\|_{D^{1,p}(\mathbb{R}^n;[(1+|x|)^{p-1}]^{\xi})}^p\\
					\leq{}&C(n,p,k)\int_{\mathbb{R}^n}\bigl(|Dv|+\varepsilon_i|D\phi_i|\bigr)^{p-2}|D\phi_i|^2\,\de x\leq C(n,p,k).
				\end{split}
			\end{equation}
			Furthermore, since $\bigl[\bigl(1+|x|\bigr)^{p-1}\bigr]^{\xi}$ is locally bounded away from $0$ and $\infty$ in $\mathbb{R}^n$, the sequence $\{w_i\}_{i\in \mathbb N^+}$ is uniformly bounded in $D_{\rm{loc}}^{1,p}(\mathbb{R}^n)$.
			By the Rellich-Kondrachov theorem, there is a subsequence (still note by $\{w_i\}_{i\in \mathbb N^+}$), converges strongly in $L^q_{\rm{loc}}(\mathbb{R}^n)$ $(\forall\,q\in(p,p^*))$ and a.e. to some function $\phi$.
			Note that, for any $R>0$,
			\begin{equation*}
				\biggl\|\biggl(1+\frac{\varepsilon_i\phi_i}{v}\biggr)^{\frac{p-2}{p}}|\phi_i|^{\frac{2}{p}}-|\phi|^{\frac{2}{p}}\biggr\|_{L^q(B(0,R))}\rightarrow0,\quad\quad\mathrm{as}\: i\rightarrow\infty,
			\end{equation*}
			so one has $\bigl\||\phi_i|^\frac{2}{p}-|\phi|^\frac{2}{p}\bigr\|_{L^q(B(0,R))}\rightarrow0$. Then $\bigl\||\phi_i|^2-|\phi|^2\bigr\|_{L^{\frac{q}{p}}(B(0,R))}\rightarrow0$.
			We deduce from the dominated convergence theorem that, for any $R>0$,
			\begin{equation}\label{eq:bnc}
				\int_{B(0,R)}|y|^{-1}\frac{(v+C_1\varepsilon_i\phi_i)^{p_1^*}}{v^2+|\varepsilon_i\phi_i|^2}|\phi_i|^2\,\de x\rightarrow\int_{B(0,R)}|y|^{-1}v^{p_1^*-2}|\phi|^2\,\de x,\quad\quad\mathrm{as}\:i\rightarrow\infty.
			\end{equation}
			
			For any $R>1$, letting $\tau:=\sqrt{(R^2-|z|^2)_+}$ and using polar coordinates, one gets
			\begin{equation*}
				\begin{split}
					&\int_{\mathbb{R}^n\backslash B(0,R)}|y|^{-1}v^{p_1^*-2}|\phi_i|^2\,\de x\leq C\int_{\mathbb{R}^n\backslash B(0,R)}|y|^{-1}v^{p_1^*-2}\biggl(1+\frac{\varepsilon_i\phi_i}{v}\biggr)^{p-2}|\phi_i|^2\,\de x\\
					&\leq{}C\int_{\mathbb{R}^n\backslash B(0,R)}|y|^{-1}\bigl[(1+|y|)^2+|z|^2\bigr]^{\frac{(2-p)n-p}{2(p-1)}}|w_i|^p\,\de x\\
					&\leq{}C\int_{\mathbb{R}^{n-k}}\int_{|y|\geq\tau}|y|^{-1}\bigl[(1+|y|)^2+|z|^2\bigr]^{\frac{(2-p)n-p}{2(p-1)}}|w_i(y,z)|^p\,\de y\de z\\
					&\leq{}C\int_{\mathbb{R}^{n-k}}\int_{\mathbb{S}^{k-1}}\int_{\tau}^{\infty}r_1^{k-2}\bigl[(1+r_1)^2+|z|^2\bigr]^{\frac{(2-p)n-p}{2(p-1)}}|w_i(r_1\theta,z)|^p\,\de r_1\de\theta\de z\\
					&\leq{}C\int_{\mathbb{R}^{n-k}}\int_{\mathbb{S}^{k-1}}\int_{\tau}^{\infty}\int_{r_1}^{\infty}r_1^{k-2}\bigl[(1+r_1)^2+|z|^2\bigr]^{\frac{(2-p)n-p}{2(p-1)}}|w_i(t\theta,z)|^{p-1}|Dw_i|\,\de t\de r_1\de\theta\de z,
				\end{split}
			\end{equation*}
			where $C=C(n,p,k)$.
			Since $\frac{(2-p)n-p}{2(p-1)}\geq0$ (recalling $ p\leq\frac{2n}{n+1}$), by Fubini's theorem and H$\rm{\ddot{o}}$lder's inequality, one can obtain
			\begin{equation*}
				\begin{split}
					&\int_{\mathbb{R}^n\backslash B(0,R)}|y|^{-1}v^{p_1^*-2}|\phi_i|^2\,\de x
					\\
					&\leq{} C\int_{\mathbb{R}^{n-k}}\int_{\mathbb{S}^{k-1}}\int_{\tau}^{\infty}\int_{\tau}^{t}\Bigl(r_1^{k-2}\bigl[(1+r_1)^2+|z|^2\bigr]^{\frac{(2-p)n-p}{2(p-1)}}|w_i(t\theta,z)|^{p-1}|Dw_i|\Bigr)\,\de r_1\de t\de\theta\de z\\
					&\leq{}C\int_{\mathbb{R}^{n-k}}\int_{\mathbb{S}^{k-1}}\int_{\tau}^{\infty}t^{k-1}\bigl[(1+t)^2+|z|^2\bigr]^{\frac{(2-p)n-p}{2(p-1)}}|w_i(t\theta,z)|^{p-1}|Dw_i|\,\de t\de\theta\de z\\
					&\leq{}C\biggl(\int_{\mathbb{R}^{n-k}}\int_{\mathbb{S}^{k-1}}\int_{\tau}^{\infty}t^{k-2}\bigl[(1+t)^2+|z|^2\bigr]^{\frac{(2-p)n-p}{2(p-1)}}|w_i(t\theta,z)|^p\,\de t\de\theta\de z\biggr)^{\frac{p-1}{p}}\\
					&\quad\cdot\biggl(\int_{\mathbb{R}^{n-k}}\int_{\mathbb{S}^{k-1}}\int_{\tau}^{\infty}t^{k-2+p}\bigl[(1+t)^2+|z|^2\bigr]^{\frac{(2-p)n-p}{2(p-1)}}|Dw_i|^p\,\de t\de\theta\de z\biggr)^{\frac{1}{p}}\\
					&\leq{}C\biggl(\int_{\mathbb{R}^n\backslash B(0,R)}|y|^{-1}v^{p_1^*-2}|\phi_i|^2\,\de x\biggr)^{\frac{p-1}{p}}\cdot\biggl(\int_{\mathbb{R}^n\backslash B(0,R)}\bigl(1+|x|\bigr)^{\frac{(2-p)n-p}{p-1}+p-1}|Dw_i|^p\,\de x\biggr)^{\frac{1}{p}}.
				\end{split}
			\end{equation*}
			Therefore, we get
			\begin{equation*}
				\begin{split}
					\int_{\mathbb{R}^n\backslash B(0,R)}|y|^{-1}\frac{(v+C_1\varepsilon_i\phi_i)^{p_1^*}}{v^2+|\varepsilon_i\phi_i|^2}|\phi_i|^2\,\de x\leq{}&\int_{\mathbb{R}^n\backslash B(0,R)}|y|^{-1}v^{p_1^*-2}|\phi_i|^2\,\de x\\
					\leq{}&\int_{\mathbb{R}^n\backslash B(0,R)}\bigl(1+|x|\bigr)^{\frac{(2-p)n-p}{p-1}+p-1}|Dw_i|^p\,\de x.
				\end{split}
			\end{equation*}
			Similar to \eqref{eq:tnb}, we can apply \eqref{eq:ni1} ($a=|\phi_i|, b=|D\phi_i|, r=|x|,\varepsilon=\varepsilon_i$ and $s=1$) to get that, for any $\varepsilon_0>0$, there exist a $\zeta>0$ small enough such that
			\begin{equation*}
				\begin{split}
					\int_{\mathbb{R}^n\backslash B(0,R)}|y|^{-1}\frac{(v+C_1\varepsilon_i\phi_i)^{p_1^*}}{v^2+|\varepsilon_i\phi_i|^2}|\phi_i|^2\,\de x\leq{}&C(n,p,k,C_1)\varepsilon_0\int_{\mathbb{R}^n\backslash B(0,R)}|y|^{-1}v^{p_1^*-2}|\phi_i|^2\,\de x\\
					+C(n,p&,k,C_1,\varepsilon_0)R^{-1}\int_{\mathbb{R}^n\backslash B(0,R)}\bigl(|Dv|+\varepsilon_i|D\phi_i|\bigr)^{p-2}|D\phi_i|^2\,\de x\\
					\leq{}&C(n,p,k,C_1)\varepsilon_0\int_{\mathbb{R}^n\backslash B(0,R)}|y|^{-1}\frac{(v+C_1\varepsilon_i\phi_i)^{p_1^*}}{v^2+|\varepsilon_i\phi_i|^2}|\phi_i|^2\,\de x\\
					+C(n,p&,k,C_1,\varepsilon_0)R^{-1}\int_{\mathbb{R}^n\backslash B(0,R)}\bigl(|Dv|+\varepsilon_i|D\phi_i|\bigr)^{p-2}|D\phi_i|^2\,\de x.
				\end{split}
			\end{equation*}
			Thus, by fixing $\varepsilon_0$ small so that $C(n,p,k,C_1)\varepsilon_0\leq1/2$, it follows that
			\begin{equation*}
				\begin{split}
					\int_{\mathbb{R}^n\backslash B(0,R)}|y|^{-1}&\frac{(v+C_1\varepsilon_i\phi_i)^{p_1^*}}{v^2+|\varepsilon_i\phi_i|^2}|\phi_i|^2\,\de x\\
					\leq{}& C(n,p,k,C_1)R^{-1}\int_{\mathbb{R}^n\backslash B(0,R)}\bigl(|Dv|+\varepsilon_i|D\phi_i|\bigr)^{p-2}|D\phi_i|^2\,\de x\leq C(n,p,k,C_1)R^{-1}.
				\end{split}
			\end{equation*}
			Combining this with \eqref{eq:tnb} and \eqref{eq:bnc}, by the arbitrariness of $R$, we conclude that $\phi\in L^2(\mathbb{R}^n,|y|^{-1}v^{p_1^*-2})$ and that \eqref{eq:nc} holds.
			This concludes the proof under the assumption that $\varepsilon_i\phi_i\leq\zeta v$ with $\zeta=\zeta(n,p,k,C_1)$ sufficiently small.
			
			\medskip	%$\bullet$
			\textit{ Step 2: prove \eqref{eq:nc} in the general case}.
			Throughout this part, we assume that $\zeta=\zeta(n,p,k,C_1)>0$ is a small constant such that Step 1 applies.
			Observe that, by \eqref{eq:ele}, $\zeta v$ is a super-solution for the operator
			\begin{equation*}
				L_v[\psi]:=-\mathrm{div}\Bigl(\bigl(|Dv|+|D\psi|\bigr)^{p-2}+(p-2)\bigl(|Dv|+|D\psi|\bigr)^{p-3}|D\psi|D\psi\Bigr),
			\end{equation*}
			namely, $L_v[\zeta v]\geq0$.
			Therefore, multiplying $L_v[\zeta v]\geq0$ by $(\varepsilon_i\phi_i-\zeta v)_+$ and integrating by parts, we get
			\begin{equation}\label{eq:sp}
				\begin{split}
					\int_{\mathbb{R}^n}\bigl(|Dv|+\zeta|Dv|\bigr)^{p-2}\zeta Dv&\,\cdot\, D(\varepsilon_i\phi_i-\zeta v)_+ \,\de x\\
					&+(p-2)\int_{\mathbb{R}^n}\bigl(|Dv|+\zeta|Dv|\bigr)^{p-3}\zeta^2|Dv|\cdot D(\varepsilon_i\phi_i-\zeta v)_+\,\de x\geq0.
				\end{split}
			\end{equation}
			By the convexity of
			\begin{equation*}
				\mathbb{R}^n\ni x\mapsto F_t(x):=(t+|x|)^{p-2}|x|^2,\quad\quad t\geq0,
			\end{equation*}
			we have
			\begin{equation*}
				F_t(x)+DF_t(x)\cdot(x'-x)\leq F_t(x'),\quad\quad\forall\,x,x'\in\mathbb{R}^n,\:\forall\,t\geq0.
			\end{equation*}
			Hence, applying this inequality with $t=|Dv|$, $x=\zeta Dv$ and $x'=\varepsilon_iD\phi_i$, it follows from \eqref{eq:sp} that
			\begin{equation}\label{eq:bp}
				\begin{split}
					C(n,p,k)\varepsilon_i^{-2}&\int_{\{\varepsilon_i\phi_i>\zeta v\}}|Dv|^p \,\de x\leq\varepsilon_i^{-2}\int_{\{\varepsilon_i\phi_i>\zeta v\}}\bigl(|Dv|+\zeta|Dv|\bigr)^{p-2}\zeta^2|Dv|^2\,\de x\\
					&\leq{}\varepsilon_i^{-2}\int_{\{\varepsilon_i\phi_i>\zeta v\}}\bigl(|Dv|+\zeta|Dv|\bigr)^{p-2}\zeta^2|Dv|^2\,\de x\\
					&\quad+\varepsilon_i^{-2}\int_{\{\varepsilon_i\phi_i>\zeta v\}}\bigl(|Dv|+\zeta|Dv|\bigr)^{p-2}\zeta Dv\,\cdot\, D(\varepsilon_i\phi_i-\zeta v)_+ \,\de x\\
					&\quad+\varepsilon_i^{-2}(p-2)\int_{\{\varepsilon_i\phi_i>\zeta v\}}\bigl(|Dv|+\zeta|Dv|\bigr)^{p-3}\zeta^2|Dv|\cdot D(\varepsilon_i\phi_i-\zeta v)_+\,\de x\\
					&\leq{}\int_{\{\varepsilon_i\phi_i>\zeta v\}}\bigl(|Dv|+\varepsilon_i|D\phi_i|\bigr)^{p-2}|D\phi|^2\,\de x.
				\end{split}
			\end{equation}
			We now write $\phi_i=\phi_{i,1}+\phi_{i,2}$, where
			\begin{equation*}
				\phi_{i,1}:=\min\biggl\{\phi_i,\frac{\zeta v}{\varepsilon_i}\biggr\}\quad\quad\text{and}\quad\quad\phi_{i,2}:=\phi_i-\phi_{i,1}.
			\end{equation*}
			Note that, as a consequence of \eqref{eq:pnb} and \eqref{eq:bp}, we have
			\begin{equation*}
				\begin{split}
					&\int_{\mathbb{R}^n}\bigl(|Dv|+\varepsilon_i|D\phi_{i,1}|\bigr)^{p-2}|D\phi_{i,1}|^2\,\de x\\
					&\leq{}\varepsilon_i^{-2}\int_{\{\varepsilon_i\phi_i>\zeta v\}}\bigl(|Dv|+\zeta|Dv|\bigr)^{p-2}\zeta^2|Dv|^2\,\de x + \int_{\{\varepsilon_i\phi_i\leq\zeta v\}}\bigl(|Dv|+\varepsilon_i|D\phi_i|\bigr)^{p-2}|D\phi_i|^2\,\de x\\
					&\leq{}C(n,p,k)\int_{\mathbb{R}^n}\bigl(|Dv|+\varepsilon_i|D\phi_i|\bigr)^{p-2}|D\phi|^2\,\de x\leq C(n,p,k),
				\end{split}
			\end{equation*}
			and
			\begin{equation*}
				\begin{split}
					&\int_{\mathbb{R}^n}\bigl(|Dv|+\varepsilon_i|D\phi_{i,2}|\bigr)^{p-2}|D\phi_{i,2}|^2\,\de x\\
					&\leq{} C(n,p,k)\biggl(\int_{\{\varepsilon_i\phi_i>\zeta v\}}\bigl(|Dv|+\varepsilon_i|D\phi_i|\bigr)^{p-2}|D\phi_i|^2\,\de x
					+\varepsilon_i^{-2}\int_{\{\varepsilon_i\phi_i>\zeta v\}}\bigl(|Dv|+\zeta|Dv|\bigr)^{p-2}\zeta^2|Dv|^2\,\de x\biggr)\\
					&\leq{}C(n,p,k)\int_{\mathbb{R}^n}\bigl(|Dv|+\varepsilon_i|D\phi_i|\bigr)^{p-2}|D\phi|^2\,\de x\leq C(n,p,k),
				\end{split}
			\end{equation*}
			where we have used $|D\phi_{i,2}|^2\leq2\big(|D\phi_i|^2+\varepsilon_i^{-2}\zeta^2|Dv|^2\big)$.
			Then one obtains
			\begin{equation}\label{eq:tfpnb}
				\int_{\mathbb{R}^n}\bigl(|Dv|+\varepsilon_i|D\phi_{i,1}|\bigr)^{p-2}|D\phi_{i,1}|^2\,\de x + \int_{\mathbb{R}^n}\bigl(|Dv|+\varepsilon_i|D\phi_{i,2}|\bigr)^{p-2}|D\phi_{i,2}|^2\,\de x\leq C(n,p,k)
			\end{equation}
			Hence, it follows from the similar way as deriving \eqref{eq:hb} that
			\begin{equation}\label{eq:tfnb}
				\int_{\mathbb{R}^n}|D\phi_{i,1}|^p\,\de x+\int_{\mathbb{R}^n}|D\phi_{i,2}|^p\,\de x\leq C(n,p,k).
			\end{equation}
			In particular, since $|\{\varepsilon_i\phi_i>\zeta v\}\cap B(0,R)|\rightarrow0$ $(i\rightarrow\infty)$ for any $R>1$, we deduce that $\phi_{i,2}\rightharpoonup0$ in $D^{1,p}(\mathbb{R}^n)$.
			Thus, up to a subsequence, both $\phi_i$ and $\phi_{i,1}$ converge weakly in $D^{1,p}(\mathbb{R}^n)$ and also a.e. to the same function $\phi\in D^{1,p}(\mathbb{R}^n)$.
			
			Let $\eta=\eta(n,p,k)>0$ be a small exponent to be fixed.
			We discuss the following two cases respectively.
			
			\medskip	%$\bullet$
			\textit{ Case 1}.
			Assume that
			\begin{equation*}
				\begin{split}
					\int_{\{\varepsilon_i\phi_i>\zeta v\}}|y|^{-1}|\phi_{i,1}|^{p_1^*}\,\de x >{}&\varepsilon_i^{-\eta}\int_{\{\varepsilon_i\phi_i>\zeta v\}}|y|^{-1}\biggl(\phi_i-\frac{\zeta v}{\varepsilon_i}\biggr)_+^{p_1^*}\,\de x\\
					={}&\varepsilon_i^{-\eta}\int_{\{\varepsilon_i\phi_i>\zeta v\}}|y|^{-1}|\phi_{i,2}|^{p_1^*}\,\de x.
				\end{split}
			\end{equation*}
			Since $\phi$ is also the limit of $\phi_{i,1}$, we can apply Step 1 to $\phi_{i,1}$ to deduce that $\phi\in L^2(\mathbb{R}^n;|y|^{-1}v^{p_1^*-2})$.
			We know that there exists a constant $C=C(n,p,k,C_1)$ such that
			\begin{equation*}
				\frac{1}{C}\varepsilon_i^{p_1^*-2}\phi_i^{p_1^*}\leq\frac{(v+C_1\varepsilon_i\phi_i)^{p_1^*}}{v^2+|\varepsilon_i\phi_i|^2}|\phi_i|^2\leq C\varepsilon_i^{p_1^*-2}\phi_i^{p_1^*}\quad\quad\text{inside }\{\varepsilon_i\phi_i>\zeta v\}.
			\end{equation*}
			Thus, by the Taylor expansion and H$\rm{\ddot{o}}$lder's inequality, we can obtain that, as $i\rightarrow\infty$,
			\begin{equation*}
				\begin{split}
					\biggl|\int_{\mathbb{R}^n}|y|^{-1}\frac{(v+C_1\varepsilon_i\phi_i)^{p_1^*}}{v^2+|\varepsilon_i\phi_i|^2}|\phi_i|^2\,\de x\:\: -\biggr.&\biggl.\int_{\mathbb{R}^n}|y|^{-1}\frac{(v+C_1\varepsilon_i\phi_{i,1})^{p_1^*}}{v^2+|\varepsilon_i\phi_{i,1}|^2}|\phi_{i,1}|^2\,\de x\biggr|\\
					\leq{}&C(n,p,k,C_1)\int_{\{\varepsilon_i\phi_i>\zeta v\}}|y|^{-1}\varepsilon_i^{p_1^*-2}|\phi_{i,1}|^{p_1^*-1}|\phi_{i,2}|\\
					\leq{}&O(\varepsilon_i^{\eta})\int_{\{\varepsilon_i\phi_i>\zeta v\}}|y|^{-1}\frac{(v+C_1\varepsilon_i\phi_{i,1})^{p_1^*}}{v^2+|\varepsilon_i\phi_{i,1}|^2}|\phi_{i,1}|^2\,\de x=O(\varepsilon_i^{\eta}).
				\end{split}
			\end{equation*}
			Thus
			\begin{equation*}
				\int_{\mathbb{R}^n}|y|^{-1}\frac{(v+C_1\varepsilon_i\phi_{i,1})^{p_1^*}}{v^2+|\varepsilon_i\phi_{i,1}|^2}|\phi_{i,1}|^2\,\de x\rightarrow\int_{\mathbb{R}^n}|y|^{-1}v^{p_1^*-2}|\phi|^2\,\de x,\quad\quad\mathrm{as}\:i\rightarrow\infty,
			\end{equation*}
			which proves \eqref{eq:nc}.
			
			\medskip	%$\bullet$
			\textit{ Case 2}.
			Assume that
			\begin{equation}\label{eq:c2a}
				\int_{\{\varepsilon_i\phi_i>\zeta v\}}|y|^{-1}|\phi_{i,1}|^{p_1^*}\,\de x\leq\varepsilon_i^{-\eta}\int_{\{\varepsilon_i\phi_i>\zeta v\}}|y|^{-1}|\phi_{i,2}|^{p_1^*}\,\de x.
			\end{equation}
			We claim that
			\begin{equation}\label{eq:2w}
				\varepsilon_i^{p_1^*-2}\int_{\mathbb{R}^n}|y|^{-1}|\phi_{i,2}|^{p_1^*}\,\de x = O(\varepsilon_i^{\eta}).
			\end{equation}
			To prove this, let $A_i:=\{\varepsilon_i\phi_i>\zeta v\}$ and define
			\begin{equation*}
				E_i:=\biggl\{|D\phi_{i,2}|\leq\frac{|Dv|}{\varepsilon_i}\biggr\}\cap A_i,\quad\quad F_i:=\biggl\{|D\phi_{i,2}|>\frac{|Dv|}{\varepsilon_i}\biggr\}\cap A_i.
			\end{equation*}
			Then, since $|Dv|+\varepsilon_i|D\phi_{i,2}|\leq2|Dv|$ inside $E_i$, it follows from H$\rm{\ddot{o}}$lder's inequality that
			\begin{equation}\label{eq:wh}
				\begin{split}
					\int_{\mathbb{R}^n}|D&\phi_{i,2}|^p \,\de x=\int_{E_i}|D\phi_{i,2}|^p\,\de x+\int_{F_i}|D\phi_{i,2}|^p\,\de x\\
					\leq{}&\biggl(\int_{E_i}|Dv|^{p-2}|D\phi_{i,2}|^2\,\de x\biggr)^{\frac{p}{2}}\biggl(\int_{E_i}|Dv|^p\,\de x\biggr)^{1-\frac{p}{2}} + \int_{F_i}|D\phi_{i,2}|^p\,\de x\\
					\leq{}&\biggl(2^{2-p}\int_{E_i}\bigl(|Dv|+\varepsilon_i|D\phi_{i,2}|\bigr)^{p-2}|D\phi_{i,2}|^2\,\de x\biggr)^{\frac{p}{2}}\biggl(\int_{E_i}|Dv|^p\,\de x\biggr)^{1-\frac{p}{2}} + \int_{F_i}|D\phi_{i,2}|^p\,\de x\\
					\leq{}&C(n,p,k)\biggl(\int_{E_i}\bigl(|Dv|+\varepsilon_i|D\phi_{i,2}|\bigr)^{p-2}|D\phi_{i,2}|^2\,\de x\biggr)^{\frac{p}{2}}\biggl(\int_{E_i}|Dv|^p\,\de x\biggr)^{1-\frac{p}{2}} + \int_{F_i}|D\phi_{i,2}|^p\,\de x.
				\end{split}
			\end{equation}
			For any $\mu$ satisfying $\frac{n(p-1)}{n-1}<\mu<\frac{p(n-1)}{p-1}$ (since $1<p<n$), we define
			\begin{equation}\label{eq:Q}
				Q:=\frac{pn-1}{p(n-1)-\mu(p-1)}>1.
			\end{equation}
			Since $Q\Bigl(\frac{p(1-n)}{p-1}+\mu\Bigr)$ is monotone decreasing about $\mu$,
			we deduce from \eqref{eq:vdv} that
			\begin{equation*}
				\begin{split}
					v^{-p_1^*}&\Bigl((1+|x|)^{\frac{p(1-n)}{p-1}+\mu}\Bigr)^Q \leq C(n,p,k)(1+|x|)^{\frac{p(n-1)}{p-1}+\bigl(\frac{p(1-n)}{p-1}+\frac{n(p-1)}{n-1}\bigr)\frac{pn-1}{p(n-1)-(p-1)\frac{n(p-1)}{n-1}}}\\
					&={}C(n,p,k)(1+|x|)^{\frac{p(n-1)}{p-1}+\frac{1-pn}{p-1}}= C(n,p,k)(1+|x|)^{-1}\leq C(n,p,k)|y|^{-1}.
				\end{split}
			\end{equation*}
			Also, using the above inequality and \eqref{eq:c2a} together with H$\rm{\ddot{o}}$lder's inequality, we obtain
			\begin{equation}\label{eq:edvp}
				\begin{split}
					\int_{E_i}|Dv|^p\,\de x\leq{}&C(n,p,k)\int_{E_i}(1+|x|)^{\frac{p(1-n)}{p-1}}\,\de x\\
					\leq{}&C(n,p,k)\biggl[\int_{E_i}\biggl((1+|x|)^{\frac{p(1-n)}{p-1}+\mu}\biggr)^Q\,\de x\biggr]^\frac{1}{Q}\biggl(\int_{\mathbb{R}^n}(1+|x|)^{-\frac{\mu Q}{Q-1}}\,\de x\biggr)^{\frac{Q-1}{Q}}\\
					\leq{}&C(n,p,k)\biggl[\int_{A_i}\biggl(\frac{\varepsilon_i\phi_i}{\zeta v}\biggr)^{p_1^*}\biggl((1+|x|)^{\frac{p(1-n)}{p-1}+\mu}\biggr)^Q\,\de x\biggr]^{\frac{1}{Q}}\\
					\leq{}&C(n,p,k)\biggl(\varepsilon_i^{p_1^*}\int_{A_i}|y|^{-1}|\phi_i|^{p_1^*}\,\de x\biggr)^{\frac{1}{Q}}\leq{}C(n,p,k)\biggl(\varepsilon_i^{p_1^*-\eta}\int_{A_i}|y|^{-1}|\phi_{i,2}|^{p_1^*}\,\de x\biggr)^{\frac{1}{Q}},
				\end{split}
			\end{equation}
			where we have used $\frac{\mu Q}{Q-1}=\frac{\mu(pn-1)}{(p-1)(\mu+1)}>n$. Therefore, defining
			\begin{equation*}
				N_{i,2}:=\int_{E_i}(|Dv|+\varepsilon_i|D\phi_{i,2}|)^{p-2}|D\phi_{i,2}|^2\,\de x,
			\end{equation*}
			and using Hardy-Sobolev-Maz'ya inequalities, \eqref{eq:wh} and \eqref{eq:edvp}, we can deduce that
			\begin{equation}\label{eq:whj}
				\begin{split}
					\varepsilon_i^{p_1^*-2}\int_{\mathbb{R}^n}|y|^{-1}|\phi_{i,2}|^{p_1^*}\,\de x \leq{}& C(n,p,k)\varepsilon_i^{p_1^*-2}\biggl(\int_{\mathbb{R}^n}|D\phi_{i,2}|^p\,\de x\biggr)^{\frac{p_1^*}{p}}\\
					\leq{}&C(n,p,k)\varepsilon_i^{p_1^*-2}\biggl[N_{i,2}^{\frac{p_1^*}{2}}\biggl(\int_{E_i}|Dv|^p\,\de x\biggr)^{\frac{(2-p)p_1^*}{2p}} + \biggl(\int_{F_i}|D\phi_{i,2}|^p\biggr)^{\frac{p_1^*}{p}}\biggr]\\
					\leq{}&C(n,p,k)\varepsilon_i^{p_1^*-2}\biggl[N_{i,2}^{\frac{p_1^*}{2}}\biggl(\varepsilon_i^{p_1^*-\eta}\int_{A_i}|y|^{-1}|\phi_{i,2}|^{p_1^*}\,\de x\biggr)^{\frac{(2-p)p_1^*}{2pQ}} + \biggl(\int_{F_i}|D\phi_{i,2}|^p\biggr)\biggr],
				\end{split}
			\end{equation}
			where in the last inequality we have used \eqref{eq:tfnb} and $\frac{p_1^*}{p}\geq1$.
			
			Suppose first that
			\begin{equation*}
				\int_{F_i}|D\phi_{i,2}|^p\,\de x\geq N_{i,2}^{\frac{p_1^*}{2}}\biggl(\varepsilon_i^{p_1^*-\eta}\int_{A_i}|y|^{-1}|\phi_{i,2}|^{p_1^*}\,\de x\biggr)^{\frac{(2-p)p_1^*}{2pQ}}.
			\end{equation*}
			Then, since $|Dv|\leq\varepsilon_i|D\phi_{i,2}|\sim\varepsilon_i|D\phi_i|$ inside $F_i$ (recall that $\zeta<1$), \eqref{eq:pnb} and \eqref{eq:whj} yield
			\begin{equation}\label{eq:cf1}
				\begin{split}
					\varepsilon_i^{p_1^*-2}\int_{\mathbb{R}^n}|y|^{-1}|\phi_{i,2}|^{p_1^*}\,\de x \leq{}\varepsilon_i^{p_1^*-2}\int_{F_i}|D\phi_{i,2}|^p&\,\de x=\varepsilon_i^{p_1^*-p}\int_{F_i}\varepsilon_i^{p-2}|D\phi_{i,2}|^{p-2}|D\phi_{i,2}|^2\,\de x\\
					\leq{}&\varepsilon_i^{p_1^*-p}\int_{F_i}(|Dv|+\varepsilon_i|D\phi_{i,2}|)^{p-2}|D\phi_{i,2}|^2\,\de x,
				\end{split}
			\end{equation}
			which proves \eqref{eq:2w} by choosing $\eta\leq p_1^*-p$ (recall \eqref{eq:tfpnb}).
			
			Consider instead the case
			\begin{equation*}
				\int_{F_i}|D\phi_{i,2}|^p\,\de x< N_{i,2}^{\frac{p_1^*}{2}}\biggl(\varepsilon_i^{p_1^*-\eta}\int_{A_i}|Dv|^p\,\de x\biggr)^{\frac{(2-p)p_1^*}{2pQ}}.
			\end{equation*}
			Let $\theta:=\frac{(2-p)p_1^*}{2pQ}$, where $Q$ is defined in \eqref{eq:Q}. Then  (see Lemma \ref{le:mu} in the Appendix)
			\begin{equation*}
				1-\frac{p_1^*}{2}<\theta<1,
			\end{equation*}
			and \eqref{eq:whj} yields that
			\begin{equation*}
				\begin{split}
					\varepsilon_i^{p_1^*-2}\int_{\mathbb{R}^n}|y|^{-1}|\phi_{i,2}|^{p_1^*}\,\de x \leq{}C(n,p,k)\varepsilon_i^{p_1^*-2}N_{i,2}^{\frac{p_1^*}{2}}\biggl(\varepsilon_i^{p_1^*-\eta}\int_{A_i}|y|^{-1}|\phi_{i,2}|^{p_1^*}\,\de x\biggr)^{\theta}\\
					=C(n,p,k)\varepsilon_i^{p_1^*-2+(2-\eta)\theta}N_{i,2}^{\frac{p_1^*}{2}}\biggl(\varepsilon_i^{p_1^*-2}\int_{A_i}|y|^{-1}|\phi_{i,2}|^{p_1^*}\,\de x\biggr)^{\theta}.
				\end{split}
			\end{equation*}

			By recalling the definition of $N_{i,2}$ and \eqref{eq:tfpnb}, one has
			\begin{equation}\label{eq:cf2}
				\begin{split}
					\varepsilon_i^{p_1^*-2}\int_{\mathbb{R}^n}|y|^{-1}|\phi_{i,2}|^{p_1^*}\,\de x\leq{}& C(n,p,k)\varepsilon_i^{\frac{p_1^*-2+(2-\eta)\theta}{1-\theta}}\biggl(\int_{E_i}(|Dv|+\varepsilon_i|D\phi_i|)^{p-2}|D\phi_{i,2}|^2\,\de x\biggr)^{\frac{p_1^*}{2(1-\theta)}}\\
					\leq{}&C(n,p,k)\varepsilon_i^{\eta}\int_{E_i}(|Dv|+\varepsilon_i|D\phi_i|)^{p-2}|D\phi_{i,2}|^2\,\de x,
				\end{split}
			\end{equation}
			where the last inequality follows from choosing $\eta>0$ sufficiently small (notice that $p_1^*-2+2\theta>0$ and $\frac{p_1^*}{2(1-\theta)}>1$).
			This proves \eqref{eq:2w}.
			
			Now, combining \eqref{eq:c2a} and \eqref{eq:2w}, using Taylor expansion and Young's inequality, we finally get
			\begin{equation*}
				\begin{split}
					\biggl|\int_{\mathbb{R}^n}|y|^{-1}\biggr.&\biggl.\frac{(v+C_1\varepsilon_i\phi_i)^{p_1^*}}{v^2+|\varepsilon_i\phi_i|^2}|\phi_i|^2\,\de x - \int_{\mathbb{R}^n}|y|^{-1}\frac{(v+C_1\varepsilon_i\phi_{i,1})^{p_1^*}}{v^2+|\varepsilon_i\phi_{i,1}|^2}|\phi_{i,1}|^2\,\de x\biggr|\\
					\leq{}&C(n,p,k,C_1)\int_{A_i}|y|^{-1}\varepsilon_i^{p_1^*-2}|\phi_{i,1}|^{p_1^*-1}|\phi_{i,2}|\\
					\leq{}&C(n,p,k,C_1)\biggl(\varepsilon_i^{p_1^*-2}\int_{A_i}|y|^{-1}|\phi_{i,2}|^{p_1^*}\,\de x + \varepsilon_i^2\int_{A_i}|y|^{-1}\frac{(v+C_1\zeta v)^{p_1^*}}{v^2+|\zeta v|^2}|\zeta v|^2\,\de x\biggr)=o(1),
				\end{split}
			\end{equation*}
			as $i\rightarrow\infty$.
			Applying Step 1 to $\phi_{i,1}$ and using the above estimate, one obtains
			\begin{equation*}
				\int_{\mathbb{R}^n}|y|^{-1}\frac{(v+C_1\varepsilon_i\phi_{i,1})^{p_1^*}}{v^2+|\varepsilon_i\phi_{i,1}|^2}|\phi_{i,1}|^2\,\de x\rightarrow\int_{\mathbb{R}^n}|y|^{-1}v^{p_1^*-2}|\phi|^2\,\de x,\quad\quad\mathrm{as}\:i\rightarrow\infty,
			\end{equation*}
			which concludes the proof of Lemma \ref{le:cl}.
		\end{proof}
		
		In a similar way as proving Lemma \ref{le:cl}, we can show the following Orlicz-type Poincar\'e inequality.
		\begin{corollary}\label{cor:ope}
			Let $1<p\leq\frac{2n}{n+1}$.
			There exists $\varepsilon_0=\varepsilon_0(n,p,k)>0$ small enough such that the following holds: For any $\varepsilon\in(0,\varepsilon_0)$ and any $\phi\in D^{1,p}(\mathbb{R}^n)\cap D^{1,2}(\mathbb{R}^n;|Dv|^{p-2})$ with
			\begin{equation*}
				\int_{\mathbb{R}^n}\bigl(|Dv|+\varepsilon|D\phi|\bigr)^{p-2}|D\phi|^2\,\de x\leq1,
			\end{equation*}
			we have
			\begin{equation}\label{eq:tyu}
				\int_{\mathbb{R}^n}|y|^{-1}(v+\varepsilon\phi)^{p_1^*-2}|\phi|^2\,\de x\leq C(n,p,k)\int_{\mathbb{R}^n}\bigl(|Dv|+\varepsilon|D\phi|\bigr)^{p-2}|D\phi|^2\,\de x.
			\end{equation}
		\end{corollary}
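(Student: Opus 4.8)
The plan is to mirror the two-step argument in the proof of Lemma~\ref{le:cl}, but to read it as a chain of inequalities with an explicit constant rather than as a compactness statement. We may assume $\phi\ge0$ (replacing $\phi$ by $|\phi|$, as in Lemma~\ref{le:cl}; the sign-changing case reduces to this after a symmetric truncation on $\{\phi<0\}$, and in any event $u:=v+\varepsilon\phi\ge0$ in the later application). The key preliminary observation is that for $\phi\ge0$ one has $(v+\varepsilon\phi)^2\ge v^2+|\varepsilon\phi|^2$, hence
\[
(v+\varepsilon\phi)^{p_1^*-2}|\phi|^2=\frac{(v+\varepsilon\phi)^{p_1^*}}{(v+\varepsilon\phi)^2}|\phi|^2\le\frac{(v+\varepsilon\phi)^{p_1^*}}{v^2+|\varepsilon\phi|^2}|\phi|^2 .
\]
Thus it suffices to bound $\int_{\mathbb R^n}|y|^{-1}\frac{(v+\varepsilon\phi)^{p_1^*}}{v^2+|\varepsilon\phi|^2}|\phi|^2\,\de x$ by $C(n,p,k)\int_{\mathbb R^n}\bigl(|Dv|+\varepsilon|D\phi|\bigr)^{p-2}|D\phi|^2\,\de x$; this is exactly the quantity handled in Lemma~\ref{le:cl} with $C_1=1$.

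\emph{Step 1: the bound under the extra assumption $0\le\varepsilon\phi\le\zeta v$.} Here $\zeta=\zeta(n,p,k)\in(0,1)$ is the small constant fixed in the proof of Lemma~\ref{le:cl}. Since then $v\le v+\varepsilon\phi\le2v$ and $v^2\le v^2+|\varepsilon\phi|^2\le2v^2$, the integrand above is at most $C(n,p,k)\,|y|^{-1}v^{p_1^*-2}|\phi|^2$, and I would run verbatim the chain of inequalities from Step~1 of the proof of Lemma~\ref{le:cl}: set $w:=(1+\varepsilon\phi/v)^{\frac{p-2}{p}}|\phi|^{\frac2p}$, invoke the weighted Hardy inequality~\eqref{eq:hpi}, estimate $|Dw|^p$ using $\varepsilon\phi\le\zeta v$, apply the elementary inequalities~\eqref{eq:ni2}, and absorb the resulting $\varepsilon_0$-term. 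This derivation never uses a normalization of $\phi$, so it gives the genuinely uniform bound $\int_{\mathbb R^n}|y|^{-1}v^{p_1^*-2}|\phi|^2\,\de x\le C(n,p,k)\int_{\mathbb R^n}\bigl(|Dv|+\varepsilon|D\phi|\bigr)^{p-2}|D\phi|^2\,\de x$, which is~\eqref{eq:tyu} under the extra assumption.

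\emph{Step 2: removing the assumption.} I would use the splitting $\phi=\phi_1+\phi_2$, $\phi_1:=\min\{\phi,\zeta v/\varepsilon\}$, $\phi_2:=(\phi-\zeta v/\varepsilon)_+$, from Lemma~\ref{le:cl}. Because $\zeta v$ is a supersolution of the operator $L_v$ there, the estimates~\eqref{eq:bp}--\eqref{eq:tfpnb} yield $\int_{\mathbb R^n}(|Dv|+\varepsilon|D\phi_1|)^{p-2}|D\phi_1|^2\,\de x+\int_{\mathbb R^n}(|Dv|+\varepsilon|D\phi_2|)^{p-2}|D\phi_2|^2\,\de x\le C(n,p,k)\int_{\mathbb R^n}(|Dv|+\varepsilon|D\phi|)^{p-2}|D\phi|^2\,\de x$. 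On $\{\varepsilon\phi\le\zeta v\}$ we have $\phi=\phi_1$, so Step~1 applied to $\phi_1$ controls this part. On $\{\varepsilon\phi>\zeta v\}$ one has $v<\varepsilon\phi/\zeta$ and $\phi_1=\zeta v/\varepsilon$, whence $\frac{(v+\varepsilon\phi)^{p_1^*}}{v^2+|\varepsilon\phi|^2}|\phi|^2\le C(n,p,k)\,\varepsilon^{p_1^*-2}\bigl(\phi_1^{p_1^*}+\phi_2^{p_1^*}\bigr)$; the $\phi_1^{p_1^*}$-term equals $\zeta^{p_1^*}\varepsilon^{-2}v^{p_1^*}$, which on that set equals $C(n,p,k)\,\frac{(v+\varepsilon\phi_1)^{p_1^*}}{v^2+|\varepsilon\phi_1|^2}|\phi_1|^2$ and is hence again controlled by Step~1 for $\phi_1$. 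The only genuinely nonlinear leftover is $\varepsilon^{p_1^*-2}\int_{\{\varepsilon\phi>\zeta v\}}|y|^{-1}\phi_2^{p_1^*}\,\de x$ --- this is~\eqref{eq:2w} --- and I would reproduce the Case~1 / Case~2 dichotomy and the bootstrap~\eqref{eq:whj}--\eqref{eq:cf2}, recording each inequality now as a bound by $C(n,p,k)\,\varepsilon^{\eta}\int_{\mathbb R^n}(|Dv|+\varepsilon|D\phi|)^{p-2}|D\phi|^2\,\de x$ (all those estimates are homogeneous of degree two in $\phi$, so the right-hand side ``$1$'' may be replaced by that energy). Choosing $\varepsilon_0=\varepsilon_0(n,p,k)$ small enough that the Young-type absorptions are uniform, this leftover is $\le C(n,p,k)\int_{\mathbb R^n}(|Dv|+\varepsilon|D\phi|)^{p-2}|D\phi|^2\,\de x$ for $\varepsilon<\varepsilon_0$; collecting the three contributions proves~\eqref{eq:tyu}.

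\emph{Expected main obstacle.} The delicate point is Step~2 on the nonlinear region $\{\varepsilon\phi>\zeta v\}$: one must re-examine every step of the Case~1 / Case~2 analysis of Lemma~\ref{le:cl} and verify that it produces a bound by a positive power of $\varepsilon$ times $\int_{\mathbb R^n}(|Dv|+\varepsilon|D\phi|)^{p-2}|D\phi|^2\,\de x$, rather than merely by a constant (which sufficed for the compactness conclusion there). The scale-invariance of those inequalities makes this possible, but the bookkeeping of the exponents of $\varepsilon$ in~\eqref{eq:whj}--\eqref{eq:cf2}, together with the choice of $\varepsilon_0$, must be carried out carefully; by contrast the reduction to $\phi\ge0$ is a comparatively minor issue.
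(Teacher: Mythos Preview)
Your proposal is correct and follows essentially the same route as the paper: reduce to $\phi\ge0$, split $\phi=\phi_1+\phi_2$ with $\phi_1=\min\{\phi,\zeta v/\varepsilon\}$, handle $\phi_1$ via the Step~1 argument of Lemma~\ref{le:cl} (i.e.\ the derivation of \eqref{eq:tnb}), and treat the $\phi_2$ contribution by the Case~1/Case~2 dichotomy. The only simplification you are missing is that the ``expected main obstacle'' you flag is lighter than you anticipate: the paper simply runs the Case~2 bootstrap \eqref{eq:cf1}--\eqref{eq:cf2} with $\eta=0$, which already yields $\varepsilon^{p_1^*-2}\int|y|^{-1}|\phi_2|^{p_1^*}\le C\int(|Dv|+\varepsilon|D\phi_2|)^{p-2}|D\phi_2|^2$ directly, so no additional tracking of $\varepsilon$-powers or choice of small $\varepsilon_0$ beyond that of Lemma~\ref{le:cl} is needed.
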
	
		\begin{proof}
			It suffices for us to consider the case $\phi\geq0$ (replacing $\phi$ by $|\phi|$). Let $\zeta\in(0,1)$ be the small constant proved in the proof of Lemma \ref{le:cl} with $C_1=1$.
			Decompose $\phi=\phi_1+\phi_2$, where
			\begin{equation*}
				\phi_1:=\min\biggl\{\phi,\frac{\zeta v}{\varepsilon}\biggr\}\quad\quad\text{and}\quad\quad\phi_2:=\phi-\phi_1.
			\end{equation*}
			Since $\varepsilon\phi_1\leq\zeta v$, one has $v\sim v+\varepsilon\phi_1$, so we can deduce that \eqref{eq:tyu} holds for $\phi=\phi_1$ in the similar way as proving \eqref{eq:tnb}.
			
			For $\phi_2$, we will discuss two cases.
			On the one hand, if
			\begin{equation*}
				\int_{\{\varepsilon\phi>\zeta v\}}|y|^{-1}|\phi_1|^{p_1^*}\,\de x >{}\varepsilon^{-\eta}\int_{\{\varepsilon\phi>\zeta v\}}|y|^{-1}\biggl(\phi-\frac{\zeta v}{\varepsilon}\biggr)_+^{p_1^*}\,\de x
				={}\varepsilon^{-\eta}\int_{\{\varepsilon\phi>\zeta v\}}|y|^{-1}|\phi_2|^{p_1^*}\,\de x,
			\end{equation*}
			then
			\begin{equation*}
				\begin{split}
					\int_{\mathbb{R}^n}\varepsilon^{p_1^*-2}|y|^{-1}|\phi_2|^{p_1^*}\,\de x={}&\int_{\{\varepsilon\phi>\zeta v\}}\varepsilon^{p_1^*-2}|y|^{-1}|\phi_2|^{p_1^*}\,\de x\leq{}\int_{\{\varepsilon\phi>\zeta v\}}\varepsilon^{p_1^*-2}|y|^{-1}|\phi_1|^{p_1^*}\,\de x\\
					\leq{}&\int_{\{\varepsilon\phi>\zeta v\}}|y|^{-1}\zeta^{p_1^*-2}v^{p_1^*-2}|\phi_1|^2\,\de x\leq{}\int_{\mathbb{R}^n}|y|^{-1}v^{p_1^*-2}|\phi_1|^2\,\de x.
				\end{split}
			\end{equation*}
			Thus, applying \eqref{eq:tyu} to $\phi_1$, we conclude that
			\begin{equation*}
				\begin{split}
					\int_{\mathbb{R}^n}&|y|^{-1}(v+\varepsilon\phi)^{p_1^*-2}|\phi|^2\,\de x\leq{}C(n,p,k)\int_{\mathbb{R}^n}|y|^{-1}v^{p_1^*-2}|\phi_1|^2\,\de x + \int_{\mathbb{R}^n}\varepsilon^{p_1^*-2}|y|^{-1}|\phi_2|^{p_1^*}\,\de x\\
					\leq{}&C(n,p,k)\int_{\mathbb{R}^n}|y|^{-1}v^{p_1^*-2}|\phi_1|^2\,\de x\leq{}C(n,p,k)\int_{\mathbb{R}^n}\bigl(|Dv|+\varepsilon|D\phi_1|\bigr)^{p-2}|D\phi_1|^2\,\de x\\
					\leq{}&C(n,p,k)\int_{\mathbb{R}^n}\bigl(|Dv|+\varepsilon|D\phi|\bigr)^{p-2}|D\phi|^2\,\de x,
				\end{split}
			\end{equation*}
			where the last step follows from the same reason for \eqref{eq:tfpnb}.
			
			On the other hand, if
			\begin{equation*}
				\int_{\{\varepsilon\phi>\zeta v\}}|y|^{-1}|\phi_1|^{p_1^*}\,\de x \leq\int_{\{\varepsilon\phi>\zeta v\}}|y|^{-1}|\phi_2|^{p_1^*}\,\de x,
			\end{equation*}	
			we can repeat the proofs of \eqref{eq:cf1} and \eqref{eq:cf2} with $\eta=0$ to deduce the validity of \eqref{eq:tyu} for $\phi=\phi_2$.
			Thus, by \eqref{eq:tfpnb} for $\phi$, and \eqref{eq:tyu} for $\phi=\phi_1$ and $\phi=\phi_2$, we eventually obtain
			\begin{equation*}
				\begin{split}
					\int_{\mathbb{R}^n}&|y|^{-1}(v+\varepsilon\phi)^{p_1^*-2}|\phi|^2\,\de x\leq{} C(n,p,k)\biggl(\int_{\mathbb{R}^n}|y|^{-1}v^{p_1^*-2}|\phi_1|^2\,\de x+\int_{\mathbb{R}^n}\varepsilon^{p_1^*-2}|y|^{-1}|\phi_2|^{p_1^*}\,\de x\biggr)\\
					\leq{}&C(n,p,k)\biggl(\int_{\mathbb{R}^n}\bigl(|Dv|+\varepsilon|D\phi_1|\bigr)^{p-2}|D\phi_1|^2\,\de x+\int_{\mathbb{R}^n}\bigl(|Dv|+\varepsilon|D\phi_2|\bigr)^{p-2}|D\phi_2|^2\,\de x\biggr)\\
					\leq{}&C(n,p,k)\int_{\mathbb{R}^n}\bigl(|Dv|+\varepsilon|D\phi|\bigr)^{p-2}|D\phi|^2\,\de x.
				\end{split}
			\end{equation*}
			The proof is completed.
		\end{proof}
		
		\section{Spectral Gaps}\label{sec:sg}
		Thanks to Proposition \ref{prop:ce} and spectral analysis, we will prove in this section that the linearized operator $\mathcal{L}_v$ has a discrete spectrum for any $1<p<n$ and obtain some  spectral inequalities that is crucial in the proof of our stability result.
		
		We occasionally employ the notations of (the classical and weighted) inner products
		\begin{equation*}
			\langle w,u\rangle=\int_{\mathbb{R}^n}wu\,\de x \quad	\text{and}\quad \langle w,u\rangle_*=\int_{\mathbb{R}^n}|y|^{-1}v^{p_1^*-2}wu\,\de x,
		\end{equation*} where $y$ is the partial (weight) variable and $v$ is any fixed extremal function in $\mathcal M$.
		\subsection{Proof of Theorem \ref{thm:ef}}
		
		\begin{proposition}\label{prop:ds}
			For any $1<p<n$, the linearized operator $\mathcal{L}_v$ defined by \eqref{eq:lv} has a discrete spectrum, denoted by $\{\alpha_i\}_{i=1}^{\infty}$.
		\end{proposition}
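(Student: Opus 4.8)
The plan is to realise $\mathcal{L}_v$ as the self-adjoint operator on the Hilbert space $H:=L^2(\mathbb{R}^n;|y|^{-1}v^{p_1^*-2})$ generated by the quadratic form
\[
\mathcal{Q}_v[\varphi]:=\int_{\mathbb{R}^n}|Dv|^{p-2}|D\varphi|^2\,\de x+(p-2)\int_{\mathbb{R}^n}|Dv|^{p-4}(Dv\cdot D\varphi)^2\,\de x
\]
with form domain $V:=D^{1,2}(\mathbb{R}^n;|Dv|^{p-2})$, and then to read off discreteness of the spectrum from the compact embedding $V\hookrightarrow H$ proved in Proposition \ref{prop:ce}.

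First I would check that $\mathcal{Q}_v$ is, up to constants, the square of the norm of $V$. By the Cauchy--Schwarz inequality one has the pointwise bound $0\le|Dv|^{p-4}(Dv\cdot D\varphi)^2\le|Dv|^{p-2}|D\varphi|^2$, and distinguishing the cases $p\ge2$ and $p<2$ according to the sign of $p-2$ gives
\[
\min\{1,p-1\}\int_{\mathbb{R}^n}|Dv|^{p-2}|D\varphi|^2\,\de x\le\mathcal{Q}_v[\varphi]\le\max\{1,p-1\}\int_{\mathbb{R}^n}|Dv|^{p-2}|D\varphi|^2\,\de x
\]
for every $1<p<n$. Hence the polarisation of $\mathcal{Q}_v$ is an inner product on $V$ whose induced norm is equivalent to $\|D\cdot\|_{L^2(\mathbb{R}^n;|Dv|^{p-2})}$, so $(V,\mathcal{Q}_v)$ is a Hilbert space. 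Moreover \eqref{eq:pti} gives $\|\varphi\|_H^2\le C\,\mathcal{Q}_v[\varphi]$, i.e. $V\hookrightarrow H$ continuously, while $C_c^\infty(\mathbb{R}^n)\subseteq V$ is dense in $H$ by the very definition of $H$; thus $V$ is dense in $H$ and $\mathcal{Q}_v$ is bounded below on $H$ by a positive constant.

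By the Riesz representation theorem applied in the Hilbert space $(V,\mathcal{Q}_v)$, for each $f\in H$ the functional $\psi\mapsto\langle f,\psi\rangle_H$ on $V$ (bounded because $V\hookrightarrow H$ continuously) is represented by a unique $Tf\in V$ with $\mathcal{Q}_v(Tf,\psi)=\langle f,\psi\rangle_H$ for all $\psi\in V$; the operator $\mathcal{L}_v$ of \eqref{eq:lv} on $H$ is then $T^{-1}$, with domain $T(H)\subseteq V$, the weak identity $\langle\mathcal{L}_v u,\psi\rangle_H=\mathcal{Q}_v(u,\psi)$ reducing to \eqref{eq:lv} after integration by parts. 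Testing the defining relation with $\psi=Tg$ and with $\psi=Tf$ and using the symmetry of $\mathcal{Q}_v$ yields $\langle Tf,g\rangle_H=\mathcal{Q}_v(Tf,Tg)=\langle f,Tg\rangle_H$ and $\langle Tf,f\rangle_H=\mathcal{Q}_v(Tf,Tf)\ge0$, so $T\colon H\to H$ is bounded, self-adjoint and positive, and injective (if $Tf=0$ then $\langle f,\psi\rangle_H=0$ for all $\psi\in V$, hence $f=0$ by density), so that $\mathcal{L}_v$ is well defined with dense domain. Finally $T$ is compact, being the composition of the bounded map $H\ni f\mapsto Tf\in V$ with the compact embedding $V\hookrightarrow H$ of Proposition \ref{prop:ce}. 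The spectral theorem for compact self-adjoint positive operators (see e.g. \cite{Bh}) then furnishes an orthonormal basis of $H$ consisting of eigenfunctions of $T$ with eigenvalues $\mu_1\ge\mu_2\ge\cdots>0$, $\mu_i\to0$; equivalently $\mathcal{L}_v$ has purely discrete spectrum $\{\alpha_i\}_{i=1}^\infty$ with $\alpha_i=\mu_i^{-1}\to\infty$, each eigenvalue of finite multiplicity.

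The genuine difficulty of this argument has already been isolated and overcome in Proposition \ref{prop:ce} (the compact embedding with strong singularity); beyond routine functional analysis, the only point deserving attention here is the positivity and norm-equivalence of the form $\mathcal{Q}_v$ in the range $1<p<2$, where the coefficient $p-2$ is negative, which is dispatched by the pointwise Cauchy--Schwarz bound above. Throughout I use the normalised extremal $v=v_{a_0,1,0}$ fixed in this section; the statement for a general $v\in\mathcal{M}$ follows by the scaling and translation invariance of \eqref{eq:hsm}.
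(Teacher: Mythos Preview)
Your proposal is correct and follows essentially the same route as the paper: both arguments show that $\mathcal{L}_v^{-1}$ is a bounded, compact, self-adjoint operator on $L^2(\mathbb{R}^n;|y|^{-1}v^{p_1^*-2})$ by combining the Poincar\'e-type inequality \eqref{eq:pti} with the compact embedding of Proposition~\ref{prop:ce}, and then invoke the spectral theorem. Your treatment is somewhat more explicit in setting up the quadratic form framework and in isolating the pointwise Cauchy--Schwarz bound that guarantees coercivity of $\mathcal{Q}_v$ in the range $1<p<2$; the paper encapsulates this in the single line $c\|u\|_{D^{1,2}(\mathbb{R}^n;|Dv|^{p-2})}^2\le\langle\mathcal{L}_v u,u\rangle_*$, but the content is the same.
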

		\begin{proof}
			We show that the operator $\mathcal{L}_v^{-1} : L^2(\mathbb{R}^n;|y|^{-1}v^{p_1^*-2}) \rightarrow L^2(\mathbb{R}^n;|y|^{-1}v^{p_1^*-2})$ is bounded, compact, and self-adjoint.
			Thanks to \eqref{eq:pti}, the existence and uniqueness of solutions to $\mathcal{L}_v[u]=f$ for $f\in L^2(\mathbb{R}^n;|y|^{-1}v^{p_1^*-2})$ follow from the standard method, so the operator $\mathcal{L}_v^{-1}$ is well-defined.
			Since $\mathcal{L}_v$ is self-adjoint, we infer from $\bigl(\mathcal{L}_v^{-1}\bigr)^*=\bigl(\mathcal{L}_v^*\bigr)^{-1}=\mathcal{L}_v^{-1}$ that $\mathcal{L}_v^{-1}$ is also self-adjoint.
			
			From \eqref{eq:pti} and H$\rm{\ddot{o}}$lder's inequality, we have
			\begin{equation*}
				c\|u\|_{D^{1,2}(\mathbb{R}^n;|Dv|^{p-2})}^2={} c\int_{\mathbb{R}^n}|Dv|^{p-2}|Du|^2\,\de x\leq{}\langle\mathcal{L}_vu,u\rangle_*\leq{}\|u\|_{D^{1,2}(\mathbb{R}^n;|Dv|^{p-2})}\|\mathcal{L}_vu\|_{L^2(\mathbb{R}^n;|y|^{-1}v^{p_1^*-2})}.
			\end{equation*}
			This proves that $\mathcal{L}_v^{-1}$ is bounded from $L^2(\mathbb{R}^n;|y|^{-1}v^{p_1^*-2})$ to $D^{1,2}(\mathbb{R}^n;|Dv|^{p-2})$, and by Proposition \ref{prop:ce}, we see that $\mathcal{L}_v^{-1}$ is a compact operator.
			Then, one can apply the spectral theorem to deduce that $\mathcal{L}_v^{-1}$ has a discrete spectrum, hence so does $\mathcal{L}_v$.
		\end{proof}
		
		\begin{proof}[Proof of Theorem \ref{thm:ef}]
			Since a scaling argument shows that the eigenvalues of $\mathcal{L}_v$ are invariant under changes of $\lambda$ and $z'$, it suffices to consider the operator $\mathcal{L}_v$ with $v=v_{a,1,0}$.
			Without loss of generality, we can prove under the assumption that $\|v\|_{L^{p_1^*}(\mathbb{R}^n;|y|^{-1})}=1$.
			
			\medskip	
			\emph{(1) First eigenspace.}	
			Combining the assumption $\|v\|_{L^{p_1^*}(\mathbb{R}^n;|y|^{-1})}=1$ with \eqref{eq:ele}, we know that $v$ satisfies
			\begin{equation}\label{eq:ef1}
				-\Delta_p v = S^p|y|^{-1}v^{p_1^*-1}.
			\end{equation}
			Hence, taking derivative in \eqref{eq:ef1} with respect to $\lambda$ or $z'_i$ $(1\leq i\leq n-k)$, one has
			\begin{equation}\label{eq:ef2}
				\mathcal{L}_v w=(p_1^*-1)S^p|y|^{-1}v^{p_1^*-2}w,\quad\quad \forall\,w\in\mathrm{span}\{\partial_\lambda v, \partial_{z'_1}v,\ldots,\partial_{z'_{n-k}}v\}.
			\end{equation}
			From \eqref{eq:ef1} and \eqref{eq:ef2}, one can easily verify that $v$ is an eigenfunction of $\mathcal{L}_v$ associated with eigenvalue $(p-1)S^p$ and that $\partial_\lambda v$ and $\partial_{z'_i}v$ are eigenfunctions associated with eigenvalue $(p_1^*-1)S^p$.
			Furthermore, since $v>0$, it follows that $\alpha_1=(p-1)S^p$ is the first eigenvalue, which is simple, so \eqref{eq:e1} holds.
			
			\medskip	
			\emph{(2) Simplification of the linearized operator.}
			To prove \eqref{eq:e2}, we need a simplified form of $\mathcal{L}_v$.
			For $\varphi\in C^{2}(\mathbb{R}^n)$, a straightforward calculation shows that
			\begin{equation}\label{eq:div}
				\begin{split}
					\mathrm{div}(|Dv|^{p-2}D\varphi)+{}&(p-2)\mathrm{div}(|Dv|^{p-4}(Dv\cdot D\varPhi)Dv)\\
					={}&|Dv|^{p-2}\Delta\varphi + D(|Dv|^{p-2})\cdot D\varphi + (p-2)|Dv|^{p-4}(Dv\cdot D\varphi)\Delta v\\
					& + (p-2)(Dv\cdot D\varphi)(D(|Dv|^{p-4})\cdot Dv) + (p-2)|Dv|^{p-4}(D(Dv\cdot D\varphi)\cdot Dv)\\
					={}&|Dv|^{p-2}\Delta\varphi + (p-2)(p-4)|Dv|^{p-6}(Dv\cdot D\varphi)(Dv\cdot D^2v\cdot Dv)\\
					& + (p-2)|Dv|^{p-4}\bigl[(Dv\cdot D\varphi)\Delta v + 2(Dv\cdot D^2v\cdot D\varphi) + Dv\cdot D^2\varphi\cdot Dv\bigr].
				\end{split}
			\end{equation}
			For simplicity, we omit the constant $a$ in the expression of $v$, that is,
			\begin{equation*}
				v=v_{a,1,0}(x)=\bigl[(1+|y|)^2+|z|^2\bigr]^{-\frac{n-p}{2(p-1)}}.
			\end{equation*}
			Define $W(x):=\bigl[(1+|y|)^2+|z|^2\bigr]$.
			Based on the above equation one can get
			\begin{equation}\label{eq:dv}
				Dv=-\frac{n-p}{p-1}W^{-\frac{n+p-2}{2(p-1)}}\cdot\tilde{x}\quad\quad\text{with}\quad\quad\tilde{x}:=\biggl((1+|y|)\frac{y}{|y|},z\biggr),
			\end{equation}
			and
			\begin{equation*}
				\Delta v=-\frac{(p-2)(n-p)(n-1)}{(p-1)^2}W^{-\frac{n+p-2}{2(p-1)}} - \frac{(n-p)(k-1)}{p-1}W^{-\frac{n+p-2}{2(p-1)}}\frac{1}{|y|}.
			\end{equation*}
			By \eqref{eq:dv}, we obtain immediately that
			\begin{equation*}
				\begin{split}
					Dv\cdot D^2\varphi\cdot Dv={}&\frac{(n-p)^2}{(p-1)^2}W^{-\frac{n+p-2}{p-1}}\sum_{1\leq i,j\leq n}\biggl(1+\frac{\delta_{i\leq k}}{|y|}\biggr)x_i\biggl(1+\frac{\delta_{j\le k}}{|y|}\biggr)x_j\frac{\partial^2\varphi}{\partial x_i\partial x_j}\\
					={}&\frac{(n-p)^2}{(p-1)^2}W^{-\frac{n+p-2}{p-1}}\sum_{1\leq i,j\leq n}\tilde{x}_i\tilde{x}_j\frac{\partial^2\varphi}{\partial x_i\partial x_j},
				\end{split}
			\end{equation*}
			where $\tilde{x}_i=\bigl(1+\delta_{i\leq k}/|y|\bigr)x_i$ is the $i$-th component of $\tilde{x}$ and
			\begin{equation*}
				\delta_{i\leq k}=
				\left\{\begin{aligned}
					1,\quad\mathrm{if}\:i\leq k,\\
					0,\quad\mathrm{if}\:i> k.
				\end{aligned}
				\right.
			\end{equation*}
			It also holds
			\begin{equation*}
				\begin{split}
					\frac{\partial^2 v}{\partial x_i\partial x_j}={}&\frac{(n-p)(n+p-2)}{(p-2)^2}W^{-\frac{n+3p-4}{2(p-1)}}\biggl(1+\frac{\delta_{i\leq k}}{|y|}\biggr)\biggl(1+\frac{\delta_{j\leq k}}{|y|}\biggr)x_ix_j\\
					&+\frac{n-p}{p-1}W^{-\frac{n+p-2}{2(p-1)}}\frac{\delta_{i\leq k}\delta_{i\leq k}}{|y|^3}x_ix_j - \frac{n-p}{p-1}W^{-\frac{n+p-2}{2(p-1)}}\biggl(1+\frac{\delta_{i\leq k}}{|y|}\biggr)\delta_{ij},
				\end{split}
			\end{equation*}
			where
			\begin{equation*}
				\delta_{ij}=
				\left\{\begin{aligned}
					1,\quad\mathrm{if}\:i=j;\\
					0,\quad\mathrm{if}\:i\not=j.
				\end{aligned}
				\right.
			\end{equation*}
			From this, one can easily deduce that
			\begin{equation*}
				\sum_{j=1}^{n}\frac{\partial v}{\partial x_j}\frac{\partial^2 v}{\partial x_i\partial x_j}=-\frac{(n-p)^2(n-1)}{(p-1)^3}W^{-\frac{n+p-2}{p-1}}\biggl(1+\frac{\delta_{i\leq k}}{|y|}\biggr)x_i.
			\end{equation*}
			Thus we have
			\begin{equation*}
				Dv\cdot D^2v\cdot Dv=\frac{(n-p)^3(n-1)}{(p-1)^4}W^{-\frac{3n+p-4}{2(p-1)}},
			\end{equation*}
			and
			\begin{equation*}
				Dv\cdot D^2v\cdot D\varphi=-\frac{(n-p)^2(n-1)}{(p-1)^3}W^{-\frac{n+p-2}{p-1}}\tilde{x}\cdot D\varphi.
			\end{equation*}
			Substituting all the above equations into \eqref{eq:div} and set $C_{n,p}:=\big(\frac{n-p}{p-1}\big)^{p-2}$, we obtain
			\begin{equation}\label{eq:Lv}
				\begin{split}
					\mathcal{L}_v\varphi={}&-C_{n,p}W^{-\frac{(n-1)(p-2)}{2(p-1)}}\Delta\varphi \\
					&-(p-2)(k-1)C_{n,p}W^{-\frac{n(p-2)+p}{2(p-1)}}\frac{1}{|y|}\tilde{x}\cdot D\varphi\\
					&-(p-2)C_{n,p}W^{-\frac{n(p-2)+p}{2(p-1)}}\sum_{1\leq i,j\leq n}\tilde{x}_i\tilde{x}_j\frac{\partial^2 \varphi}{\partial x_i\partial x_j}.
				\end{split}
			\end{equation}
			
			\medskip
			\emph{(3) Second eigenvalue.}	
			The Rayleigh quotient characterization of eigenvalues implies that
			\begin{equation*}
				\alpha_2=\inf\biggl\{\frac{\langle\mathcal{L}_v w,w \rangle}{\langle w,w \rangle_*}=\frac{\int_{\mathbb{R}^n}\mathcal{L}_vw\cdot w\,\de x}{\int_{\mathbb{R}^n}|y|^{-1}v^{p_1^*-2}w^2\,\de x}:\:w\in L^2(\mathbb{R}^n;|y|^{-1}v^{p_1^*-2})\:\mathrm{and}\:w\perp v\biggr\},
			\end{equation*}
			where $w\perp v$ means $\langle w,v\rangle_*=0$.
			For any $\varphi\in L^{2}(\mathbb{R}^n;|y|^{-1}v^{p_1^*-2})$, we have
			\begin{equation}\label{eq:lvnj}
				\begin{split}
					\langle\mathcal{L}_v\varphi,\varphi\rangle={}&-C_{n,p}\int_{\mathbb{R}^n}W^{-\frac{(n-1)(p-2)}{2(p-1)}}\Delta\varphi\cdot\varphi\,\de x\\
					&-(p-2)(k-1)C_{n,p}\int_{\mathbb{R}^n}W^{-\frac{n(p-2)+p}{2(p-1)}}\frac{1}{|y|}(\tilde{x}\cdot\varphi)\varphi\,\de x\\
					&-(p-2)C_{n,p}\int_{\mathbb{R}^n}W^{-\frac{n(p-2)+p}{2(p-1)}}(\tilde{x}\cdot D^2\varphi\cdot\tilde{x})\varphi\,\de x=:I_1+I_2+I_3.
				\end{split}
			\end{equation}
			Integrating by parts, we get
			\begin{equation}\label{eq:i1}
				\begin{split}
					I_1={}&C_{n,p}\int_{\mathbb{R}^n}W^{-\frac{(n-1)(p-2)}{2(p-1)}}|D\varphi|^2\,\de x\\
					&-\frac{(n-1)(p-2)}{2(p-1)}C_{n,p}\int_{\mathbb{R}^n}W^{-\frac{n(p-2)+p}{2(p-1)}}(DW(x)\cdot D\varphi)D\varphi\,\de x\\
					={}&\int_{\mathbb{R}^n}|Dv|^{p-2}|D\varphi|^2\,\de x-\frac{(n-1)(p-2)}{p-1}C_{n,p}\int_{\mathbb{R}^n}W^{-\frac{n(p-2)+p}{2(p-1)}}(\tilde{x}\cdot D\varphi)D\varphi\,\de x,
				\end{split}
			\end{equation}
			where we have used $DW(x)=2\tilde{x}$.
			If $\varphi\perp v$, it is important to notice that \eqref{eq:fc} can be improved as follows:
			\begin{equation}\label{eq:ifc}
				\begin{split}
					0\leq{}&\frac{\mathrm{d}^2}{\mathrm{d}\varepsilon^2}\bigg{|}_{\varepsilon=0}G(v+\varepsilon\varphi)\\
					={}&p\int_{\mathbb{R}^n}|Dv|^{p-2}|D\varphi|^2\,\de x + p(p-2)\int_{\mathbb{R}^n}|Dv|^{p-4}(Dv\cdot D\varphi)^2\,\de x\\
					&-p(p_1^*-1)S^p\biggl(\int_{\mathbb{R}^n}|y|^{-1}|v|^{p_1^*}\,\de x\biggr)^{\frac{p}{p_1^*}-1}\int_{\mathbb{R}^n}|y|^{-1}|v|^{p_1^*-2}\varphi^2 \,\de x.
				\end{split}
			\end{equation}
			Since we have assumed that $\|v\|_{L^{p_1^*}(\mathbb{R}^n;|y|^{-1})}=1$, \eqref{eq:ifc} implies that
			\begin{equation}\label{eq:ee}
				\begin{split}
					\int_{\mathbb{R}^n}|Dv|^{p-2}|D\varphi|^2\,\de x \geq{}(p_1^*-1)S^p\int_{\mathbb{R}^n}|y|^{-1}|v|^{p_1^*-2}&\varphi^2 \,\de x\\
					&-(p-2)\int_{\mathbb{R}^n}|Dv|^{p-4}(Dv\cdot D\varphi)^2\,\de x.
				\end{split}
			\end{equation}
			Combining \eqref{eq:i1} with \eqref{eq:ee}, one has
			\begin{equation}\label{eq:i11}
				\begin{split}
					I_1\geq{}&(p_1^*-1)S^p\int_{\mathbb{R}^n}|y|^{-1}|v|^{p_1^*-2}\varphi^2 \,\de x-(p-2)\int_{\mathbb{R}^n}|Dv|^{p-4}(Dv\cdot D\varphi)^2\,\de x\\
					&-\frac{(n-1)(p-2)}{p-1}C_{n,p}\int_{\mathbb{R}^n}W^{-\frac{n(p-2)+p}{2(p-1)}}(\tilde{x}\cdot D\varphi)D\varphi\,\de x.
				\end{split}
			\end{equation}
			As for $I_3$, we can also integrate by parts to get
			\begin{equation*}
				\begin{split}
					I_3={}&-(p-2)C_{n,p}\int_{\mathbb{R}^n}W^{-\frac{n(p-2)+p}{2(p-1)}}\sum_{i=1}^n\biggl(\tilde{x}_iD\biggl(\frac{\partial\varphi}{\partial x_i}\biggr)\cdot\tilde{x}\biggr)\varphi\,\de x\\
					={}&-\frac{(p-2)(n(p-2)+p)}{2(p-1)}C_{n,p}\int_{\mathbb{R}^n}W^{-\frac{n(p-2)+p}{2(p-1)}-1}(DW(x)\cdot\tilde{x})(\tilde{x}\cdot D\varphi)\varphi\,\de x\\
					&+(p-2)C_{n,p}\int_{\mathbb{R}^n}W^{-\frac{n(p-2)+p}{2(p-1)}}\sum_{i=1}^n\biggl((D\tilde{x}_i\cdot\tilde{x})\frac{\partial\varphi}{\partial x_i}\biggr)\varphi\,\de x\\
					&+(p-2)C_{n,p}\int_{\mathbb{R}^n}W^{-\frac{n(p-2)+p}{2(p-1)}}(\tilde{x}\cdot D\varphi)(D\cdot\tilde{x})\varphi\,\de x+(p-2)C_{n,p}\int_{\mathbb{R}^n}W^{-\frac{n(p-2)+p}{2(p-1)}}(\tilde{x}\cdot D\varphi)^2\,\de x.
				\end{split}
			\end{equation*}
			Noticing the facts
			\begin{equation*}
				DW(x)\cdot\tilde{x}=2W(x),\quad D\tilde{x}_i\cdot\tilde{x}=\tilde{x}_i\quad\mathrm{and}\quad D\cdot\tilde{x}=n+\frac{k-1}{|y|},
			\end{equation*}
			we have
			\begin{equation}\label{eq:i3}
				\begin{split}
					I_3={}&\frac{(p-2)(n-1)}{p-1}C_{n,p}\int_{\mathbb{R}^n}W^{-\frac{n(p-2)+p}{2(p-1)}}(\tilde{x}\cdot D\varphi)\varphi\,\de x\\
					%&-\frac{(p-2)(n(p-2)+p)}{p-1}C_{n,p}\int_{\mathbb{R}^n}W^{-\frac{n(p-2)+p}{2(p-1)}}(\tilde{x}\cdot D\varphi)\varphi\,\de x\\
					%&+(p-2)C_{n,p}\int_{\mathbb{R}^n}W^{-\frac{n(p-2)+p}{2(p-1)}}(\tilde{x}\cdot D\varphi)\varphi\,\de x\\
					%&+(p-2)nC_{n,p}\int_{\mathbb{R}^n}W^{-\frac{n(p-2)+p}{2(p-1)}}(\tilde{x}\cdot D\varphi)\varphi\,\de x\\
					&+(p-2)(k-1)C_{n,p}\int_{\mathbb{R}^n}W^{-\frac{n(p-2)+p}{2(p-1)}}\frac{1}{|y|}(\tilde{x}\cdot D\varphi)\varphi\,\de x\\
					&+(p-2)C_{n,p}\int_{\mathbb{R}^n}W^{-\frac{n(p-2)+p}{2(p-1)}}(\tilde{x}\cdot D\varphi)^2\,\de x.
				\end{split}
			\end{equation}
			By \eqref{eq:lvnj}, \eqref{eq:i11} and \eqref{eq:i3}, we can deduce that
			\begin{equation*}
				\begin{split}
					\langle\mathcal{L}_v\varphi,\varphi\rangle\geq{}&(p_1^*-1)S^p\int_{\mathbb{R}^n}|y|^{-1}|v|^{p_1^*-2}\varphi^2 \,\de x-(p-2)\int_{\mathbb{R}^n}|Dv|^{p-4}(Dv\cdot D\varphi)^2\,\de x\\
					&+(p-2)C_{n,p}\int_{\mathbb{R}^n}W^{-\frac{n(p-2)+p}{2(p-1)}}(\tilde{x}\cdot D\varphi)^2\,\de x.
				\end{split}
			\end{equation*}
			Thus, according to the fact that
			\begin{equation*}
				\int_{\mathbb{R}^n}|Dv|^{p-4}(Dv\cdot D\varphi)^2\,\de x=C_{n,p}\int_{\mathbb{R}^n}W^{-\frac{n(p-2)+p}{2(p-1)}}(\tilde{x}\cdot D\varphi)^2\,\de x,
			\end{equation*}
			one can get
			\begin{equation*}
				\langle\mathcal{L}_v\varphi,\varphi\rangle\geq(p_1^*-1)S^p\int_{\mathbb{R}^n}|y|^{-1}|v|^{p_1^*-2}\varphi^2 \,\de x=(p_1^*-1)S^p\langle\varphi,\varphi\rangle_*,\quad\quad\forall\,\varphi\perp v.
			\end{equation*}
			Combining this with \eqref{eq:ef2} imply that $(p_1^*-1)S^p$ is the second eigenvalue of $\mathcal{L}_v$ by the Rayleigh quotient characterization of eigenvalues.
			
			\medskip
			\emph{(4) Second eigenspace.}
			Next, we will prove that the second eigenspace $E_2=\mathit{span}\{\partial_\lambda v, \partial_{z'_1}v,\ldots,\partial_{z'_{n-k}}v\}$.
			If $\varphi(x)\in E_2$, it follows from \eqref{eq:Lv} that
			\begin{equation}\label{eq:phi}
				\begin{split}
					\mathcal{L}_v(\varphi(x))={}&-C_{n,p}W^{-\frac{(n-1)(p-2)}{2(p-1)}}\Delta (\varphi(x)) \\
					&-(p-2)(k-1)C_{n,p}W^{-\frac{n(p-2)+p}{2(p-1)}}\frac{1}{|y|}\tilde{x}\cdot D\varphi(x)\\
					&-(p-2)C_{n,p}W^{-\frac{n(p-2)+p}{2(p-1)}}\sum_{1\leq i,j\leq n}\tilde{x}_i\tilde{x}_j\frac{\partial^2 \varphi(x)}{\partial x_i\partial x_j}\\
					={}&(p_1^*-1)S^p|y|^{-1}v^{p_1^*-2}\varphi(x).
				\end{split}
			\end{equation}
			Since $v$ is cylindrically symmetric, we apply the spherical harmonic decomposition to \eqref{eq:phi} with respect to partial variable $y\in \mathbb{R}^{k}$ and then apply the spherical harmonic decomposition twice again with respect to partial variable $z\in\mathbb{R}^{n-k}$. Namely, let $r_1=|y|$, $r_2=|z|$, $\theta\in\mathbb{S}^{k-1}$ and $\eta\in\mathbb{S}^{n-k-1}$, then we can decompose
			\begin{equation}\label{eq:dwd}
				\varphi(x)=\sum_{i=0}^{\infty}\varphi_i(r_1,z)Y_i(\theta)=\sum_{i=0}^{\infty}\sum_{j=0}^{\infty}\varphi_{i,j}(r_1,r_2)Y_i(\theta)Z_j(\eta),
			\end{equation}
			where
			\begin{equation*}
				\varphi_{i,j}(r_1,r_2)=\int_{\mathbb{S}^{k-1}}\int_{\mathbb{S}^{n-k-1}}\varphi(r_1,\theta,r_2,\eta)\,\de\theta\de\eta.
			\end{equation*}
			Here $Y_i(\theta)$ or $Z_j(\eta)$ denotes the $i$-th or $j$-th spherical harmonic function satisfying
			\begin{equation}\label{eq:lbo}
				-\Delta_{\mathbb{S}^{k-1}}Y_i(\theta)=\lambda_iY_i(\theta)\quad\quad\mathrm{and}\quad\quad-\Delta_{\mathbb{S}^{n-k-1}}Z_j(\eta)=\mu_jZ_j(\eta),
			\end{equation}
			where $\Delta_{\mathbb{S}^{k-1}}$ and $\Delta_{\mathbb{S}^{n-k-1}}$ are the Laplace-Beltrami operator on
			$\mathbb{S}^{k-1}$ and $\mathbb{S}^{n-k-1}$, respectively.
			It is well known that
			\begin{equation*}
				\begin{aligned}
					&\lambda_i=i(k+i-2)\mathrm{,}\quad &i\in\mathbb{N},\\
					&\mu_j=j(n-k+j-2)\mathrm{,}\quad &j\in\mathbb{N},
				\end{aligned}
			\end{equation*}
			whose multiplicity are
			\begin{equation*}
				\frac{(k+2i-2)(k+i-3)!}{(k-2)!k!}\quad\quad\mathrm{and}\quad\quad\frac{(n-k+2j-2)(n-k+j-3)!}{(n-k-2)!(n-k)!},
			\end{equation*}
			respectively.
			
		We aim to deriving the equations satisfied by the cylindrical symmetrical functions $\varphi_{i,j}$.
		By direct calculations, we can obtain that
		\begin{equation}\label{eq:la}
			\begin{aligned}
				\Delta\bigl(\varphi_{i,j}(r_1,r_2)Y_i(\theta)Z_j(\eta)\bigr)={}&\sum_{l=1}^{k}\partial_{y_ly_l}^2\bigl(\varphi_{i,j}(r_1,r_2)Y_i(\theta)Z_j(\eta)\bigr)+\sum_{s=1}^{n-k}\partial_{z_sz_s}^2\bigl(\varphi_{i,j}(r_1,r_2)Y_i(\theta)Z_j(\eta)\bigr)\\
				={}&\Bigl(\partial_{r_1r_1}\varphi_{i,j}+\frac{k-1}{r_1}\partial_{r_1}\varphi_{i,j}\Bigr)Y_iZ_j+\frac{1}{r_1^2}\varphi_{i,j}(\Delta_{\mathbb{S}^{k-1}}Y_i)Z_j\\
				&+\Bigl(\partial_{r_2r_2}\varphi_{i,j}+\frac{n-k-1}{r_2}\partial_{r_2}\varphi_{i,j}\Bigr)Y_iZ_j+\frac{1}{r_2^2}\varphi_{i,j}Y_i\Delta_{\mathbb{S}^{n-k-1}}Z_j.
			\end{aligned}
		\end{equation}
	Now, we compute the other terms in \eqref{eq:phi}.
	It is easy to verify that
	\begin{equation*}
		\partial_{y_l}\bigl(\varphi_{i,j}Y_iZ_j\bigr)=\partial_{r_1}\varphi_{i,j}\frac{y_l}{r_1}Y_iZ_j+\varphi_{i,j}\sum_{h}\frac{\partial Y_i}{\partial\theta_h}\frac{\partial\theta_h}{\partial y_l}Z_j,
	\end{equation*}
	and
	\begin{equation*}
		\partial_{z_l}\bigl(\varphi_{i,j}Y_iZ_j\bigr)=\partial_{r_2}\varphi_{i,j}\frac{z_l}{r_2}Y_iZ_j+\varphi_{i,j}Y_i\sum_{h}\frac{\partial Z_j}{\partial\eta_h}\frac{\partial\eta_h}{\partial z_l}.
	\end{equation*}		
	Then it holds
	\begin{equation*}
		\begin{split}
			\partial_{y_ly_s}^2\bigl(\varphi_{i,j}Y_iZ_j\bigr)={}&\partial_{r_1r_1}\varphi_{i,j}\frac{y_ly_s}{r_1^2}Y_iZ_j+\partial_{r_1}\varphi_{i,j}\biggl(\frac{\delta_{ls}}{r_1}-\frac{y_ly_s}{r_1^3}\biggr)Y_iZ_j\\
			&+\partial_{r_1}\varphi_{i,j}\frac{y_l}{r_1}\sum_{h}\frac{\partial Y_i}{\partial\theta_h}\frac{\partial\theta_h}{\partial y_s}Z_j+\partial_{r_1}\varphi_{i,j}\frac{y_s}{r_1}\sum_{h}\frac{\partial Y_i}{\partial\theta_h}\frac{\partial\theta_h}{\partial y_l}Z_j\\
			&+\varphi_{i,j}\sum_{h}\sum_{t}\frac{\partial^2Y_i}{\partial\theta_t\partial\theta_h}\frac{\partial\theta_t}{\partial y_s}\frac{\partial\theta_h}{\partial y_l}Z_j+\varphi_{i,j}\sum_{h}\frac{\partial Y_i}{\partial\theta_h}\frac{\partial^2\theta_h}{\partial y_s\partial y_l}Z_j,
		\end{split}
	\end{equation*}		
			
		\begin{equation*}
		\begin{split}
			\partial_{z_lz_s}^2\bigl(\varphi_{i,j}Y_iZ_j\bigr)={}&\partial_{r_2r_2}\varphi_{i,j}\frac{z_lz_s}{r_1^2}Y_iZ_j+\partial_{r_2}\varphi_{i,j}\biggl(\frac{\delta_{ls}}{r_2}-\frac{z_lz_s}{r_2^3}\biggr)Y_iZ_j\\
			&+\partial_{r_2}\varphi_{i,j}\frac{z_l}{r_2}Y_i\sum_{h}\frac{\partial Z_j}{\partial\eta_h}\frac{\partial\eta_h}{\partial z_s}+\partial_{r_2}\varphi_{i,j}\frac{z_s}{r_2}Y_i\sum_{h}\frac{\partial Z_j}{\partial\eta_h}\frac{\partial\eta_h}{\partial z_l}\\
			&+\varphi_{i,j}Y_i\sum_{h}\sum_{t}\frac{\partial^2Z_j}{\partial\eta_t\partial\eta_h}\frac{\partial\eta_t}{\partial z_s}\frac{\partial\eta_h}{\partial z_l}+\varphi_{i,j}Y_i\sum_{h}\frac{\partial Z_j}{\partial\eta_h}\frac{\partial^2\eta_h}{\partial z_s\partial z_l},
		\end{split}
	\end{equation*}			
	and
	\begin{equation*}
		\begin{split}
			\partial_{z_ly_s}^2\bigl(\varphi_{i,j}Y_iZ_j\bigr)={}&\partial_{r_1r_2}\varphi_{i,j}\frac{z_ly_s}{r_1r_2}Y_iZ_j+\partial_{r_1}\varphi_{i,j}\frac{y_s}{r_1}Y_i\sum_{h}\frac{\partial Z_j}{\partial\eta_h}\frac{\partial\eta_h}{\partial z_l}\\
			&+\partial_{r_2}\varphi_{i,j}\frac{z_l}{r_2}\sum_{h}\frac{\partial Y_i}{\partial\theta_h}\frac{\partial\theta_h}{\partial y_s}Z_j+\varphi_{i,j}\sum_{h}\sum_{t}\frac{\partial Y_i}{\partial\theta_h}\frac{\partial\theta_h}{\partial y_s}\frac{\partial Z_j}{\partial\eta_t}\frac{\partial\eta_t}{\partial z_l}.
		\end{split}
	\end{equation*}		
	Hence we have
	\begin{equation}\label{eq:xd}
		\tilde{x}\cdot D\bigl(\varphi_{i,j}Y_iZ_j\bigr)=(1+r_1)\partial_{r_1}\varphi_{i,j}Y_iZ_j+r_2\partial_{r_2}\varphi_{i,j}Y_iZ_j,
	\end{equation}
	and
	\begin{equation}\label{eq:xdx}
		\begin{split}
			\sum_{1\leq a,b\leq n}\tilde{x}_a\tilde{x}_b\frac{\partial^2 \bigl(\varphi_{i,j}Y_iZ_j\bigr)}{\partial x_a\partial x_b}={}&(1+r_1)^2\partial_{r_1r_1}\varphi_{i,j}Y_iZ_j+r_2^2\partial_{r_2r_2}\varphi_{i,j}Y_iZ_j\\
			&+2r_2(1+r_1)\partial_{r_1r_2}\varphi_{i,j}Y_iZ_j,
		\end{split}
	\end{equation}
where we have used the facts that
	\begin{equation*}
		\sum_{l=1}^{k}\frac{\partial\theta_h}{\partial y_l}y_l=0,\quad\quad\sum_{l,s=1}^{k}\frac{\partial^2\theta_h}{\partial y_l\partial y_s}y_ly_s=0,
	\end{equation*}
	and
	\begin{equation*}
		\sum_{l=1}^{n-k}\frac{\partial\eta_h}{\partial z_l}z_l=0,\quad\quad\sum_{l,s=1}^{n-k}\frac{\partial^2\eta_h}{\partial z_l\partial z_s}z_lz_s=0.
	\end{equation*}	
	Putting \eqref{eq:lbo}, \eqref{eq:la}, \eqref{eq:xd} and \eqref{eq:xdx} into \eqref{eq:phi}, we get the following pointwise equations for all $\varphi_{i,j}$ ($i,j\in\mathbb{N}$) ,
	\begin{equation}\label{eq:ij1}
		\begin{split}
			&C_{n,p}W^{\alpha+1}\biggl(\partial_{r_1r_1}\varphi_{i,j}+\frac{k-1}{r_1}\partial_{r_1}\varphi_{i,j}-\frac{\lambda_i}{r_1^2}\varphi_{i,j} + \partial_{r_2r_2}\varphi_{i,j}+\frac{n-k-1}{r_2}\partial_{r_2}\varphi_{i,j}-\frac{\mu_j}{r_2^2}\varphi_{i,j}\biggr)\\
			&+C_{n,p}(p-2)W^{\alpha}\big[(1+r_1)^2\partial_{r_1r_1}\varphi_{i,j}+r_2^2\partial_{r_2r_2}\varphi_{i,j}+2(1+r_1)r_2\partial_{r_1r_2}\varphi_{i,j}\big]\\
			&+C_{n,p}(p-2)(k-1)r_1^{-1}W^{\alpha}\big[(1+r_1)\partial_{r_1}\varphi_{i,j}+r_2\partial_{r_2}\varphi_{i,j}\big]\\
			&+(p_1^*-1)S^pr_1^{-1}W^{\alpha}\varphi_{i,j}=0,
		\end{split}
	\end{equation}
	where $\alpha=-\frac{n(p-2)+p}{2(p-1)}$.
	
	In what follows, we let $r:=(r_1,r_2)$ and $D:=(\partial_{r_1},\partial_{r_2})$ when dealing with binary function on $(r_1,r_2)$.
	Defining $R(r):=W^{\frac{1}{2}}=\big[(1+r_1)^2+r_2^2\big]^{\frac{1}{2}}$ and multiplying \eqref{eq:ij1} by $r_1^{k-1}r_2^{n-k-1}R^{\frac{p-2}{p-1}}$, we can write \eqref{eq:ij1} in divergence form as follows:
	\begin{equation}\label{eq:ij2}
		\begin{split}
			\mathcal{L}_{i,j}\varphi_{i,j}:={}&C_{n,p}\mathrm{div}\big(r_1^{k-1}r_2^{n-k-1}R^{2\beta+2}D\varphi_{i,j}+(p-2)r_1^{k-1}r_2^{n-k-1}R^{2\beta+2}(DR\cdot D\varphi_{i,j})DR\big)\\
			&-C_{n,p}\big(r_1^{k-3}r_2^{n-k-1}R^{2\beta+2}\lambda_i\varphi_{i,j}+r_1^{k-1}r_2^{n-k-3}R^{2\beta+2}\mu_j\varphi_{i,j}\big)\\
			&+(p_1^*-1)S^pr_1^{k-2}r_2^{n-k-1}R^{2\beta}\varphi_{i,j}=0,
		\end{split}
	\end{equation}
	where $\beta=\alpha+\frac{p-2}{2(p-1)}=-\frac{n(p-2)+2}{2(p-1)}$.
	
	Since we aim to proving the non-degeneracy of the linear equation \eqref{eq:phi}, we will look for solutions to \eqref{eq:ij2}.

	\textit{(a) The case $i=j=0$}.
	We know that the following function
	\begin{equation*}
		\psi_{0,0}:=\partial_{\lambda}v\big|_{\lambda=1,z'=0}=\frac{n-p}{p}W^{-\frac{n-p}{2(p-1)}}-\frac{n-p}{p-1}W^{-\frac{n-p}{2(p-1)}-1}\big[(1+r_1)r_1+r_2^2\big]
	\end{equation*}
	solves the equation
	\begin{equation*}
		\mathcal{L}_{0,0}\psi_{0,0}=0.
	\end{equation*}
	If there is a binary function $f\in C^{2}(\mathbb{R}^2)$ satisfying $\mathcal{L}_{0,0}f=0$, we will show that $f=c\psi_{0,0}$ for some constant $c$.
	Let $f=g\psi_{0,0}$ with $g=\frac{f}{\psi_{0,0}}$.
	Multiplying $\mathcal{L}_{0,0}f=0$ by $f$ and integrating on $\mathbb{R}^2$, we can obtain
	\begin{equation}\label{eq:00}
		\begin{split}
			&-C_{n,p}\int_{\mathbb{R}^2}r_1^{k-1}r_2^{n-k-1}R^{2\beta+2}|Df|^2+(p-2)r_1^{k-1}r_2^{n-k-1}R^{2\beta+2}(DR\cdot Df)^2\,\de r\\
			&-C_{n,p}\int_{\mathbb{R}^2}r_1^{k-3}r_2^{n-k-1}R^{2\beta+2}\lambda_0 f^2\,\de r-C_{n,p}\int_{\mathbb{R}^2}r_1^{k-1}r_2^{n-k-3}R^{2\beta+2}\mu_0 f^2\,\de r\\
			&+(p_1^*-1)S^p\int_{\mathbb{R}^2}r_1^{k-2}r_2^{n-k-1}R^{2\beta}f^2=0.
		\end{split}
	\end{equation}
	Recall that $f=g\psi_{0,0}$, the first term in the left-hand side of \eqref{eq:00} equals to
	\begin{equation}\label{eq:001}
		\begin{split}
			&-C_{n,p}\bigg(\int_{\mathbb{R}^2}g^2r_1^{k-1}r_2^{n-k-1}R^{2\beta+2}|D\psi_{0,0}|^2+(p-2)g^2r_1^{k-1}r_2^{n-k-1}R^{2\beta+2}(DR\cdot D\psi_{0,0})^2\,\de r\\
			&+\int_{\mathbb{R}^2}\psi_{0,0}^2r_1^{k-1}r_2^{n-k-1}R^{2\beta+2}|Dg|^2+(p-2)\psi_{0,0}^2r_1^{k-1}r_2^{n-k-1}R^{2\beta+2}(DR\cdot Dg)^2\,\de r\\
			&+2\int_{\mathbb{R}^2}g\psi_{0,0}r_1^{k-1}r_2^{n-k-1}R^{2\beta+2}\big(Dg\cdot D\psi_{0,0}+(p-2)(DR\cdot Dg)(DR\cdot D\psi_{0,0})\big)\,\de r\bigg)\\
			&=:-C_{n,p}(J_1+J_2+J_3).
		\end{split}
	\end{equation}
	Define $A(r):=r_1^{k-1}r_2^{n-k-1}R^{2\beta+2}D\psi_{0,0}+(p-2)r_1^{k-1}r_2^{n-k-1}R^{2\beta+2}(DR\cdot D\psi_{0,0})DR$, then we can integrate by parts and get
	\begin{equation}\label{eq:002}
		\begin{split}
			J_3=&\int_{\mathbb{R}^2}\psi_{0,0}A(r)\cdot D(g^2)\,\de r
			=-\int_{\mathbb{R}^2}g^2A(r)\cdot D\psi_{0,0}\,\de r-\int_{\mathbb{R}^2}g^2\psi_{0,0}\mathrm{div}(A(r))\,\de r\\
			=&-J_1-\int_{\mathbb{R}^2}g^2\psi_{0,0}\mathrm{div}(A(r))\,\de r.
		\end{split}
	\end{equation}
	Combining \eqref{eq:00}, \eqref{eq:001} with \eqref{eq:002}, we can obtain
	\begin{equation*}
		-C_{n,p}J_2+\int_{\mathbb{R}^2}\mathcal{L}_{0,0}(\psi_{0,0})g^2\psi_{0,0}\,\de r=0.
	\end{equation*}
	Since $\mathcal{L}_{0,0}\psi_{0,0}=0$, it holds that
	\begin{equation*}
		\begin{split}
			0=&\int_{\mathbb{R}^2}\psi_{0,0}^2r_1^{k-1}r_2^{n-k-1}R^{2\beta+2}|Dg|^2+(p-2)\psi_{0,0}^2r_1^{k-1}r_2^{n-k-1}R^{2\beta+2}(DR\cdot Dg)^2\,\de r\\
			\geq&\int_{\mathbb{R}^2}\psi_{0,0}^2r_1^{k-1}r_2^{n-k-1}R^{2\beta+2}|Dg|^2\,\de r\geq0
		\end{split}
	\end{equation*}
	provided that $p\geq2$.
	If $p<2$, we also have
	\begin{equation*}
		\begin{split}
			0=&\int_{\mathbb{R}^2}\psi_{0,0}^2r_1^{k-1}r_2^{n-k-1}R^{2\beta+2}|Dg|^2+(p-2)\psi_{0,0}^2r_1^{k-1}r_2^{n-k-1}R^{2\beta+2}(DR\cdot Dg)^2\,\de r\\
			\geq&(p-1)\int_{\mathbb{R}^2}\psi_{0,0}^2r_1^{k-1}r_2^{n-k-1}R^{2\beta+2}|Dg|^2\,\de r\geq 0,
		\end{split}
	\end{equation*}
	since $|DR|=1$.
	Thus we have
	\begin{equation*}
		\int_{\mathbb{R}^2}\psi_{0,0}^2r_1^{k-1}r_2^{n-k-1}R^{2\beta+2}|Dg|^2\,\de r=0,
	\end{equation*}
	which implies $Dg=0$ a.e. in $\mathbb{R}^2$. That is, $f=c\psi_{0,0}$ for some constant $c$.

	\textit{(b) The case $i=0$ and $j=1$}.
	We know that the functions
	\begin{equation*}
		\partial_{z_i'}v\big|_{\lambda=1,z'=0}=\frac{n-p}{p-1}W^{-\frac{n-p}{2(p-1)}-1}z_i, \quad i=1,\dots ,n-k
	\end{equation*}
	solve the equation \eqref{eq:phi}.
	We define
	\begin{equation*}
		\psi_{0,1}:=W^{-\frac{n-p}{2(p-1)}-1}|z|=[(1+r_1)^2+r_2]^{-\frac{n-p}{2(p-1)}-1}r_2,
	\end{equation*}
	then $\partial_{z_i'}v\big|_{\lambda=1,z'=0}=\frac{n-p}{p-1}\psi_{0,1}\frac{z_i}{|z|}$.
	It holds that
	\begin{equation*}
		\mathcal{L}_{0,1}\psi_{0,1}=0,
	\end{equation*}
	since $\frac{z_i}{|z|}$ $(1\leq i\leq n-k)$ are all the functions which satisfy $-\Delta_{\mathbb{S}^{n-k-1}}G(\eta)=\mu_1 G(\eta)$.
	For any function $f\in C^{2}(\mathbb{R}^2)$ that satisfy $\mathcal{L}_{0,1}f=0$, we can multiply $\mathcal{L}_{0,1}f=0$ by $f$ and integrate on $\mathbb{R}^2$. By similar way as the case $i=j=0$, we can prove $f=c\psi_{0,1}$ for some constant $c$.

	\textit{(c) The case $j\geq2$}.
	We are going to prove the solution of the equation $\mathcal{L}_{i,j}(f)=0$ must vanishes everywhere.
	Assume on the contrary that there is a nontrivial function $f\in C^2(\mathbb{R}^2)$ satisfying $\mathcal{L}_{i,j}(f)=0$.
	Without loss of generality, we can assume $f>0$ in $\Omega\subset\mathbb{R}^2$. (If $f<0$ in $\mathbb{R}^2$, replace $f$ by $-f$).
	Since $\mathcal{L}_{0,1}\psi_{0,1}=0$, one has
	\begin{equation}\label{eq:020}
		\int_{\Omega}\left[\mathcal{L}_{i,j}(f)\psi_{0,1}-\mathcal{L}_{0,1}(\psi_{0,1})f\right]\,\de r=0.
	\end{equation}
	
	Noticing that $f=0$ on $\partial\Omega$, we can integrate by parts and get
	\begin{equation}\label{eq:021}
		\begin{split}
			&\quad \int_{\Omega}\Big[\mathrm{div}(r_1^{k-1}r_2^{n-k-1}R^{2\beta+2}Df)\psi_{0,1}-\mathrm{div}(r_1^{k-1}r_2^{n-k-1}R^{2\beta+2}D\psi_{0,1})f\Big]\,\de r\\
			=&\int_{\partial\Omega}r_1^{k-1}r_2^{n-k-1}R^{2\beta+2}\psi_{0,1}Df\cdot\nu\,\de\sigma - \int_{\Omega}r_1^{k-1}r_2^{n-k-1}R^{2\beta+2}Df\cdot D\psi_{0,1}\,\de r\\
			&-\int_{\partial\Omega}r_1^{k-1}r_2^{n-k-1}R^{2\beta+2}fD\psi_{0,1}\cdot\nu \,\de\sigma + \int_{\Omega}r_1^{k-1}r_2^{n-k-1}R^{2\beta+2}Df\cdot D\psi_{0,1}\,\de r\\
			=&\int_{\partial\Omega}r_1^{k-1}r_2^{n-k-1}R^{2\beta+2}\psi_{0,1}Df\cdot\nu\,\de\sigma,
		\end{split}
	\end{equation}
	where $\nu=-\frac{Df}{|Df|}$ is the unit outer normal vector of $\partial\Omega$.
	It also holds that
	\begin{equation}\label{eq:022}
		\begin{split}
			\int_{\Omega}\Big[\mathrm{div}(r_1^{k-1}&r_2^{n-k-1}R^{2\beta+2}(DR\cdot Df)DR)\psi_{0,1}-\mathrm{div}(r_1^{k-1}r_2^{n-k-1}R^{2\beta+2}(DR\cdot D\psi_{0,1})DR)f\Big]\,\de r\\
			=&\int_{\partial\Omega}r_1^{k-1}r_2^{n-k-1}R^{2\beta+2}\big((DR\cdot Df)(DR \cdot\nu)\psi_{0,1}-(DR\cdot D\psi_{0,1})(DR \cdot\nu)f\big)\,\de\sigma\\
			&-\int_{\Omega}r_1^{k-1}r_2^{n-k-1}R^{2\beta+2}\big((DR\cdot Df)(DR \cdot D\psi_{0,1})-(DR \cdot D\psi_{0,1})(DR\cdot Df)\big)\,\de r\\
			=&\int_{\partial\Omega}r_1^{k-1}r_2^{n-k-1}R^{2\beta+2}(DR\cdot Df)(DR \cdot\nu)\psi_{0,1}\,\de\sigma.
		\end{split}
	\end{equation}
	Combining \eqref{eq:021}, \eqref{eq:022} with \eqref{eq:020}, one can obtain
	\begin{equation}\label{eq:02c}
		\begin{split}
			-&C_{n,p}\int_{\partial\Omega}r_1^{k-1}r_2^{n-k-1}R^{2\beta+2}\psi_{0,1}\bigg(|Df|+(p-2)(DR\cdot Df)\bigg(DR\cdot\frac{Df}{|Df|}\bigg)\bigg)\,\de\sigma\\
			=&(\mu_j-\mu_1)\int_{\Omega}r_1^{k-1}r_2^{n-k-3}R^{2\beta+2}f\psi_{0,1}\,\de r+(\lambda_i-\lambda_0)\int_{\Omega}r_1^{k-3}r_2^{n-k-1}R^{2\beta+2}f\psi_{0,1}\,\de r\\
			\geq&(\mu_j-\mu_1)\int_{\Omega}r_1^{k-1}r_2^{n-k-3}R^{2\beta+2}f\psi_{0,1}\,\de r>0,
		\end{split}
	\end{equation}
	since $\psi_{0,1}>0$ in $\mathbb{R}^2$.
	If $p\geq2$, we can obtain
	\begin{equation*}
			0>-C_{n,p}\int_{\partial\Omega}r_1^{k-1}r_2^{n-k-1}R^{2\beta+2}|Df|\,\de \sigma\geq(\mu_j-\mu_1)\int_{\Omega}r_1^{k-1}r_2^{n-k-3}R^{2\beta+2}f\psi_{0,1}\,\de r>0,
	\end{equation*}
	which is a contradiction.
	If $p<2$, by H$\rm{\ddot{o}}$lder's inequality, we also have that
		\begin{equation*}
			0>-C_{n,p}(p-1)\int_{\partial\Omega}r_1^{k-1}r_2^{n-k-1}R^{2\beta+2}|Df|\,\de \sigma\geq(\mu_j-\mu_1)\int_{\Omega}r_1^{k-1}r_2^{n-k-3}R^{2\beta+2}f\psi_{0,1}\,\de r>0,
	\end{equation*}
	where we have used $|DR|=1$.

	If $\Omega=\mathbb{R}^2$, then \eqref{eq:02c} becomes
	\begin{equation*}
		\begin{split}
			0=&(\mu_j-\mu_1)\int_{\Omega}r_1^{k-1}r_2^{n-k-3}R^{2\beta+2}f\psi_{0,1}\,\de r+(\lambda_i-\lambda_0)\int_{\Omega}r_1^{k-3}r_2^{n-k-1}R^{2\beta+2}f\psi_{0,1}\,\de r\\
			\geq&(\mu_j-\mu_1)\int_{\Omega}r_1^{k-1}r_2^{n-k-3}R^{2\beta+2}f\psi_{0,1}\,\de r>0,
		\end{split}
	\end{equation*}
	which is also a contradiction.

	\textit{(d) The case $i\geq1$}.
	In similar way as the case $j\geq2$, we can prove that the solution of the equation $\mathcal{L}_{i,j}(f)=0$ must vanishes everywhere.
	Assume on the contrary that there is a nontrivial function $f\in C^2(\mathbb{R}^2)$ satisfying $\mathcal{L}_{i,j}(f)=0$.
	Without loss of generality, we can assume $f>0$ in $\Omega\subset\mathbb{R}^2$. (If $f<0$ in $\mathbb{R}^2$, replace $f$ by $-f$).

	Since $\psi_{0,0}$ is not always positive on $\mathbb{R}^2$, we use $\psi_{0,0}^+$ instead of $\psi_{0,0}$ in this case.

	We can assume that $\Omega\cap{\mathrm{supp}\psi_{0,0}^+}\not=\emptyset$, where $\mathrm{supp}\psi_{0,0}^+$ denotes the support set of $\psi_{0,0}^+$.
	Indeed, if $\Omega\cap{\mathrm{supp}\psi_{0,0}^+}=\emptyset$, then $\Omega\cap{\mathrm{supp}\psi_{0,0}^-}\not=\emptyset$.
	Thus we only need to use $\psi_{0,0}^-$ instead of $\psi_{0,0}^+$.

	Since $\psi_{0,0}^+$ also satisfies $\mathcal{L}_{0,0}(\psi_{0,0}^+)=0$ a.e. in $\mathbb{R}^2$, we have
	\begin{equation}\label{eq:100}
		\int_{\Omega}\left[\mathcal{L}_{i,j}(f)\psi_{0,0}^+-\mathcal{L}_{0,0}(\psi_{0,0}^+)f\right]\,\de r=0.
	\end{equation}
	Then, by similar way as the case $j\geq2$, we obtain
		\begin{equation*}
		\begin{split}
			&-C_{n,p}\int_{\partial\Omega}r_1^{k-1}r_2^{n-k-1}R^{2\beta+2}\psi_{0,0}^+\bigg(|Df|+(p-2)(DR\cdot Df)\bigg(DR\cdot\frac{Df}{|Df|}\bigg)\bigg)\,\de \sigma\\
			=&(\mu_j-\mu_0)\int_{\Omega}r_1^{k-1}r_2^{n-k-3}R^{2\beta+2}f\psi_{0,0}^+\,\de r+(\lambda_i-\lambda_0)\int_{\Omega}r_1^{k-3}r_2^{n-k-1}R^{2\beta+2}f\psi_{0,0}^+\,\de r\\
			\geq&(\lambda_i-\lambda_0)\int_{\Omega}r_1^{k-3}r_2^{n-k-1}R^{2\beta+2}f\psi_{0,0}^+\,\de r>0,
		\end{split}
	\end{equation*}
	which leads to a contradiction due to the same reason as the case $j\geq2$. This concludes our proof of Theorem \ref{thm:ef}.
	\end{proof}

		\subsection{Poincar\'e and spectral inequalities}
		As a consequence of Theorem \ref{thm:ef}, the functions orthogonal to the tangent space $T_v\mathcal{M}$ enjoy a quantitative improvement in the Poincar\'e{}
		inequality induced by $\mathcal{L}_v$.
		More precisely, the following holds:
		\begin{proposition}\label{prop:sg}
			Given $1 < p < n$ and any extremal function $v\in \mathcal M$, there exists a constant $\lambda = \lambda(n,p,k) > 0$ such that for any function $\varphi\in L^2(\mathbb{R}^n;|y|^{-1}v^{p_1^*-2})$ orthogonal to the tangent space $T_v\mathcal{M}$,
			\begin{equation*}
				\begin{split}
					\int_{\mathbb{R}^n} |Dv|^{p-2}|D\varphi|^2 + (p-2)|Dv|^{p-4}|Dv &\cdot D\varphi|^2 \,\de x\\
					&\geq{}\bigl( (p_1^*-1)S^p + 2\lambda \bigr) \| v \|_{L^{p_1^*}(\mathbb{R}^n;|y|^{-1})}^{p-p_1^*} \int_{\mathbb{R}^n} |y|^{-1}v^{p_1^*-2}|\varphi|^2 \,\de x,
				\end{split}
			\end{equation*}
			where $S=S(n,p,k)$ is the sharp constant in the Hardy-Sobolev-Maz'ya inequalities \eqref{eq:hsm}.
		\end{proposition}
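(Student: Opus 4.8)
The plan is to read off Proposition \ref{prop:sg} from Theorem \ref{thm:ef} and Proposition \ref{prop:ds} by the Rayleigh-quotient (min--max) characterization of the eigenvalues of $\mathcal L_v$. By Theorem \ref{thm:ef} the first two eigenspaces of $\mathcal L_v$ on $L^2(\mathbb{R}^n;|y|^{-1}v^{p_1^*-2})$ are $E_1=\mathrm{span}\{v\}$ and $E_2=\mathrm{span}\{\partial_\lambda v,\partial_{z_1'}v,\dots,\partial_{z_{n-k}'}v\}$, so that $T_v\mathcal M=E_1\oplus E_2$; hence a function $\varphi$ orthogonal to $T_v\mathcal M$ in $\langle\cdot,\cdot\rangle_*$ lies in the $\langle\cdot,\cdot\rangle_*$-orthogonal complement of the first two eigenspaces, and its Rayleigh quotient for $\mathcal L_v$ is bounded below by the \emph{third} eigenvalue $\alpha_3$. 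The one point of substance is that the gap $\alpha_3>\alpha_2$ is strict: by Proposition \ref{prop:ds} the spectrum of $\mathcal L_v$ is discrete with no finite accumulation point (so in particular each eigenvalue has finite multiplicity), hence the set of eigenvalues strictly exceeding $\alpha_2$ is nonempty, its infimum $\alpha_3$ is attained and satisfies $\alpha_3>\alpha_2$.

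First I would note that the left-hand side of the asserted inequality is exactly $\langle\mathcal L_v\varphi,\varphi\rangle=Q_v[\varphi]$, the quadratic form associated with $\mathcal L_v$; from $|Dv\cdot D\varphi|^2\le|Dv|^2|D\varphi|^2$ and $p>1$ one sees that $Q_v[\varphi]$ is comparable to $\int_{\mathbb{R}^n}|Dv|^{p-2}|D\varphi|^2\,\de x$ (with constants $\min\{1,p-1\}$ and $\max\{1,p-1\}$), so $Q_v[\varphi]\ge0$, and $Q_v[\varphi]$ is finite precisely when $\varphi$ belongs to the form domain $D^{1,2}(\mathbb{R}^n;|Dv|^{p-2})$, which by \eqref{eq:pti} embeds into $L^2(\mathbb{R}^n;|y|^{-1}v^{p_1^*-2})$. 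If $Q_v[\varphi]=+\infty$ the inequality is trivial, so we may assume $\varphi\in D^{1,2}(\mathbb{R}^n;|Dv|^{p-2})$. Then, by Proposition \ref{prop:ds} together with the compact embedding of Proposition \ref{prop:ce}, $\mathcal L_v$ is a self-adjoint operator on $L^2(\mathbb{R}^n;|y|^{-1}v^{p_1^*-2})$ with compact resolvent and a $\langle\cdot,\cdot\rangle_*$-orthonormal basis of eigenfunctions, and the minimum of the Rayleigh quotient $Q_v[\varphi]/\langle\varphi,\varphi\rangle_*$ over the $\langle\cdot,\cdot\rangle_*$-orthogonal complement of $E_1\oplus E_2$ equals $\alpha_3$. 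Since $\varphi\perp T_v\mathcal M=E_1\oplus E_2$, this gives $Q_v[\varphi]\ge\alpha_3\,\langle\varphi,\varphi\rangle_*$.

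It then remains to write $\alpha_3=\bigl((p_1^*-1)S^p+2\lambda\bigr)\|v\|_{L^{p_1^*}(\mathbb{R}^n;|y|^{-1})}^{p-p_1^*}$ with $\lambda=\lambda(n,p,k)>0$. As in the proof of Theorem \ref{thm:ef}, the eigenvalues of $\mathcal L_v$ are invariant under translations of $z'$ and dilations of $\lambda$; and replacing $v$ by $av$ multiplies the coefficients $|Dv|^{p-2}$, $|Dv|^{p-4}$ of $Q_v$ and the weight $|y|^{-1}v^{p_1^*-2}$ by $|a|^{p-2}$ and $|a|^{p_1^*-2}$ respectively, so (keeping the same eigenfunctions) every eigenvalue of $\mathcal L_v$ scales like $|a|^{p-p_1^*}$, exactly as $\|v\|_{L^{p_1^*}(\mathbb{R}^n;|y|^{-1})}^{p-p_1^*}$ does. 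Hence $\alpha_3\big/\|v\|_{L^{p_1^*}(\mathbb{R}^n;|y|^{-1})}^{p-p_1^*}$ is the same number for every $v\in\mathcal M$ and depends only on $n,p,k$; by Theorem \ref{thm:ef} (which gives $\alpha_2=(p_1^*-1)S^p\|v\|_{L^{p_1^*}(\mathbb{R}^n;|y|^{-1})}^{p-p_1^*}$) together with the strict gap $\alpha_3>\alpha_2$, this number strictly exceeds $(p_1^*-1)S^p$, and I would let $2\lambda$ be the difference. Combining with $Q_v[\varphi]\ge\alpha_3\langle\varphi,\varphi\rangle_*$ gives
\begin{equation*}
	Q_v[\varphi]\ \ge\ \bigl((p_1^*-1)S^p+2\lambda\bigr)\|v\|_{L^{p_1^*}(\mathbb{R}^n;|y|^{-1})}^{p-p_1^*}\int_{\mathbb{R}^n}|y|^{-1}v^{p_1^*-2}|\varphi|^2\,\de x ,
\end{equation*}
which is the claim.

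I do not expect a genuine obstacle at this stage: the real work has already been done in Theorem \ref{thm:ef} --- the non-degeneracy of the Euler--Lagrange equation, i.e.\ the identification of the second eigenspace \emph{exactly} with $\mathrm{span}\{\partial_\lambda v,\partial_{z_1'}v,\dots,\partial_{z_{n-k}'}v\}$ --- and in Proposition \ref{prop:ds}, whose proof in turn rests on the new compact embedding of Proposition \ref{prop:ce} with the strong singular weight $|y|^{-1}$. The only items requiring any care here are the scaling bookkeeping of the homogeneity factor $\|v\|^{p-p_1^*}$ and the verification that $\alpha_3-\alpha_2>0$ strictly, which uses discreteness of the spectrum rather than mere self-adjointness of $\mathcal L_v$.
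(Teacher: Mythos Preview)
Your proposal is correct and follows exactly the approach implicit in the paper: the paper states Proposition~\ref{prop:sg} ``as a consequence of Theorem~\ref{thm:ef}'' without further argument, and your Rayleigh-quotient derivation from $T_v\mathcal M=E_1\oplus E_2$ together with the discreteness in Proposition~\ref{prop:ds} (giving the strict gap $\alpha_3>\alpha_2$) is precisely how one fills in that consequence. Your additional care about the form domain and the scaling bookkeeping for the homogeneity factor $\|v\|_{L^{p_1^*}(\mathbb{R}^n;|y|^{-1})}^{p-p_1^*}$ makes explicit what the paper leaves to the reader.
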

		
		Observe that $\frac{p_1^*}{p_1^*-1}=\frac{p(n-1)}{n(p-1)}$, then it is easy to verify that, $|y|^{-1}\bigl(v^{p_1^*-2}\xi\bigr)^{p_1^*/(p_1^*-1)}\in L^1(\mathbb{R}^n)$ for any $\xi\in T_v\mathcal{M}$. By H$\rm{\ddot{o}}$lder's inequality, for any $u\in D^{1,p}(\mathbb{R}^n)$ and $\xi\in T_v\mathcal{M}$, one has
		\begin{equation*}
			\int_{\mathbb{R}^n} |y|^{-1}v^{p_1^*-2} \xi u \,\de x\leq{}\biggl(\int_{\mathbb{R}^n}|y|^{-1}\bigl(v^{p_1^*-2}\xi\bigr)^{\frac{p_1^*}{p_1^*-1}}\,\de x\biggr)^{\frac{p_1^*-1}{p_1^*}} \cdot \biggl(\int_{\mathbb{R}^n}|y|^{-1}u^{p_1^*}\,\de x\biggr)^{\frac{1}{p_1^*}}<\infty.
		\end{equation*}
		Thus, we can define  the ``orthogonality to $T_v\mathcal{M}$" for functions in $D^{1,p}(\mathbb{R}^n)$ as follows.
		\begin{definition}\label{def:o}
			Let $1<p<n$. For any functions $u\in D^{1,p}(\mathbb{R}^n)$ and $v\in \mathcal M$,  we say that $u$ is orthogonal to the tangent space $T_v\mathcal{M}$ in $L^2(\mathbb{R}^n;|y|^{-1}v^{p_1^*-2})$,  if and only if
			\begin{equation*}
				\int_{\mathbb{R}^n} |y|^{-1}v^{p_1^*-2} w u \,\de x ={} 0, \quad\quad\forall\, w\in T_v\mathcal{M}.
			\end{equation*}
		\end{definition}
		
		The main result of this section is the following spectral gap estimate.
		\begin{proposition}\label{prop:sgc}
			Let $S=S(n,p,k)>0$ be the sharp constant in the Hardy-Sobolev-Maz'ya inequalities \eqref{eq:hsm}, and $\lambda=\lambda(n,p,k)>0$ be the same constant as specified in Proposition \ref{prop:sg}.
			For any $\gamma_0>0$ and $C_1>0$, there exists $\bar{\delta}=\bar{\delta}(n,p,\gamma_0,C_1)>0$ such that the following holds:
			Let $\varphi\in D^{1,p}(\mathbb{R}^n)$ be a function orthogonal to the tangent space $T_v\mathcal{M}$ in $L^2(\mathbb{R}^n,|y|^{-1}v^{p_1^*-2})$ satisfying
			\begin{equation*}
				\|\varphi\|_{D^{1,p}(\mathbb{R}^n)}\leq\bar{\delta}.
			\end{equation*}
			Let	$w=w(Dv,Dv+D\varphi):\mathbb{R}^n\rightarrow\mathbb{R}^n$ be defined as in Lemma \ref{le:FZ 2.1} with $(x_1,x_2):=(Dv, D\varphi)$.
			Then \\%\noindent(\romannumeral1)
			(1) when $1<p\leq\frac{2n}{n+1}$, we have
			\begin{equation*}
				\begin{split}
					\int_{\mathbb{R}^n} \Big(|Dv|^{p-2}|D\varphi|^2 + (p-2)|w|^{p-2}(|D(v&+\varphi)|-|Dv|)^2 +\gamma_0\min\bigl\{|D\varphi|^p,|Dv|^{p-2}|D\varphi|^2\bigr\}\Big)\,\de x\\
					\geq{}&\bigl((p_1^*-1)S^p+\lambda\bigr)\|v\|_{L^{p_1^*}(\mathbb{R}^n;|y|^{-1})}^{p-p_1^*}\int_{\mathbb{R}^n}|y|^{-1}\frac{(v+C_1|\varphi)^{p_1^*}}{v^2+|\varphi|^2}|\varphi|^2\,\de x;
				\end{split}
			\end{equation*}
			%where
			%\begin{equation*}
			%w=w(Dv,Dv+D\varphi):=\left\{
			%\begin{aligned}
			%&\biggl(\frac{|D(v+\varphi)|}{(2-p)|D(v+\varphi)|+(p-1)|Dv|}\biggr)^{\frac{1}{p-2}}Dv\quad&\text{if }|Dv|<|D(v+\varphi)|,\\	%&\quad\quad\quad\quad\quad\quad\quad\quad Dv&\text{if }|Dv|\geq|D(v+\varphi)|,
			%\end{aligned}\right.
			%\end{equation*}	
			%\noindent(\romannumeral2)
			(2) when $\frac{2n}{n+1}<p<2$, we have
			\begin{equation*}
				\begin{split}
					\int_{\mathbb{R}^n}\Big(|Dv|^{p-2}|D\varphi|^2 + (p-2)|w|^{p-2}(|D(v+\varphi)|-|Dv|&)^2 +\gamma_0\min\bigl\{|D\varphi|^p,|Dv|^{p-2}|D\varphi|^2\bigr\}\Big)\,\de x\\
					\geq{}&\bigl((p_1^*-1)S^p+\lambda\bigr)\|v\|_{L^{p_1^*}(\mathbb{R}^n;|y|^{-1})}^{p-p_1^*}\int_{\mathbb{R}^n}|y|^{-1}v^{p_1^*-2}|\varphi|^2\,\de x;
				\end{split}
			\end{equation*}
			%\noindent(\romannumeral3)
			(3) when $2\leq p<n$, we have
			\begin{equation*}
				\begin{split}
					\int_{\mathbb{R}^n}\Big(|Dv|^{p-2}|D\varphi|^2 + (p-2)|w|^{p-2}&(|D(v+\varphi)|-|Dv|)^2 \Big)\,\de x \\
					\geq{}&\bigl((p_1^*-1)S^p+\lambda\bigr)\|v\|_{L^{p_1^*}(\mathbb{R}^n;|y|^{-1})}^{p-p_1^*}\int_{\mathbb{R}^n}|y|^{-1}v^{p_1^*-2}|\varphi|^2\,\de x.
				\end{split}
			\end{equation*}
			%where $w:\mathbb{R}^n\rightarrow\mathbb{R}^n$ is defined as in Lemma \ref{le:FZ 2.1} by
			%\begin{equation*}
			%w=w(Dv,Dv+D\varphi):=\left\{
			%\begin{aligned}
			%&\quad\quad\quad\quad\quad\quad Dv&\text{if }|Dv|<|D(v+\varphi)|\\
			%&\biggl(\frac{|D(v+\varphi)|}{|Dv|}\biggr)^{\frac{1}{p-2}}D(v+\varphi)\quad&\text{if }|Dv|\geq|D(v+\varphi)|.
			%\end{aligned}\right.
			%\end{equation*}
		\end{proposition}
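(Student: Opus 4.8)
The plan is to run a Bianchi--Egnell-type blow-up argument: argue by contradiction, and feed the compactness results of Section~\ref{sec:cr}, the spectral gap of Proposition~\ref{prop:sg} and the pointwise algebraic structure of Lemma~\ref{le:FZ 2.1} into the scheme. Assume the assertion fails for some $\gamma_0, C_1 > 0$; then there are $\varphi_i \in D^{1,p}(\mathbb{R}^n)$, not identically zero, orthogonal to $T_v\mathcal{M}$ in $L^2(\mathbb{R}^n;|y|^{-1}v^{p_1^*-2})$, with $\|\varphi_i\|_{D^{1,p}(\mathbb{R}^n)} \to 0$, violating the relevant inequality (1), (2) or (3). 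I would normalize by setting
\[
\varepsilon_i := \Bigl( \int_{\mathbb{R}^n} \bigl(|Dv| + |D\varphi_i|\bigr)^{p-2} |D\varphi_i|^2 \, \de x \Bigr)^{1/2}, \qquad \phi_i := \varphi_i / \varepsilon_i ,
\]
so that $\int_{\mathbb{R}^n} (|Dv| + \varepsilon_i|D\phi_i|)^{p-2} |D\phi_i|^2 \, \de x = 1$. One checks $\varepsilon_i \to 0$ (for $p>2$, bound $(|Dv|+|D\varphi_i|)^{p-2}$ by a multiple of $|Dv|^{p-2}+|D\varphi_i|^{p-2}$ and note $|Dv|^{p-2}\in L^{p/(p-2)}(\mathbb{R}^n)$; the cases $p=2$ and $p<2$ are immediate), and the Hölder computation behind \eqref{eq:hb} shows $\{\phi_i\}$ is bounded in $D^{1,p}(\mathbb{R}^n)$.

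Next I would extract a limit. If $1 < p \le \frac{2n}{n+1}$, the normalization is exactly the hypothesis of Lemma~\ref{le:cl}, which yields, after passing to a subsequence, a weak limit $\phi \in D^{1,p}(\mathbb{R}^n) \cap L^2(\mathbb{R}^n;|y|^{-1}v^{p_1^*-2})$ together with
\[
\int_{\mathbb{R}^n} |y|^{-1} \frac{(v + C_1\varepsilon_i\phi_i)^{p_1^*}}{v^2 + |\varepsilon_i\phi_i|^2} |\phi_i|^2 \, \de x \longrightarrow \int_{\mathbb{R}^n} |y|^{-1} v^{p_1^*-2} |\phi|^2 \, \de x .
\]
If $2 \le p < n$, the normalization forces $\int_{\mathbb{R}^n} |Dv|^{p-2}|D\phi_i|^2 \, \de x \le 1$, so Proposition~\ref{prop:ce} gives strong convergence $\phi_i \to \phi$ in $L^2(\mathbb{R}^n;|y|^{-1}v^{p_1^*-2})$; if $\frac{2n}{n+1} < p < 2$ (so $p_1^* > 2$), the same strong convergence follows from the $D^{1,p}$ bound and \eqref{eq:hsm}, since $v^{p_1^*} \in L^1(\mathbb{R}^n;|y|^{-1})$ gives uniform smallness of $\int_{Y_\rho} |y|^{-1} v^{p_1^*-2}|\phi_i|^2 \, \de x$ and of $\int_{\mathbb{R}^n \setminus B(0,\rho^{-1})} |y|^{-1} v^{p_1^*-2}|\phi_i|^2 \, \de x$, while local Rellich--Kondrachov compactness handles the rest. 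In every case the orthogonality passes to the limit: writing $\int_{\mathbb{R}^n} |y|^{-1} v^{p_1^*-2} \xi \phi_i \, \de x$ for $\xi \in T_v\mathcal{M}$ as a duality pairing between $L^{p_1^*/(p_1^*-1)}$ and $L^{p_1^*}(\mathbb{R}^n;|y|^{-1})$ — the first membership being the integrability noted before Definition~\ref{def:o} — weak $L^{p_1^*}(\mathbb{R}^n;|y|^{-1})$ convergence of $\phi_i$ gives $\phi \perp T_v\mathcal{M}$. Also $\varepsilon_i^{-2}$ times the right-hand side of the violated inequality converges to $\bigl((p_1^*-1)S^p + \lambda\bigr)\|v\|_{L^{p_1^*}(\mathbb{R}^n;|y|^{-1})}^{p-p_1^*} \int_{\mathbb{R}^n} |y|^{-1} v^{p_1^*-2}|\phi|^2 \, \de x$.

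For the left-hand side I would invoke Lemma~\ref{le:FZ 2.1} twice. Its pointwise lower bound shows the integrand of the left-hand side (after adding the nonnegative $\gamma_0$-term when $p < 2$) dominates $c(n,p,k,\gamma_0)(|Dv| + |D\varphi_i|)^{p-2}|D\varphi_i|^2 \ge 0$; integrating and using the normalization, $\varepsilon_i^{-2}$ times the left-hand side is $\ge c(n,p,k,\gamma_0) > 0$. Moreover Lemma~\ref{le:FZ 2.1} identifies $\int_{\mathbb{R}^n} |Dv|^{p-2}|D\varphi_i|^2 + (p-2)|w_i|^{p-2}(|D(v+\varphi_i)| - |Dv|)^2 \, \de x$ with $\tfrac{2}{p}$ times the second-order Bregman remainder $\int_{\mathbb{R}^n} \bigl(|D(v+\varphi_i)|^p - |Dv|^p - p|Dv|^{p-2}Dv \cdot D\varphi_i\bigr) \, \de x$, which is nonnegative, convex and lower semicontinuous in $D\varphi_i$ and whose $\varepsilon_i^{-2}$-rescaling converges, as $\varepsilon_i \to 0$, to the Hessian quadratic form; hence
\[
\liminf_{i \to \infty} \varepsilon_i^{-2}\, (\text{left-hand side})_i \ \ge \ \int_{\mathbb{R}^n} |Dv|^{p-2}|D\phi|^2 + (p-2)|Dv|^{p-4}(Dv \cdot D\phi)^2 \, \de x .
\]
Applying Proposition~\ref{prop:sg} to $\phi \perp T_v\mathcal{M}$ bounds the right side below by $\bigl((p_1^*-1)S^p + 2\lambda\bigr)\|v\|_{L^{p_1^*}(\mathbb{R}^n;|y|^{-1})}^{p-p_1^*} \int_{\mathbb{R}^n} |y|^{-1}v^{p_1^*-2}|\phi|^2 \, \de x$. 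Passing to the limit in the violated inequality then forces simultaneously $\lambda \int_{\mathbb{R}^n} |y|^{-1}v^{p_1^*-2}|\phi|^2 \, \de x \le 0$ (hence $\phi = 0$) and $c(n,p,k,\gamma_0) \le \bigl((p_1^*-1)S^p + \lambda\bigr)\|v\|^{p-p_1^*}_{L^{p_1^*}(\mathbb{R}^n;|y|^{-1})} \int_{\mathbb{R}^n}|y|^{-1}v^{p_1^*-2}|\phi|^2 \, \de x = 0$, which is absurd; this contradiction proves the proposition.

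The hardest step is the passage to the limit in the genuinely nonlinear term $(p-2)\varepsilon_i^{-2}|w_i|^{p-2}(|D(v+\varepsilon_i\phi_i)| - |Dv|)^2$: since only weak $D^{1,p}$ convergence of $\phi_i$ is available (no a.e.\ convergence of the gradients), and since for $p < 2$ the separate pieces $\int_{\mathbb{R}^n} |Dv|^{p-2}|D\phi_i|^2 \, \de x$ may diverge, the coercivity, the lower semicontinuity and the correct Hessian limit all have to be extracted from the single pointwise identity of Lemma~\ref{le:FZ 2.1} — which is precisely why the statement packages the left-hand side through $|w|$ rather than through the raw anisotropic form $|Dv|^{p-4}(Dv \cdot D\varphi)^2$. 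A second delicate point is compactness in the range $\frac{2n}{n+1} < p < 2$, where the weighted $D^{1,2}(\mathbb{R}^n;|Dv|^{p-2})$ bound underlying Proposition~\ref{prop:ce} is not at hand and one must instead use $p_1^* > 2$ and the decay of $v$; and in the range $1 < p \le \frac{2n}{n+1}$ the target weighted $L^2$-norm on the right is recovered only through the Orlicz-type quantity produced by Lemma~\ref{le:cl}, which is why case (1) is formulated with $\tfrac{(v + C_1\varphi)^{p_1^*}}{v^2 + |\varphi|^2}|\varphi|^2$ in place of $v^{p_1^*-2}|\varphi|^2$.
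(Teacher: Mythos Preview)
Your overall architecture (contradiction, normalization, compactness via Lemma~\ref{le:cl}/Proposition~\ref{prop:ce}, orthogonality in the limit, then Proposition~\ref{prop:sg}) matches the paper, and your coercivity bound $\varepsilon_i^{-2}(\text{LHS})_i\ge c(n,p,k,\gamma_0)>0$ is exactly \eqref{eq:c1f3}. The gap is in your key step
\[
\liminf_{i\to\infty}\varepsilon_i^{-2}(\text{LHS})_i\ \ge\ \int_{\mathbb{R}^n}|Dv|^{p-2}|D\phi|^2+(p-2)|Dv|^{p-4}(Dv\cdot D\phi)^2\,\de x.
\]
First, Lemma~\ref{le:FZ 2.1} does \emph{not} identify the two LHS terms with $\tfrac{2}{p}$ times the Bregman remainder $\int(|D(v+\varphi)|^p-|Dv|^p-p|Dv|^{p-2}Dv\cdot D\varphi)$; it only gives an inequality, with the Bregman remainder on the larger side (plus an extra positive term). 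So you cannot replace the LHS by the Bregman remainder from below. Second, even if you could, weak lower semicontinuity of a convex functional in $D\varphi$ gives nothing for $\varepsilon_i^{-2}R(\varepsilon_i\phi_i)$: since $\varepsilon_i\phi_i\rightharpoonup 0$, lsc only yields $\liminf R(\varepsilon_i\phi_i)\ge R(0)=0$, and there is no general principle producing the Hessian of $R$ at $0$ evaluated at the weak limit $\phi$ when only $\phi_i\rightharpoonup\phi$ in $D^{1,p}$ is available.

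What the paper actually does to get that $\liminf$ is not a soft convexity argument but a hard gradient-level analysis: split $B(0,R)=\mathcal R_{i,R}\cup\mathcal S_{i,R}$ with $\mathcal S_i=\{|D\varphi_i|>2|Dv|\}$; use the $\gamma_0$-term (for $p<2$) together with \eqref{eq:lb} to show $\int_{\mathcal S_{i,R}}|Dv|^p\le C\varepsilon_i^2$, hence $|\mathcal S_{i,R}|\to 0$; on $\mathcal R_{i,R}$ the bound $|D\varphi_i|\le 2|Dv|$ upgrades the $L^p$ control of $D\hat\varphi_i$ to a \emph{uniform weighted $L^2$} bound, so (up to a subsequence) $D\hat\varphi_i\chi_{\mathcal R_{i,R}}\rightharpoonup D\hat\varphi\chi_{B(0,R)}$ in $L^2$. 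Then one writes $\hat\varphi_i=\hat\varphi+\psi_i$, expands the integrand, uses a.e.\ convergence $|w_i|\to|Dv|$ (from $D\varphi_i\to 0$ in $L^p$) and dominated convergence to kill the cross terms, and drops the nonnegative pure-$\psi_i$ piece (cf.\ \eqref{eq:c1f8}). This is the mechanism that produces \eqref{eq:c1f9}; without the domain splitting and the $L^2$ upgrade on $\mathcal R_{i,R}$ --- which is precisely what the $\gamma_0\min\{\cdot\}$ term buys you --- the liminf inequality does not follow.
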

		\begin{proof}
			We can assume that $\|v\|_{L^{p_1^*}(\mathbb{R}^n;|y|^{-1})}=1$, as the general case follows from a scaling.
			By replacing $\varphi$ by $|\varphi|$, it suffices to consider the case $\varphi\geq0$.
			We argue by contradiction.
			
			\medskip	
			%$\bullet$
			\textit{ (1) The case $1<p\leq\frac{2n}{n+1}$.}
			%Suppose the inequality in $(i)$ of Proposition \ref{prop:sgc} does not hold,
			Suppose the desired inequality is not true for all test functions $\varphi$, 	then there exists a sequence $\{\varphi_i\}_{i\in \mathbb N^+}$ satisfying $0\not\equiv\varphi_i\rightarrow0$ in $D^{1,p}(\mathbb{R}^n)$ with $\varphi_i$ orthogonal to $T_v\mathcal{M}$, such that
			\begin{equation}\label{eq:c1f}
				\begin{split}
					\int_{\mathbb{R}^n}\Big(|Dv|^{p-2}|D\varphi_i|^2 + (p-2)|w_i|^{p-2}(D(v&+\varphi_i)|-|Dv|)^2 +\gamma_0\min\bigl\{|D\varphi_i|^p,|Dv|^{p-2}|D\varphi_i|^2\bigr\}\Big)\,\de x\\
					<{}&\bigl((p_1^*-1)S^p+\lambda\bigr)\int_{\mathbb{R}^n}|y|^{-1}\frac{(v+C_1|\varphi_i)^{p_1^*}}{v^2+|\varphi_i|^2}|\varphi_i|^2\,\de x,
				\end{split}
			\end{equation}
			where $w_i$ is determined by $\varphi_i$ and $v$ as in the statement of Proposition \ref{prop:sgc}.
			
			Let
			\begin{equation*}
				\varepsilon_i:=\biggl(\int_{\mathbb{R}^n}\bigl(|Dv|+|D\varphi_i|\bigr)^{p-2}|D\varphi_i|^2\,\de x\biggr)^{\frac{1}{2}},
			\end{equation*}
			and set $\hat{\varphi}_i:=\frac{\varphi_i}{\varepsilon_i}$.
			Since $p<2$, it holds
			\begin{equation*}
				\int_{\mathbb{R}^n}\bigl(|Dv|+|D\varphi_i|\bigr)^{p-2}|D\varphi_i|^2\,\de x\leq\int_{\mathbb{R}^n}|D\varphi_i|^{p-2}|D\varphi_i|^2\,\de x=\int_{\mathbb{R}^n}|D\varphi_i|^p\,\de x\rightarrow 0,
			\end{equation*}
			and hence $\varepsilon_i\rightarrow0$.
			For any $R>1$, set
			\begin{equation*}
				\mathcal{R}_i:=\{2|Dv|\geq|D\varphi_i|\},\quad\quad\mathcal{S}_i:=\{2|Dv|<|D\varphi_i|\},
			\end{equation*}
			and
			\begin{equation*}
				\mathcal{R}_{i,R}:=B(0,R)\cap\mathcal{R}_i,\quad\quad\mathcal{S}_{i,R}:=B(0,R)\cap\mathcal{S}_i.
			\end{equation*}
			Using  $(2.2)$ in \cite{FZ}, we can show that the integrand on the left-hand side of \eqref{eq:c1f} is nonnegative. Indeed, for any $x_1\not=0$, the function
			\begin{equation*}
				I(x_1,x_2):=p|x_1|^{p-2}|x_2|^2+p(p-2)|w|^{p-2}(|x_1|-|x_1+x_2|)^2
			\end{equation*}
			satisfies the lower bound
			\begin{equation}\label{eq:lb}
				I(x_1,x_2)\geq c(p)\frac{|x_1|}{|x_1|+|x_2|}|x_1|^{p-2}|x_2|^2\quad\quad \text{for some } c(p)>0,
			\end{equation}
			where $w=w(x_1,x_1+x_2)$ is defined as in the statement of Proposition \ref{prop:sgc}. Therefore, one can deduce that the integral on the left-hand side of \eqref{eq:c1f} is nonnegative. Thus we obtain that, for any $R>1$,
			\begin{equation}\label{eq:c1f1}
				\begin{split}
					&\int_{B(0,R)}\bigg[|Dv|^{p-2}|D\hat{\varphi}_i|^2+(p-2)|w_i|^{p-2}\biggl(\frac{|Dv+D\varphi_i|-|Dv|}{\varepsilon_i}\biggr)^2\\
					&+\gamma_0\min\bigl\{\varepsilon_i^{p-2}|D\hat{\varphi}_i|^p, |Dv|^{p-2}|D\hat{\varphi}_i|^2\bigr\}\bigg]\,\de x
					\leq\bigl((p_1^*-1)S^p+\lambda\bigr)\int_{\mathbb{R}^n}|y|^{-1}\frac{(v+C_1\varphi_i)^{p_1^*}}{v^2+|\varphi_i|^2}|\hat{\varphi}_i|^2\,\de x.
				\end{split}
			\end{equation}
			From \eqref{eq:lb}, we get
			\begin{equation*}
				\begin{split}
					|Dv|^{p-2}|D\hat{\varphi}_i|^2+(p-2)&|w_i|^{p-2}\biggl(\frac{|Dv+D\varphi_i|-|Dv|}{\varepsilon_i}\biggr)^2\\
					&\geq c(p)\frac{|Dv|}{|Dv|+|D\varphi_i|}|Dv|^{p-2}|D\hat{\varphi}_i|^2\geq c(p)|Dv|^{p-2}|D\hat{\varphi}_i|^2\quad\quad\text{on}\:\mathcal{R}_{i,R}.
				\end{split}
			\end{equation*}
		Consequently, combining this estimate with \eqref{eq:c1f1}, we get
		\begin{equation}\label{eq:c1f2}
			\begin{split}
				c(p)\int_{\mathcal{R}_{i,R}}&|Dv|^{p-2}|D\hat{\varphi}_i|^2\,\de x+\gamma_0\int_{\mathcal{S}_{i,R}}\varepsilon_i^{p-2}|D\hat{\varphi}_i|^p\,\de x\\
				\leq{}&\int_{B(0,R)}\bigg[|Dv|^{p-2}|D\hat{\varphi}_i|^2
				+(p-2)|w_i|^{p-2}\biggl(\frac{|Dv+D\varphi_i|-|Dv|}{\varepsilon_i}\biggr)^2\\
				&\quad\quad\quad\quad\quad\quad+\gamma_0\min\bigl\{\varepsilon_i^{p-2}|D\hat{\varphi}_i|^p, |Dv|^{p-2}|D\hat{\varphi}_i|^2\bigr\}\bigg]\,\de x\\
				\leq{}&\bigl((p_1^*-1)S^p+\lambda\bigr)\int_{\mathbb{R}^n}|y|^{-1}\frac{(v+C_1\varphi_i)^{p_1^*}}{v^2+|\varphi_i|^2}|\hat{\varphi}_i|^2\,\de x.
			\end{split}
		\end{equation}
		In particular, the above inequality implies that
		\begin{equation}\label{eq:c1f3}
			\begin{split}
				1=\varepsilon_i^{-2}\int_{\mathbb{R}^n}&\bigl(|Dv|+|D\varphi_i|\bigr)^{p-2}|D\varphi_i|^2\,\de x\\
				&\quad\quad\leq C(p)\biggl(\int_{\mathcal{R}_i}|Dv|^{p-2}|D\hat{\varphi}_i|^2\,\de x + \int_{\mathcal{S}_i}\varepsilon_i^{p-2}|D\hat{\varphi}_i|^p\,\de x\biggr)\\
				&\quad\quad\quad\quad\quad\quad\quad\quad\leq{}C(n,p,k,\gamma_0)\bigl((p_1^*-1)S^p+\lambda\bigr)\int_{\mathbb{R}^n}|y|^{-1}\frac{(v+C_1\varphi_i)^{p_1^*}}{v^2+|\varphi_i|^2}|\hat{\varphi}_i|^2\,\de x.
			\end{split}
		\end{equation}
		Furthermore, applying Corollary \ref{cor:ope} to $\varphi_i$, for $i$ large enough (so that $\varepsilon_i\leq\varepsilon_0$), we have
		\begin{equation}\label{eq:c1f4}
			\begin{split}
				\int_{\mathbb{R}^n}|y|^{-1}\frac{(v+C_1\varphi_i)^{p_1^*}}{v^2+|\varphi_i|^2}|\hat{\varphi}_i|^2&\,\de x\leq{} C(n,p,k,C_1)\int_{\mathbb{R}^n}|y|^{-1}(v+|\varphi_i|)^{p_1^*-2}|\hat{\varphi}_i|^2\,\de x\\
				\leq{}&C(n,p,k,C_1)\int_{\mathbb{R}^n}\bigl(|Dv|+|D\varphi_i|\bigr)^{p-2}|D\hat{\varphi}_i|^2\,\de x\leq C(n,p,k,C_1).
			\end{split}
		\end{equation}
		Hence, combining \eqref{eq:c1f2} with \eqref{eq:c1f4}, by the definition of $\mathcal{S}_{i,R}$, we get
			\begin{equation*}
				\varepsilon_i^{-2}\int_{\mathcal{S}_{i,R}}|Dv|^p\,\de x\leq\varepsilon_i^{p-2}\int_{\mathcal{S}_{i,R}}|D\hat{\varphi}_i|^p\,\de x\leq C(n,p,k,C_1).
			\end{equation*}
			Since $|Dv|$ is uniformly bounded away from zero inside $B(0,R)$, we conclude that
			\begin{equation}\label{eq:c1f5}
				|\mathcal{S}_{i,R}|\rightarrow0\quad\quad\text{as }i\rightarrow\infty,\quad\forall\,R>1.
			\end{equation}
			Now, according to Lemma \ref{le:cl}, we have that $\hat{\varphi}_i$ converges weakly in $D^{1,p}(\mathbb{R}^n)$ to some function $\hat{\varphi}\in D^{1,p}(\mathbb{R}^n)\cap L^2(\mathbb{R}^n;|y|^{-1}v^{p_1^*-2})$, and
			\begin{equation}\label{eq:c1f6}
				\int_{\mathbb{R}^n}|y|^{-1}\frac{(v+C_1\varphi_i)^{p_1^*}}{v^2+|\varphi_i|^2}|\hat{\varphi}_i|^2\,\de x \rightarrow \int_{\mathbb{R}^n}|y|^{-1}v^{p_1^*-2}|\hat{\varphi}|^2\,\de x,\quad\quad\text{as }i\rightarrow\infty.
			\end{equation}
			Using \eqref{eq:c1f2} and \eqref{eq:c1f4}, one has
			\begin{equation*}
				\int_{\mathcal{R}_{i,R}}|Dv|^{p-2}|D\hat{\varphi}_i|^2\,\de x\leq C(n,p,k,C_1).
			\end{equation*}
			Therefore, \eqref{eq:c1f5} and $\hat{\varphi}_i\rightharpoonup\hat{\varphi}$ in $D^{1,p}(\mathbb{R})$ imply that, up to a subsequence,
			\begin{equation*}
				D\hat{\varphi}_i\chi_{\mathcal{R}_{i,R}}\rightharpoonup D\hat{\varphi}\chi_{B(0,R)}\quad\quad\text{in }L^2(\mathbb{R}^n;\mathbb{R}^n),\quad\quad\forall\,R>1.
			\end{equation*}
			In particular, $\hat{\varphi}\in D_{\mathrm{loc}}^{1,2}(\mathbb{R}^n)$.
			Letting $i\rightarrow\infty$ in \eqref{eq:c1f3} and \eqref{eq:c1f4}, and using \eqref{eq:c1f6}, we deduce that
			\begin{equation}\label{eq:c1f7}
				0<c(n,p,\gamma_0)\leq\|\hat{\varphi}\|_{L^2(\mathbb{R}^n;|y|^{-1}v^{p_1^*-2})}\leq C(n,p,k,C_1).
			\end{equation}
			Let us write
			\begin{equation*}
				\hat{\varphi}_i=\hat{\varphi}+\psi_i\quad\quad\text{with}\quad\quad\psi_i:=\hat{\varphi}_i-\hat{\varphi},
			\end{equation*}
			so that
			\begin{equation*}
				\psi_i\rightharpoonup0\:\text{ in }D^{1,p}(\mathbb{R}^n)\quad\quad\text{and}\quad\quad D\psi_i\chi_{\mathcal{R}_i}\rightharpoonup0\:\text{ in }L_{\mathrm{loc}}^2(\mathbb{R}^n;\mathbb{R}^n).
			\end{equation*}
			We now look at the left-hand side of \eqref{eq:c1f1}.
			
			The fact $\varphi_i\rightarrow0$ in $D^{1,p}(\mathbb{R}^n)$ implies that, up to a subsequence, $|w_i|\rightarrow|Dv|$ almost everywhere.
			Also, we can rewrite
			\begin{equation*}
				\begin{split}
					\biggl(\frac{|Dv+D\varphi_i|-|Dv|}{\varepsilon_i}\biggr)^2=\biggl(\biggl[\int_{0}^{1}\frac{Dv+tD\varphi_i}{|Dv+tD\varphi_i|}\,\de t\biggr]\cdot D\hat{\varphi}_i\biggr)^2&\\
					={}\bigg{(}\bigg{[}&\int_{0}^{1}\frac{Dv+tD\varphi_i}{|Dv+tD\varphi_i|}\,\de t\bigg{]}\cdot \bigl(D\hat{\varphi}+D\psi_i\bigr)\bigg{)}^2.
				\end{split}
			\end{equation*}
			Hence, if we set
			\begin{equation*}
				f_{i,1}:=\bigg{[}\int_{0}^{1}\frac{Dv+tD\varphi_i}{|Dv+tD\varphi_i|}\,\de t\bigg{]}\cdot D\hat{\varphi}\quad\quad\text{and}\quad\quad f_{i,2}:=\bigg{[}\int_{0}^{1}\frac{Dv+tD\varphi_i}{|Dv+tD\varphi_i|}\,\de t\bigg{]}\cdot D\psi_i,
			\end{equation*}
			since $\frac{Dv+tD\varphi_i}{|Dv+tD\varphi_i|}\rightarrow\frac{Dv}{|Dv|}$ a.e., it follows from Lebesgue's dominated convergence theorem that
			\begin{equation*}
				f_{i,1}\rightarrow\frac{Dv}{|Dv|}\cdot D\hat{\varphi}\text{ strongly in } L_{\mathrm{loc}}^2(\mathbb{R}^n)\quad\quad\text{and}\quad\quad f_{i,2}\chi_{\mathcal{R}_i}\rightharpoonup0 \text{ weakly in } L_{\mathrm{loc}}^2(\mathbb{R}^n).
			\end{equation*}
			Thus, we can control the first two terms of the left-hand side of \eqref{eq:c1f1} from below as follows:
			\begin{equation}\label{eq:c1f8}
				\begin{split}
					\int_{\mathcal{R}_{i,R}}&\bigg[|Dv|^{p-2}|D\hat{\varphi}_i|^2+(p-2)|w_i|^{p-2}\biggl(\frac{|Dv+D\varphi_i|-|Dv|}{\varepsilon_i}\biggr)^2\bigg]\,\de x\\
					&=\int_{\mathcal{R}_{i,R}}\Big(|Dv|^{p-2}\bigl(|D\hat{\varphi}|^2+2D\psi_i\cdot D\hat{\varphi}\bigr)+(p-2)|w_i|^{p-2}(f_{i,1}^2+2f_{i,1}f_{i,2})\Big)\,\de x\\
					&\quad+\int_{\mathcal{R}_{i,R}}(|Dv|^{p-2}|D\psi_i|^2+(p-2)|w_i|^{p-2}f_{i,2}^2)\,\de x\\
					&\geq \int_{\mathcal{R}_{i,R}}\Big(|Dv|^{p-2}\bigl(|D\hat{\varphi}|^2+2D\psi_i\cdot D\hat{\varphi}\bigr)+(p-2)|w_i|^{p-2}(f_{i,1}^2+2f_{i,1}f_{i,2})\Big)\,\de x,
				\end{split}
			\end{equation}
			where the last inequality follows from the nonnegativity of $|Dv|^{p-2}|D\psi_i|^2+(p-2)|w_i|^{p-2}f_{i,2}^2$ (one can easily obtain by the fact $f_{i,2}^2\leq|D\psi|^2$ and the definition of $w_i$).
			
			Then, combining the convergences
			\begin{equation*}
				\begin{split}
					D\psi_i\chi_{\mathcal{R}_i}&\rightharpoonup0,\quad\quad f_{i,1}\rightarrow\frac{Dv}{|Dv|}\cdot D\hat{\varphi},\quad\quad f_{i,2}\chi_{\mathcal{R}_i}\rightharpoonup0\quad\text{in }L_{\mathrm{loc}}^2(\mathbb{R}^n),\\
					&|w_i|\rightarrow|Dv|\text{ a.e.}\quad\quad\text{and}\quad\quad|(B(0,R))\backslash\mathcal{R}_{i,R}|\rightarrow0
				\end{split}
			\end{equation*}
			with the fact that
			\begin{equation*}
				|w_i|^{p-2}\leq C(p)|Dv|^{p-2},
			\end{equation*}
			we infer from Lebesgue's dominated convergence theorem that the last term in \eqref{eq:c1f8} converges to
			\begin{equation*}
				\int_{B(0,R)}\bigg[|Dv|^{p-2}|D\hat{\varphi}|^2+(p-2)|Dv|^{p-2}\biggl(\frac{Dv}{|Dv|}\cdot D\hat{\varphi}\biggr)^2\bigg]\,\de x.
			\end{equation*}
			Recalling \eqref{eq:c1f1} and \eqref{eq:c1f6}, since $R>1$ is arbitrary and the integrand is nonnegative, this proves that
			\begin{equation}\label{eq:c1f9}
				\int_{\mathbb{R}^n}\bigg[|Dv|^{p-2}|D\hat{\varphi}|^2+(p-2)|Dv|^{p-2}\biggl(\frac{Dv}{|Dv|}\cdot D\hat{\varphi}\biggr)^2\bigg]\,\de x\leq\bigl((p_1^*-1)S^p+\lambda\bigr)\int_{\mathbb{R}^n}|y|^{-1}v^{p_1^*-2}|\hat{\varphi}|^2\,\de x.
			\end{equation}
			On the other hand, $\hat{\varphi}$ is the weak limit of $\hat{\varphi}_i$ in $D^{1,p}(\mathbb{R}^n)$.
			Hence, thanks to Definition 3.7, the orthogonality of $\varphi_i$ (and so of $\hat{\varphi}_i$) implies that  $\hat{\varphi}$ is also orthogonal to $T_v\mathcal{M}$.
			Since $\hat{\varphi}\in L^2(\mathbb{R}^n;|y|^{-1}v^{p_1^*-2})$, \eqref{eq:c1f7} and \eqref{eq:c1f9} contradict Proposition \ref{prop:sg}, our proof for the case $1<p\leq\frac{2n}{n+1}$ is completed.
			
			\medskip	
			%	$\bullet$
			\textit{ (2) The case $\frac{2n}{n+1}<p<2$.}
			%The proof is quite similar to the previous case $1<p\leq\frac{2n}{n+1}$.
			Assume by contradiction that the inequality does not hold, then there exists a sequence  $\{\varphi_i\}_{i\in \mathbb N^+}$ satisfying $0\not\equiv\varphi_i\rightarrow 0$ in $D^{1,p}(\mathbb{R}^n)$ with $\varphi_i$ orthogonal to $T_v\mathcal{M}$, such that
			\begin{equation}\label{eq:c2f}
				\begin{split}
					\int_{\mathbb{R}^n}\Big(|Dv|^{p-2}|D\varphi_i|^2 + (p-2)|w_i|^{p-2}&(|D(v+\varphi_i)|-|Dv|)^2 +\gamma_0\min\bigl\{|D\varphi_i|^p,|Dv|^{p-2}|D\varphi_i|^2\bigr\}\Big)\,\de x\\			&\quad\quad\quad\quad\quad<\bigl((p_1^*-1)S^p+\lambda\bigr)\int_{\mathbb{R}^n}|y|^{-1}v^{p_1^*-2}|\varphi_i|^2\,\de x,
				\end{split}
			\end{equation}
			where $w_i$ is determined by $\varphi_i$ and $v$ as in the statement of Proposition \ref{prop:sgc}.
			
			Similar to the case $1<p\leq\frac{2n}{n+1}$, we define
			\begin{equation*}
				\varepsilon_i:=\biggl(\int_{\mathbb{R}^n}\bigl(|Dv|+|D\varphi_i|\bigr)^{p-2}|D\varphi_i|^2\,\de x\biggr)^{\frac{1}{2}},\quad\quad\hat{\varphi}_i:=\frac{\varphi_i}{\varepsilon_i},
			\end{equation*}
			and split $B(0,R)=\mathcal{R}_{i,R}\cup\mathcal{S}_{i,R}$.
			Then, the following analogues of \eqref{eq:c1f2} and \eqref{eq:c1f3} also hold for the case $\frac{2n}{n+1}<p<2$:
			\begin{equation}\label{eq:aa1}
				\begin{split}
					c(p)\int_{\mathcal{R}_{i,R}}&|Dv|^{p-2}|D\hat{\varphi}_i|^2\,\de x+\gamma_0\int_{\mathcal{S}_{i,R}}\varepsilon_i^{p-2}|D\hat{\varphi}_i|^p\,\de x\\
					\leq{}&\int_{B(0,R)}\bigg[|Dv|^{p-2}|D\hat{\varphi}_i|^2
					+(p-2)|w_i|^{p-2}\biggl(\frac{|Dv+D\varphi_i|-|Dv|}{\varepsilon_i}\biggr)^2\\
					&\quad\quad\quad\quad\quad\quad+\gamma_0\min\bigl\{\varepsilon_i^{p-2}|D\hat{\varphi}_i|^p, |Dv|^{p-2}|D\hat{\varphi}_i|^2\bigr\}\bigg]\,\de x\\
					\leq{}&\bigl((p_1^*-1)S^p+\lambda\bigr)\int_{\mathbb{R}^n}|y|^{-1}v^{p_1^*-2}|\varphi_i|^2\,\de x,
				\end{split}
			\end{equation}
			and
			\begin{equation}\label{eq:aa2}
				\begin{split}
					1=\varepsilon_i^{-2}\int_{\mathbb{R}^n}&\bigl(|Dv|+|D\varphi_i|\bigr)^{p-2}|D\varphi_i|^2\,\de x\\
					&\quad\quad\leq C(p)\biggl(\int_{\mathcal{R}_i}|Dv|^{p-2}|D\hat{\varphi}_i|^2\,\de x + \int_{\mathcal{S}_i}\varepsilon_i^{p-2}|D\hat{\varphi}_i|^p\,\de x\biggr)\\
					&\quad\quad\quad\quad\quad\quad\quad\quad\leq{}C(n,p,k,\gamma_0)\bigl((p_1^*-1)S^p+\lambda\bigr)\int_{\mathbb{R}^n}|y|^{-1}v^{p_1^*-2}|\varphi_i|^2\,\de x.
				\end{split}
			\end{equation}
			Thanks to H$\rm{\ddot{o}}$lder's inequality, we have
			\begin{equation*}
				\begin{split}
					\int_{\mathbb{R}^n}|D&\hat{\varphi}_i|^p\,\de x\leq\biggl(\int_{\mathbb{R}^n}(|Dv|+|D\varphi|_i)^{p-2}|D\hat{\varphi}_i|^2\,\de x\biggr)^{\frac{p}{2}}\biggl(\int_{\mathbb{R}^n}(|Dv|+|D\varphi_i|)^p\,\de x\biggr)^{1-\frac{p}{2}}\\
					&=\biggl(\int_{\mathbb{R}^n}(|Dv|+|D\varphi_i|)^p\,\de x\biggr)^{1-\frac{p}{2}}\leq C(p)\biggl[\biggl(\int_{\mathbb{R}^n}|Dv|^p\,\de x\biggr)^{1-\frac{p}{2}} + \varepsilon_i^{\frac{p(2-p)}{2}}\biggl(\int_{\mathbb{R}^n}|D\hat{\varphi}_i|^p\,\de x\biggr)^{1-\frac{p}{2}}\biggr].
				\end{split}
			\end{equation*}
			It follows that
			\begin{equation}\label{eq:c2f1}
				\int_{\mathbb{R}^n}|D\hat{\varphi}_i|^p\,\de x\leq C(n,p,k).
			\end{equation}
			Thus, up to a subsequence, $\hat{\varphi}_i\rightarrow\hat{\varphi}$ weakly in $D^{1,p}(\mathbb{R}^n)$ and strongly in $L_{\mathrm{loc}}^2(\mathbb{R}^n)$ (note that $p^*>2$).
			In addition, \eqref{eq:c2f1} together with H$\rm{\ddot{o}}$lder's inequality and the Hardy-Sobolev-Maz'ya inequalities yield
			\begin{equation*}
				\begin{split}
					\int_{\mathbb{R}^n\backslash B(0,R)}|y|^{-1}v^{p_1^*-2}|D\hat{\varphi}_i|^2\,\de x\leq{}&\biggl(\int_{\mathbb{R}^n\backslash B(0,R)}|y|^{-1}v^{p_1^*}\,\de x\biggr)^{1-\frac{2}{p_1^*}}\biggl(\int_{\mathbb{R}^n\backslash B(0,R)}|y|^{-1}|\hat{\varphi}_i|^{p_1^*}\,\de x\biggr)^{\frac{2}{p_1^*}}\\
					\leq{}&\biggl(\int_{\mathbb{R}^n\backslash B(0,R)}|y|^{-1}v^{p_1^*}\,\de x\biggr)^{1-\frac{2}{p_1^*}}\biggl(\int_{\mathbb{R}^n}|D\hat{\varphi}_i|^p\,\de x\biggr)^{\frac{2}{p}},\quad\quad\forall\,R\geq0.
				\end{split}
			\end{equation*}
			Combining this estimate with \eqref{eq:c2f1} and the fact $\hat{\varphi}_i\rightarrow\hat{\varphi}$ in $L_{\mathrm{loc}}^2(\mathbb{R}^n)$, we conclude that $\hat{\varphi}_i\rightarrow\hat{\varphi}$ strongly in $L^2(\mathbb{R}^n;|y|^{-1}v^{p_1^*-2})$.
			In particular, letting $i\rightarrow\infty$ in \eqref{eq:aa2}, we obtain
			\begin{equation*}
				0<C(n,p,k)<\|\hat{\varphi}\|_{L^2(\mathbb{R}^n;|y|^{-1}v^{p_1^*-2})}.
			\end{equation*}
			Similar to the case $1<p\leq\frac{2n}{n+1}$, \eqref{eq:aa1} implies that
			\begin{equation*}
				|\mathcal{S}_{i,R}|\rightarrow0\quad\text{and}\quad\int_{\mathcal{R}_{i,R}}|Dv|^{p-2}|D\hat{\varphi}_i|\,\de x\leq C(n,p,k),\quad\quad\forall\,R>1.
			\end{equation*}
			It follows from $\hat{\varphi}_i\rightharpoonup\hat{\varphi}$ in $D^{1,p}(\mathbb{R}^n)$ that, up to a subsequence,
			\begin{equation*}
				D\hat{\varphi}_i\chi_{\mathcal{R}_{i,R}}\rightharpoonup D\hat{\varphi}\chi_{B(0,R)}\quad\quad\text{in }L^2(\mathbb{R}^n;\mathbb{R}^n),\quad\quad\forall\,R>1.
			\end{equation*}
			We decompose
			\begin{equation*}
				\hat{\varphi}_i=\hat{\varphi}+\psi_i\quad\quad\text{with}\quad\quad\psi_i:=\hat{\varphi}_i-\hat{\varphi}.
			\end{equation*}	
			Through quite similar argument as in the case $1<p\leq\frac{2n}{n+1}$, we can deduce that
			\begin{equation*}
				\begin{split}
					\liminf_{i\rightarrow\infty}\int_{\mathcal{R}_{i,R}}\bigg[|Dv|^{p-2}|D\hat{\varphi}_i|^2+(p-2)&|w_i|^{p-2}\biggl(\frac{|Dv+D\varphi_i|-|Dv|}{\varepsilon_i}\biggr)^2\bigg]\,\de x\\
					\geq{}&\int_{B(0,R)}\bigg[|Dv|^{p-2}|D\hat{\varphi}|^2+(p-2)|Dv|^{p-2}\biggl(\frac{Dv}{|Dv|}\cdot D\hat{\varphi}\biggr)^2\bigg]\,\de x.
				\end{split}
			\end{equation*}
			Recalling \eqref{eq:c2f}, since $R>1$ is arbitrary and the above integrands are nonnegative, we have proved that \eqref{eq:c1f9} holds, which contradicts Proposition \ref{prop:sg} because $\hat{\varphi}$ is orthogonal to $T_v\mathcal{M}$ ($\hat{\varphi}$ is the strong $L^2(\mathbb{R}^n;|y|^{-1}v^{p_1^*-2})$-limit of $\hat{\varphi}_i$).
			
			\medskip	
			%$\bullet$
			\textit{(3) The case $2\leq p<n$}.
			%The argument is similar to the case $\frac{2n}{n+1}< p < 2$.
			If the conclusion is false, then there exists a sequence $\{\varphi_i\}_{i\in \mathbb N^+}$ orthogonal to $T_v\mathcal{M}$, satisfying  $\varphi_i\not\equiv 0$ and $\varphi_i\rightarrow 0$ in $D^{1,p}(\mathbb{R}^n)$, such that
			\begin{equation}\label{eq:c3f}
				\begin{split}
					\int_{\mathbb{R}^n}\Big(|Dv|^{p-2}|D\varphi_i|^2 + (p-2)|w_i|^{p-2}&(|D(v+\varphi_i)|-|Dv|)^2\Big)\,\de x\\
					&\quad\quad\quad\quad\quad\quad<\bigl((p_1^*-1)S^p+\lambda\bigr)\int_{\mathbb{R}^n}|y|^{-1}v^{p_1^*-2}|\varphi_i|^2\,\de x,
				\end{split}
			\end{equation}
			where $w_i$ is determined by $\varphi_i$ and $v$ as in the statement of Proposition \ref{prop:sgc}.
			
			Let
			\begin{equation*}
				\varepsilon_i:=\biggl(\int_{\mathbb{R}^n}|Dv|^{p-2}|D\varphi_i|^2\,\de x\biggr)^{\frac{1}{2}}\quad\quad\text{and}\quad\quad\hat{\varphi}_i:=\frac{\varphi_i}{\varepsilon_i}.
			\end{equation*}
			Noting that $p\geq2$, it follows from H$\rm{\ddot{o}}$lder's inequality that
			\begin{equation*}
				\int_{\mathbb{R}^n}|Dv|^{p-2}|D\varphi_i|^2\,\de x\leq\biggl(\int_{\mathbb{R}^n}|Dv|^p\,\de x\biggr)^{1-\frac{p}{2}}\biggl(\int_{\mathbb{R}^n}|D\varphi_i|^p\,\de x\biggr)^{\frac{p}{2}}\rightarrow0,
			\end{equation*}
			hence $\varepsilon_i\rightarrow0$ as $i\rightarrow+\infty$.
			
			Since $1=\|\hat{\varphi}_i\|_{D^{1,2}(\mathbb{R}^n;|Dv|^{p-2})}$, Proposition \ref{prop:ce} implies that, up to a subsequence, $\hat{\varphi}_i\rightarrow\hat{\varphi}$ weakly in $D_{\rm{loc}}^{1,2}(\mathbb{R}^n;|Dv|^{p-2})$ and strongly in $L^2(\mathbb{R}^n;|y|^{-1}v^{p_1^*-2})$.
			Due to $p\geq2$, it follows from \eqref{eq:c3f} that
			\begin{equation*}
				1=\int_{\mathbb{R}^n}|Dv|^{p-2}|D\hat{\varphi}_i|^2\,\de x\leq\bigl((p_1^*-1)S^p+\lambda\bigr)\int_{\mathbb{R}^n}|y|^{-1}v^{p_1^*-2}|\varphi_i|^2\,\de x,
			\end{equation*}
			so we deduce that
			\begin{equation*}
				0<C(n,p,k)<\|\hat{\varphi}\|_{L^2(\mathbb{R}^n;|y|^{-1}v^{p_1^*-2})}.
			\end{equation*}
			In addition, since the integrand on the left-hand side of \eqref{eq:c3f} is nonnegative, for any $R>1$, it holds
			\begin{equation}\label{eq:c3f1}
				\begin{split}
					\int_{B(0,R)}\bigg[|Dv|^{p-2}|D\hat{\varphi}_i|^2+(p-2)|w_i|^{p-2}&\biggl(\frac{|Dv+D\varphi_i|-|Dv|}{\varepsilon_i}\biggr)^2\bigg]\,\de x\\
					&\quad\quad\quad\quad\quad\leq\bigl((p_1^*-1)S^p+\lambda\bigr)\int_{\mathbb{R}^n}|y|^{-1}v^{p_1^*-2}|\hat{\varphi}_i|^2\,\de x.
				\end{split}
			\end{equation}
			Noting that, for any $R>1$,
			\begin{equation*}
				0<c(R)\leq|Dv|\leq C(R)\quad\quad\text{in }B(0,R),
			\end{equation*}
			and writing
			\begin{equation*}
				\hat{\varphi}_i=\hat{\varphi}+\psi_i\quad\quad\text{with}\quad\quad\psi_i:=\hat{\varphi}_i-\hat{\varphi},
			\end{equation*}
			we have
			\begin{equation*}
				\psi_i\rightharpoonup0\quad\text{in }D_{\rm{loc}}^{1,2}(\mathbb{R}^n).
			\end{equation*}
			Considering the left-hand side of \eqref{eq:c3f1}, we can deduce in the same way as the case $\frac{2n}{n+1}<p<2$ that
			\begin{equation*}
				\begin{split}
					\liminf_{i\rightarrow\infty}\int_{B(0,R)}\bigg[|Dv|^{p-2}|D\hat{\varphi}_i|^2+(p-2)&|w_i|^{p-2}\biggl(\frac{|Dv+D\varphi_i|-|Dv|}{\varepsilon_i}\biggr)^2\bigg]\,\de x\\
					\geq{}&\int_{B(0,R)}\bigg[|Dv|^{p-2}|D\hat{\varphi}|^2+(p-2)|Dv|^{p-2}\biggl(\frac{Dv}{|Dv|}\cdot D\hat{\varphi}\biggr)^2\bigg]\,\de x.
				\end{split}
			\end{equation*}
			Since $R>1$ is arbitrary, combining \eqref{eq:c3f1} with the above estimate show that \eqref{eq:c1f9} holds, which contradicts Proposition \ref{prop:sg} due to the orthogonality of $\hat{\varphi}$ to $T_v\mathcal{M}$. This concludes our proof of Proposition \ref{prop:sgc}.
		\end{proof}

		\section{Poof of Theorem \ref{thm:main}}\label{sec:main}
		
		Thanks to the estimates proved in Sections \ref{sec:cr} and \ref{sec:sg}, we can now follow the compactness strategy of \cite{BE,FN,FZ} to prove Theorem \ref{thm:main}.
		By scaling, we can assume $\|u\|_{L^{p_1^*}(\mathbb{R}^n;|y|^{-1})}=1$. Since the right-hand side of \eqref{eq:se} is trivially bounded by $2$, it suffices to prove the result for $\delta(u) \ll1$.
		
		\subsection{Two preliminary lemmas}
		To prove Theorem \ref{thm:main}, we need a lemma based on the concentration-compactness results (Theorem \ref{thm:cc}).
		\begin{lemma}[Compactness]\label{le:cg}
			For any $\hat{\varepsilon}>0$, there exists a constant $\hat{\delta}=\hat{\delta}(n,p,k,\hat{\varepsilon})$ such that the following holds: if
			\begin{equation*}
				\|Du\|_{L^p(\mathbb{R}^n)}-S\leq\hat{\delta},
			\end{equation*}
			then there exists a function $\hat{v}\in\mathcal{M}$ which minimizes the right-hand side of \eqref{eq:se}, and $\hat{v}$ satisfies
			\begin{equation}\label{eq:fx}
				\|D(u-\hat{v})\|_{L^p(\mathbb{R}^n)}\leq\hat{\varepsilon}.
			\end{equation}
		\end{lemma}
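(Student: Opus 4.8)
The plan is to argue by contradiction using the concentration--compactness theorem (Theorem~\ref{thm:cc}) together with the classification of extremals in \cite{LM}, and then to verify separately, via a properness argument on the finite-dimensional manifold $\mathcal{M}$, that the infimum on the right-hand side of \eqref{eq:se} is attained.

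\emph{Step 1: the distance from $u$ to $\mathcal{M}$ is controlled by the deficit.} I would first prove that for every $\hat{\varepsilon}>0$ there is $\hat{\delta}>0$ such that, whenever $\|u\|_{L^{p_1^*}(\mathbb{R}^n;|y|^{-1})}=1$ and $\|Du\|_{L^p(\mathbb{R}^n)}-S\le\hat{\delta}$, one has $\inf_{v\in\mathcal{M}}\|D(u-v)\|_{L^p(\mathbb{R}^n)}\le\hat{\varepsilon}$. If this failed, there would exist $\hat{\varepsilon}_0>0$ and functions $u_j$ with $\|u_j\|_{L^{p_1^*}(\mathbb{R}^n;|y|^{-1})}=1$, $\|Du_j\|_{L^p(\mathbb{R}^n)}\to S$ and $\inf_{v\in\mathcal{M}}\|D(u_j-v)\|_{L^p(\mathbb{R}^n)}>\hat{\varepsilon}_0$ for all $j$. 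Then $\{u_j\}$ is a minimizing sequence for \eqref{eq:hsm}, so by Theorem~\ref{thm:cc}, after relabelling and acting on each $u_j$ by a suitable dilation $\lambda_j$ and $z$-translation $z'_j$ from the symmetry group of \eqref{eq:hsm}, the rescaled functions $\tilde{u}_j$ converge strongly in $D^{1,p}(\mathbb{R}^n)$ to an extremal of \eqref{eq:hsm}; by \cite{LM} this limit lies on $\mathcal{M}$. Since this group action is an isometry of the $D^{1,p}$ seminorm and maps $\mathcal{M}$ onto itself, undoing it produces $v_j\in\mathcal{M}$ with $\|D(u_j-v_j)\|_{L^p(\mathbb{R}^n)}\to0$, contradicting the bound $\hat{\varepsilon}_0$.

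\emph{Step 2: attainment of the infimum.} Fix $u$ with $\|u\|_{L^{p_1^*}(\mathbb{R}^n;|y|^{-1})}=1$ and $\|Du\|_{L^p(\mathbb{R}^n)}-S\le\hat{\delta}$, the parameter $\hat{\delta}$ taken so small (by Step 1, and shrinking $\hat{\varepsilon}$ if necessary so that $\hat{\varepsilon}<S$) that $d:=\inf_{v\in\mathcal{M}}\|D(u-v)\|_{L^p(\mathbb{R}^n)}\le\hat{\varepsilon}<S\le\|Du\|_{L^p(\mathbb{R}^n)}$. A direct computation from the scaling structure of \eqref{eq:em} shows $\|Dv_{a,\lambda,z'}\|_{L^p(\mathbb{R}^n)}=|a|\,c_0$ with $c_0=c_0(n,p,k)>0$ independent of $\lambda$ and $z'$. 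Take a minimizing sequence $v_m=v_{a_m,\lambda_m,z'_m}\in\mathcal{M}$ for $d$. From $|a_m|\,c_0=\|Dv_m\|_{L^p(\mathbb{R}^n)}$ and $\big|\,\|Dv_m\|_{L^p(\mathbb{R}^n)}-\|Du\|_{L^p(\mathbb{R}^n)}\,\big|\le\|D(u-v_m)\|_{L^p(\mathbb{R}^n)}\to d$, the sequence $|a_m|$ is bounded and bounded away from $0$, so along a subsequence $a_m\to a_*\ne0$. If $(\lambda_m,z'_m)$ were to leave every compact subset of $(0,\infty)\times\mathbb{R}^{n-k}$, then along a further subsequence $Dv_m\to0$ a.e.\ with $\|Dv_m\|_{L^p(\mathbb{R}^n)}$ bounded, hence $Dv_m\rightharpoonup0$ weakly in $L^p(\mathbb{R}^n)$, and the Brezis--Lieb lemma applied to $Du-Dv_m\to Du$ a.e.\ would give $\|D(u-v_m)\|_{L^p(\mathbb{R}^n)}^p\to\|Du\|_{L^p(\mathbb{R}^n)}^p+|a_*|^pc_0^p>d^p$, contradicting $\|D(u-v_m)\|_{L^p(\mathbb{R}^n)}\to d$. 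Therefore $(\lambda_m,z'_m)$ stays in a compact set, so along a subsequence $(a_m,\lambda_m,z'_m)\to(a_*,\lambda_*,z'_*)$, and by continuity $\hat{v}:=v_{a_*,\lambda_*,z'_*}\in\mathcal{M}$ realizes the infimum, i.e.\ $\|D(u-\hat{v})\|_{L^p(\mathbb{R}^n)}=d\le\hat{\varepsilon}$, which is \eqref{eq:fx}.

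\emph{Main obstacle.} The substantive work is packaged into Theorem~\ref{thm:cc}, the concentration--compactness principle for \eqref{eq:hsm} with the partial (stronger) singular weight $|y|^{-1}$, proved in Section~\ref{sec-cc}; handling the partial singularity and the merely cylindrical symmetry of the extremals is precisely where that proof is delicate, but here it may be used as a black box. Within the present lemma the points requiring care are (i) the bookkeeping of the symmetry group of \eqref{eq:hsm}, so that strong $D^{1,p}$ convergence of the rescaled minimizing sequence translates faithfully into closeness of the original $u_j$ to $\mathcal{M}$, and (ii) the decoupling facts at the ``boundary'' of parameter space — weak-$L^p$ vanishing of $Dv_{a,\lambda,z'}$ under degenerating parameters, together with the exact Brezis--Lieb splitting of the $p$-Dirichlet energy — which force a minimizing sequence for $d$ into a compact region and thereby yield attainment. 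One also uses \cite{LM} to ensure the concentration--compactness limit is a genuine element of $\mathcal{M}$ rather than merely some optimizer.
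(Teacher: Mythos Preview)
Your proof is correct and follows essentially the same route as the paper: a contradiction argument via the concentration--compactness Theorem~\ref{thm:cc} for Step~1, and a properness argument on the parameters $(a,\lambda,z')$ for attainment in Step~2. The only difference is cosmetic---you invoke Brezis--Lieb to quantify the energy decoupling when parameters degenerate, whereas the paper simply asserts $\|D(u-v_i)\|_{L^p}^p\ge\tfrac12\|Du\|_{L^p}^p$ in that regime; both lead to the same contradiction.
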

		
		\begin{proof}
			We begin by showing the following fact: for all $\xi>0$, there exists $\delta_0=\delta(n,p,\xi)>0$ such that if
			\begin{equation*}
				\delta(u)\leq\delta_0,
			\end{equation*}
			then
			\begin{equation}\label{eq:cl1}
				\inf_{v\in\mathcal{M}}\|D(u-v)\|_{L^p(\mathbb{R}^n)}^p\leq\xi.
			\end{equation}
			Otherwise, for some $\xi>0$, there exists a sequence $\{u_i\}_{i\in \mathbb N^+}\subseteq D^{1,p}(\mathbb{R}^n)$ such that $\|u_i\|_{L^{p_1^*}(\mathbb{R}^n;|y|^{-1})}=1$ and $\delta(u_i)\rightarrow0$, while
			\begin{equation*}
				\inf_{v\in\mathcal{M}}\|D(u-v)\|_{L^p(\mathbb{R}^n)}^p>\xi.
			\end{equation*}
			Theorem \ref{thm:cc} ensures that there exist sequences $\{\lambda_i\}_{i\in \mathbb N^+}$ and $\{x_i\}_{i\in \mathbb N^+}$ such that, up to a subsequence, $\lambda_i^{-\frac{n-p}{p}}u_i(\lambda_i(x-x_i))$ converges strongly in $D^{1,p}(\mathbb{R}^n)$ to some $\bar{v}\in\mathcal{M}$, while it also holds
			\begin{equation*}
				\xi<\biggl\|Du_i-D\biggl[\lambda_i^{\frac{n-p}{p}}\bar{v}\biggl(\frac{\cdot}{\lambda_i}-x_i\biggr)\biggr]\biggr\|_{L^p(\mathbb{R}^n)}=\biggl\|D\biggl[\lambda_i^{-\frac{n-p}{p}}u_i(\lambda_i(\cdot-x_i))\biggr]-D\bar{v}\biggr\|_{L^p(\mathbb{R}^n)}\rightarrow0.
			\end{equation*}
			This gives a contradiction for $i$  sufficiently large, hence \eqref{eq:cl1} holds.
			Therefore, we can let $\xi$ as small as we want by choosing $\delta_0$ small enough.
			
			The infimum on the left-hand side of \eqref{eq:cl1} is attained.
			In fact, let $\{v_i\}$ be a minimizing sequence of \eqref{eq:fx} with $v_i=c_iv_{\lambda_i,x_i}$.
			The sequences $\{c_i\}$, $\{\lambda_i\}$, $\{1/\lambda_i\}$ and $\{x_i\}$ are bounded: if $\lambda_i\rightarrow\infty$ or $\lambda_i\rightarrow0$, then for $i$ large enough,
			\begin{equation*}
				\int_{\mathbb{R}^n}|D(u-v_i)|^p\,\de x\geq\frac{1}{2}\int_{\mathbb{R}^n}|Du|^p\,\de x,
			\end{equation*}
			which contradicts \eqref{eq:cl1}.
			The analogous argument works if $|x_i|\rightarrow\infty$ or $|c_i|\rightarrow\infty$.
			Thus $\{c_i\}$, $\{\lambda_i\}$, $\{1/\lambda_i\}$ and $\{x_i\}$ are bounded, so up to a subsequence, $(c_i,\lambda_i,x_i)\rightarrow(c_0,\lambda_0,x_0)$ for some $(c_0,\lambda_0,x_0)\in\mathbb{R}\times\mathbb{R}^+\times\mathbb{R}^n$.
			Since the functions $\{cv_{\lambda,x}\}$ are smooth, decay nicely, and depend smoothly on the parameters, we deduce that $v_i\rightarrow c_0v_{\lambda_0,x_0}=:\hat{v}$ in $D^{1,p}(\mathbb{R}^n)$ (actually, they also converge in $C^k$
			for any $k\in\mathbb{N}$).
			Hence $\hat{v}$ attains the infimum, and \eqref{eq:fx} follows by taking $\xi$ small.
			%To show that the infimum is attained in \eqref{eq:se}, we obtain an upper bound on the distance by using $\bar{v}=\tilde{v}/\|\tilde{v}\|_{L^{p_1^*}(\mathbb{R}^n;|y|^{-1})}$as a competitor.
			%Indeed, it follows from H$\rm{\ddot{o}}$lder??s inequality that
		\end{proof}
		%It follows from concentration-compactness results (Theorem \ref{thm:cc}) that for any $\hat{\varepsilon}>0$, there exists a constant $\hat{\delta}=\hat{\delta}(n,p,k,\hat{\varepsilon})$ such that the following holds: if
		%\begin{equation*}
		%\|Du\|_{L^p(\mathbb{R}^n)}-S\leq\hat{\delta},
		%\end{equation*}
		%then there exists $\hat{v}\in\mathcal{M}$ which minimizes the right hand side of \eqref{eq:se}, and $\hat{v}$ satisfies $\|Du-D\hat{v}\|_{L^p(\mathbb{R}^n)}\leq\hat{\varepsilon}$.
		Thus, by the Hardy-Sobolev-Maz'ya inequalities we know $\hat{v}$ in Lemma \ref{le:cg} satisfies: $\|\hat{v}\|_{L^{p_1^*}(\mathbb{R}^n;|y|^{-1})}$ is close to $1$ (since $\|u\|_{L^{p_1^*}(\mathbb{R}^n;|y|^{-1})}=1$), so all the lemmas, propositions and corollaries in previous Sections \ref{sec:cr} and \ref{sec:sg} hold.
		
		The basic idea of the proof would be expanding $u$ around $\hat{v}$. Unfortunately, by our choice of $\hat{v}$, we do not have the desired orthogonality to use Proposition \ref{prop:sgc}.
		Hence, we need the following result.
		\begin{lemma}[Orthogonality]\label{le:o}
			Let $\|u\|_{L^{p_1^*}(\mathbb{R}^n;|y|^{-1})}=1$, and assume that $\|D(u-\hat{v})\|_{L^p(\mathbb{R}^n)}\leq\hat{\varepsilon}$ with $\hat{v}=v_{a,1,0}\in\mathcal{M}$.
			There exist $\varepsilon'=\varepsilon'(n,p,k)>0$ and a modulus of continuity $\omega:\mathbb{R}^+\rightarrow\mathbb{R}^+$ such that the following holds: If $\hat{\varepsilon}\leq\varepsilon'$, then there exists $v\in\mathcal{M}$ such that $u-v$ is orthogonal to $T_v\mathcal{M}$ and $\|D(u-v)\|_{L^p(\mathbb{R}^n)}\leq\omega(\hat{\varepsilon})$.
		\end{lemma}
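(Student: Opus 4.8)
The plan is to produce $v$ by the implicit function theorem applied to the system of $n-k+2$ orthogonality conditions defining $T_v\mathcal M$, treating the parameters $\sigma=(a,\lambda,z')\in(\mathbb R\setminus\{0\})\times\mathbb R^+\times\mathbb R^{n-k}$ of the extremal family \eqref{eq:em} as the unknowns. Write $v_\sigma:=v_{a,\lambda,z'}$, let $\{w_j(\sigma)\}_{j=1}^{n-k+2}=\{v_\sigma,\partial_\lambda v_\sigma,\partial_{z'_1}v_\sigma,\dots,\partial_{z'_{n-k}}v_\sigma\}$ be the natural basis of $T_{v_\sigma}\mathcal M$, and set
\[
\Psi_j(\sigma,u):=\int_{\mathbb R^n}|y|^{-1}v_\sigma^{\,p_1^*-2}\,(u-v_\sigma)\,w_j(\sigma)\,\de x ,\qquad j=1,\dots,n-k+2 .
\]
By the Hölder pairing with exponents $p_1^*$ and $\frac{p_1^*}{p_1^*-1}$ recorded just before Definition \ref{def:o}, together with the decay estimates \eqref{eq:vdv}–\eqref{eq:vdvp} and the Hardy--Sobolev--Maz'ya inequality \eqref{eq:hsm}, each $\Psi_j$ is finite for $u\in D^{1,p}(\mathbb R^n)$; since $v_\sigma>0$ depends smoothly on $\sigma$ with all relevant derivatives decaying at the rates dictated by \eqref{eq:em}, one may differentiate under the integral sign, so $\Psi=(\Psi_j)_j$ is $C^1$ near $(\hat\sigma,\hat v)$, where $\hat v=v_{\hat\sigma}$ and $\hat\sigma=(a,1,0)$. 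Finding $v\in\mathcal M$ with $u-v$ orthogonal to $T_v\mathcal M$ in $L^2(\mathbb R^n;|y|^{-1}v^{p_1^*-2})$ is exactly solving $\Psi(\sigma,u)=0$, and $\Psi(\hat\sigma,\hat v)=0$ by construction.

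Next I would compute $\partial_\sigma\Psi(\hat\sigma,\hat v)$. Differentiating $\Psi_j$ in a parameter $\sigma_i$, every term in which $v_\sigma^{\,p_1^*-2}$ or $w_j(\sigma)$ gets differentiated carries the factor $u-v_\sigma$, which vanishes at $(\hat\sigma,\hat v)$; hence only $-\int_{\mathbb R^n}|y|^{-1}\hat v^{\,p_1^*-2}\,\partial_{\sigma_i}\hat v\,w_j\,\de x$ survives. Since $v$ is linear in $a$, $\partial_a\hat v=\hat v/a$, so $\{\partial_{\sigma_i}\hat v\}_i$ spans the same space as $\{w_j\}_j$ and $\partial_\sigma\Psi(\hat\sigma,\hat v)$ is the product of the Gram matrix $G$ of $\{w_j\}$ for $\langle\cdot,\cdot\rangle_*$ with an invertible diagonal matrix. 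By Theorem \ref{thm:ef}, $\hat v$ lies in the first eigenspace and $\partial_\lambda\hat v,\partial_{z'_1}\hat v,\dots,\partial_{z'_{n-k}}\hat v$ in the second, so $\langle\hat v,w_j\rangle_*=0$ for $j\ge2$; moreover the weight $|y|^{-1}\hat v^{\,p_1^*-2}$ and $\partial_\lambda\hat v$ are even in each $z_m$ while $\partial_{z'_m}\hat v$ is odd in $z_m$, so the remaining off-diagonal entries of $G$ vanish by parity. Thus $G$ is diagonal with strictly positive diagonal (no $w_j$ is zero), and $\partial_\sigma\Psi(\hat\sigma,\hat v)$ is invertible.

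The implicit function theorem then gives, for $u$ in a $D^{1,p}$-neighbourhood of $\hat v$, a unique $\sigma(u)$ near $\hat\sigma$ with $\Psi(\sigma(u),u)=0$, $\sigma(\hat v)=\hat\sigma$, and locally Lipschitz dependence on $u$; putting $v:=v_{\sigma(u)}\in\mathcal M$ yields the desired orthogonality and
\[
\|D(u-v)\|_{L^p(\mathbb R^n)}\le\|D(u-\hat v)\|_{L^p(\mathbb R^n)}+\|D(\hat v-v_{\sigma(u)})\|_{L^p(\mathbb R^n)}\le(1+C)\,\hat\varepsilon ,
\]
so one may even take $\omega$ linear. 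For uniformity of $\varepsilon'$ and $\omega$ in $\hat v=v_{a,1,0}$: the hypotheses $\|D(u-\hat v)\|_{L^p}\le\hat\varepsilon$ and $\|u\|_{L^{p_1^*}(\mathbb R^n;|y|^{-1})}=1$ force, via \eqref{eq:hsm}, $\|\hat v\|_{L^{p_1^*}(\mathbb R^n;|y|^{-1})}=1+O(\hat\varepsilon)$, hence $a$ ranges over a fixed compact subset of $(0,\infty)$; the invertibility constant of $\partial_\sigma\Psi(\hat\sigma,\hat v)$ and the $C^1$-bounds on $\Psi$ depend continuously on $a$ over that compact set, so the implicit function theorem applies with constants independent of $\hat v$. (The reduction to $\lambda=1$, $z'=0$ is already built into the statement; alternatively one could invoke the scaling/translation equivariance, as in Lemma \ref{le:cg}.)

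The main obstacle I expect is not existence — given Theorem \ref{thm:ef} the linearized system is manifestly non-degenerate — but making the argument genuinely quantitative and uniform: one must carefully justify finiteness of the $\Psi_j$ and differentiation under the integral in the delicate regime $p_1^*<2$, where $v^{\,p_1^*-2}$ grows polynomially at infinity (though $v>0$ creates no singularity), and then verify that every constant produced by the implicit function theorem depends only on the compact range of $a$. This bookkeeping, rather than any conceptual difficulty, is where the real work lies.
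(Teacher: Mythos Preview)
Your approach via the implicit function theorem is correct and takes a genuinely different route from the paper's. The paper instead minimizes over $\mathcal M$ the auxiliary functional
\[
\mathcal F_u[v]=\frac{1}{p_1^*}\int_{\mathbb R^n}|y|^{-1}|v|^{p_1^*}\,\de x-\frac{1}{p_1^*-1}\int_{\mathbb R^n}|y|^{-1}|v|^{p_1^*-2}vu\,\de x,
\]
shows by H\"older's inequality that when $u=\hat v\in\mathcal M$ the unique minimizer is $\hat v$ itself, and then uses a compactness argument to locate a nearby minimizer $v$ for general $u$ close to $\hat v$; the first-order optimality condition for this minimization is exactly the orthogonality of $u-v$ to $T_v\mathcal M$.

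Your IFT route has the advantage of yielding a genuinely quantitative (Lipschitz) modulus $\omega$, whereas the paper's compactness argument only produces an abstract one. Conversely, the variational argument is lighter in prerequisites: it needs neither the eigenfunction information from Theorem~\ref{thm:ef} nor the parity computation for the Gram matrix, only H\"older's inequality and the strict convexity of $s\mapsto \frac{1}{p_1^*}s^{p_1^*}-\frac{1}{p_1^*-1}As^{p_1^*-1}$; in particular it sidesteps the integrability and differentiation-under-the-integral bookkeeping that you correctly identify as the main technical burden of your approach. (Note that you only use the easy direction of Theorem~\ref{thm:ef}, namely that $v$ and $\partial_\lambda v,\partial_{z'_i}v$ are eigenfunctions with distinct eigenvalues, so the dependence is mild.)
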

		\begin{proof}
			Given $u$ as in the statement of Lemma \ref{le:o}, we consider the minimization of the functional
			\begin{equation}\label{eq:of}
				\mathcal{M}\ni v\mapsto\mathcal{F}_u[v]:=\frac{1}{p_1^*}\int_{\mathbb{R}^n}|y|^{-1}|v|^{p_1^*}\,\de x-\frac{1}{p_1^*-1}\int_{\mathbb{R}^n}|y|^{-1}|v|^{p_1^*-2}vu\,\de x.
			\end{equation}
			Assume first that $u=\hat{v}\in\mathcal{M}$.
			We claim that the minimizer of \eqref{eq:of} is unique and coincides with $u$.
			
			To prove this, we note that, by H$\mathrm{\ddot{o}}$lder's inequality,
			\begin{equation}\label{eq:ofe}
				\begin{split}
					\mathcal{F}_u[v]\geq{}&\frac{1}{p_1^*}\int_{\mathbb{R}^n}|y|^{-1}|v|^{p_1^*}\,\de x-\frac{1}{p_1^*-1}\biggl(\int_{\mathbb{R}^n}|y|^{-1}|u|^{p_1^*}\,\de x\biggr)^{\frac{1}{p_1^*}}\biggl(\int_{\mathbb{R}^n}|y|^{-1}|v|^{p_1^*}\,\de x\biggr)^{\frac{p_1^*-1}{p_1^*}}\\
					\geq{}&-\frac{1}{p_1^*(p_1^*-2)}\int_{\mathbb{R}^n}|y|^{-1}|u|^{p_1^*}\,\de x,
				\end{split}
			\end{equation}
			where the second inequality follows from the fact that the function
			\begin{equation*}
				\mathbb{R}^+\ni s\mapsto\frac{1}{p_1^*}s^{p_1^*} -\frac{1}{p_1^*-1}As^{p_1^*-1}
			\end{equation*}
			is uniquely minimized at $s=A$.
			Noticing that the last term in \eqref{eq:ofe} coincides with $\mathcal{F}_u[u]$, and that equality holds in the two inequalities of \eqref{eq:ofe} at the same time if and only if $v = u$, the claim was proved.
			
			Now, if $u$ is close to $\hat{v}=v_{a,1,0}$ in $D^{1,p}(\mathbb{R}^n)$-norm, it follows from compactness that the minimum of the function
			\begin{equation*}
				\mathbb{R}\times\mathbb{R}^+\times\mathbb{R}^n\ni(a,b,x_0)\mapsto\mathcal{F}_u[v_{a,1,0}]
			\end{equation*}
			is attained at some values $(a', b', x_0')$ close to $(a, 1, 0)$, hence $\|Dv_{a',b',x_0'}-D\hat{v}\|_{L^p(\mathbb{R}^n)}\ll1$.
			Thus, by the assumption that $u$ and $\hat{v}$ are close in $D^{1,p}(\mathbb{R}^n)$, we deduce that
			\begin{equation*}
				\|Du-Dv_{a',b',x_0'}\|_{L^p(\mathbb{R}^n)}\rightarrow0\quad\quad\text{as } \,\, \|Du-D\hat v\|_{L^p(\mathbb{R}^n)}\rightarrow0,
			\end{equation*}
			which proves the existence of a modulus of continuity $\omega$.
			Finally, it is not difficult to check that if $v\in\mathcal{M}$ is close to $v_{a,1,0}$ and minimizes $\mathcal{F}_u$, then
			\begin{equation*}
				0=\frac{\de}{\de t}\bigg|_{t=0}\mathcal{F}_u[v+t\xi]=\int_{\mathbb{R}^n}|y|^{-1}v^{p_1^*-2}\xi(v-u)\,\de x,\quad\quad\forall\,\xi\in T_v\mathcal{M}.
			\end{equation*}
			This concludes our proof.
		\end{proof}

		\subsection{Proof of our main result - Theorem \ref{thm:main}}
		%Now, we are ready to complete the \emph{proof of our main result - Theorem \ref{thm:main}.}
		Thanks to Lemma \ref{le:o}, given $u$ as at the beginning of Section 4 with $\delta(u)$ sufficiently small, we can find $v\in\mathcal{M}$ close to $u$ such that $u-v$ is orthogonal to $T_v\mathcal{M}$. More precisely, $u$ can be written as $u=v+\varepsilon\varphi$, where $\varepsilon\leq\omega(\hat{\varepsilon})$ with $\hat{\varepsilon}\leq\varepsilon'$, $\|D\varphi\|_{L^p(\mathbb{R}^n)}=1$, and $\varphi$ is orthogonal to $T_v\mathcal{M}$.
		
		Observe that, for $\delta(u)$ small,
		\begin{equation*}
			\delta(u)=\|Du\|_{L^p(\mathbb{R}^n)}-S\geq C(n,p,k)\bigl(\|Du\|_{L^p(\mathbb{R}^n)}^p-S^p\bigr).
		\end{equation*}
		In the following argument, several parameters will appear, and these parameters depend on each other.
		To simplify the notation, we shall not give their explicit dependence on $n$, $p$ and $k$, but we emphasize how the parameters depend on each other, at least until they were fixed. Our proof will be carried out by discussing the following three different cases.
		
		\medskip
		$\bullet$ \textit{ The case $1<p\leq\frac{2n}{n+1}$}.
		Let $\kappa>0$ be a small constant to be fixed later.
		By Lemma \ref{le:FZ 2.1}, we have
		\begin{equation}\label{eq:a1}
			\begin{split}
				\|Du\|_{L^p(\mathbb{R}^n)}^p\geq{}&\int_{\mathbb{R}^n}|Dv|^p\,\de x+\varepsilon p\int_{\mathbb{R}^n}|Dv|^{p-2}Dv\cdot D\varphi\,\de x\\
				&+\frac{\varepsilon p(1-\kappa)}{2}\Biggl(\int_{\mathbb{R}^n}\bigg[|Dv|^{p-2}|D\varphi|^2+(p-2)|w|^{p-2}\biggl(\frac{|Du|-|Dv|}{\varepsilon}\biggr)^2\bigg]\,\de x\Biggr)\\
				&+c_0(\kappa)\int_{\mathbb{R}^n}\min\bigl\{\varepsilon^p|D\varphi|^p, \varepsilon^2|Dv|^{p-2}|D\varphi|^2\bigr\}\,\de x,
			\end{split}
		\end{equation}
		where $w=w(Dv,Dv+Du)$ is defined as in Lemma \ref{le:FZ 2.1}.
		On the other hand, by Lemma \ref{le:FZ 2.4} and the concavity of  $t\mapsto t^{p/p_1^*}$, we have
		\begin{equation*}
			\begin{split}
				1={}&\|u\|_{L^{p_1^*}(\mathbb{R}^n;|y|^{-1})}^p=\biggl(\int_{\mathbb{R}^n}|y|^{-1}|v+\varepsilon\varphi|^{p_1^*}\,\de x\biggr)^{\frac{p}{p_1^*}}\\
				%\leq{}&\biggl(\int_{\mathbb{R}^n}|y|^{-1}\biggl[|v|^{p_1^*}+p_1^*|v|^{p_1^*-2}v\cdot\varepsilon\varphi+\biggl(\frac{p_1^*(p_1^*-1)}{2}+\kappa\biggr)\frac{\big{(}|v|+C_1(\kappa)\varepsilon|\varphi|\big{)}^{p_1^*}}{|v|^2+\varepsilon^2\varphi^2}|\varepsilon\varphi|^2\biggr]\,\de x\biggr)^{\frac{p}{p_1^*}}\\
				\leq{}&\|v\|_{L^{p_1^*}(\mathbb{R}^n;|y|^{-1})}^{p-p_1^*}\int_{\mathbb{R}^n}\bigg[\varepsilon p|y|^{-1}v^{p_1^*-1}\varphi + \varepsilon^2\biggl(\frac{p(p_1^*-1)}{2}+\frac{p}{p_1^*}\kappa\biggr)|y|^{-1}\frac{\big{(}|v|+C_1(\kappa)\varepsilon|\varphi|\big{)}^{p_1^*}}{|v|^2+\varepsilon^2\varphi^2}|\varphi|^2\bigg]\,\de x\\
				&+\|v\|_{L^{p_1^*}(\mathbb{R}^n;|y|^{-1})}^p.
			\end{split}
		\end{equation*}
		By \eqref{eq:hsm}, one has
		\begin{equation*}
			\int_{\mathbb{R}^n}|Dv|^{p-2}Dv\cdot D\varphi\,\de x=\|v\|_{L^{p_1^*}(\mathbb{R}^n;|y|^{-1})}S^p\int_{\mathbb{R}^n}|y|^{-1}v^{p_1^*}\varphi\,\de x,
		\end{equation*}
		and $\|Dv\|_{L^p(\mathbb{R}^n)}=S\|v\|_{L^{p_1^*}(\mathbb{R}^n;|y|^{-1})}$. Then we can immediately conclude that
		\begin{equation*}
			\begin{split}
				C(n,p,k)\delta(u)\geq{}&\|Du\|_{L^p(\mathbb{R}^n)}^p-S^p\|u\|_{L^{p_1^*}(\mathbb{R}^n;|y|^{-1})}^p\\
				\geq{}&\frac{\varepsilon^2p(1-\kappa)}{2}\Biggl(\int_{\mathbb{R}^n}\bigg[|Dv|^{p-2}|D\varphi|^2+(p-2)|w|^{p-2}\biggl(\frac{|Du|-|Dv|}{\varepsilon}\biggr)^2\bigg]\,\de x\Biggr)\\
				&+c_0(\kappa)\int_{\mathbb{R}^n}\min\biggl\{\varepsilon^p|D\varphi|^p, \varepsilon^2|Dv|^{p-2}|D\varphi|^2\biggr\}\,\de x\\
				&-\varepsilon^2\|v\|_{L^{p_1^*}(\mathbb{R}^n;|y|^{-1})}^{p-p_1^*}\biggl(\frac{p(p_1^*-1)}{2}+\frac{p}{p_1^*}\kappa\biggr)\int_{\mathbb{R}^n}|y|^{-1}\frac{\big{(}|v|+C_1(\kappa)\varepsilon|\varphi|\big{)}^{p_1^*}}{|v|^2+\varepsilon^2\varphi^2}|\varphi|^2\,\de x.
			\end{split}
		\end{equation*}
		Now, for $\delta(u)\leq\delta'=\delta'(\varepsilon,\kappa,\gamma_0)$ small enough, Proposition \ref{prop:sgc} allows us to absorb the last term in the above inequality. More precisely, we have
		\begin{equation*}
			\begin{split}
				C(n,p,k)&\delta(u)\\
				\geq{} p\epsilon^2&\biggl(\frac{(1-\kappa)}{2}-\frac{(p_1^*-1)+\frac{2}{p_1^*}\kappa}{2(p_1^*-1)+2\lambda S^{-p}}\biggr)\Biggl( \int_{\mathbb{R}^n}\bigg[|Dv|^{p-2}|D\varphi|^2+(p-2)|w|^{p-2}\biggl(\frac{|Du|-|Dv|}{\epsilon}\biggr)^2\bigg]\,\de x\Biggr)\\
				+{}&\biggl(c_0(\kappa)-\gamma_0\frac{p\big[(p_1^*-1)+\frac{2}{p_1^*}\kappa\big]}{2(p_1^*-1)+2\lambda S^{-p}}\biggr)\int_{\mathbb{R}^n}\min\{\epsilon^p|D\varphi|^p,\, \epsilon^2|Dv|^{p-2}|D\varphi|^2\}\,\de x.
			\end{split}
		\end{equation*}
		By choosing $\kappa=\kappa(n,p,k)>0$ small enough so that
		\begin{equation*}
			\frac{(1-\kappa)}{2} - \frac{(p_1^*-1) + \frac{2}{p_1^*}\kappa}{2(p_1^*-1) + 2\lambda S^{-p}} \geq 0,
		\end{equation*}
		and taking $\gamma_0=\gamma_0>0$ small enough so that
		\begin{equation*}
			\frac{c_0}{2} \geq \gamma_0 \frac{p\big[(p_1^*-1) + \frac{2}{p_1^*}\kappa\big]}{2(p_1^*-1) + 2\lambda S^{-p}},
		\end{equation*}
		we eventually arrive at
		\begin{equation}\label{eq:f1}
			C(n,p,k)\delta(u) \geq \frac{c_0}{2}\int_{\mathbb{R}^n} \min\bigl\{ \epsilon^p |D\varphi|^p,\, \epsilon^2 |Dv|^{p-2}|D\varphi|^2 \bigr\}\,\de x.
		\end{equation}
		Observe that, since $p<2$, it follows from H$\mathrm{\ddot{o}}$lder's inequality that
		\begin{equation*}
			\begin{split}
				\biggl(\int_{\{\epsilon|D\varphi|<|Dv|\}}|D\varphi|^p\,\de x\biggr)^\frac{2}{p}
				&\leq\biggl(\int_{\{\epsilon|D\varphi|<|Dv|\}}|Dv|^p\,\de x\biggr)^{\frac{2}{p}-1}\int_{\{\epsilon|D\varphi|<|Dv|\}}|Dv|^{p-2}|D\varphi|^2\,\de x\\
				&\leq C(n,p,k)\int_{\{\epsilon|D\varphi|<|Dv|\}}|Dv|^{p-2}|D\varphi|^2\,\de x.
			\end{split}
		\end{equation*}
		Hence, by $\|D\varphi\|_{L^p(\mathbb{R}^n)}=1$, we get
		\begin{equation}\label{eq:f2}
			\begin{split}
				&\int_{\mathbb{R}^n} \min\bigl\{ \epsilon^p|D\varphi|^p, \epsilon^2|Dv|^{p-2}|D\varphi|^2 \bigr\}\,\de x \\
				&\qquad = \int_{\{\epsilon|D\varphi|\geq|Dv|\}} \epsilon^p|D\varphi|^p\,\de x + \int_{\{\epsilon|D\varphi|<|Dv|\}} \epsilon^2|Dv|^{p-2}|D\varphi|^2\,\de x \\
				&\qquad \geq \int_{\{\epsilon|D\varphi|\geq|Dv|\}} \epsilon^p|D\varphi|^p\,\de x + c \biggl( \int_{\{\epsilon|D\varphi|<|Dv|\}} \epsilon^p|D\varphi|^p\,\de x \biggr)^{\frac{2}{p}}\geq c \biggl( \int_{\mathbb{R}^n} \epsilon^p|D\varphi|^p\,\de x \biggr)^{\frac{2}{p}},
			\end{split}
		\end{equation}
		where $c=c(n,p,k)>0$. Combining \eqref{eq:f1} and \eqref{eq:f2}, we conclude the proof of \eqref{eq:se} with $\gamma=2$.
		
		\medskip
		$\bullet$ \textit{ The case $\frac{2n}{n+1}<p<2$}.
		The proof is quite similar to the previous case $1<p\leq\frac{2n}{n+1}$.
		Let $\kappa>0$ be a small constant to be fixed later.
		By Lemma \ref{le:FZ 2.1}, we also have \eqref{eq:a1} in this case.
		On the other hand, by Lemma \ref{le:FZ 2.4} and the concavity of  $t\mapsto t^{p/p_1^*}$, one has
		\begin{equation*}
			\begin{split}
				1={}&\|u\|_{L^{p_1^*}(\mathbb{R}^n;|y|^{-1})}^p=\biggl(\int_{\mathbb{R}^n}|y|^{-1}|v+\varepsilon\varphi|^{p_1^*}\,\de x\biggr)^{\frac{p}{p_1^*}}\\
				%\leq{}&\biggl(\int_{\mathbb{R}^n}|y|^{-1}\biggl[|v|^{p_1^*}+p_1^*|v|^{p_1^*-2}v\cdot\varepsilon\varphi+\biggl(\frac{p_1^*(p_1^*-1)}{2}+\kappa\biggr)v^{p_1^*-2}|\varepsilon\varphi|^2+C_1(\kappa)|\varepsilon\varphi|^{p_1^*}\biggr]\,\de x\biggr)^{\frac{p}{p_1^*}}\\
				\leq{}&\|v\|_{L^{p_1^*}(\mathbb{R}^n;|y|^{-1})}^{p-p_1^*}\int_{\mathbb{R}^n}|y|^{-1}\biggl[\varepsilon pv^{p_1^*-1}\varphi + \varepsilon^2\biggl(\frac{p(p_1^*-1)}{2}+\frac{p}{p_1^*}\kappa\biggr)v^{p_1^*-2}|\varphi|^2 + C_1(\kappa)\varepsilon^{p_1^*}\frac{p}{p_1^*}|\varphi|^{p_1^*}\biggr]\,\de x\\
				&+\|v\|_{L^{p_1^*}(\mathbb{R}^n;|y|^{-1})}^p.
			\end{split}
		\end{equation*}
		Hence, arguing as in the case $1<p\leq\frac{2n}{n+1}$, from \eqref{eq:hsm}, Proposition \ref{prop:sgc}, and \eqref{eq:f2}, by choosing first $\kappa>0$ and then $\gamma_0>0$ small enough, for $\delta(u)$ sufficiently small, we have
		\begin{equation*}
			C(n,p,k)\delta(u)\geq \biggl(\int_{\mathbb{R}^n}\varepsilon^p|D\varphi|^p\,\de x\biggr)^{\frac{2}{p}}-\varepsilon^{p_1^*}\frac{C_1}{p_1^*}\int_{\mathbb{R}^n}|y|^{-1}|\varphi|^{p_1^*}\,\de x.
		\end{equation*}
		Since $p_1^*>2$ and $1=\|D\varphi\|_{L^p(\mathbb{R}^n)}\geq S\|\varphi\|_{L^{p_1^*}(\mathbb{R}^n;|y|^{-1})}$, the result follows from the Hardy-Sobolev-Maz'ya inequalities, provided $\varepsilon$ is sufficiently small.
		
		\medskip
		$\bullet$ \textit{ The case $2\leq p<n$}.
		Let $\kappa>0$ be a small constant to be fixed later. By Lemma \ref{le:FZ 2.1}, we have
		\begin{equation*}
			\begin{split}
				\|Du\|_{L^p(\mathbb{R}^n)}^p \geq{}&\int_{\mathbb{R}^n}|Dv|^p\,\de x+\varepsilon p\int_{\mathbb{R}^n}|Dv|^{p-2}Dv\cdot D\varphi\,\de x\\
				&+\frac{\varepsilon p(1-\kappa)}{2}\Biggl(\int_{\mathbb{R}^n}\bigg[|Dv|^{p-2}|D\varphi|^2+(p-2)|w|^{p-2}\biggl(\frac{|Du|-|Dv|}{\varepsilon}\biggr)^2\bigg]\,\de x\Biggr)\\
				&+ c_0(\kappa)\varepsilon^p\int_{\mathbb{R}^n}|D\varphi|^p\,\de x,
			\end{split}
		\end{equation*}
		where $w=w(Dv,Dv+Du)$ is defined as in Lemma \ref{le:FZ 2.1}.
		On the other hand, by Lemma \ref{le:FZ 2.4} and the concavity of  $t\mapsto t^{p/p_1^*}$, one has
		\begin{equation*}
			\begin{split}
				1={}&\|u\|_{L^{p_1^*}(\mathbb{R}^n;|y|^{-1})}^p=\biggl(\int_{\mathbb{R}^n}|y|^{-1}|v+\varepsilon\varphi|^{p_1^*}\,\de x\biggr)^{\frac{p}{p_1^*}}\\
				%\leq{}&\biggl(\int_{\mathbb{R}^n}|y|^{-1}\biggl[|v|^{p_1^*}+p_1^*|v|^{p_1^*-2}v\cdot\varepsilon\varphi+\biggl(\frac{p_1^*(p_1^*-1)}{2}+\kappa\biggr)v^{p_1^*-2}|\varepsilon\varphi|^2+C_1(\kappa)|\varepsilon\varphi|^{p_1^*}\biggr]\,\de x\biggr)^{\frac{p}{p_1^*}}\\
				\leq{}&\|v\|_{L^{p_1^*}(\mathbb{R}^n;|y|^{-1})}^{p-p_1^*}\int_{\mathbb{R}^n}|y|^{-1}\biggl[\varepsilon pv^{p_1^*-1}\varphi + \varepsilon^2\biggl(\frac{p(p_1^*-1)}{2}+\frac{p}{p_1^*}\kappa\biggr)v^{p_1^*-2}|\varphi|^2 + C_1(\kappa)\varepsilon^{p_1^*}\frac{p}{p_1^*}|\varphi|^{p_1^*}\biggr]\,\de x.\\
				&+\|v\|_{L^{p_1^*}(\mathbb{R}^n;|y|^{-1})}^p.
			\end{split}
		\end{equation*}
		Hence, arguing as in the case $1<p\leq\frac{2n}{n+1}$, from \eqref{eq:hsm} and Proposition \ref{prop:sgc}, by choosing first $\kappa>0$ small enough, for $\delta(u)$ sufficiently small, we have
		\begin{equation*}
			C(n,p,k)\delta(u)\geq \int_{\mathbb{R}^n}\varepsilon^p|D\varphi|^p\,\de x-\varepsilon^{p_1^*}\frac{C_1}{p_1^*}\int_{\mathbb{R}^n}|y|^{-1}|\varphi|^{p_1^*}\,\de x.
		\end{equation*}
		Since $1=\|D\varphi\|_{L^p(\mathbb{R}^n)}\geq S\|\varphi\|_{L^{p_1^*}(\mathbb{R}^n;|y|^{-1})}$, the result follows from the Hardy-Sobolev-Maz'ya inequalities, provided $\varepsilon$ is sufficiently small.
		This completes our proof of Theorem \ref{thm:main}. \hskip3cm $\qed$
		
		\vspace{1em}
		
		\begin{appendices}
			
			\section{Concentration-Compactness Results}\label{sec-cc}
			
			The associated minimization problem to \eqref{eq:hsm} is
			\begin{equation}\label{eq:mp}
				I_{\lambda}:=\inf\biggl\{ \int_{\mathbb{R}^n}|Du|^p \,\de x : u\in D^{1,p}(\mathbb{R}^n),\int_{\mathbb{R}^n}|y|^{-1}u^{p_1^*} \,\de x=\lambda\biggr\}.
			\end{equation}
			$I_{\lambda}$ is invariant with respect to dilation.
			Indeed, for any $u\in D^{1,p}(\mathbb{R}^n)$ and for any $\sigma>0$, set
			\begin{equation*}
				u_{\sigma}(\cdot):={\sigma}^{-\frac{n-1}{p^*_1}}u\biggl(\frac{\cdot}{\sigma}\biggr)={\sigma}^{-\frac{n-p}{p}}u\biggl(\frac{\cdot}{\sigma}\biggr),
			\end{equation*}
			then
			\begin{equation*}
				\int_{\mathbb{R}^n}|y|^{-1}u_{\sigma}^{p^*_1}(x)\,\de x=\int_{\mathbb{R}^n}|y|^{-1}u^{p^*_1}(x)\,\de x,
			\end{equation*}
			and
			\begin{equation*}
				\int_{\mathbb{R}^n}|Du_{\sigma}(x)|^p \,\de x=\int_{\mathbb{R}^n}|Du(x)|^p \,\de x.
			\end{equation*}
			We also notice that, for all $\lambda>0$,
			\begin{equation*}
				I_{\lambda}=\lambda^{p/p^*_1} I_1.
			\end{equation*}
			Thus, we obtains that for $\alpha\in(0,\lambda)$,
			\begin{equation*}
				I_{\alpha}+I_{\lambda-\alpha}>I_{\lambda}.
			\end{equation*}
			
			In what follows, $Y_0$ is defined at the beginning of Section \ref{sec:cr}.
			\begin{theorem}\label{thm:cc}
				Any minimizing sequence $\{u_i(x)\}_{i\in \mathbb N^+}$ of \eqref{eq:mp} is relatively compact in $D^{1,p}(\mathbb{R}^n)$ up to a translation and a dilation, i.e. there exists $\{\zeta_i\}_{i\in \mathbb N^+}\subseteq Y_0$ and $\{\sigma_i\}_{i\in \mathbb N^+}\subseteq(0,+\infty)$ such that the new minimizing sequence $\{\tilde{u}_i:={\sigma_i}^{-(n-1)/p^*_1}u\bigl((x-\zeta_i)/\sigma_i\bigr)\}_{i\in \mathbb N^+}$ is relatively compact in $D^{1,p}(\mathbb{R}^n)$.
				In particular, there exists a minimum in \eqref{eq:mp}.
			\end{theorem}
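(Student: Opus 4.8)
\textbf{Proof strategy for Theorem \ref{thm:cc}.} The plan is to run the concentration-compactness method for the constrained minimization \eqref{eq:mp}, with the strict subadditivity $I_\alpha+I_{\lambda-\alpha}>I_\lambda$ recorded above as the decisive input and the dilation invariance used to normalize scales, in the spirit of \cite{BT,SSW}. By $I_\lambda=\lambda^{p/p_1^*}I_1$ it suffices to treat $\lambda=1$; let $\{u_i\}$ be a minimizing sequence, so $\int_{\mathbb{R}^n}|y|^{-1}u_i^{p_1^*}\,\de x=1$, $\int_{\mathbb{R}^n}|Du_i|^p\,\de x\to I_1=S^p$, and $\{u_i\}$ is bounded in $D^{1,p}(\mathbb{R}^n)$. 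Set $\nu_i:=|y|^{-1}u_i^{p_1^*}\,\de x$, a sequence of probability measures. The concentration function $r\mapsto Q_i(r):=\sup_{\zeta\in\mathbb{R}^n}\nu_i(B(\zeta,r))$ is continuous with $Q_i(0^+)=0$, $Q_i(+\infty)=1$; choosing $\sigma_i>0$ with $Q_i(\sigma_i)=\tfrac12$ and replacing $u_i$ by the dilation $x\mapsto\sigma_i^{(n-p)/p}u_i(\sigma_i x)$ (which preserves the constraint and the energy), we may assume $Q_i(1)=\tfrac12$ for all $i$; this forbids vanishing (which would give $Q_i(1)\to 0$) and a purely scale-invariant bubble (which would give $Q_i(1)\to 0$ or $\to 1$). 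Pick $\zeta_i=(\eta_i,z_i)\in\mathbb{R}^k\times\mathbb{R}^{n-k}$ with $\nu_i(B(\zeta_i,1))\ge\tfrac14$. Since $\|u_i\|_{L^{p^*}}\le\mathscr{S}^{-1}\|Du_i\|_{L^p}$, Hölder's inequality gives $\int_{B(\zeta_i,R)}u_i^{p_1^*}\,\de x\le C R$ uniformly in $i$ (using $p_1^*<p^*$, for which $n(1-p_1^*/p^*)=1$); hence if $|\eta_i|\to\infty$ then, as $|y|\ge|\eta_i|-R$ on $B(\zeta_i,R)$, $\nu_i(B(\zeta_i,R))\le C R/(|\eta_i|-R)\to 0$, a contradiction, so $|\eta_i|\le C$. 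Translating $u_i$ by $(0,-z_i)\in Y_0$ (a symmetry of \eqref{eq:mp}), we may assume $z_i=0$, and after a further subsequence $\zeta_i\to\zeta_\infty$; in particular a fixed ball $B(0,R_0)$ keeps weighted mass $\ge\tfrac14$ for all $i$.

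Up to a subsequence $u_i\rightharpoonup u$ in $D^{1,p}(\mathbb{R}^n)$, with $u_i\to u$ a.e.\ and in $L^q_{\mathrm{loc}}(\mathbb{R}^n)$ for $q<p^*$. Applying the concentration-compactness principle to the pair $|Du_i|^p\,\de x$, $|y|^{-1}u_i^{p_1^*}\,\de x$ gives, up to a subsequence, $|y|^{-1}u_i^{p_1^*}\,\de x\rightharpoonup|y|^{-1}u^{p_1^*}\,\de x+\sum_j\nu_j\delta_{\zeta_j}$ and $|Du_i|^p\,\de x\rightharpoonup\mu\ge|Du|^p\,\de x+\sum_j\mu_j\delta_{\zeta_j}$, together with masses $\nu_\infty,\mu_\infty$ escaping to infinity. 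Because $|y|^{-1}$ is locally bounded away from $Y_0$ and $p_1^*<p^*$, no concentration of the weighted $L^{p_1^*}$-measure occurs outside $Y_0$ (Rellich-Kondrachov), so each $\zeta_j\in Y_0$; and the Hardy-Sobolev-Maz'ya inequality \eqref{eq:hsm}, localized, yields $S^p\nu_j^{p/p_1^*}\le\mu_j$ and $S^p\nu_\infty^{p/p_1^*}\le\mu_\infty$. Writing $a:=\int_{\mathbb{R}^n}|y|^{-1}u^{p_1^*}\,\de x$, we have $a+\sum_j\nu_j+\nu_\infty=1$ and
\[S^p=I_1=\mu(\mathbb{R}^n)+\mu_\infty\ge S^p\Bigl(a^{p/p_1^*}+\sum_j\nu_j^{p/p_1^*}+\nu_\infty^{p/p_1^*}\Bigr)\ge S^p\Bigl(a+\sum_j\nu_j+\nu_\infty\Bigr)=S^p,\]
the last inequality using $t^{p/p_1^*}\ge t$ on $[0,1]$ (as $p/p_1^*<1$). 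Hence equality holds throughout, so by strict concavity each of $a,\nu_j,\nu_\infty$ is $0$ or $1$, i.e.\ exactly one equals $1$. But $\nu_\infty\le\tfrac14$ since $B(0,R_0)$ keeps weighted mass $\ge\tfrac14$, and $\nu_j=1$ for some $j$ would force $\nu_i\rightharpoonup\delta_{\zeta_j}$, hence $Q_i(\varepsilon)\to1$ for every $\varepsilon>0$, contradicting $Q_i(1)=\tfrac12$. Therefore $a=1$: $u$ satisfies the constraint, so $\int|Du|^p\,\de x\ge I_1$, while weak lower semicontinuity gives $\int|Du|^p\,\de x\le I_1$; thus $u$ attains the infimum and $\|Du_i\|_{L^p}\to\|Du\|_{L^p}$, which with $Du_i\rightharpoonup Du$ in $L^p$ and the uniform convexity of $L^p$ yields $u_i\to u$ strongly in $D^{1,p}(\mathbb{R}^n)$. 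Undoing the dilations $\sigma_i$ and the $Y_0$-translations gives the asserted relative compactness and the existence of a minimizer.

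\textbf{Main obstacle.} The crux is the structure of the limiting measures: one must show that loss of $L^{p_1^*}(\mathbb{R}^n;|y|^{-1})$-mass can only occur as Dirac masses supported on the singular cylinder $Y_0$, together with the reverse Sobolev-type bound $\mu_j\ge S^p\nu_j^{p/p_1^*}$. This rests on the interplay between the exponent gap $p_1^*<p^*$, the precise singularity of $|y|^{-1}$, and the weighted estimates of Section~\ref{sec:cr}; the fact that the weight is not translation invariant in $y$ is what forces one to center everything on $Y_0$ and to use only $z$-translations. Once this is in place, the strict subadditivity $I_\alpha+I_{1-\alpha}>I_1$ (equivalently, strict concavity of $t\mapsto t^{p/p_1^*}$) eliminates every loss-of-compactness scenario, while the scale normalization $Q_i(1)=\tfrac12$ removes the residual scale-invariant bubble.
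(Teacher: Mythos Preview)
Your proposal is correct and follows essentially the same Lions concentration-compactness argument as the paper's Appendix~\ref{sec-cc}: scale normalization $Q_i(1)=\tfrac12$, the measure-decomposition lemma (the paper's Lemma~\ref{le:cc1}) giving Dirac atoms supported on $Y_0$ with $\mu_j\ge S^p\nu_j^{p/p_1^*}$, and strict concavity of $t\mapsto t^{p/p_1^*}$ to force all mass onto $u$. The one organizational difference is that the paper runs the concentration function on the combined density $\rho_i=|y|^{-1}|u_i|^{p_1^*}+|Du_i|^p$ and first proves \emph{tightness} of $\rho_i$ by an explicit cutoff argument ruling out dichotomy, only afterward invoking Lemma~\ref{le:cc1}; you instead carry the escaping masses $\nu_\infty,\mu_\infty$ through a single concavity chain, which is the more streamlined modern packaging but presupposes the ``at infinity'' estimates $S^p\nu_\infty^{p/p_1^*}\le\mu_\infty$ and $\mu(\mathbb{R}^n)+\mu_\infty=\lim_i\int|Du_i|^p$ that the paper sidesteps by establishing tightness first. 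One small slip: from $\nu_i(B(0,R_0))\ge\tfrac14$ you only get $\nu_\infty\le\tfrac34$, not $\nu_\infty\le\tfrac14$; this is harmless since $\nu_\infty<1$ is all you need.
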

			\noindent The proof of Theorem \ref{thm:cc} is similar to \cite[Theorem I.1]{L3} and \cite[Theorem 2.4]{L4} by making use of the following lemma, which can also be proved by similar methods as \cite[Lemma I.1]{L3} and \cite[Lemma 2.4]{L4}.
			\begin{lemma}\label{le:cc1}
				Let $\{u_i\}_{i\in \mathbb N^+}$ be a bounded sequence in $D^{1,p}(\mathbb{R}^n)$ such that $|Du_i|^p$ is tight.
				We may assume that $u_i$ converges a.e. to $u\in D^{1,p}(\mathbb{R}^n)$ and $|Du_i|^p$, $|y|^{-1}|u_i|^{p_1^*}$ converge weakly to some measures $\mu$, $\nu$.
				Then we have:\\
				\noindent({\romannumeral1}) There exist a family of distinct points $\{x_j\}_{j\in J}\subseteq Y_0$ and a family of real numbers $\{\nu_j\}_{j\in J}\subseteq(0,+\infty)$, where $J$ is an at most countable set, such that
				\begin{equation}\label{eq:ccl1}
					\nu=|y|^{-1}|u|^{p_1^*} + \sum_{j\in J}\nu_j\delta_{x_j}.
				\end{equation}
				
				\noindent({\romannumeral2}) In addition, we have
				\begin{equation}\label{eq:ccl2}
					\mu\geq|Du|^p + \sum_{j\in J}\mu_j\delta_{x_j},
				\end{equation}
				where $\mu_j>0$ satisfies: for all $j\in J$,
				\begin{equation}\label{eq:ccl3}
					\nu_j^{p/p_1^*}\leq\mu/I.
				\end{equation}
				Hence, we also have
				\begin{equation*}
					\sum_{j\in J}\nu_j^{p/p_1^*}<+\infty.
				\end{equation*}
				
				\noindent({\romannumeral3}) If $\nu\in D^{1,p}(\mathbb{R}^n)$ and $|D(u_i+v)|^P$ converges weakly to some measure $\tilde{\mu}$, then $\tilde{\mu}-\mu\in L^1(\mathbb{R}^n)$.
				Therefore, one has
				\begin{equation*}
					\tilde{\mu}\geq|D(u+v)|^p+\sum_{j\in J}\mu_j\delta_{x_j}.
				\end{equation*}
				
				\noindent({\romannumeral4}) If $u\equiv0$ and $\int_{\mathbb{R}^n} \,\de\mu\leq I\bigl(\int_{\mathbb{R}^n} \,\de\nu\bigr)^{p/p_1^*}$, then $J$ is a singleton and $\nu=\gamma\delta_{x_0}=\bigl(I({\gamma}^{p/p_1^*})\bigr)^{-1}\mu$, where $\gamma>0$ is a constant and $x_0\in Y_0$.
			\end{lemma}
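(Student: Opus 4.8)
The plan is to follow P.-L.\ Lions' concentration--compactness scheme, in the spirit of \cite{L3,L4}, the only genuinely new feature being the analysis near the singular set $Y_0=\{y=0\}$. First I would reduce to the case $u\equiv0$: setting $v_i:=u_i-u$, one has $v_i\rightharpoonup0$ in $D^{1,p}(\mathbb R^n)$ and $v_i\to0$ in $L^p_{\mathrm{loc}}$ and a.e.; since $k\geq3$ the weight $|y|^{-1}$ lies in $L^1_{\mathrm{loc}}(\mathbb R^k)$, so the Brezis--Lieb lemma applies to $|Du_i|^p$ and to the weighted density $|y|^{-1}|u_i|^{p_1^*}$, yielding, in the sense of weak-$*$ convergence of measures,
\begin{equation*}
|Du_i|^p=|Du|^p+|Dv_i|^p+o(1),\qquad |y|^{-1}|u_i|^{p_1^*}=|y|^{-1}|u|^{p_1^*}+|y|^{-1}|v_i|^{p_1^*}+o(1).
\end{equation*}
It then suffices to describe the weak limits $\bar\mu,\bar\nu$ of $|Dv_i|^p,|y|^{-1}|v_i|^{p_1^*}$, after which $\mu=|Du|^p+\bar\mu$ and $\nu=|y|^{-1}|u|^{p_1^*}+\bar\nu$.

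Next, for $\phi\in C_c^\infty(\mathbb R^n)$ I would apply the Hardy--Sobolev--Maz'ya inequality \eqref{eq:hsm} to $\phi v_i$; since $\{Dv_i\}$ is bounded in $L^p$ and $v_i\to0$ in $L^p_{\mathrm{loc}}$, the cross terms in $\int|D(\phi v_i)|^p$ tend to zero, and letting $i\to\infty$ gives the reverse H\"older inequality $S\bigl(\int|\phi|^{p_1^*}\,\de\bar\nu\bigr)^{1/p_1^*}\leq\bigl(\int|\phi|^p\,\de\bar\mu\bigr)^{1/p}$ for every bounded Borel $\phi$. The standard measure-theoretic consequence is that $\bar\nu=\sum_{j\in J}\nu_j\delta_{x_j}$ with $J$ at most countable, $\bar\mu\geq\sum_j\mu_j\delta_{x_j}$, and $S\nu_j^{1/p_1^*}\leq\mu_j^{1/p}$; since the scaling identities recorded in this appendix give $I=I_1=S^p$, this is exactly $\nu_j^{p/p_1^*}\leq\mu_j/I$, and $\sum_j\nu_j^{p/p_1^*}\leq\bar\mu(\mathbb R^n)/I<\infty$. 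That every atom lies in $Y_0$ follows because on any ball $B$ disjoint from $\{y=0\}$ the weight $|y|^{-1}$ is bounded above and below, so $L^{p_1^*}(B;|y|^{-1})=L^{p_1^*}(B)$ with equivalent norms; as $p_1^*<p^*=\frac{pn}{n-p}$, Rellich--Kondrachov gives $v_i\to0$ strongly in $L^{p_1^*}(B;|y|^{-1})$, hence $\bar\nu(B)=0$. This establishes (i)--(ii). For (iii), the elementary bound $\bigl||a+b|^p-|a|^p\bigr|\leq C_p\bigl(|a|^{p-1}|b|+|b|^p\bigr)$, together with $Dv_i\rightharpoonup0$, $D(u+v)\in L^p$, and a.e.\ convergence, shows via Vitali's theorem that $|D(u_i+v)|^p-|Du_i|^p\to|D(u+v)|^p-|Du|^p$ in $L^1(\mathbb R^n)$, so $\tilde\mu-\mu\in L^1$ and the atomic part $\sum_j\mu_j\delta_{x_j}$ is unchanged.

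Finally, for (iv), assume $u\equiv0$ and $\bar\mu(\mathbb R^n)\leq I\,\bar\nu(\mathbb R^n)^{p/p_1^*}$. Using the strict subadditivity of $t\mapsto t^{p/p_1^*}$ on $[0,\infty)$ (valid since $0<p/p_1^*<1$),
\begin{equation*}
I\,\bar\nu(\mathbb R^n)^{p/p_1^*}=I\Bigl(\sum_j\nu_j\Bigr)^{p/p_1^*}\leq I\sum_j\nu_j^{p/p_1^*}\leq\sum_j\mu_j\leq\bar\mu(\mathbb R^n)\leq I\,\bar\nu(\mathbb R^n)^{p/p_1^*},
\end{equation*}
so all inequalities are equalities; equality in the subadditivity relation forces a single nonzero $\nu_j$, i.e.\ $J=\{x_0\}$ with $x_0\in Y_0$ and $\bar\nu=\gamma\delta_{x_0}$ for some $\gamma>0$, and then $\bar\mu(\mathbb R^n)=\mu_{x_0}$ with $\bar\mu\geq\mu_{x_0}\delta_{x_0}$ gives $\bar\mu=\mu_{x_0}\delta_{x_0}=I\gamma^{p/p_1^*}\delta_{x_0}$, the asserted multiple of $\bar\nu$. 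I expect the main difficulty to be the honest treatment of the singular set $Y_0$: one must both exclude spurious concentration generated by the singularity of $|y|^{-1}$ — controlled by the $L^1_{\mathrm{loc}}$ integrability that uses $k\geq3$ and by the strict inequality $p_1^*<p^*$ away from $\{y=0\}$ — and confirm that genuine loss of compactness occurs only through concentration at points of $Y_0$, which is what plays the role of the full translation invariance available in the unweighted Sobolev case.
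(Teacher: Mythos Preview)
Your overall plan follows Lions' scheme and matches the paper's strategy, but there is a genuine gap in how you handle the gradient measure. You invoke Brezis--Lieb to write $|Du_i|^p=|Du|^p+|Dv_i|^p+o(1)$ in the weak-$*$ sense of measures and conclude $\mu=|Du|^p+\bar\mu$. The Brezis--Lieb lemma, however, requires a.e.\ convergence of the integrand, here $Du_i\to Du$ a.e., and the hypotheses only give $u_i\to u$ a.e. Without gradient a.e.\ convergence this decomposition of $\mu$ is simply false in general, so the passage from ``$\bar\mu$ has atoms $\mu_j$ at $x_j$'' to ``$\mu$ has the same atoms'' is not justified as written. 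The same issue recurs in your treatment of (iii): your Vitali argument for $|D(u_i+v)|^p-|Du_i|^p\to|D(u+v)|^p-|Du|^p$ in $L^1$ presupposes a.e.\ convergence of $Du_i$, which you do not have.

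The paper avoids both problems by never splitting the gradient measure. For (ii) it applies the Hardy--Sobolev--Maz'ya inequality to $\varphi u_i$ (not $\varphi v_i$), passes to the limit, and obtains
\[
\Bigl(\int|\varphi|^{p_1^*}\,\de\nu\Bigr)^{1/p_1^*}I^{1/p}\le\Bigl(\int|\varphi|^{p}\,\de\mu\Bigr)^{1/p}+C\Bigl(\int|D\varphi|^p|u|^p\,\de x\Bigr)^{1/p},
\]
then tests with $\varphi((x-x_j)/\varepsilon)$ and uses H\"older with $1/n+1/p^*=1/p$ to show the last term is $O\bigl(\|u\|_{L^{p^*}(B(x_j,\varepsilon))}\bigr)\to0$, yielding $\mu(\{x_j\})\ge I\nu_j^{p/p_1^*}$ directly; combining with $\mu\ge|Du|^p$ (weak lower semicontinuity) and mutual singularity gives \eqref{eq:ccl2}. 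For (iii) the paper uses the Minkowski-type bound $\bigl|(\int\varphi\,\de\tilde\mu)^{1/p}-(\int\varphi\,\de\mu)^{1/p}\bigr|\le(\int\varphi|Dv|^p)^{1/p}$, which holds for every $\varphi\ge0$ and immediately shows $\tilde\mu$ and $\mu$ share the same singular part. Your reverse H\"older step, the treatment of (iv), and the argument locating the atoms on $Y_0$ are all fine and essentially the same as the paper's.
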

			
			\begin{remark}
				The fact that $x_j$ is contained in $Y_0$ is clear.
				Indeed, since $u_i$ is bounded in $L^{p^*}(\mathbb{R}^n)$, $|y|^{-1}|u_i|^{p_1^*}$ is bounded in $L_{\rm{loc}}^{\alpha}(\mathbb{R}^n\backslash Y_0)$ for some $\alpha>1$.
				Thus, if there exists an $x_{j_0}\in\mathbb{R}^n\backslash Y_0$ such that $\delta_{x_{j_0}}$ appears in \eqref{eq:ccl1}, it will contradict the weak convergence of $|y|^{-1}|u_i|^{p_1^*}$.
			\end{remark}
			
			\begin{lemma}{\rm\cite[Lemma 1.2]{L3}}\label{le:cc2}
				Let $\mu$, $\nu$ be two bounded nonnegative measures on $\mathbb{R}^n$ satisfying for some constant $C_0\geq 0$,
				\begin{equation*}
					\biggl(\int_{\mathbb{R}^n}|\varphi|^q \,\de\nu\biggr)^{1/q}\geq C_0\	\biggl(\int_{\mathbb{R}^n}|\varphi|^p \,\de\mu\biggr)^{1/p},\quad\quad\forall\,\varphi\in\mathcal{D}(\mathbb{R}^n),
				\end{equation*}
				where $1\leq p<q \leq +\infty$.
				Then there exist an at most countable set $J$, a family $\{x_j\}_{j\in J}$ of distinct point in $\mathbb{R}^n$, $\{\nu_j\}_{j\in J}$ in $(0,+\infty)$ such that
				\begin{equation*}
					\nu = \sum_{j\in J}\nu_j\delta_{x_j}\quad\quad\text{and}\quad\quad\,\de\mu\geq C_0^{-p}\sum_{j\in J}\nu_j^{p/q}\delta_{x_j}.
				\end{equation*}
				In particular,
				\begin{equation*}
					\sum_{j\in J}\nu_j^{p/q}<+\infty.
				\end{equation*}
				If, in addition, $\nu({\mathbb{R}^n})^{1/q}\geq C_0\mu({\mathbb{R}^n})^{1/p}$, then $J$ reduces to a single point and $\nu=\gamma\delta_{x_0}=\gamma^{-p/q}C_0^p\mu$ for some $x_0\in\mathbb{R}^n$ and for some $\gamma\geq 0$.
			\end{lemma}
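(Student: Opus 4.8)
The plan is to follow the classical argument of Lions \cite[Lemma 1.2]{L3}; here are the main steps in the order I would carry them out. The starting point is to upgrade the assumed inequality from test functions $\varphi\in\mathcal{D}(\mathbb{R}^n)$ to indicator functions: using the outer regularity of the Radon measures $\mu,\nu$ one approximates $\chi_A$ from above by functions in $\mathcal{D}(\mathbb{R}^n)$ and passes to the limit by dominated convergence, obtaining
\[
\nu(A)\leq C_0^{\,q}\,\mu(A)^{q/p}\qquad\text{for every Borel set }A\subseteq\mathbb{R}^n.
\]
In particular $\nu\ll\mu$, so $\de\nu=f\,\de\mu$ with $0\leq f\in L^1(\mu)$, and the displayed estimate restricted to measurable $B\subseteq A$ reads $\int_B f\,\de\mu\leq C_0^{\,q}\,\mu(B)^{q/p}$. (If $C_0=0$ then $\nu\equiv0$ and there is nothing to prove; if $q=+\infty$ the same scheme shows directly that every point charged by $\nu$ is an atom of $\mu$ of mass at least $C_0^{-p}$ and the remaining argument simplifies, so I assume $q<+\infty$ below.)

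The heart of the proof is to show that the diffuse part of $\mu$ carries no $\nu$-mass. Decompose $\mu=\mu_a+\mu_d$ into its atomic and non-atomic parts and suppose, towards a contradiction, that $f\geq c>0$ on a set of positive $\mu_d$-measure. Discarding the (countably many) atoms of $\mu$, which does not affect $\mu_d$, we obtain a Borel set $A$ with $\mu(A)=\mu_d(A)>0$, with $\mu|_A$ non-atomic, and with $f\geq c$ on $A$. By Sierpi\'nski's intermediate-value property for non-atomic measures, for each $N$ we may partition $A=\bigsqcup_{i=1}^N B_i$ with $\mu(B_i)=\mu(A)/N$; then
\[
c\,\mu(A)\leq\int_A f\,\de\mu=\sum_{i=1}^N\int_{B_i}f\,\de\mu\leq C_0^{\,q}\sum_{i=1}^N\mu(B_i)^{q/p}=C_0^{\,q}\,\mu(A)^{q/p}\,N^{1-q/p},
\]
which tends to $0$ as $N\to\infty$ because $q/p>1$ — a contradiction. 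Hence $\nu$ is concentrated on the atom set of $\mu$; letting $\{x_j\}_{j\in J}$ be those atoms with $\nu(\{x_j\})>0$ and $\nu_j:=\nu(\{x_j\})>0$, we get $\nu=\sum_{j\in J}\nu_j\delta_{x_j}$.

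It then remains to extract the quantitative conclusions. Taking $A=\{x_j\}$ in the indicator estimate gives $\mu(\{x_j\})\geq C_0^{-p}\nu_j^{\,p/q}$, hence $\de\mu\geq\sum_j\mu(\{x_j\})\delta_{x_j}\geq C_0^{-p}\sum_j\nu_j^{\,p/q}\delta_{x_j}$, and $\sum_j\nu_j^{\,p/q}\leq C_0^{\,p}\,\mu(\mathbb{R}^n)<\infty$ by finiteness of $\mu$. If moreover $\nu(\mathbb{R}^n)^{1/q}\geq C_0\mu(\mathbb{R}^n)^{1/p}$, then the indicator estimate with $A=\mathbb{R}^n$ forces equality there, so that $(\sum_j\nu_j)^{p/q}=C_0^{\,p}\mu(\mathbb{R}^n)\geq C_0^{\,p}\sum_j\mu(\{x_j\})\geq\sum_j\nu_j^{\,p/q}$; since $t\mapsto t^{p/q}$ is strictly subadditive (as $0<p/q<1$) we also have $(\sum_j\nu_j)^{p/q}\leq\sum_j\nu_j^{\,p/q}$, with equality only if $J$ is a single point. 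Tracing the resulting equalities backwards pins down $\mu$ as a single atom at that point, so $\nu$ and $\mu$ are proportional, as claimed.

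The only delicate point I anticipate is the first reduction — passing from $\varphi\in\mathcal{D}(\mathbb{R}^n)$ to Borel indicators — which requires the regularity of $\mu,\nu$ and a careful one-sided approximation of $\chi_A$ from the outside; once this is in hand, everything else is elementary measure theory, namely the non-atomicity/averaging argument above together with the subadditivity of a concave power.
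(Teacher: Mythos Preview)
The paper does not prove this lemma at all --- it is simply quoted from Lions \cite[Lemma~1.2]{L3} and used as a black box in the proof of Lemma~\ref{le:cc1}. So there is no in-paper argument to compare against; your write-up is essentially a reconstruction of Lions' original proof, and the strategy (pass to indicators, show via an equipartition argument that the non-atomic part of $\mu$ carries no $\nu$-mass, then read off the atomic bounds and the equality case via subadditivity of $t\mapsto t^{p/q}$) is exactly the classical one.

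Two small remarks. First, the hypothesis as printed in the paper has the inequality the wrong way round: the conclusion $\de\mu\geq C_0^{-p}\sum_j\nu_j^{p/q}\delta_{x_j}$ and the way the lemma is invoked in the proof of Lemma~\ref{le:cc1} both require $\bigl(\int|\varphi|^q\,\de\nu\bigr)^{1/q}\leq C_0\bigl(\int|\varphi|^p\,\de\mu\bigr)^{1/p}$, which is also what you are tacitly using when you derive $\nu(A)\leq C_0^{\,q}\mu(A)^{q/p}$. You have the right statement; the paper has a typo. Second, for the reduction to indicators it is cleaner to approximate $\chi_U$ \emph{from below} by $\mathcal{D}$-functions for open $U$ (monotone convergence on both sides), and then invoke outer regularity of $\mu,\nu$ to reach all Borel sets; approximating ``from above'' as you write works too but is slightly more awkward to justify. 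The rest of your argument is correct as written.
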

			
			\begin{proof}[Proof of Theorem \ref{thm:cc}]
				Without loss of  generality, we can assume $\lambda=1$ in \eqref{eq:mp}.
				Denote by $\{u_i(x)\}_{i\in \mathbb N^+}$ the new minimizing sequence $\bigl\{{\sigma}^{-(n-1)/p^*_1}u((x-\zeta_i)/\sigma_i)\bigr\}_{i\in \mathbb N^+}$ and all the subsequence extracted from it.
				If $\{u_i(x)\}_{i\in \mathbb N^+}$ is a minimizing sequence and
				\begin{equation*}
					\rho_i=|y|^{-1}|u_i|^{p_1^*}+ |Du_i|^p,
				\end{equation*}
				we may choose $\sigma_i>0$ such that, for any $t\geq 0$,
				\begin{equation}\label{eq:cf}
					\mathcal{Q}_i(1)=\frac{1}{2}\quad\quad\text{with}\quad\quad\mathcal{Q}_i(t):=\sup_{x'\in\mathbb{R}^n}\int_{B(x',t)}\rho_i\,\de x.
				\end{equation}
				Define $L_i:=\int_{\mathbb{R}^n}\rho_i \,\de x$.
				As $i\rightarrow\infty$, one has
				\begin{equation*}
					L_i=\int_{\mathbb{R}^n}|u_i|^{p^*}+|Du_i|^p\,\de x\rightarrow L\geq 1+I.
				\end{equation*}
				Next, we prove that $\rho_i$ is tight up to a translation, i.e., there exists $\{\zeta_i\}_{i\in \mathbb N^+}\subseteq\mathbb{R}^n$ such that, for any $\varepsilon>0$, there exists $R<+\infty$ such that
				\begin{equation}\label{eq:tp}
					\int_{|x-\zeta_i|\geq R}\rho_i(x)\,\de x\leq \varepsilon.
				\end{equation}
				In view of \eqref{eq:cf}, as $i\rightarrow\infty$, $\mathcal{Q}_i(t)\rightarrow\mathcal{Q}(t)$ for some non-decreasing, non-negative function $\mathcal{Q}$ on $\mathbb{R}_+$, so we have
				\begin{equation*}
					0<1/2=\mathcal{Q}(1)\leq\mathcal{Q}(t)\leq C.
				\end{equation*}
				Applying the methods in \cite{L1,L2}, in order to prove \eqref{eq:tp}, we only need to show that dichotomy cannot occur.
				Assume on the contrary, there exists $\bar{\alpha}\in(0,L)$ such that, for all $\varepsilon>0$, there exists $\zeta_i\in\mathbb{R}^n$ and $R_i>R_0>0$ satisfying $R_i\rightarrow+\infty$ $(i\rightarrow\infty)$ such that
				\begin{equation}\label{eq:dct}
					\biggl|\bar{\alpha}-\int_{B(\zeta_i,R_0)}\rho_i \,\de x\biggr|\leq\varepsilon\quad\quad\text{and}\quad\quad\int_{R_0\leq|x-\zeta_i|\leq R_i}\rho_i \,\de x\leq\varepsilon.
				\end{equation}
				Let $\xi$, $\psi\in C_b^{\infty}(\mathbb{R}^n)$ satisfying: $0\leq\xi\leq 1$, $0\leq\psi\leq 1$ and $\xi=1$ if $|x|\leq1$, $\xi=0$ if $|x|\geq2$, $\psi=1$ if $|x|\geq1$, $\psi=0$ if $|x|\leq 1/2$.
				We define $\xi_i=\xi((x-\zeta_i)/R_1)$, $\psi_i=\psi((x-\zeta_i)/R_i)$, where $R_i\geq R_0$ is determined below.
				It is easy to deduce that, for $i$ large enough such that $R_i\geq4R_0$, then
				\begin{equation*}
					\biggl|\int_{\mathbb{R}^n}|Du_i|^p \,\de x-\int_{\mathbb{R}^n}|D(\xi_iu_i)|^p \,\de x-\int_{\mathbb{R}^n}|D(\psi_iu_i)|^p\,\de x\biggr|\leq C(X_i^p+X_i)+\varepsilon,
				\end{equation*}
				where
				\begin{equation*}
					X_i=\biggl(\int_{\mathbb{R}^n}\bigl(|D\xi_i|^p+|D\psi_i|^p\bigr)u_i^p \,\de x\biggr)^{1/p}.
				\end{equation*}
				By H$\rm{\ddot{o}}$lder's inequality, we obtain
				\begin{equation*}
					X_i^p\leq C\biggl(\int_{\mathbb{R}^n}|D\xi_i|^n+|D\psi_i|^n \,\de x\biggr)^{p/n}\cdot\biggl(\int_{R_0\leq|x-\zeta_i|\leq R_i}|u_i|^{p^*}\biggr)^{p/p^*},
				\end{equation*}
				where $1/n+1/p^*=1/p$.
				In view of \eqref{eq:dct}, one can deduce
				\begin{equation*}
					X_i^p\leq{} C\varepsilon\biggl(\int_{\mathbb{R}^n}|D\xi_i|^n+|D\psi_i|^n \,\de x\biggr)^{p/n}\leq{}C\varepsilon\biggl(\int_{\mathbb{R}^n}|D\xi|^n+|D\psi|^n \,\de x\biggr)^{p/n}\leq{}C\varepsilon.
				\end{equation*}
				We finally obtain
				\begin{equation}\label{eq:dct1}
					\biggl|\int_{\mathbb{R}^n}|Du_i|^p \,\de x-\int_{\mathbb{R}^n}|Du_i^1|^p \,\de x-\int_{\mathbb{R}^n}|Du_i^2|^p\,\de x\biggr|\leq C(\varepsilon^{1/p}+\varepsilon),
				\end{equation}
				where $u_i^1=\xi_iu_i$, $u_i^2=\psi_iu_i$.
				Without loss of generality, we may assume that, as $i\rightarrow\infty$,
				\begin{equation}\label{eq:dct2}
					\int_{\mathbb{R}^n}|y|^{-1}|u_i^1|^{p_1^*} \,\de x\rightarrow\alpha\quad\quad\mathrm{and}\quad\quad\int_{\mathbb{R}^n}|y|^{-1}|u_i^2|^{p_1^*} \,\de x\rightarrow\beta,
				\end{equation}
				where $0\leq\alpha,\:\beta\leq1$ and  $|1-(\alpha+\beta)|\leq\varepsilon$.
				We claim: for $\varepsilon$ small enough, $\| Du_i^1\|_{L^p(\mathbb{R}^n)}$ and $\| Du_i^2\|_{L^p(\mathbb{R}^n)}$ remains bounded away from 0.
				In fact, \eqref{eq:dct} shows that
				\begin{equation*}
					\biggl|\int_{\mathbb{R}^n}|y|^{-1}|u_i^1|^{p_1^*}+|Du_i^1|^p \,\de x -\bar{\alpha}\biggr|\leq\varepsilon.
				\end{equation*}
				Combining \eqref{eq:dct} with \eqref{eq:dct1}, one can obtain
				\begin{equation*}
					\biggl|\int_{\mathbb{R}^n}|y|^{-1}|u_i^2|^{p_1^*}+|Du_i^2|^p \,\de x -(L-\bar{\alpha})\biggr|\leq\varepsilon.
				\end{equation*}
				Denote by $\gamma>0$ some constant such that, for any $\varepsilon$ small enough and for any $i\geq1$, $\gamma\leq\| Du_i^l\|_{L^p(\mathbb{R}^n)}^p$ $(l=1,2)$.
				If for some sequence $\varepsilon_m\rightarrow0\,(m\rightarrow\infty)$, the constant $\alpha_m=\alpha(\varepsilon_m)$ in \eqref{eq:dct2} satisfies $\alpha_m\rightarrow0$ or $\alpha_m\rightarrow1$, we deduce from \eqref{eq:dct1} that
				\begin{equation*}
					\delta(\varepsilon_m)\geq\gamma+I-I,
				\end{equation*}
				where $\delta(t)\rightarrow0$ as $t\rightarrow0_+$.
				This is impossible.
				On the other hand, if $\alpha_m\rightarrow\alpha\in(0,1)$ and $\beta_m\rightarrow1-\alpha$ as $m\rightarrow\infty$, we can also deduce from \eqref{eq:dct1} that
				\begin{equation*}
					I\geq I_{\alpha}+I_{1-\alpha},
				\end{equation*}
				which is also impossible.
				We concluded the proof of \eqref{eq:tp}.
				
				Next, we show the weak limit of the minimizing sequence $u$ is not identically $0$.
				Indeed, in view of \eqref{eq:tp}, we deduce from
				the weak convergence that
				\begin{equation*}
					\int_{\mathbb{R}^n} \,\de\mu=I\quad\quad\text{and}\quad\quad\int_{\mathbb{R}^n} \,\de\nu=1.
				\end{equation*}
				If $u\equiv0$, we may apply Lemma \ref{le:cc1} $({\romannumeral4})$ and deduce that $\nu=\frac{1}{I}\mu=\delta_{x_0}$ for some $x_0\in\mathbb{R}^n$.
				Therefore, one has
				\begin{equation*}
					\frac{1}{2}=\mathcal{Q}_i(1)\geq\int_{B(x_0,1)}|y|^{-1}|u_i|^{p_1^*} \,\de x\rightarrow 1.
				\end{equation*}
				This contradiction shows that $u\not\equiv0$.
				
				Finally, we show that $u_i$ converges strongly to $u$.
				Define $\alpha:=\int_{\mathbb{R}^n}|y|^{-1}|u|^{p_1^*} \,\de x$.
				By the above proof, we know that $\alpha\in(0,1]$ and we have to show $\alpha=1$.
				Suppose that $\alpha\neq 1$, applying Lemma \ref{le:cc1}, one has
				\begin{equation*}
					\alpha=\int_{\mathbb{R}^n}|y|^{-1}|u|^{p_1^*} \,\de x,\quad\quad\sum_{j\in J}\nu_j=1-\alpha,
				\end{equation*}
				and
				\begin{equation*}
					\mu_j\geq I\nu_j^{p/p_1^*},\quad\quad\int_{\mathbb{R}^n}|Du|^p \,\de x \leq I-\sum_{j\in J}\mu_j.
				\end{equation*}
				Hence, one can obtain
				\begin{equation*}
					\int_{\mathbb{R}^n}|Du|^p\,\de x\leq{}I-\sum_{j\in J}\mu_j\leq{}I\Big{(}1-\sum_{j\in J}\nu_j^{p/p_1^*}\Big{)}<{}I\Big{(}1-\Big{(}\sum_{j\in J}\nu_j\Big{)}\Big{)}^{p/p_1^*}=I{\alpha}^{p/p_1^*},
				\end{equation*}
				while $\int_{\mathbb{R}^n}|Du|^p \,\de x\geq I_{\alpha}=I{\alpha}^{p/p_1^*}$.
				This contradiction shows that $\alpha=1$.
				
				Denote by $\pi$ the projection form $\mathbb{R}^n$ to and $Y_0$.
				We claim that $\{\pi(\zeta_i)\}_{i\in \mathbb N^+}$ remains bounded.
				If $|\pi(\zeta_i)|$ (or a subsequence)  $\rightarrow+\infty$ as $i\rightarrow\infty$.
				Let $\eta\in \mathcal{D}(\mathbb{R}^n)$ such that $\eta\equiv1$ on $B(0,1)$, $0\leq\eta\leq1$, $\mathrm{supp}\,\eta\subseteq B(0,2)$ and let $\eta_i=\eta((\cdot-\zeta_i)/R)$.
				Combining \eqref{eq:tp} with Sobolev inequality, one has
				\begin{equation*}
					\int_{\mathbb{R}^n}(u_i-v_i)^{p^*_1}\,\de x\leq\int_{\mathbb{R}^n}|D(u_i-v_i)|^p \,\de x\leq\delta(\varepsilon)\rightarrow0,\quad\quad\text{as }\varepsilon\rightarrow0,
				\end{equation*}
				where $v_i=\eta_i(u_i)$.
				Thus, for $\varepsilon$ small enough,
				\begin{equation*}
					\int_{\mathbb{R}^n}|y|^{-1}|v_i|^{p_1^*} \,\de x\geq\frac{1}{2}.
				\end{equation*}
				On the other hand, for $i$ large enough,
				\begin{equation*}
					\int_{\mathbb{R}^n}|y|^{-1}|v_i|^{p_1^*}\,\de x\leq\int_{B(\zeta_i,2R)}|y|^{-1}|u_i|^{p_1^*} \,\de x\leq C(|\pi(\zeta_i)-2R|)^{-1},
				\end{equation*}
				which is a contradiction.
				Hence $\{\pi(\zeta_i)\}_{i\in \mathbb N^+}$ is bounded and we may as well take $\pi(\zeta_i)=0$, i.e., $\{\zeta_i\}_{i\in \mathbb N^+}\subseteq Y_0$, so we complete the proof.
			\end{proof}
			
			\begin{proof}[Proof of Lemma \ref{le:cc1}]
				We first treat the case $u\equiv0$.
				The idea is to obtain a reversed H$\rm{\ddot{o}}$lder's inequality between $\nu$ and $\mu$, which will give various information we need.
				
				Let $\varphi\in\mathcal{D}(\mathbb{R}^n)$, by Hardy-Sobolev-Maz'ya inequalities, we have
				\begin{equation}\label{eq:hsmi}
					\biggl(\int_{\mathbb{R}^n}|y|^{-1}|u_i|^{p_1^*}|\varphi|^{p_1^*} \,\de x\biggr)^{1/p_1^*}I^{1/p}\leq\biggl(\int_{\mathbb{R}^n}|D(\varphi u_i)|^p \,\de x\biggr)^{1/p}.
				\end{equation}
				The left-hand side of \eqref{eq:hsmi}  $\rightarrow\bigl(\int_{\mathbb{R}^n}|\varphi|^{p_1^*} \,\de\nu\bigr)^{1/p_1^*}$ as $n\rightarrow\infty$, and the right-hand side of \eqref{eq:hsmi} can be estimated as follows:
				\begin{equation*}
					\biggl|\biggl(\int_{\mathbb{R}^n}|D(\varphi u_i)|^p \,\de x\biggr)^{1/p} - \biggl(\int_{\mathbb{R}^n}|\varphi|^p|Du_i|^p\biggr)^{1/p}\biggr|\leq C\biggl(\int_{\mathbb{R}^n}|D\varphi|^p|u_i|^p \,\de x\biggr)^{1/p}.
				\end{equation*}
				Combining the fact that $|Du_i|^p$ is tight, $u\equiv0$ and Rellich-Kondrachov theorem, one can deduce that the right-hand side of the above inequality $\rightarrow0$ as $i\rightarrow\infty$.
				Therefore, passing to the limit in \eqref{eq:hsmi}, it holds
				\begin{equation*}
					\biggl(\int_{\mathbb{R}^n}|\varphi|^{p_1^*} \,\de\nu\biggr)^{1/p_1^*}\leq I^{-1/p}\biggl(\int_{\mathbb{R}^n}|\varphi|^p \,\de\mu\biggr)^{1/p_1^*},\quad\quad\forall\,\varphi\in\mathcal{D}(\mathbb{R}^n).
				\end{equation*}
				As a consequence, Lemma \ref{le:cc1} is proved in the case $u\equiv0$ by applying Lemma \ref{le:cc2}.
				
				We now consider the general case $u\not\equiv0$.
				It is easy to find that \eqref{eq:hsmi} still holds.
				Letting $v_i:=u_i-u$, by Brezis-Lieb Lemma \cite{BL_a}, one has, for all $\varphi\in\mathcal{D}(\mathbb{R}^n)$,
				\begin{equation*}
					\int_{\mathbb{R}^n}|y|^{-1}|u_i|^{p_1^*}|\varphi|^{p_1^*}\,\de x - \int_{\mathbb{R}^n}|y|^{-1}|v_i|^{p_1^*}|\varphi|^{p_1^*}\,\de x \rightarrow\int_{\mathbb{R}^n}|y|^{-1}|u|^{p_1^*}|\varphi|^{p_1^*}\,\de x.
				\end{equation*}
				Since $v_i$ is bounded in $\mathcal{D}^{1,p}(\mathbb{R}^n)$ and $|Dv_i|^p$ is tight, applying Lemma \ref{le:cc2} to $v_i$, we obtain \eqref{eq:ccl1}.
				Next, passing to the limit in \eqref{eq:hsmi} and using Rellich-Kondrachov theorem as before, we obtain that, for any $\varphi\in\mathcal{D}(\mathbb{R}^n)$,
				\begin{equation*}
					\biggl(\int_{\mathbb{R}^n}|\varphi|^{p_1^*} \,\de\nu\biggr)^{1/p_1^*}I^{1/p}\leq\biggl(\int_{\mathbb{R}^n}|\varphi|^p \,\de\mu\biggr)^{1/p} + C\biggl(\int_{\mathbb{R}^n}|D\varphi|^p|u|^p \,\de x\biggr)^{1/p}.
				\end{equation*}
				If $\varphi$ satisfies $0\leq\varphi\leq1$, $\varphi(0)=1$, $\mathrm{supp}\,\varphi=B(0,1)$ and $\varphi\in\mathcal{D}(\mathbb{R}^n)$, we can apply the above inequality to $\varphi((x-x_j)/\varepsilon)$ for $\varepsilon>0$ and $j\in J$ and obtain that
				\begin{equation*}
					\nu_j^{1/p_1^*}I^{1/p}\leq\mu(B(x_j,\varepsilon))^{1/p}+C\biggl(\int_{B(x_j,\varepsilon)}{\varepsilon}^{-p}\biggl|D\varphi\biggl(\frac{x-x_j}{\varepsilon}\biggr)\biggr|^p|u|^p \,\de x\biggr)^{1/p}.
				\end{equation*}
				Noticing $1/n+1/p^*=1/p$, by H$\rm{\ddot{o}}$lder's inequality, we get
				\begin{equation*}
					\begin{split}
						\int_{B(x_j,\varepsilon)}&{\varepsilon}^{-p}\biggl|D\varphi\biggl(\frac{x-x_j}{\varepsilon}\biggr)\biggr|^p|u|^p \,\de x\\
						\leq{}&\biggl(\int_{B(x_j,\varepsilon)}|u|^{p^*} \,\de x\biggr)^{p/p^*}{\varepsilon^{-p}}\biggl(\int_{\mathbb{R}^n}\biggl|D\varphi\biggl(\frac{x}{\varepsilon}\biggr)\biggr|^n \,\de x\biggr)^{(p^*-p)/p^*}\\
						\leq{}&C\biggl(\int_{B(x_j,\varepsilon)}|u|^{p^*} \,\de x\biggr)^{p/p^*}.
					\end{split}
				\end{equation*}
				Hence, we have
				\begin{equation*}
					\nu_j^{1/p_1^*}I^{1/p}\leq\mu(B(x_j,\varepsilon))^{1/p}+C\biggl(\int_{B(x_j,\varepsilon)}|u|^{p^*} \,\de x\biggr)^{1/p^*}.
				\end{equation*}
				Letting $\varepsilon\rightarrow0$, the above inequality implies that $\mu(\{x_j\})>0$ and for all $j\in J$,
				\begin{equation*}
					\mu\geq\nu_j^{p/p_1^*}I\delta_{x_j}.
				\end{equation*}
				Thus, one can deduce
				\begin{equation*}
					\mu\geq\sum_{j\in J}\nu_j^{p/p_1^*}I\delta_{x_j}=:\mu_1.
				\end{equation*}
				By the weak convergence of $Du_i$ and Fatou's Lemma, we can obtain $\mu\geq|Du|^p$.
				Since $|Du|^p$ and $\mu_1$ are orthogonal, \eqref{eq:ccl2} and \eqref{eq:ccl3} are proved.
				
				Finally, in order to prove Lemma \ref{le:cc1} $({\romannumeral3})$, one observes that, for all $\varphi\in C_b(\mathbb{R}^n)$ (the space of functions in $\mathbb{R}^n$ which is continuous and bounded) satisfying $\varphi\geq 0$, it holds
				\begin{equation*}
					\biggl|\biggl(\int_{\mathbb{R}^n}\varphi|D(u_i+v)|^p \,\de x\biggr)^{1/p}-\biggl(\int_{\mathbb{R}^n}\varphi|Du_i|^p \,\de x\biggr)^{1/p}\biggr|\leq\biggl(\int_{\mathbb{R}^n}\varphi|Dv|^p \,\de x\biggr)^{1/p}.
				\end{equation*}
				Letting $i\rightarrow\infty$, one has
				\begin{equation*}
					\biggl|\biggl(\int_{\mathbb{R}^n}\varphi \,\de\tilde{\mu}\biggr)^{1/p}-\biggl(\int_{\mathbb{R}^n}\varphi \,\de\mu\biggr)^{1/p}\biggr|\leq\biggl(\int_{\mathbb{R}^n}\varphi h \,\de x\biggr)^{1/p},
				\end{equation*}
				where $h\in L_+^1(\mathbb{R}^n)$.
				This shows that the singular parts of $\tilde{\mu}$ and $\mu$ are the same.
				Thus the proof is finished.
			\end{proof}

			\section{Some useful inequalities}\label{sec-ue}
			\begin{lemma}{\rm\cite[(2.3) and (2.1)]{PV}}\label{lem:sobs}\\
				\noindent(\romannumeral1) The Hardy-Sobolev inequality: for any $q\geq1$, $s>q-n$ and $R\geq0$, there exists a constant $C:=C(n,q,s)$ such that, for any $\varphi\in C_c^1(\mathbb{R}^n)$,
				\begin{equation}\label{eq:hsi}
					\int_{\mathbb{R}^n\backslash B(0,R)}|x|^{s-q}|\varphi|^q\,\de x\leq C(n,q,s)\int_{\mathbb{R}^n}|x|^s|D\varphi|^q\,\de x.
				\end{equation}
				
				\noindent(\romannumeral2) 	The Caffarelli-Kohn-Nirenberg inequality: for any $r,q\geq1$, $\frac{1}{r}+\frac{\beta}{n}=\frac{1}{q}+\frac{\alpha-1}{n}>0$, then there exists a constant $C:=C(n,q,\alpha,\beta)$ such that, for any $\phi\in C_c^1(\mathbb{R}^n)$,
				\begin{equation}\label{eq:ckni}
					\bigl\||x|^{\beta}\phi\bigr\|_{L^r(\mathbb{R}^n)}\leq C\bigl\||x|^{\alpha} |D\phi| \bigr\|_{L^q(\mathbb{R}^n)}.
				\end{equation}
				
			\end{lemma}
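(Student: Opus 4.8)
The statement is classical --- both inequalities, in this generality, are contained in the cited reference \cite{PV} --- so strictly speaking the proof is a citation. For completeness I would sketch the elementary arguments. For part (\romannumeral1) the plan is to exploit the divergence identity $\mathrm{div}\bigl(|x|^{s-q}x\bigr)=(s-q+n)|x|^{s-q}$, which is meaningful precisely because the hypothesis $s>q-n$ forces $s-q+n>0$. Fix $\varphi\in C_c^1(\mathbb{R}^n)$, apply the divergence theorem on $\mathbb{R}^n\backslash B(0,\epsilon)$ to the vector field $|x|^{s-q}x\,|\varphi|^q$, and let $\epsilon\to0$: the boundary contribution on $\partial B(0,\epsilon)$ is of size $O(\epsilon^{s-q+n})$, hence vanishes in the limit thanks to $s-q+n>0$. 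This yields the integral identity
\begin{equation*}
	(s-q+n)\int_{\mathbb{R}^n}|x|^{s-q}|\varphi|^q\,\de x=-q\int_{\mathbb{R}^n}|x|^{s-q}|\varphi|^{q-2}\varphi\,(x\cdot D\varphi)\,\de x.
\end{equation*}

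Next I would estimate the right-hand side by $q\int_{\mathbb{R}^n}|x|^{s-q+1}|\varphi|^{q-1}|D\varphi|\,\de x$ and split the weight as $|x|^{s-q+1}=|x|^{(s-q)(q-1)/q}\,|x|^{s/q}$, which is exactly the splitting that matches the exponent bookkeeping. Hölder's inequality with exponents $\tfrac{q}{q-1}$ and $q$ then gives
\begin{equation*}
	\int_{\mathbb{R}^n}|x|^{s-q}|\varphi|^q\,\de x\leq\frac{q}{s-q+n}\biggl(\int_{\mathbb{R}^n}|x|^{s-q}|\varphi|^q\,\de x\biggr)^{\frac{q-1}{q}}\biggl(\int_{\mathbb{R}^n}|x|^s|D\varphi|^q\,\de x\biggr)^{\frac1q}.
\end{equation*}
Since $\varphi\in C_c^1(\mathbb{R}^n)$ and $s-q>-n$, the first factor on the right is finite and may be divided out, producing \eqref{eq:hsi} with $C(n,q,s)=\bigl(q/(s-q+n)\bigr)^q$ even for the integral over all of $\mathbb{R}^n$; restricting to $\mathbb{R}^n\backslash B(0,R)$ only decreases the left-hand side.

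For part (\romannumeral2) I would first observe that in the only regime used in this paper, namely $r=q$ (in fact $r=q=2$), the balance condition $\tfrac1r+\tfrac\beta n=\tfrac1q+\tfrac{\alpha-1}n$ collapses to $\beta=\alpha-1$, and then \eqref{eq:ckni} is literally \eqref{eq:hsi} with $s=\alpha q$ and $R=0$, so no additional work is required. For the general case $r\geq q$ the plan is the standard reduction: by density pass to $\phi\in C_c^\infty(\mathbb{R}^n\backslash\{0\})$; then, using the rearrangement adapted to the measures $|x|^{\beta r}\,\de x$ and $|x|^{\alpha q}\,\de x$ --- admissible exactly because the balance condition makes the two homogeneities compatible --- reduce to radial $\phi(x)=f(|x|)$; and finally handle the resulting one-dimensional weighted Hardy inequality $\bigl(\int_0^\infty|f|^r\rho^{\beta r+n-1}\,\de\rho\bigr)^{1/r}\leq C\bigl(\int_0^\infty|f'|^q\rho^{\alpha q+n-1}\,\de\rho\bigr)^{1/q}$ by writing $f(\rho)=-\int_\rho^\infty f'(t)\,\de t$ and applying Hölder (for $r=q$ this is again just the one-dimensional case of part (\romannumeral1)). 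The genuinely delicate points are the two I singled out: making the integration by parts rigorous at the origin in (\romannumeral1) --- handled cleanly by the $\epsilon$-cutoff together with the strict inequality $s>q-n$, which is precisely what kills the boundary term --- and, in the general CKN case, the passage to radial competitors; full details for both are in \cite{PV}.
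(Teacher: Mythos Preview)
Your proposal is correct. The paper itself gives no proof of this lemma --- it is stated in the appendix purely as a citation to \cite[(2.3) and (2.1)]{PV} --- so your self-contained sketch goes beyond what the paper does. The divergence-identity argument for part~(\romannumeral1) is the standard one and is carried out accurately, including the verification that the boundary term at $\partial B(0,\epsilon)$ vanishes thanks to $s-q+n>0$; and your observation that the only instance of~(\romannumeral2) used in the paper has $r=q=2$, hence $\beta=\alpha-1$, which reduces it to~(\romannumeral1), is exactly right (this is the application in~\eqref{eq:z1}).
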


			\begin{lemma}{\rm\cite[Lemma 2.1]{FZ}}\label{le:FZ 2.1}
				Let $x_1,x_2\in\mathbb{R}^n$.
				For any $\kappa>0$, there exists a positive constant $c_0=c_0(p,\kappa)$ such that the following holds:\\
				\noindent(\romannumeral1) for $1<p<2$,
				\begin{equation*}
					\begin{split}
						|x_1+x_2|^p\geq{}|x|^p+p|x_1|^{p-2}x_1\cdot x_2+\frac{1-\kappa}{2}\Big{(}p|x_1|^{p-2}|x_2|^2+p(p-2)&|w|^{p-2}(|x_1|-|x_1+x_2|)^2\Big{)}\\
						&+ c_0\min\bigl\{|x_2|^p,|x_1|^{p-2}|x_2^2|\bigr\},
					\end{split}
				\end{equation*}
				where
				\begin{equation*}
					w=w(x_1,x_1+x_2):=\left\{
					\begin{aligned}
						&\biggl(\frac{|x_1+x_2|}{(2-p)|x_1+x_2|+(p-1)|x_1|}\biggr)^{\frac{1}{p-2}}x_1\quad&\text{if }|x_1|<|x_1+x_2|,\\
						&\quad\quad\quad\quad\quad\quad\quad\quad x_1&\text{if }|x_1|\geq|x_1+x_2|;
					\end{aligned}
					\right.
				\end{equation*}
				
				\noindent(\romannumeral2) for $2\leq p<n$,
				\begin{equation*}
					|x_1+x_2|^p\geq{}|x_1|^p+p|x_1|^{p-2}x_1\cdot x_2+\frac{1-\kappa}{2}\Big{(}p|x_1|^{p-2}|x_2|^2+p(p-2)|w|^{p-2}(|x_1|-|x_1+x_2|)^2\Big{)}
					+ c_0|x_2|^p,
				\end{equation*}
				where
				\begin{equation*}
					w=w(x_1,x_1+x_2):=\left\{
					\begin{aligned}
						&\quad\quad\quad\quad\quad\quad x_1&\text{if }|x_1|<|x_1+x_2|,\\
						&\biggl(\frac{|x_1+x_2|}{|x_1|}\biggr)^{\frac{1}{p-2}}D(x_1+x_2)\quad&\text{if }|x_1|\geq|x_1+x_2|.
					\end{aligned}
					\right.
				\end{equation*}
				
			\end{lemma}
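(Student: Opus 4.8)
The plan is to reduce each inequality to an elementary one in two real variables and then establish it by combining an exact Taylor expansion along segments with explicit asymptotics near the origin and at infinity, plus a compactness argument in between. First I would note that, in both (i) and (ii), the two sides are positively homogeneous of degree $p$ under the joint dilation $(x_1,x_2)\mapsto(\mu x_1,\mu x_2)$ (since $w$ is homogeneous of degree $1$) and invariant under simultaneous rotations of $x_1$ and $x_2$; moreover I may assume $x_1\ne0$, which is the only case that is both meaningful (because of the factors $|x_1|^{p-2}$ and the definition of $w$) and needed in the applications. Hence I may take $x_1=e_1$ and $x_2=se_1+te_2$ with $t\ge0$. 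Writing $r:=|x_1+x_2|=\big((1+s)^2+t^2\big)^{1/2}$ and inserting the explicit $w$ (for which $|w|^{p-2}$ is an explicit rational function of $r$), each inequality becomes a concrete statement $G(s,t)\ge c_0\,R(s,t)$ with $R(s,t)=\min\{(s^2+t^2)^{p/2},s^2+t^2\}$ for $1<p<2$ and $R(s,t)=(s^2+t^2)^{p/2}$ for $2\le p<n$, where $G$ is smooth away from $(0,0)$.

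The heart of the argument is the \emph{borderline inequality}, namely the case $\kappa=0$, $c_0=0$:
\[
|x_1+x_2|^p\ \ge\ |x_1|^p+p|x_1|^{p-2}x_1\cdot x_2+\tfrac12\Big(p|x_1|^{p-2}|x_2|^2+p(p-2)|w|^{p-2}(|x_1|-|x_1+x_2|)^2\Big).
\]
For this I would set $\psi(\tau):=|x_1+\tau x_2|$ and use the exact second-order Taylor formula $|x_1+x_2|^p=|x_1|^p+p|x_1|^{p-2}x_1\cdot x_2+\int_0^1(1-\tau)\,\tfrac{d^2}{d\tau^2}\psi(\tau)^p\,d\tau$, together with the identity $\tfrac{d^2}{d\tau^2}\psi^p=p\psi^{p-2}|x_2|^2+p(p-2)\psi^{p-2}(\psi')^2$, which follows from $(\psi')^2+\psi\psi''=|x_2|^2$. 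It then remains to bound $\int_0^1(1-\tau)\big(p\psi^{p-2}|x_2|^2+p(p-2)\psi^{p-2}(\psi')^2\big)d\tau$ below by the target quadratic form. Since $\psi$ is convex, $\psi'$ is monotone and $\psi(1)-\psi(0)=\int_0^1\psi'\,d\tau$, so a weighted Cauchy--Schwarz/Jensen estimate — with weight encoded by the piecewise choice of $w$, the dichotomy $|x_1|<|x_1+x_2|$ versus $|x_1|\ge|x_1+x_2|$ being exactly what keeps the $|x_2|^2$-term and the (signed, for $p<2$) cross-term compatible — produces the claim with the sharp constant $\tfrac12$. Comparing the two sides to second order as $x_2\to0$ shows that $\tfrac12$ cannot be improved, since the right-hand side then reproduces the Hessian of $|\cdot|^p$ at $x_1$; this is why $\kappa$ must appear in the gain term of the lemma.

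To produce the gain, I would write the right-hand side of the lemma as [borderline right-hand side] $+\tfrac{\kappa}{2}\big(p|x_1|^{p-2}|x_2|^2+p(p-2)|w|^{p-2}(|x_1|-|x_1+x_2|)^2\big)+c_0R$, so that by the borderline inequality it suffices to show the $\tfrac{\kappa}{2}(\cdots)$ bracket alone dominates $c_0R$ for $c_0=c_0(p,\kappa)$ small. The bracket is strictly positive for $x_2\ne0$: indeed $(2-p)|w|^{p-2}<1$ in every regime while $\big||x_1|-|x_1+x_2|\big|\le|x_2|$. In the reduced two-variable setting it then remains to bound the quotient of this bracket by $R(s,t)$ below by a positive constant, which I would do by inspecting three regimes: as $(s,t)\to0$ (Taylor expansion, where the quotient tends to $p\big((p-1)s^2+t^2\big)/(s^2+t^2)\ge p(p-1)>0$ for $1<p<2$, and stays bounded below — in fact diverges — for $p>2$); as $|(s,t)|\to\infty$, where crucially the formula for $w$ is designed so that the leading $|x_2|^2$-growth inside the bracket cancels and one is left comparing the next-order growth with $|x_2|^p$; and on any annulus $\varepsilon\le|(s,t)|\le\varepsilon^{-1}$, where continuity together with the above strict positivity of $G$ gives a positive lower bound by compactness. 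Taking $c_0$ below the resulting infimum closes the proof.

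The main obstacle is the sharp borderline estimate: choosing the weighted integral inequality so that the optimal constant $\tfrac12$ (and not $\tfrac12-\epsilon$) appears is precisely what forces the nonobvious piecewise formula for $w$, and getting this right is the whole content; everything after it is soft. A secondary technical nuisance is the regime $|x_2|\to\infty$ for $1<p<2$, where the main term, the intermediate quadratic-type term and the error $c_0|x_2|^p$ all sit below the naive $|x_2|^2$ scale, so the cancellation built into $w$ has to be tracked one order further in order to close the comparison.
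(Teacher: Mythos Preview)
The paper does not prove this lemma; it is quoted verbatim from \cite[Lemma~2.1]{FZ} and used as a black box, so there is no in-paper argument to compare against. That said, your proposed route has a real gap, independent of how \cite{FZ} proceed.

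Your strategy is: (a) prove the \emph{borderline} inequality ($\kappa=0$, $c_0=0$), then (b) show that the sacrificed piece $\tfrac{\kappa}{2}\big(p|x_1|^{p-2}|x_2|^2+p(p-2)|w|^{p-2}(|x_1|-|x_1+x_2|)^2\big)$ alone dominates $c_0R$. Step (b) fails as $|x_2|\to\infty$ with $|x_1|$ fixed. For $p>2$ one has $|x_1|<|x_1+x_2|$, so $w=x_1$, both terms in the bracket are nonnegative, and the bracket is exactly of size $|x_2|^2$ --- there is no cancellation at all --- while $R=|x_2|^p$ grows strictly faster. For $1<p<2$ the cancellation you invoke does occur, but it leaves the bracket of order $|x_2|$ (since $1-(2-p)|w|^{p-2}=\tfrac{(p-1)|x_1|}{(2-p)|x_1+x_2|+(p-1)|x_1|}\sim c/|x_2|$), again beaten by $R=|x_2|^p$ because $p>1$. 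In both regimes the quotient bracket$/R$ tends to $0$, so no choice of $c_0>0$ works. Tracking ``one order further'' cannot repair this: the next order of the bracket is still polynomial of degree below $p$.

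The resolution is that the $|x_2|^p$ gain at infinity lives in the borderline remainder $F:=|x_1+x_2|^p-[\text{borderline RHS}]$ itself, not in the $\kappa$-bracket. Indeed $F\sim|x_2|^p$ as $|x_2|\to\infty$ (immediate for $p\ge2$; for $p<2$ it follows once you use that the bracket is $o(|x_2|^p)$). So the decomposition cannot be decoupled: you must bound $F+\tfrac{\kappa}{2}(\text{bracket})\ge c_0R$ jointly, using the bracket near $x_2=0$ (and near the other equality point $x_2=-2x_1$, where $F=0$ as well for $p\ge2$) and using $F$ at infinity. Your three-regime analysis is the right skeleton, but it has to be applied to $F+\tfrac{\kappa}{2}(\text{bracket})$ rather than to the bracket alone. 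Separately, the derivation of the sharp borderline inequality via ``weighted Cauchy--Schwarz/Jensen'' is not yet a proof; this step is exactly where the specific piecewise formula for $w$ must be exploited, and making it precise is the substantive content of \cite[Lemma~2.1]{FZ}.
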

			Note that, for the case $p=2$, since the coefficient $p(p-2)$ vanishes, the exact definition of $w$ is irrelevant.
			
			\begin{lemma}\label{le:FZ 2.4}
				Let $a,b\in\mathbb{R}$.
				For any $\kappa>0$, there exists a constant $C_1=C_1(p_1^*,\kappa)>0$ such that the following inequalities hold:\\
				\noindent(\romannumeral1) for $1<p\leq\frac{2n}{n+1}$,
				\begin{equation*}
					|a+b|^{p_1^*}\leq|a|^{p_1^*}+p_1^*|a|^{p_1^*-2}ab+\biggl(\frac{p_1^*(p_1^*-1)}{2}+\kappa\biggr)\frac{(|a|+C_1|b|)^{p_1^*}|b|^2}{|a|^2+|b|^2}|b|^2;
				\end{equation*}
				
				\noindent(\romannumeral2) for $\frac{2n}{n+1}<p<n$,
				\begin{equation*}
					|a+b|^{p_1^*}\leq|a|^{p_1^*}+p_1^*|a|^{p_1^*-2}ab+\biggl(\frac{p_1^*(p_1^*-1)}{2}+\kappa\biggr)|a|^{p_1^*-2}|b|^2+C_1|b|^{p_1^*}.
				\end{equation*}	
			\end{lemma}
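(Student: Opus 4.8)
The plan is to reduce both inequalities to a one–variable estimate and then split the range of the variable into a neighborhood of $0$ (handled by Taylor expansion) and its complement (handled by crude comparisons together with a large choice of $C_1$), much as in \cite[Lemma 2.4]{FZ}. Both sides of (i) and (ii) are positively homogeneous of degree $p_1^*$ in $(a,b)$ and invariant under $(a,b)\mapsto(-a,-b)$, so it suffices to treat $a\ge 0$, and after dividing by $a^{p_1^*}$ we may assume $a=1$; the degenerate case $a=0$ is immediate, since the right-hand side of (i) then equals $\bigl(\tfrac{p_1^*(p_1^*-1)}{2}+\kappa\bigr)C_1^{p_1^*}|b|^{p_1^*}$ and that of (ii) contains the term $C_1|b|^{p_1^*}$, so both hold once $C_1\ge1$. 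Writing $t:=b$ and
\[
	g(t):=|1+t|^{p_1^*}-1-p_1^*\,t,
\]
we are reduced to proving, for every $t\in\mathbb R$,
\[
	g(t)\le\Bigl(\tfrac{p_1^*(p_1^*-1)}{2}+\kappa\Bigr)\,\frac{(1+C_1|t|)^{p_1^*}}{1+t^2}\,t^2\qquad\text{in case (i),}
\]
\[
	g(t)\le\Bigl(\tfrac{p_1^*(p_1^*-1)}{2}+\kappa\Bigr)t^2+C_1|t|^{p_1^*}\qquad\text{in case (ii).}
\]

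For small $|t|$ the key is Taylor's theorem. Since $1+t>0$ near $0$, the function $g$ is $C^2$ there with $g(0)=g'(0)=0$ and $g''(0)=p_1^*(p_1^*-1)$, so there is $\eta=\eta(p_1^*,\kappa)>0$ with $g(t)\le\bigl(\tfrac{p_1^*(p_1^*-1)}{2}+\tfrac{\kappa}{2}\bigr)t^2$ whenever $|t|\le\eta$. In case (ii) this is already stronger than the desired bound, because $C_1|t|^{p_1^*}\ge0$. In case (i) we note that $(1+C_1|t|)^{p_1^*}\ge1$, hence $\tfrac{(1+C_1|t|)^{p_1^*}}{1+t^2}\ge\tfrac{1}{1+t^2}\to1$ as $t\to0$, a lower bound uniform in $C_1\ge0$; shrinking $\eta$ so that $\tfrac{1}{1+t^2}\ge\tfrac{p_1^*(p_1^*-1)/2+\kappa/2}{p_1^*(p_1^*-1)/2+\kappa}$ on $|t|\le\eta$, the inequality follows on that range for every $C_1\ge0$.

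For $|t|\ge\eta$ we use that, since $p_1^*>1$, one has $|t|\le\eta^{1-p_1^*}|t|^{p_1^*}$ and $(1+|t|)^{p_1^*}\le(1+\eta^{-1})^{p_1^*}|t|^{p_1^*}$, whence $g(t)\le(1+|t|)^{p_1^*}+p_1^*|t|\le C_2(p_1^*,\eta)\,|t|^{p_1^*}$ on $|t|\ge\eta$. In case (ii) the middle term of the right-hand side is nonnegative, so the inequality holds as soon as $C_1\ge C_2$. In case (i), using $1+t^2\le(1+\eta^{-2})t^2$ and $(1+C_1|t|)^{p_1^*}\ge C_1^{p_1^*}|t|^{p_1^*}$ gives
\[
	\frac{(1+C_1|t|)^{p_1^*}}{1+t^2}\,t^2\ge\frac{C_1^{p_1^*}}{1+\eta^{-2}}\,|t|^{p_1^*}\qquad(|t|\ge\eta),
\]
so the right-hand side of (i) is at least $\bigl(\tfrac{p_1^*(p_1^*-1)}{2}+\kappa\bigr)\tfrac{C_1^{p_1^*}}{1+\eta^{-2}}|t|^{p_1^*}$, which exceeds $C_2|t|^{p_1^*}$ once $C_1$ is large. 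Undoing the normalization $a=1$ by homogeneity then concludes the proof.

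No genuinely hard step is anticipated; the argument is bookkeeping of constants, and $C_1$ can simply be taken as the maximum of the finitely many lower bounds produced above. The one point worth stressing is the role of the weight in (i): when $p_1^*\le2$ the naive quadratic term $|a|^{p_1^*-2}|b|^2$ badly underestimates $|a+b|^{p_1^*}$ for $|b|\gg|a|$, and the factor $\dfrac{(|a|+C_1|b|)^{p_1^*}}{|a|^2+|b|^2}$ is designed precisely so that it is $\approx1$ for $|b|\ll|a|$ while it grows like $C_1^{p_1^*}|b|^{p_1^*-2}$ for $|b|\gg|a|$, matching the growth of $g(t)/t^2$ at infinity; extracting the sharp second-order coefficient $\tfrac{p_1^*(p_1^*-1)}{2}$ up to the arbitrary loss $\kappa$ is the only place where the Taylor expansion, rather than a compactness/continuity argument, is essential.
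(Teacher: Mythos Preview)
Your argument is correct. The paper does not actually prove this lemma but simply observes that $1<p\le\frac{2n}{n+1}$ forces $p_1^*\le 2$ while $\frac{2n}{n+1}<p<n$ forces $p_1^*>2$, and then cites \cite[Lemma~3.2]{FN} and \cite[Lemma~2.4]{FZ} for the two cases; your self-contained proof via homogeneity reduction to a one-variable function, Taylor expansion near $t=0$, and crude growth comparison for $|t|\ge\eta$ is precisely the method used in those references, so the approaches coincide. One trivial slip: for $a=0$ in case~(i) the bound $C_1\ge1$ is not quite enough when $\tfrac{p_1^*(p_1^*-1)}{2}+\kappa<1$, but as you note at the end this is absorbed by taking $C_1$ to be the maximum of the finitely many constraints produced.
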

			\begin{proof}
				Notice that $1<p\leq\frac{2n}{n+1}$ implies $p_1^*\leq2$ and $\frac{2n}{n+1}<p<n$ implies $p_1^*>2$, then the inequalities in Lemma \ref{le:FZ 2.4} follow from \cite[Lemma 3.2]{FN} and \cite[Lemma 2.4]{FZ}.
			\end{proof}
			
			\begin{lemma}{\rm\cite[Lemma B.1]{DT1}\label{le:hi}}
				Let $p>1$ and $\xi\geq1$.
				For any compactly supported function $w\in D^{1,p}(\mathbb{R}^k)$, one has
				\begin{equation*}
					C(k,p,\xi)\int_{\mathbb{R}^{k}}|w|^p|y|^{-1}\bigl[(1+|y|)^{p-1}\bigr]^{\xi-1}\,\de y\leq\int_{\mathbb{R}^k}|Dw|^p\bigl[(1+|y|)^{p-1}\bigr]^{\xi}\,\de y,
				\end{equation*}
				where $C(k,p,\xi)$ is a positive constant.
			\end{lemma}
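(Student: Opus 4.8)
The plan is to reduce the $k$-dimensional weighted Hardy inequality of Lemma~\ref{le:hi} to a one-dimensional one by passing to polar coordinates, and then to prove the one-dimensional inequality by an integration-by-parts (Ni--Serrin type) argument with an explicit auxiliary weight, followed by an elementary check of a pointwise inequality between weights.

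First I would write $y=r\theta$ with $r=|y|$, $\theta\in\mathbb{S}^{k-1}$, so that $\de y=r^{k-1}\,\de r\,\de\theta$. Since $Dw=(\partial_r w)\theta+(\text{tangential part})$ is an orthogonal decomposition, $|Dw|^p\ge|\partial_r w|^p$; hence it suffices to prove, for each fixed $\theta$ and $f(r):=w(r\theta)$ (which is compactly supported in $[0,\infty)$), the scalar inequality
\begin{equation*}
	C(k,p,\xi)\int_0^\infty |f(r)|^p\,r^{k-2}(1+r)^{(p-1)(\xi-1)}\,\de r\le\int_0^\infty |f'(r)|^p\,r^{k-1}(1+r)^{(p-1)\xi}\,\de r,
\end{equation*}
and then integrate over $\mathbb{S}^{k-1}$. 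I abbreviate the two one-dimensional weights as $\mu(r):=r^{k-2}(1+r)^{(p-1)(\xi-1)}$ and $\nu(r):=r^{k-1}(1+r)^{(p-1)\xi}$.

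The core step: since $k\ge2$ the weight $\mu$ is integrable near $0$, so $\phi(r):=\int_0^r\mu(s)\,\de s$ is well defined, with $\phi(0)=0$ and $\phi'=\mu$. Assuming first $f\in C_c^1([0,\infty))$ (the general case follows by a density argument in $D^{1,p}$), integrating by parts gives
\begin{equation*}
	\int_0^\infty |f|^p\mu\,\de r=\bigl[|f|^p\phi\bigr]_0^\infty-p\int_0^\infty |f|^{p-2}ff'\phi\,\de r\le p\int_0^\infty |f|^{p-1}|f'|\,\phi\,\de r,
\end{equation*}
since the boundary term vanishes by $\phi(0)=0$ and the compact support of $f$. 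Applying H\"older's inequality with exponents $p'=p/(p-1)$ and $p$ to the splitting $|f|^{p-1}|f'|\phi=\bigl(|f|^{p-1}\mu^{1/p'}\bigr)\bigl(|f'|\phi\,\mu^{-1/p'}\bigr)$ and absorbing the factor $\bigl(\int|f|^p\mu\bigr)^{1/p'}$, one obtains
\begin{equation*}
	\int_0^\infty |f|^p\mu\,\de r\le p^p\int_0^\infty |f'|^p\,\phi(r)^p\,\mu(r)^{-(p-1)}\,\de r.
\end{equation*}
Thus the whole lemma reduces to the pointwise bound $\phi(r)^p\mu(r)^{-(p-1)}\le C(k,p,\xi)\,\nu(r)$ for all $r>0$, which I would verify by elementary estimates: for $r\le1$ one has $\phi(r)\sim r^{k-1}$, and the claimed inequality then follows from $p\ge1$; for $r\ge1$ one has $\phi(r)\sim r^{\,k-1+(p-1)(\xi-1)}$ (the exponent $k-1+(p-1)(\xi-1)$ being strictly positive), and substituting this one finds that the exponents of both $r$ and $(1+r)$ in $\phi^p\mu^{-(p-1)}$ match exactly those of $\nu$. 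This establishes the one-dimensional inequality, hence Lemma~\ref{le:hi}.

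The main obstacle I anticipate is precisely the verification of $\phi(r)^p\mu(r)^{-(p-1)}\le C\nu(r)$: one must track the exponents of $r$ and of $(1+r)$ through the integral defining $\phi$, and check that in the critical large-$r$ regime the comparison of exponents is an \emph{equality}. This is where the exact shape of the weights $\bigl[(1+r)^{p-1}\bigr]^{\xi-1}$ and $\bigl[(1+r)^{p-1}\bigr]^{\xi}$ matters, and it reflects the fact that the gain of one power of $(1+|y|)^{p-1}$ together with the loss of one power of $|y|$ is exactly what makes the inequality scale-consistent. A secondary, purely technical, point is the approximation argument passing from $C_c^1$ functions to general compactly supported $w\in D^{1,p}(\mathbb{R}^k)$, together with the (harmless, and satisfied throughout the paper) requirement $k\ge2$ needed for $\mu$ — equivalently $|y|^{-1}$ — to be locally integrable near the origin.
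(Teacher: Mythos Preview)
Your argument is correct. The paper does not give its own proof of this lemma: it is simply quoted from \cite[Lemma B.1]{DT1}, with a remark pointing to \cite[Theorem 4.1]{Si} for the relaxation to all $p>1$ (the original \cite{DT1} version assumes $p$ below the spatial dimension). So there is nothing in the paper to compare your argument to at the level of details.

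Your route---reduce to a radial one-dimensional inequality via $|Dw|\ge|\partial_r w|$, integrate by parts against $\phi(r)=\int_0^r\mu$, apply H\"older, and then check the pointwise bound $\phi^p\mu^{1-p}\le C\nu$---is a clean, self-contained proof. The exponent bookkeeping you describe is exactly right: for $r\le1$ one gets $\phi^p\mu^{1-p}\sim r^{p+k-2}\le C r^{k-1}\sim\nu$ since $p>1$, and for $r\ge1$ the exponents match on the nose, $k-1+(p-1)\xi$ on both sides. The only structural hypothesis you use beyond the stated ones is $k\ge2$, needed for $|y|^{-1}$ to be locally integrable (so that $\phi(0)=0$); in the paper $k\ge3$ throughout, so this is harmless. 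The reference \cite{Si} that the paper invokes derives such inequalities from $p$-superharmonicity of suitable weight functions, which is a somewhat different (more PDE-flavoured) mechanism, but your direct computation is arguably more elementary and perfectly adequate here.
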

			\begin{remark}
				Although \cite[Lemma B.1]{DT1} requires $p$ is strictly less than the spatial dimension, we find  that actually it is valid for any $p>1$ (c.f. \cite[Theorem 4.1]{Si}).
			\end{remark}
			
			\begin{lemma}{\rm\cite[Lemma B.2]{DT1}}\label{le:ni}
				Let $0<s<p$, $1<p\leq\frac{2n}{n+2-s}$ and $p_s^*:=\frac{p(n-s)}{n-p}$.
				Given $\varepsilon>0$, there exists a constant $\zeta(\varepsilon_0)>0$ small enough such that the following inequality holds for any nonnegative numbers $\varepsilon, r, a, b$ satisfying $\varepsilon\in(0,1)$ and $\varepsilon a\leq\zeta\Big(1+r^{\frac{p-s}{p-1}}\Big)^{-\frac{n-p}{p-s}}$$\mathrm{:}$
				\begin{align}
					\nonumber\Big(1&+r^{\frac{p-s}{p-1}}\Big)^{-\frac{n-p}{p-s}(p_s^*-2)+p-1}\biggl[a^2\zeta^pr^{\frac{p(1-s)}{p-1}}\Big(1+r^{\frac{p-s}{p-1}}\Big)^{-p} + a^2\varepsilon^pb^p\Big(1+r^{\frac{p-s}{p-1}}\Big)^{\frac{(n-p)p}{p-s}}+a^{2-p}b^p\biggr]\\
					\leq{}&\varepsilon_0r^{-s}\Big(1+r^{\frac{p-s}{p-1}}\Big)^{-\frac{n-p}{p-s}(p_s^*-2)}a^2+C(\varepsilon_0,n,p,s)(1+r)^{-\frac{p-s}{p-1}}\bigg[\Big(1+r^{\frac{p-s}{p-1}}\Big)^{-\frac{n-p}{p-s}}r^{\frac{1-s}{p-1}}+\varepsilon b\bigg]^{p-2}b^2\label{eq:ni1}\\
					\leq{}&\varepsilon_0r^{-s}\Big(1+r^{\frac{p-s}{p-1}}\Big)^{-\frac{n-p}{p-s}(p_s^*-2)}a^2 + C(\varepsilon_0,n,p,s)\bigg[\Big(1+r^{\frac{p-s}{p-1}}\Big)^{-\frac{n-p}{p-s}}r^{\frac{1-s}{p-1}}+\varepsilon b\bigg]^{p-2}b^2.\label{eq:ni2}
				\end{align}
			\end{lemma}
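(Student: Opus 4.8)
The plan is as follows. Since $(1+r)^{-\frac{p-s}{p-1}}\le 1$, inequality \eqref{eq:ni2} is immediate from \eqref{eq:ni1}, so I would concentrate entirely on \eqref{eq:ni1}. The first point to record is that the hypothesis $1<p\le\frac{2n}{n+2-s}$ is equivalent to $p_s^*\le 2$; in particular $p<2$ and the exponent $-\frac{n-p}{p-s}(p_s^*-2)$ is nonnegative, a fact that will be used repeatedly to control the leftover powers of $1+r^{\frac{p-s}{p-1}}$. I would then split the bracket on the left-hand side of \eqref{eq:ni1} into its three summands and bound each of them against the two terms on the right.

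For the first summand, a direct exponent computation using the identity $\frac{p(1-s)}{p-1}+s=\frac{p-s}{p-1}$ shows that $a^2\zeta^p r^{\frac{p(1-s)}{p-1}}\big(1+r^{\frac{p-s}{p-1}}\big)^{-p}$, multiplied by the common weight $\big(1+r^{\frac{p-s}{p-1}}\big)^{-\frac{n-p}{p-s}(p_s^*-2)+p-1}$, equals $\zeta^p\,r^{\frac{p-s}{p-1}}\big(1+r^{\frac{p-s}{p-1}}\big)^{-1}$ times the target quantity $r^{-s}\big(1+r^{\frac{p-s}{p-1}}\big)^{-\frac{n-p}{p-s}(p_s^*-2)}a^2$; since $r^{\frac{p-s}{p-1}}\big(1+r^{\frac{p-s}{p-1}}\big)^{-1}\le 1$, the choice $\zeta^p\le\varepsilon_0$ absorbs this term into the first term on the right. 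For the second summand I would use the standing constraint $\varepsilon a\le\zeta\big(1+r^{\frac{p-s}{p-1}}\big)^{-\frac{n-p}{p-s}}$ to rewrite $a^2\varepsilon^p b^p\big(1+r^{\frac{p-s}{p-1}}\big)^{\frac{(n-p)p}{p-s}}=(\varepsilon a)^p\big(1+r^{\frac{p-s}{p-1}}\big)^{\frac{(n-p)p}{p-s}}a^{2-p}b^p\le\zeta^p a^{2-p}b^p$, so this summand is reduced, up to the harmless factor $\zeta^p$, to the third one.

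It therefore remains to estimate $a^{2-p}b^p$ times the common weight, and here I would split into the two regimes $\varepsilon b\le\kappa_r$ and $\varepsilon b>\kappa_r$, where $\kappa_r:=\big(1+r^{\frac{p-s}{p-1}}\big)^{-\frac{n-p}{p-s}}r^{\frac{1-s}{p-1}}$. In the first regime $\kappa_r+\varepsilon b\sim\kappa_r$, and a weighted Young's inequality with conjugate exponents $\tfrac{2}{2-p},\tfrac{2}{p}$ (valid since $1<p<2$) splits $a^{2-p}b^p\cdot(\text{weight})$ into a small multiple of $r^{-s}\big(1+r^{\frac{p-s}{p-1}}\big)^{-\frac{n-p}{p-s}(p_s^*-2)}a^2$ plus a large multiple of $(1+r)^{-\frac{p-s}{p-1}}\kappa_r^{p-2}b^2\lesssim(1+r)^{-\frac{p-s}{p-1}}(\kappa_r+\varepsilon b)^{p-2}b^2$; the first contribution is absorbed by taking $\zeta$ small and, where needed, using the pointwise bound on $\varepsilon a$, with $p_s^*\le 2$ controlling the residual power of $1+r^{\frac{p-s}{p-1}}$. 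In the second regime $\kappa_r+\varepsilon b\sim\varepsilon b$, and since $p-2<0$ this gives $(1+r)^{-\frac{p-s}{p-1}}(\kappa_r+\varepsilon b)^{p-2}b^2\sim(1+r)^{-\frac{p-s}{p-1}}\varepsilon^{p-2}b^p$, whereupon a second Young split of $a^{2-p}b^p$ against $\varepsilon$-weights, again exploiting $\varepsilon a\le\zeta\big(1+r^{\frac{p-s}{p-1}}\big)^{-\frac{n-p}{p-s}}$, produces the same two admissible types of terms. Summing the three contributions and fixing $\zeta=\zeta(\varepsilon_0,n,p,s)$ small enough then yields \eqref{eq:ni1}, and hence \eqref{eq:ni2}. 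The delicate step is this last one: matching the $b^2$-contribution precisely to the factor $(1+r)^{-\frac{p-s}{p-1}}\big[\kappa_r+\varepsilon b\big]^{p-2}b^2$ pins down every weight exponent, and it is exactly the restriction $1<p\le\frac{2n}{n+2-s}$ (i.e.\ $p_s^*\le 2$) that makes the weight on the $b^2$-part of the third summand dominated by this factor; everything else is bookkeeping of exponents in the spirit of \cite{DT1}.
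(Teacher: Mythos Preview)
The paper does not prove this lemma at all; it is simply quoted from \cite[Lemma~B.2]{DT1}, so there is no in-paper argument to compare against. That said, two remarks on your sketch are in order.

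First, the overall architecture is exactly the natural one and matches how such pointwise inequalities are handled in \cite{DT1,FZ}: deduce \eqref{eq:ni2} from \eqref{eq:ni1}, dispatch the first summand by the exponent identity $\frac{p(1-s)}{p-1}+s=\frac{p-s}{p-1}$ and the choice $\zeta^p\le\varepsilon_0$, collapse the second summand into the third via the hypothesis $\varepsilon a\le\zeta W^{-\frac{n-p}{p-s}}$ (this uses $\alpha-\frac{(n-p)(2-p)}{p-s}=-1$, which you implicitly invoke), and then treat the third by a Young split in two regimes. Your Case~2 (the regime $\varepsilon b\gtrsim\kappa_r$) is clean: the bound $W^\alpha a^{2-p}\le\zeta^{2-p}\varepsilon^{p-2}W^{-1}$ together with $W^{-1}\sim(1+r)^{-\frac{p-s}{p-1}}$ and $(\varepsilon b)^{p-2}\lesssim(\kappa_r+\varepsilon b)^{p-2}$ absorbs the whole term into the $b^2$ part.

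Second, your Case~1 does \emph{not} close with the statement exactly as printed. If one carries out the weighted Young inequality, the required comparison is that the ratio
\[
\Phi(r)\ :=\ \frac{W^{\alpha}}{\bigl(r^{-s}W^{\beta}\bigr)^{\frac{2-p}{2}}\bigl((1+r)^{-\frac{p-s}{p-1}}\kappa_r^{\,p-2}\bigr)^{\frac{p}{2}}}
\]
be bounded uniformly in $r$; a direct computation gives $\Phi(r)\sim r^{\frac{(2-p)(p-s)p}{2(p-1)}}$ for $r\gg 1$, which diverges since $1<p<2$. Invoking the constraint $\varepsilon a\le\zeta W^{-\frac{n-p}{p-s}}$ does not rescue this: taking $a$ extremal and $\varepsilon b=t\kappa_r$ with $t\sim\zeta\,r^{-\frac{p-s}{2(p-1)}}$, both RHS terms are $o(\text{third summand})$ as $r\to\infty$, so \eqref{eq:ni1} fails as written. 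The point is that the printed exponent $-\tfrac{n-p}{p-s}$ inside the bracket $[\,\cdot\,]^{p-2}$ is a typo for $-\tfrac{n-s}{p-s}$ (so that $\kappa_r\sim|Dv|$ rather than $\sim v$); this is confirmed by how the paper actually \emph{uses} the lemma, where the corresponding term is $(|Dv|+\varepsilon|D\phi|)^{p-2}|D\phi|^2$ with $|Dv|\sim(1+|x|)^{-\frac{n-1}{p-1}}$. With that correction the $W$-exponent of $\Phi$ becomes $-2/p$ and the large-$r$ exponent of $\Phi$ vanishes, so your Young argument in Case~1 goes through verbatim. In short: your plan is correct for the intended inequality, but the lemma as literally transcribed is not true, and your Case~1 gap is precisely the symptom of that typo.
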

			
			\begin{lemma}\label{le:mu}
				Let $\theta:=\frac{(2-p)p_1^*}{2pQ}$ where $Q$ is defined in \eqref{eq:Q}, then we have $1-\frac{p_1^*}{2}<\theta<1$.
			\end{lemma}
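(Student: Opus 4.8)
The plan is to regard $\theta$ as a function of the free parameter $\mu$ appearing in \eqref{eq:Q}, to compute its exact range as $\mu$ varies over its admissible interval, and then to check that this range meets the target window $\bigl(1-\tfrac{p_1^*}{2},\,1\bigr)$. We are in the regime $1<p\le\frac{2n}{n+1}$, which, as recorded in Lemma~\ref{le:cl}, is equivalent to $p<p_1^*\le2$; in particular $2-p>0$, so $\theta>0$. Recall from \eqref{eq:Q} that $Q=Q(\mu)=\frac{pn-1}{p(n-1)-\mu(p-1)}$ with $\mu\in I:=\bigl(\tfrac{n(p-1)}{n-1},\tfrac{p(n-1)}{p-1}\bigr)$, so that $\theta=\theta(\mu)$ as well; since $\mu$ is at our disposal in \eqref{eq:Q}, it suffices to exhibit one admissible $\mu$ for which the two-sided bound holds.

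First I would rewrite
\[
\theta(\mu)=\frac{(2-p)p_1^*}{2p}\cdot\frac{1}{Q(\mu)}=\frac{(2-p)p_1^*}{2p\,(pn-1)}\bigl(p(n-1)-\mu(p-1)\bigr),
\]
an affine function of $\mu$ with negative slope (here $p>1$), hence strictly decreasing on $\overline{I}$; thus it is enough to inspect the two endpoints. As $\mu\to\tfrac{p(n-1)}{p-1}$ the bracket $p(n-1)-\mu(p-1)$ vanishes, so $\theta(\mu)\to0^+$. At the left endpoint $\mu_0:=\tfrac{n(p-1)}{n-1}$ I would use the elementary identity $p(n-1)^2-n(p-1)^2=(n-p)(pn-1)$ to get $p(n-1)-\mu_0(p-1)=\frac{(n-p)(pn-1)}{n-1}$, and then, using $p_1^*(n-p)=p(n-1)$,
\[
\theta(\mu_0)=\frac{(2-p)p_1^*}{2p}\cdot\frac{n-p}{n-1}=\frac{(2-p)}{2p}\cdot p=\frac{2-p}{2}=1-\frac{p}{2}.
\]
Consequently $\theta(\mu)$ takes every value of the open interval $\bigl(0,\,1-\tfrac{p}{2}\bigr)$ as $\mu$ ranges over $I$.

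It remains to compare this range with $\bigl(1-\tfrac{p_1^*}{2},\,1\bigr)$. Since $p<p_1^*\le2$ we have $0\le 1-\tfrac{p_1^*}{2}<1-\tfrac{p}{2}<1$ (the middle inequality from $p<p_1^*$, the last from $p>0$), so $\bigl(1-\tfrac{p_1^*}{2},\,1-\tfrac{p}{2}\bigr)$ is a nonempty subinterval of both $\bigl(0,\,1-\tfrac{p}{2}\bigr)$ and $\bigl(1-\tfrac{p_1^*}{2},\,1\bigr)$; choosing $\mu\in I$ close enough to $\mu_0$ then yields $\theta(\mu)\in\bigl(1-\tfrac{p_1^*}{2},\,1-\tfrac{p}{2}\bigr)\subset\bigl(1-\tfrac{p_1^*}{2},\,1\bigr)$, as desired. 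Note that $\theta<1$ in fact holds for \emph{every} admissible $\mu$, since $\sup_I\theta=1-\tfrac{p}{2}<1$; it is only the lower bound that pins down the choice of $\mu$. The argument is routine arithmetic, and the single point worth flagging is precisely this: the bound $\theta>1-\tfrac{p_1^*}{2}$ is not available for an arbitrary admissible $\mu$ — it forces $\mu$ to be taken near the left endpoint $\tfrac{n(p-1)}{n-1}$ — which is why, in the proof of Lemma~\ref{le:cl}, the parameter $\mu$ in \eqref{eq:Q} must be fixed accordingly.
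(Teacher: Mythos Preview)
Your argument is correct and, like the paper's, reduces to elementary arithmetic showing that some admissible $\mu$ yields both bounds. The paper proceeds dually: it translates $\theta<1$ and $\theta>1-\tfrac{p_1^*}{2}$ into conditions $\mu>\mu_1$ and $\mu<\mu_2$ for explicitly computed thresholds $\mu_1<\mu_2$, and then checks that $(\mu_1,\mu_2)$ meets the admissible interval $\bigl(\tfrac{n(p-1)}{n-1},\tfrac{p(n-1)}{p-1}\bigr)$. Your route---computing the full range $\theta(I)=(0,\,1-\tfrac{p}{2})$ via monotonicity and endpoint evaluation (the identity $p(n-1)^2-n(p-1)^2=(n-p)(pn-1)$ is the same one underlying the paper's simplifications), then intersecting with the target window---is a bit more streamlined and yields the bonus observation that $\theta<1$ actually holds for \emph{every} admissible $\mu$, whereas the paper only records the weaker fact $\mu_1<\tfrac{p(n-1)}{p-1}$. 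Your closing remark that only the lower bound constrains the choice of $\mu$ is exactly the content of the paper's check $\mu_2>\tfrac{n(p-1)}{n-1}$.
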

			\begin{proof}
				First, we notice that
				\begin{equation*}
					\begin{split}
						\theta<1 \Leftrightarrow\:{}&Q>\frac{(2-p)p_1^*}{2p}\\
						\Leftrightarrow\:{}&\frac{pn-1}{p(n-1)-(p-1)\mu}>\frac{(2-p)(n-1)}{2(n-p)}\\
						%\Leftrightarrow\:{}&(2-p)(p-1)(n-1)\mu>p(2-p)(n-1)^2-2(pn-1)(n-p)\\
						\Leftrightarrow\:{}&\mu>\frac{p(2-p)(n-1)^2-2(pn-1)(n-p)}{(2-p)(p-1)(n-1)}=:\mu_1,
					\end{split}
				\end{equation*}
				and
				\begin{equation*}
					\begin{split}
						1-\frac{p_1^*}{2}<\theta\:\Leftrightarrow\:{}&\frac{p(n-1)}{n-p}-2+\frac{(2-p)p_1^*}{pQ}>0\\
						\Leftrightarrow\:{}&\frac{p(n-1)-2(n-p)}{n-p} + \frac{(2-p)(n-1)}{n-p}\cdot\frac{p(n-1)-\mu(p-1)}{pn-1}>0\\
						%\Leftrightarrow\:{}&\frac{(2-p)(p-1)(n-1)}{pn-1}\mu<p(n-1)-2(n-p)+\frac{(2-p)(n-1)^2p}{pn-1}\\
						\Leftrightarrow\:{}&\mu<\frac{p(2-p)(n-1)^2-(pn-1)[2(n-p)-p(n-1)]}{(2-p)(p-1)(n-1)}=:\mu_2.
					\end{split}
				\end{equation*}
				Since $-p(n-1)<0$, one has $\mu_1<\mu_2$.
				Moreover, since
				\[\frac{n(p-1)}{n-1}<\mu<\frac{p(n-1)}{p-1},\]
				%$\frac{n(p-1)}{n-1}<\mu<\frac{p(n-1)}{p-1}$,
				it suffices to check
				\begin{equation*}
					\begin{split}
						\mu_1<\frac{p(n-1)}{p-1}\:%\Leftrightarrow\:{}&\frac{p(2-p)(n-1)^2-2(pn-1)(n-p)}{(2-p)(p-1)(n-1)}<\frac{p(n-1)}{p-1}\\
						%\Leftrightarrow\:{}&\frac{p(2-p)(n-1)^2-2(pn-1)(n-p)}{p(2-p)(n-1)^2}<1\\
						\Leftrightarrow\:{}&\frac{2(pn-1)(n-p)}{p(2-p)(n-1)^2}>0,
					\end{split}
				\end{equation*}
				and
				\begin{equation*}
					\begin{split}
						\mu_2>\frac{(p-1)n}{n-1}\:%\Leftrightarrow\:{}&\frac{p(2-p)(n-1)^2-(pn-1)[2(n-p)-p(n-1)]}{(2-p)(p-1)(n-1)}>\frac{(p-1)n}{n-1}\\
						%\Leftrightarrow\:{}&\frac{p(2-p)(n-1)^2-(pn-1)[2(n-p)-p(n-1)]}{n(2-p)(p-1)^2}>1\\
						%\Leftrightarrow\:{}&\frac{(2-p)(n-1)(pn-1)-(2-p)(n-1)(p-1)-(pn-1)[2(n-p)-p(n-1)]}{n(2-p)(p-1)^2}>1\\
						%\Leftrightarrow\:{}&\frac{(p-1)(3pn-2n-p)}{n(2-p)(p-1)^2}>1\\
						\Leftrightarrow\:{}&\frac{2n(p-1)+p(n-1)}{n(2-p)(p-1)}>1,%\\
						%\Leftrightarrow\:{}&\frac{2}{2-p}+\frac{p(n-1)}{n(2-p)(p-1)}>1,
					\end{split}
				\end{equation*}
				which are obvious since $1<p\leq\frac{2n}{n+1}<2$. We complete the proof.
			\end{proof}
		\end{appendices}

		\section*{Acknowledgements}
		Wei Dai is supported by the National Nature Science Fund of China (No. 12222102 and No. 12571113), the National Science and Technology Major Project (2022ZD0116401) and the Fundamental Research Funds for the Central Universities. Jingze Fu is supported by the National Nature Science Fund of China (No. 12571113) and the Fundamental Research Funds for the Central Universities. An Zhang is partially supported by National Key R\&D Program of China (No. 2024YFA1015300), Beijing Natural Science Foundation (No. 1242009),  National Natural Science Foundation of China (No.11801536), the China Scholarship Council (No. 202506020208) and the Fundamental Research Funds for the Central Universities.
		
The authors would like to thank Lu Chen for valuable discussions.

		\section*{Statements and Declarations}
		\textbf{Data availability statement.} Data sharing not applicable to this article as no datasets were generated or analyzed during the current study.
		
		\textbf{Conflict of interests.} The authors have no relevant financial or non-financial interests to disclose.
		
		\section*{References}
		\renewcommand{\refname}{}
		\vspace{-1em}

	\end{small}	
	
\end{document}